\newcommand{\jbracket}[1]{\left\langle {#1} \right\rangle}
\newcommand{\rmop}[1]{\mathop{\mathrm{#1}}}
\newcommand{\beq}{\begin{equation}}
\newcommand{\eeq}{\end{equation}}
\newcommand{\iu}{\mathrm{i}}
\newcommand{\df}{\mathrm{d}}
\newcommand{\p}{\partial}
\renewcommand{\Re}{\rmop{Re}}
\newcommand{\Ker}{\rmop{Ker}}
\newcommand{\Ran}{\rmop{Ran}}
\newcommand{\rot}{\rmop{rot}}
\newcommand{\Cbb}{\mathbb{C}}
\newcommand{\Nbb}{\mathbb{N}}
\newcommand{\Rbb}{\mathbb{R}}
\newcommand{\Ccal}{\mathcal{C}}
\newcommand{\Dcal}{\mathcal{D}}
\newcommand{\Hcal}{\mathcal{H}}
\newcommand{\Jcal}{\mathcal{J}}
\newcommand{\Kcal}{\mathcal{K}}
\newcommand{\Lcal}{\mathcal{L}}
\newcommand{\Mcal}{\mathcal{M}}
\newcommand{\Pcal}{\mathcal{P}}
\newcommand{\Qcal}{\mathcal{Q}}
\newcommand{\Scal}{\mathcal{S}}
\newcommand{\Tcal}{\mathcal{T}}
\newcommand{\Vcal}{\mathcal{V}}
\newcommand{\Wcal}{\mathcal{W}}
\newcommand{\Xcal}{\mathcal{X}}
\newcommand{\Ycal}{\mathcal{Y}}
\newcommand{\Ga}{\alpha}
\newcommand{\Gd}{\delta}
\newcommand{\Gve}{\varepsilon}
\newcommand{\Gl}{\lambda}
\newcommand{\Gm}{\mu}
\newcommand{\Gs}{\sigma}
\newcommand{\Go}{\omega}
\newcommand{\GD}{\Delta}
\newcommand{\GG}{\Gamma}
\newcommand{\GO}{\Omega}
\newcommand{\freespace}{\vspace{4mm}}
\numberwithin{equation}{section}
\newtheorem{prop}{Proposition}[section]
\newtheorem{theo}[prop]{Theorem}
\newtheorem{coro}[prop]{Corollary}
\newtheorem{lemm}[prop]{Lemma}
\theoremstyle{definition}
\newtheorem*{note*}{Note}
\newtheorem*{claim*}{Claim}
\newtheorem*{exam*}{Example}
\newtheorem*{rema*}{Remark}
\newtheorem*{exer*}{Exercise}
\newcommand{\Hsb}{\Hcal}
\newcommand{\Hdf}{\Hsb_\mathrm{div}}
\newcommand{\Hdrf}{\Hsb_\mathrm{div, rot}}
\newcommand{\divf}{\mathrm{div}}
\newcommand{\nv}{n}
\renewcommand{\iu}{i}
\renewcommand{\df}{d}
\title[Vector field decomposition and eNP spectrum]{A decomposition theorem of surface vector fields and spectral structure of the  Neumann-Poincar\'e operator in elasticity}
\thanks{This work was supported by NRF (of S. Korea) grants 2019R1A2B5B01069967 and 2022R1A2B5B01001445, and a KIAS Indivisual Grant (MG089001) at Korea Institute for Advanced Study.}
\author{Shota Fukushima$^1$}
\address{$^1$Department of Mathematics and Institute of Applied Mathematics, Inha University, Incheon 22212, S. Korea}
\email{shota.fukushima.math@gmail.com}
\author{Yong-Gwan Ji$^2$}
\address{$^2$School of Mathematics, Korea Institute for Advanced Study, Seoul 02455, S. Korea}
\email{ygji@kias.re.kr}
\author{Hyeonbae Kang$^1$}
\email{hbkang@inha.ac.kr}
\subjclass[2020]{Primary 47A10; Secondary 31A10, 31B10, 35Q74}
\begin{document}

\begin{abstract}
We prove that the space of vector fields on the boundary of a bounded domain with the Lipschitz boundary in three dimensions is decomposed into three subspaces: elements of the first one extend to the inside the domain as divergence-free and rotation-free vector fields, the second one to the outside as  divergence-free and rotation-free vector fields, and the third one to both the inside and the outside as divergence-free harmonic vector fields. We then show that each subspace in the decomposition is infinite-dimensional. We also prove under a mild regularity assumption on the boundary that the decomposition is almost direct in the sense that any intersection of two subspaces is finite-dimensional. We actually prove that the dimension of intersection is bounded by the first Betti number of the boundary. In particular, if the boundary is simply connected, then the decomposition is direct. We apply this decomposition theorem to investigate spectral properties of the Neumann-Poincar\'e operator in elasticity, whose cubic polynomial is known to be compact. We prove that each linear factor of the cubic polynomial is compact on each subspace of decomposition separately and those subspaces characterize eigenspaces of the Neumann-Poincar\'e operator. We then prove all the results for three dimensions, decomposition of surface vector fields and spectral structure, are extended to higher dimensions. We also prove analogous but different results in two dimensions.
\end{abstract}

\maketitle

\tableofcontents

\section{Introduction}

It is well known that any complex-valued function on the boundary $\p\GO$ of a bounded domain in $\Rbb^2\simeq \Cbb$ is decomposed into two parts, one of which extends to $\GO$ and the other to $\Rbb^2 \setminus \overline{\GO}$ as holomorphic functions, or equivalently by taking complex conjugates, as anti-holomorphic functions (see Section \ref{sec_2D}). It amounts to, for a vector field $(f_1,f_2)$ on $\p\GO$ after being identified with $f_1+if_2$, its being decomposed into two parts, one of which extends to $\GO$ as a divergence-free and rotation-free vector field and the other to $\Rbb^2 \setminus \overline{\GO}$. In this paper we prove an analogous, but different  decomposition theorem in three (and higher) dimensions: any vector field on the boundary $\p\GO$ of a bounded domain in $\Rbb^3$ is decomposed into three parts: the first one extends to $\GO$ as a divergence-free and rotation-free vector field, the second one to $\Rbb^3 \setminus \overline{\GO}$ as a divergence-free and rotation-free vector field, and the third one to both $\GO$ and $\Rbb^3 \setminus \overline{\GO}$ as divergence-free harmonic vector fields.

Investigation of decompositions of the surface vector fields in this paper is motivated by the spectral theory of Neumann-Poincar\'e operator in elasticity.
The Neumann-Poincar\'e operator in elasticity (abbreviated to eNP operator afterwards) is the boundary integral operator on $\p\GO \subset \Rbb^m$ whose integral kernel is defined to be the conormal derivative of the Kelvin matrix of the fundamental solution to the Lam\'e system of the linear elasticity. Unlike the NP operator for the Laplacian which is compact on $H^{1/2}(\p\GO)$ if $\p\GO$ is $C^{1,\Ga}$ for some $\Ga>0$, the eNP operator is not compact on $H^{1/2}(\p\GO)^m$ ($H^{1/2}(\p\GO)$ is the $1/2$-Sobolev space on $\p\GO$) even if $\p\GO$ is smooth (see \cite{DKV-Duke-88}).
However, it is recently proved that it is polynomially compact. In fact, if we set
\beq
    \label{eq_k0}
    k_0:=\frac{\Gm}{2(\Gl+2\Gm)}
\eeq
where $(\Gl, \Gm)$ is the pair of Lam\'e constants and denote the eNP operator by $\Kcal$, then it is proved in \cite{AJKKY} that $\Kcal^2-k_0^2I$ is compact but $\Kcal\pm k_0 I$ are not compact if $m=2$ and $\p\GO$ is $C^{1,\Ga}$, and in \cite{AKM19} that $\Kcal (\Kcal^2-k_0^2I)$ is compact but $\Kcal(\Kcal\pm k_0I)$, $\Kcal^2-k_0^2 I$ are not compact if $m=3$ and $\p\GO$ is $C^{\infty}$ (see also \cite{MR}). The assumption that $\p\GO$ is $C^{\infty}$-smooth is required in that paper when $m=3$ since the techniques of pseudo-differential operators are utilized. In fact, the compactness of $\Kcal (\Kcal^2-k_0^2I)$ is proved \cite{AKM19} by reducing the calculation of $\Kcal (\Kcal^2-k_0^2I)$ to a matrix operation of its principal symbol. We remark that the principal symbol of $\Kcal$ was already found in \cite{AAL99}, and that the diagonalization of the (matrix-valued) principal symbol of $\Kcal$ is calculated in \cite{CRSV22}. As a consequence of such results, it is shown that the spectrum of the eNP operator consists of eigenvalues converging to $k_0$ and $-k_0$ in two dimensions, and $k_0$, $-k_0$ and $0$ in three dimensions. Recently, convergence rates of eigenvalues have been studied in two dimensions \cite{AKM20}, and Weyl-type asymptotics of some combinations of eigenvalues in three dimensions have been derived \cite{Rozenblum}. In the recent paper \cite{Rozenblum23}, those of the signed eigenvalues converging to each accumulation point are obtained. These results are extensions to the eNP of the corresponding results for the NP operator associated with the Laplace operator \cite{Miya, MR19}. Eigenvalues of the eNP operator on the sphere are explicitly computed in \cite{DLL19}.

A natural question arises: Does eigenfunctions corresponding to eigenvalues, converging to $k_0$ and $-k_0$ in two dimensions, and to $k_0$, $-k_0$ and $0$ in three dimensions, have special properties? A related question is if it is possible to decompose $H^{1/2}(\p\GO)^m$ ($m=2,3$) into subspaces in some meaningful way so that $\Kcal+k_0 I$ and $\Kcal-k_0 I$ are compact on the respective subspaces in two dimensions, and $\Kcal+k_0 I$, $\Kcal-k_0 I$ and $\Kcal$ are compact on the respective subspaces in three dimensions? We prove in this paper that above mentioned decompositions yield right subspaces for these questions.
There are straight-forward decompositions of the space $H^{1/2}(\p\GO)^m$: decomposition into ranges of $\Kcal-k_0 I$ and $\Kcal+k_0 I$ in two dimensions, and ranges of $\Kcal(\Kcal-k_0 I)$, $\Kcal(\Kcal+k_0 I)$ and $\Kcal^2-k_0^2 I$ in three dimensions. But it is not clear if these subspaces have any meaningful properties, and even if it is so, it is difficult to figure them out.

We now prepare ourselves for more precise description of main results and methods.
Let $\GO\subset \Rbb^m$ be a bounded domain with the Lipschitz boundary $\p\GO$. We assume $m=2,3$ here, even if we deal with higher dimensional case as well in this paper. We set
\[
\Hsb:=H^{1/2}(\p\GO)
\]
for ease of notation. As we will explain later (see \eqref{eq_inner_product}), the space $\Hsb^m$ ($m \ge 2$) is equipped with the inner product defined in terms of the single layer potential of the Lam\'e system. The norm on $\Hsb^m$ induced by the inner product is equivalent to the product norm induced by the usual $H^{1/2}$-norm, and the eNP operator is symmetrized by this inner product. The dual spaces of $\Hsb$ and $\Hsb^m$ are denoted by $\Hsb^*=H^{-1/2}(\p\GO)$ and $(\Hsb^m)^*=H^{-1/2}(\partial\Omega)^m=(\Hsb^*)^m$, respectively.

We define subspaces of $\Hsb^m$ which are used for decomposition of surface vector fields. For $f \in \Hsb^m$, let $v_-^f \in H^1(\GO)^m$ be the unique solution to
\beq\label{eq_laplace_bvp}
    \begin{cases}
        \GD u=0 & \text{in } \GO, \\
        u=f & \text{on } \p\GO.
    \end{cases}
\eeq
We also let $v_+^f$ be the unique solution to
\beq
    \label{eq_laplace_bvp_ext}
    \begin{cases}
        \GD u=0 & \text{in } \Rbb^m\setminus \overline{\GO}, \\
        u=f & \text{on } \p\GO, \\
        u=O(|x|^{-1}) & \text{as } |x|\to \infty
    \end{cases}
\eeq
such that $\nabla v_+^f \in L^2 (\Rbb^m \setminus \overline{\GO})^m$ if $m=2$ and $v_+^f \in H^1 (\Rbb^m \setminus \overline{\GO})^m$ if $m=3$. If $m=2$, we need to impose the condition $\int_{\p\GO} f =0$ to ensure existence of the solution to \eqref{eq_laplace_bvp_ext}. We define subspaces of $\Hsb^m$ as follows:
\beq
    \label{eq_free_spaces}
    \begin{aligned}
        \Hdrf^-&:=\{ f\in \Hsb^m \mid \nabla\cdot v_-^f=0, \, \nabla\times v_-^f=0 \text{ in } \GO\}, \\
        \Hdrf^+&:=\{ f\in \Hsb^m \mid \nabla\cdot v_+^f=0, \, \nabla\times v_+^f=0 \text{ in } \Rbb^m \setminus\overline{\GO}\}, \\
        \Hdf&:=\{ f\in \Hsb^m \mid \nabla \cdot v_-^f=0 \text{ in } \Omega \text{ and } \nabla \cdot v_+^f=0 \text{ in } \Rbb^m\setminus \overline{\Omega}\}.
    \end{aligned}
\eeq
If $m=2$, $\nabla \times$ is replaced by $\mbox{rot}$. We equip the subspaces \eqref{eq_free_spaces} with the subspace topologies of $\Hsb^3$, or equivalently, the topologies induced by the restriction of the inner product \eqref{eq_inner_product} on $\Hsb^3$.

As mentioned at the beginning of Introduction, in two dimensions any vector field on $\p\GO$ is decomposed into two parts in a unique way, one extends to $\GO$ and the other to $\Rbb^2 \setminus \overline{\GO}$ as divergence-free and rotation-free vector fields. Such a decomposition can be expressed as
\[
\Hsb^2=\Hdrf^- \oplus \Hdrf^+
\]
(see Theorem \ref{coro_direct_sum_2d}). In particular, $\Hdf=\{0\}$ in two dimensions. In three dimensions, the situation is completely different. It turns out that $\Hdf$ is of infinite dimensions if $m=3$, and $\Hsb^3$ is decomposed as $\Hdrf^- + \Hdrf^+ + \Hdf$.

To investigate the decomposition when $m=3$, we introduce an inner product on $\Hsb^3$.
With the pair of Lam\'e constants $(\Gl, \Gm)$, the Lam\'e operator $\Lcal_{\Gl,\Gm}$ is defined by
\beq\label{eq_lame_operator}
    \Lcal_{\Gl, \Gm}u:=\Gm \Delta u+(\Gl+\Gm)\nabla (\nabla\cdot u).
\eeq
We assume that the Lam\'e constants $\Gl$ and $\Gm$ satisfy the following condition
\begin{equation}
    \label{eq_s_convex_condition}
    \Gm>0, \quad 3\Gl+2\Gm>0.
\end{equation}
Let $\GG = ( \GG_{ij} )_{i, j = 1}^3$ be the Kelvin matrix of fundamental solutions to the Lam\'{e} operator $\Lcal_{\Gl,\Gm}$, namely,
\beq\label{Kelvin}
  \GG_{ij}(x) =
- \frac{\Ga_1}{4 \pi} \frac{\Gd_{ij}}{|x|} - \frac{\Ga_2}{4 \pi} \frac{x_i x_j}{|x|^3},
\eeq
where
\beq
    \label{eq_alpha_12}
    \begin{aligned}
        \Ga_1
        &:=\frac{1}{2}\left( \frac{1}{\Gm}+\frac{1}{\Gl+2\Gm}\right)
        =\frac{1+2k_0}{2\Gm}, \\
    \Ga_2
    &:=\frac{1}{2}\left( \frac{1}{\Gm}-\frac{1}{\Gl+2\Gm}\right)
    =\frac{1-2k_0}{2\Gm},
    \end{aligned}
\eeq
and $\Gd_{ij}$ is Kronecker's delta (see \cite{Kup-book-65}). Here, $k_0$ is the number defined by \eqref{eq_k0}. Let $\Scal$ be the single layer potential with respect to the Lam\'e system, namely,
\beq
    \label{eq_single_layer_potential}
    \Scal [f](x):=\int_{\p \GO} \Gamma (x-y)f(y)\, \df \sigma (y).
\eeq
We consider $\Scal [f](x)$ for $x \in \p\GO$ as well as for $x \in \Rbb^3$.

It is known (see \cite{AJKKY}) that under the condition \eqref{eq_s_convex_condition} the linear operator $f\mapsto \Scal [f]|_{\p\GO}$ defines an isomorphism from $(\Hsb^3)^*$ to $\Hsb^3$. Thus the bilinear form
\beq\label{eq_inner_product}
    \jbracket{f, g}_*:=-\jbracket{\Scal^{-1}[f], g}_{\p\GO}
\eeq
defines an inner product on $\Hsb^3$ equivalent to the usual Sobolev inner product, where the right-hand side is the duality pairing between $(\Hsb^3)^*$ and $\Hsb^3$. Here $\Scal^{-1}: \Hsb^3\to (\Hsb^3)^*$ is the inverse mapping of $f\mapsto \Scal [f]|_{\p\GO}$. Note that the inner product \eqref{eq_inner_product} depends on the Lam\'e constants $(\lambda, \mu)$.
In what follows, we equip the Hilbert space $\Hsb^m$ with the inner product \eqref{eq_inner_product}.

Observe that the definition \eqref{eq_inner_product} makes sense even if $(\mu, k_0)=(1, 1/2)$, or equivalently, $(\lambda, \mu)=(-1, 1)$ which does not satisfy the condition \eqref{eq_s_convex_condition}. If $(\mu, k_0)=(1, 1/2)$, then $\Ga_1=1$ and $\Ga_2=0$, and hence $\Scal[f]=\Scal_0[f]$, where $\Scal_0$ is the single layer potential with respect to the Laplacian:
\beq\label{eq_lapla_single}
    \Scal_0[\varphi](x):=-\frac{1}{4\pi}\int_{\partial\Omega}\frac{1}{|x-y|}\varphi(y)\, \df\sigma (y)
\eeq
for $\varphi$ belonging to either $\Hsb^*$ or $(\Hsb^3)^*$. It is known (\cite{Verchota84}) that $\Scal_0$ is an isomorphism from $(\Hsb^3)^*$ to $\Hsb^3$. Thus \eqref{eq_inner_product} is well-defined.

We now state main results of this paper regarding decomposition of vector fields. We present only three dimensional results for simplicity of presentation. The two and higher dimensional results are presented in the last two sections.

\begin{theo}
    \label{theo_decomposition_div_free}
Let $\GO$ be a bounded domain in $\Rbb^3$ with the Lipschitz boundary. The Hilbert space $\Hsb^3$ is decomposed as
\beq\label{eq_H3orthogonal}
\Hsb^3=\Hdrf^-\oplus \Hdrf^+ \oplus (\Hdrf^-+\Hdrf^+)^\perp,
\eeq
where the decomposition is orthogonal with the inner product \eqref{eq_inner_product}, and
\beq\label{eq_perpsub}
(\Hdrf^-+\Hdrf^+)^\perp \subset \Hdf.
\eeq
In particular, we have
\beq\label{eq_decomposition_div_free}
        \Hsb^3=\Hdrf^- \oplus \Hdrf^+ + \Hdf.
    \eeq
\end{theo}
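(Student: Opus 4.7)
The plan is to prove three claims and combine them via a standard Hilbert-space argument: (i) $\Hdrf^\pm$ are closed in $\Hsb^3$; (ii) $\Hdrf^- \perp \Hdrf^+$ with respect to $\jbracket{\cdot,\cdot}_*$; and (iii) $(\Hdrf^- + \Hdrf^+)^\perp \subset \Hdf$. From (i) and (ii) the sum $\Hdrf^- + \Hdrf^+$ is closed as an orthogonal direct sum of two closed subspaces, so the orthogonal-complement theorem gives $\Hsb^3 = (\Hdrf^- + \Hdrf^+) \oplus (\Hdrf^- + \Hdrf^+)^\perp$; combined with (iii) this produces \eqref{eq_H3orthogonal}, \eqref{eq_perpsub}, and \eqref{eq_decomposition_div_free}. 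Item (i) is routine: the maps $f \mapsto v_\pm^f$ defined by \eqref{eq_laplace_bvp}--\eqref{eq_laplace_bvp_ext} are bounded from $\Hsb^3$ into $H^1$ of the respective domain by Lipschitz Dirichlet estimates, and the conditions $\nabla\cdot u = 0$ and $\nabla\times u = 0$ are closed in $H^1$, so $\Hdrf^\pm$ are preimages of closed subspaces.

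The substantive step is (ii). I would express the inner product as an elastic-energy integral
\[
    \jbracket{f, g}_* = \int_{\Rbb^3} E(U_f, U_g)\, dx, \qquad U_h := \Scal[\Scal^{-1}[h]],
\]
where $E(u,v) = \Gl(\nabla\cdot u)(\nabla\cdot v) + 2\Gm\,\hat\epsilon(u):\hat\epsilon(v)$ is the Lam\'e strain-energy density; this identity comes from Green's formula for $\Lcal_{\Gl,\Gm}$, the jump relations for $\Scal$, and the definition \eqref{eq_inner_product}. For $f \in \Hdrf^-$, uniqueness for the interior Lam\'e Dirichlet problem identifies $U_f|_\GO$ with $v_-^f$, which is divergence-free and curl-free; symmetrically $U_g|_{\Rbb^3\setminus\overline\GO} = v_+^g$ for $g \in \Hdrf^+$. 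The divergence-free, curl-free structure collapses $E$ on each side to $2\Gm\,\nabla v_-^f : \nabla U_g$ (respectively $2\Gm\,\nabla U_f : \nabla v_+^g$), and integration by parts using harmonicity of $v_-^f$ and $v_+^g$ reduces $\jbracket{f, g}_*$ to the Laplace-Dirichlet pairing $2\Gm\int_\GO \nabla v_-^f \cdot \nabla v_-^g\, dx + 2\Gm\int_{\Rbb^3\setminus\overline\GO}\nabla v_+^f \cdot \nabla v_+^g\, dx$. A further integration by parts exploiting the scalar-potential structure $v_-^f = \nabla h^-$ and $v_+^g = \nabla h^+$ recasts this quantity as $-2\Gm\,\jbracket{f, \Scal_0^{-1}[g]}_{\p\GO}$. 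The div-free, curl-free condition on $v_+^g$ forces the structural identity $\divf_T(\Scal_0^{-1}[g])^T = \Lambda^-(\Scal_0^{-1}[g])_n$ on $\p\GO$, which (via surface integration by parts against arbitrary $f = \nabla h^-|_{\p\GO}$) says exactly that $\Scal_0^{-1}[g]$ annihilates $\Hdrf^-$ in the duality pairing, giving $\jbracket{f, g}_* = 0$.

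For (iii), I would test $\jbracket{f, g}_* = -\jbracket{\Scal^{-1}[f], g}_{\p\GO} = 0$ against $g = \nabla h|_{\p\GO}$ as $h$ ranges over harmonic functions on $\GO$. Decomposing $\nabla h|_{\p\GO}$ into tangential and normal parts and using the surface divergence theorem together with self-adjointness of the Dirichlet-to-Neumann operator yields a scalar identity on $\Scal^{-1}[f]$; propagating this identity through the Lam\'e jump formulas identifies the boundary trace of $\nabla\cdot v_+^f$ as zero. Harmonicity of $\nabla\cdot v_+^f$ in $\Rbb^3\setminus\overline\GO$ together with the decay in \eqref{eq_laplace_bvp_ext} then gives $\nabla\cdot v_+^f \equiv 0$, and the symmetric test with $g \in \Hdrf^+$ yields $\nabla\cdot v_-^f \equiv 0$ in $\GO$, so $f \in \Hdf$. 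The main obstacle is step (ii): the orthogonality-producing cancellation is distributed between the Lam\'e Green's identity for $\Scal$ and the Laplace Green's identity for the harmonic extensions, and in the Lipschitz category the integration-by-parts identities require nontangential-trace justification; extracting the structural identity on $\Scal_0^{-1}[g]$ from $g \in \Hdrf^+$ is the most delicate piece, reducing in the flat-boundary case to the Fourier-side identity $(\Lambda^-)^2 = -\Delta_T$ and in general requiring compatible curvature cancellations.
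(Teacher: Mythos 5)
There is a genuine gap, and it is concentrated in your step (ii). You chose to express $\jbracket{f,g}_*$ via the Lam\'e strain-energy density $E(u,v)=\Gl(\nabla\cdot u)(\nabla\cdot v)+2\Gm\,\hat\epsilon(u){:}\hat\epsilon(v)$, which is adapted to the standard conormal derivative $\p_\nu$. This forces you to fight the Korn structure: the curl-free/div-free conditions on $v_-^f$ and $v_+^g$ do not kill the cross terms, so you are led to a further integration by parts that requires the scalar-potential representation $v_-^f=\nabla h^-$ and $v_+^g=\nabla h^+$. That representation is a topological statement (it needs $H^1_{\mathrm{dR}}(\GO)=0$ and $H^1_{\mathrm{dR}}(\Rbb^3\setminus\overline\GO)=0$), but the theorem is asserted for arbitrary bounded Lipschitz domains, including non-simply-connected ones. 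The ``structural identity'' on $\Scal_0^{-1}[g]$ that you invoke is also never established---you sketch only the flat-boundary Fourier calculation and appeal to ``curvature cancellations.'' A parallel problem infects step (iii): testing only against $g=\nabla h|_{\p\GO}$ tests only against the subspace $\Mcal^-$ (in the paper's notation), which can be a \emph{proper} subspace of $\Hdrf^-$ when $b_1(\GO)>0$; the inclusion $(\Mcal^-)^\perp\subset\Hdf^+$ that you would need is precisely the reverse of the inclusion $\Hdf^+\subset(\Mcal^-)^\perp$ that actually holds, and the gap between them is exactly what Theorem~\ref{theo_codim_betti} quantifies by the first Betti number.

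The missing idea is to abandon $\p_\nu$ in favor of the div-free conormal derivative $\p_\nu^\divf$ built from the vector-calculus identity $\GD u=\nabla(\nabla\cdot u)-\nabla\times(\nabla\times u)$, which rewrites the Lam\'e operator as in \eqref{eq_lame_vc}. With that choice, Green's identity for $\Lcal_{\Gl,\Gm}$ produces the integrand $\Gm|\nabla\times u|^2+\frac{\Gm}{2k_0}|\nabla\cdot u|^2$ instead of strain energy (Corollary~\ref{cor213}), and the orthogonality $\Hdrf^-\perp\Hdrf^+$ drops out immediately: the interior terms vanish because $u^f$ is curl- and div-free in $\GO$, and the exterior terms vanish because $u^g$ is curl- and div-free outside. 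No scalar potentials, no simple-connectivity, no boundary symbol calculus. The paper then packages this by introducing $\Kcal^\divf$, proving it self-adjoint for $\jbracket{\cdot,\cdot}_*$ (Corollary~\ref{coro_kd_self_adjoint}), and showing $\Hdrf^\pm=\Ker(\Kcal^\divf\mp\frac12 I)$ (Theorem~\ref{theo_div_rot_free_kd}); the orthogonality then comes for free from the spectral theorem, and the inclusion $(\Hdrf^-+\Hdrf^+)^\perp\subset\Hdf$ follows because $(\Hdrf^-+\Hdrf^+)^\perp=\overline{\Ran(\Kcal^\divf-\frac12 I)(\Kcal^\divf+\frac12 I)}$, every element of that range is a two-sided trace of the div-free double layer potential $\Dcal^\divf$ (hence in $\Hdf^-\cap\Hdf^+=\Hdf$), and $\Hdf$ is closed (Lemma~\ref{lemm_div_free_closed}). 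Your overall skeleton (closedness, orthogonality, inclusion) is the right one, but without the div-free conormal derivative each of steps (ii) and (iii) acquires obstructions that cannot be removed in the stated generality.
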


We emphasize that the subspaces $\Hdrf^-$, $\Hdrf^+$ and $\Hdf$ are infinite-dimensional as shown in Theorem \ref{theo_infinite_dim_c1a}. This result plays a crucial role in investigation of spectral properties of the eNP operator.

We do not know whether the equality holds in \eqref{eq_perpsub}, or equivalently,
\beq\label{eq_perpsub2}
\Hdrf^{\pm} \cap \Hdf = \{ 0 \},
\eeq
and hence the decomposition \eqref{eq_decomposition_div_free} is orthogonal  for a general domain $\GO$.
However, we show that if $\p\GO$ is $C^{1,\Ga}$ for some $\alpha>1/2$, then the inclusion \eqref{eq_perpsub} is of finite codimension, or equivalently, $\Hdrf^{\pm} \cap \Hdf$ is of finite dimensional as the following theorem shows (see also Theorem \ref{theo_finite_dim}).

\begin{theo}\label{theo_codim_betti}
    Let $\GO$ be a bounded domain in $\Rbb^3$. If $\partial\Omega$ is $C^{1, \alpha}$ for some $\alpha>1/2$, then the following inequality holds:
    \begin{equation}
        \label{eq_codim_betti}
        \dim \Hdf /(\Hdrf^-+\Hdrf^+)^\perp = \dim \Hdf \cap (\Hdrf^-+\Hdrf^+) \leq b_1(\partial\Omega).
    \end{equation}
\end{theo}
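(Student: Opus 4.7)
The plan is to construct an injective linear map $\Phi\colon\Hdf\cap(\Hdrf^-+\Hdrf^+)\to H^1(\partial\Omega;\Rbb)$ from the intersection into the first de~Rham cohomology of the boundary surface. Since $\dim H^1(\partial\Omega;\Rbb)=b_1(\partial\Omega)$, injectivity of $\Phi$ immediately yields the inequality in \eqref{eq_codim_betti}. The first equality there is a Hilbert-space identity combining the inclusion $(\Hdrf^-+\Hdrf^+)^\perp\subset\Hdf$ from Theorem~\ref{theo_decomposition_div_free} with closedness of $\Hdrf^-+\Hdrf^+$ inside $\Hdf$; the latter becomes automatic once the finite-dimensional bound is in hand.

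I would define $\Phi(f):=[f_T]$, the de~Rham class of the tangential trace of $f$ on $\partial\Omega$. Well-definedness is verified by picking any decomposition $f=g+h$ with $g\in\Hdrf^-$ and $h\in\Hdrf^+$: the extensions $v_-^g$ and $v_+^h$ are simultaneously closed and coclosed $1$-forms on $\Omega$ and $\Omega':=\Rbb^3\setminus\overline\Omega$, respectively, so their pullbacks $g_T=i^*v_-^g$ and $h_T=i^*v_+^h$ to $\partial\Omega$ are closed, making $f_T=g_T+h_T$ a closed $1$-form whose class depends only on $f$. The hypothesis that $\partial\Omega$ is $C^{1,\alpha}$ with $\alpha>1/2$ ensures, via elliptic boundary regularity for \eqref{eq_laplace_bvp}--\eqref{eq_laplace_bvp_ext}, that the extensions have enough smoothness up to $\partial\Omega$ for these pullbacks to be defined classically.

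For injectivity, suppose $\Phi(f)=0$. The classes $[v_-^g]\in H^1(\Omega)$ and $[v_+^h]\in H^1(\Omega')$ restrict on $\partial\Omega$ to $[g_T]$ and $[h_T]$, and their combination corresponds to $[f_T]$ under the Mayer--Vietoris isomorphism $H^1(\Omega)\oplus H^1(\Omega')\xrightarrow{\sim}H^1(\partial\Omega)$. This isomorphism is valid because the one-point compactification yields $S^3=\overline\Omega\cup\overline{\Omega'}$ with $H^1(S^3)=H^2(S^3)=0$. Thus $[f_T]=0$ forces $[v_-^g]=0$ and $[v_+^h]=0$, so $v_-^g=\nabla\phi_g$ in $\Omega$ and $v_+^h=\nabla\phi_h^*$ in $\Omega'$ for harmonic scalar potentials $\phi_g,\phi_h^*$.

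The main obstacle is the concluding step: using the remaining content of $f\in\Hdf$ — namely that the crossed extensions $v_+^g$ and $v_-^h$ are also divergence-free — to conclude $g=h=0$. The plan is to exploit the decomposition $v_+^g=\nabla\tilde\phi_g+W$, where $\tilde\phi_g$ is the harmonic extension of $\phi_g|_{\partial\Omega}$ to $\Omega'$ and $W$ is the harmonic extension of the purely normal vector field $(\partial_n\phi_g-\partial_n\tilde\phi_g)n$. Divergence-freeness of $v_+^g$ reduces to divergence-freeness of $W$, which via the surface-divergence formula on $\partial\Omega$ becomes an elliptic pseudodifferential equation for the boundary trace $u_g:=\phi_g|_{\partial\Omega}$; a direct spectral computation on the sphere via vector-spherical harmonics shows that only constants solve it, whence $g=0$, and the symmetric argument gives $h=0$. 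Carrying this through for a general $C^{1,\alpha}$ boundary with $\alpha>1/2$ — and verifying that the kernel of the derived boundary operator is trivial modulo constants — is the delicate part, where the regularity assumption $\alpha>1/2$ is used to legitimize the tangential calculus and the composition of Dirichlet-to-Neumann maps on $\partial\Omega$.
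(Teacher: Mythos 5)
Your overall strategy --- construct a linear map from $\Hdf\cap(\Hdrf^-+\Hdrf^+)$ into a cohomology space of dimension $b_1(\partial\Omega)$ and prove injectivity --- is sound in outline and is related to what the paper does. But the execution differs from the paper's and, as written, has a genuine gap at exactly the step you flag as ``the delicate part.'' You reduce injectivity of $\Phi$ to the claim that if $g\in\Hdrf^-$ has $v_-^g=\nabla\phi_g$ in $\Omega$ for some scalar $\phi_g$, and additionally $\nabla\cdot v_+^g=0$ in $\Rbb^3\setminus\overline\Omega$, then $g=0$. In the paper's notation this is the statement $\Mcal^-\cap\Hdf=\{0\}$ (and symmetrically $\Mcal^+\cap\Hdf=\{0\}$), and it is not trivial. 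Your proposed route --- reduce to a pseudodifferential equation on $\partial\Omega$, check its kernel by a vector-spherical-harmonics computation on the sphere, then assert the same for a general $C^{1,\alpha}$ boundary --- does not work: an explicit spectral computation on the sphere does not transfer to other domains, since the relevant boundary operator depends on $\partial\Omega$ in a nonperturbative way, and $C^{1,\alpha}$ regularity is nowhere near enough for the symbol calculus you appeal to. The paper instead proves the stronger orthogonality $\Hdf^+\subset(\Mcal^-)^\perp$ (Proposition \ref{prop_suff}), deduced from a clean duality identity relating $\nabla\Scal_0[\psi]|_\pm$ and $\nabla\cdot\Scal[g]|_\mp$ (Lemma \ref{lemm_adjoint_qp}) together with the decay estimates of Lemma \ref{lemm_mplus_decay}; no spectral or pseudodifferential input is needed, and the argument is uniform over Lipschitz-type geometries once the trace theorem of Gesztesy--Mitrea is in place.

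There is a secondary issue you should not wave away: you assign a de~Rham class $[f_T]\in H^1_{\mathrm{dR}}(\partial\Omega)$ to the tangential trace of an $H^{1/2}$ vector field. For $C^{1,\alpha}$ boundaries the field $v_-^g$ lies in $H^1(\Omega)^3$ and is smooth only in the interior; its tangential trace is merely $H^{1/2}(\partial\Omega)$, and ``closed on $\partial\Omega$'' must be interpreted distributionally on a surface with only H\"older-continuous tangent planes. Making this rigorous requires a Sobolev--de~Rham framework on $\partial\Omega$ that you have not set up. The paper sidesteps this entirely by defining the cohomology maps $A^\pm$ with values in $H^1_{\mathrm{dR}}(\Omega)$ and $H^1_{\mathrm{dR}}(\Rbb^3\setminus\overline\Omega)$, where the closed forms $v_\pm^f$ are genuinely $C^\infty$ by interior elliptic regularity, and then uses the Mayer--Vietoris identity $b_1(\Omega)+b_1(\Rbb^3\setminus\overline\Omega)=b_1(\partial\Omega)$ at the very end. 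In short: your Mayer--Vietoris splitting of $[f_T]$ and the deduction $[v_-^g]=0$, $[v_+^h]=0$ from $[f_T]=0$ is fine in principle, but the final step $g=h=0$ needs the orthogonality mechanism the paper supplies, and the boundary-cohomology formulation needs more regularity care than the paper's interior formulation.
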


Here $b_1(X)$ denotes the first Betti number of the topological space $X$. In particular, if $\partial\Omega$ is simply connected, then $b_1(\partial\Omega)=0$ and thus the sum in \eqref{eq_decomposition_div_free} is orthogonal.

\begin{coro}\label{coro_simply_connected}
    Let $\GO$ be a bounded domain in $\Rbb^3$. If $\partial\Omega$ is $C^{1, \alpha}$ for some $\alpha>1/2$ and simply connected, then the orthogonal decomposition
    \[
        \Hsb^3=\Hdrf^- \oplus \Hdrf^+ \oplus \Hdf
    \]
    holds.
\end{coro}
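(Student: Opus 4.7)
The plan is to deduce the corollary by combining Theorem~\ref{theo_decomposition_div_free} with Theorem~\ref{theo_codim_betti}. Theorem~\ref{theo_decomposition_div_free} already supplies the orthogonal decomposition
\[
\Hsb^3 = \Hdrf^- \oplus \Hdrf^+ \oplus (\Hdrf^- + \Hdrf^+)^\perp,
\]
together with the inclusion $(\Hdrf^- + \Hdrf^+)^\perp \subset \Hdf$. The entire task therefore reduces to upgrading this inclusion to an equality, because substituting $\Hdf$ for the third summand in the display above immediately yields the claimed orthogonal decomposition $\Hsb^3 = \Hdrf^- \oplus \Hdrf^+ \oplus \Hdf$.

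To establish the equality, I would invoke Theorem~\ref{theo_codim_betti}, which under the $C^{1,\alpha}$ hypothesis with $\alpha>1/2$ gives
\[
\dim \Hdf / (\Hdrf^- + \Hdrf^+)^\perp \leq b_1(\partial\Omega).
\]
Since $\partial\Omega$ is simply connected, the Hurewicz theorem identifies $H_1(\partial\Omega; \Zbb)$ with the abelianization of $\pi_1(\partial\Omega)$, which is trivial; equivalently, the classification of closed surfaces realizes $\partial\Omega$ as a topological $2$-sphere. Either way, $b_1(\partial\Omega) = 0$, so the codimension on the left is zero, forcing $(\Hdrf^- + \Hdrf^+)^\perp = \Hdf$ and completing the proof.

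There is no real obstacle here: the corollary is essentially a bookkeeping consequence of the two preceding theorems, with the only external input being the standard topological fact that simple connectivity of a closed surface kills its first Betti number. The only subtlety to flag is that the $C^{1,\alpha}$ regularity with $\alpha > 1/2$ is needed solely to invoke Theorem~\ref{theo_codim_betti}; without that assumption one still has the orthogonal decomposition of Theorem~\ref{theo_decomposition_div_free}, but would not be able to conclude $(\Hdrf^- + \Hdrf^+)^\perp = \Hdf$.
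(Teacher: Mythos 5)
Your argument is exactly the one the paper intends: Theorem \ref{theo_decomposition_div_free} gives the orthogonal decomposition with $(\Hdrf^-+\Hdrf^+)^\perp \subset \Hdf$, and Theorem \ref{theo_codim_betti} together with $b_1(\partial\Omega)=0$ for a simply connected boundary upgrades this inclusion to an equality. This matches the paper's own one-line deduction (stated immediately after Theorem \ref{theo_codim_betti}), so the proposal is correct and takes the same approach.
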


We apply decomposition theorem for vector fields to investigation of eNP operator's spectral structure. The eNP operator, denoted by $\Kcal$, is defined by
\beq
    \label{eq_np}
    \Kcal [f](x):=\rmop{p.v.}\int_{\p\GO} (\p_{\nu_y}\Gamma (x-y))^T f(y)\, \df \sigma (y), \quad x \in \p\GO,
\eeq
where $\rmop{p.v.}$ stands for the Cauchy principal value ($T$ for transpose),
$\p_\nu$ denotes the conormal derivative on $\p\GO$ associated with the Lam\'e operator $\Lcal_{\Gl,\Gm}$, namely,
\beq
    \label{eq_dnu}
    \p_\nu u(x):=\Gl (\nabla\cdot u)\nv_x+2\Gm (\widehat\nabla u)\nv_x,
\eeq
where $\nv_x$ is the outward normal vector at $x\in \p\GO$, and $\widehat\nabla$ is the symmetric gradient, namely, $\widehat\nabla u= (\nabla u + \nabla u^T)/2$. The operator $\Kcal$ is a singular integral operator and bounded on $L^2(\p\GO)^3$ (\cite{DKV-Duke-88}) and on $H^1 (\partial\Omega)^3$ (\cite[Proposition 2.4]{AAL99}), and thus on $\Hsb^3=H^{1/2}(\partial\Omega)^3$ by the interpolation. The eNP operator $\Kcal$ is related to the single layer potential $\Scal$ by the jump relation
\beq
\p_\nu \Scal [f]|_{\pm} = \left( \Kcal^*\pm \frac{1}{2}I \right)[f] \quad \textrm{on } \p\GO,
\eeq
where $\Kcal^*$ is the adjoint operator of $\Kcal$ on $L^2(\p\GO)^m$. Here and throughout this paper, the subscripts $-$ and $+$ respectively denote the limits in the direction of the normal vector to $\p\GO$ from inside and outside $\GO$, for example,
\[
\p_\nu u|_{\pm}(x) := \lim_{t \to +0} \left[\Gl \nabla\cdot u(x\pm t \nv_x)\nv_x +2\Gm \widehat\nabla u(x\pm t \nv_x) \nv_x\right]
\]
for almost every $x \in \p\GO$. The operators $\Kcal^*$ and $\Kcal$ are bounded on $\Hsb^*$ and $\Hsb$, respectively (see, for example, \cite{AJKKY}).

As mentioned earlier, the eNP operator $\Kcal$ is not compact  even if $\p\GO$ is smooth, and moreover, neither $\Kcal (\Kcal\pm k_0I)$ nor $\Kcal^2-k_0^2I$ are compact, where $k_0$ is the number given in \eqref{eq_k0}. However, $\Kcal (\Kcal^2-k_0^2I)$ is compact on $\Hsb^3$ if $\p\GO$ is $C^{\infty}$. The first main result of this paper regarding polynomial compactness of the eNP operator is the following.

\begin{theo}\label{theo_free_np_3D}
Let $\GO$ be a bounded domain in $\Rbb^3$. If $\partial\Omega$ is $C^{1,\Ga}$ for some $\Ga >0$,  then the restricted operators
\begin{align*}
    \Kcal + k_0I&: \Hdrf^- \longrightarrow \Hsb^3, \\
    \Kcal - k_0I&: \Hdrf^+ \longrightarrow \Hsb^3
\end{align*}
are compact. If $\partial\Omega$ is $C^{1, \alpha}$ for some $\alpha>1/2$, then the restricted operator
\[
    \Kcal: \Hdf \longrightarrow \Hsb^3
\]
is compact.
\end{theo}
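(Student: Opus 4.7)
The plan is to reduce the restricted operator in each case to a fixed boundary operator modulo compacts, and then to show that the principal symbol of this operator vanishes on the relevant subspace. The central observation is that for $f$ in any of the three subspaces the corresponding extension is automatically a solution of the Lam\'e system: div-free and rotation-free trivially gives $\Lcal_{\Gl,\Gm}v_\pm^f=0$, and for $f\in\Hdf$ the same holds because $\Lcal_{\Gl,\Gm}v_\pm^f=\Gm\Delta v_\pm^f+(\Gl+\Gm)\nabla(\nabla\cdot v_\pm^f)=0$. This grants Green's identity for the Lam\'e system on each extension.

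Take $f\in\Hdrf^-$, so that $v_-^f=\nabla\Gvf$ for some harmonic $\Gvf$ in $\GO$. The curl-free condition reduces the conormal derivative to $\p_\nu v_-^f|_-=2\Gm\p_n v_-^f|_-$. Combining the Lam\'e Green identity $\Scal[\p_\nu v_-^f|_-]=(\Kcal-\tfrac12 I)f$ with the componentwise Laplace Green identity $\Scal_0[\p_n v_-^f|_-]=(\Kcal_0-\tfrac12 I)f$ (here $\Kcal_0$ denotes the scalar Laplace Neumann--Poincar\'e operator, compact on $\Hsb$ under the $C^{1,\Ga}$ assumption) and eliminating $\p_n v_-^f|_-$, one obtains on $\Hdrf^-$ an identity of the form
\[
(\Kcal+k_0 I)f=\bigl((k_0+\tfrac12)I-\Gm\Scal\Scal_0^{-1}\bigr)f+\Kcal_{1}f,
\]
with $\Kcal_1$ compact (it involves $\Scal\Scal_0^{-1}$ composed with $\Kcal_0$). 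A direct Fourier computation on the flat-boundary model, in the orthonormal basis $\{\hat\xi,\hat\xi_\perp,n\}$ of cotangent fibres, gives $\Gs(\Scal_0)(\xi)=-1/(2|\xi|)$ and $\Gs(\Scal)(\xi)=-(2|\xi|)^{-1}\mathrm{diag}(\Ga_1,\Ga_1+\Ga_2,\Ga_1)$; using $\Ga_1+\Ga_2=1/\Gm$ and $\Gm\Ga_1=(1+2k_0)/2$ one gets
\[
\Gs\bigl((k_0+\tfrac12)I-\Gm\Scal\Scal_0^{-1}\bigr)(\xi)=\mathrm{diag}(0,\,k_0-\tfrac12,\,0),
\]
which annihilates $\mathrm{span}_{\Cbb}\{\hat\xi,n\}$. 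On the other hand, expressing $\Gvf$ as a harmonic extension of its boundary trace shows that $\hat f(\xi)\in\mathrm{span}_{\Cbb}\{\hat\xi,n\}$ for $f\in\Hdrf^-$, so the operator has vanishing principal symbol on $\Hdrf^-$ and is therefore of negative order, hence compact as a map $\Hdrf^-\to\Hsb^3$. The statement for $\Hdrf^+$ follows by the analogous argument using the exterior Green identity.

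For $\Hdf$, both $v_\pm^f$ are Lam\'e solutions and one uses the sum of the interior and exterior Lam\'e Green identities: $2\Kcal f=\Scal(N_++N_-)f$ with $N_\pm f=\p_\nu v_\pm^f|_\pm$. The divergence-free condition gives $N_\pm f=2\Gm(\widehat\nabla v_\pm^f)n|_\pm$, and the algebraic expansion $2(\widehat\nabla u)n=\p_n u+\nabla(u\cdot n)-(\nabla n)^T u$ together with the componentwise harmonicity of $v_\pm^f$ and the kernel splitting $\Scal=\Gm^{-1}\Scal_0+\widetilde{\Scal}$ yields a representation $\Kcal f=Af+(\text{compact})$ in which $\Gs(A)$ annihilates $\hat\xi_\perp$. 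Since the two divergence conditions $(\iu\xi,\pm|\xi|)\cdot\hat f=0$ force $\hat f\propto\hat\xi_\perp$ for $f\in\Hdf$, the same symbol-vanishing mechanism gives compactness of $\Kcal|_{\Hdf}$. The main obstacle is the Weingarten-type term $(\nabla n)^T v_\pm^f$ in the expansion of $\widehat\nabla$: it involves one derivative of the unit normal, $\nabla n\in C^{\Ga-1}$, and the corresponding boundary integral operator gains enough regularity to be compact $\Hdf\to\Hsb^3$ only when $\Ga>1/2$, which is precisely the regularity hypothesis used here.
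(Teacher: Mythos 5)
Your approach is a genuinely different route from the paper's: you derive, for $f\in\Hdrf^-$, the exact identity
\[
(\Kcal+k_0 I)f=\bigl((k_0+\tfrac12)I-\mu\Scal\Scal_0^{-1}\bigr)f+2\mu\Scal\Scal_0^{-1}\Kcal_0 f
\]
from the Lam\'e and Laplace Green representations (the reduction $\p_\nu v_-^f|_-=2\mu\p_n v_-^f|_-$ for div-free, curl-free fields is correct), and then argue microlocally that the non-compact piece has vanishing principal symbol on $\Hdrf^-$. The paper instead introduces the auxiliary operator $\Kcal^\divf$, proves the exact algebraic identities $\Hdrf^\pm=\Ker(\Kcal^\divf\pm\tfrac12 I)$ via the jump formula for the div-free double layer, and shows $\Kcal+2k_0\Kcal^\divf$ is compact because its integral kernel is weakly singular (Lemma \ref{lemm_np23_compact}, via Corollary \ref{coro_T_cpt}). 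For $\Hdf$, the paper uses the exact operator identity of Proposition \ref{lemm_np_modulo_cpt} expressing $\Kcal$ in terms of $\nabla\cdot u^f|_\pm$, which vanishes on $\Hdf$. These are purely kernel-level arguments; no symbol calculus is invoked.

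There is, however, a real gap in your proposal. The symbol argument fails under the stated regularity. You invoke the pseudodifferential principle that an order-zero operator with vanishing principal symbol on a subspace is compact there, but for boundaries that are only $C^{1,\alpha}$ with $\alpha>0$ (or even $\alpha>1/2$) the standard symbol calculus is not available, and the whole point of the theorem, as explained in the introduction, is to extend the $C^\infty$ result of \cite{AKM19} to $C^{1,\alpha}$ boundaries precisely by \emph{avoiding} pseudodifferential techniques. Your computation of $\sigma(\Scal)$, $\sigma(\Scal_0)$, and the identification of $\Hdrf^\pm$ and $\Hdf$ with span$\{\hat\xi,n\}$ resp.\ span$\{\hat\xi_\perp\}$ in the fibre is correct as a flat-boundary heuristic and is consistent with \cite{AAL99,CRSV22}, but it does not constitute a proof under the stated hypothesis. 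To make it rigorous you would also need to exhibit $\Hdrf^\pm$ and $\Hdf$ as ranges of pseudodifferential Calder\'on-type projections modulo compacts, and then verify that the composition has negative order; none of this is set up, and it would require $\p\GO$ at least $C^\infty$ or nearly so. A secondary slip: the ``kernel splitting $\Scal=\mu^{-1}\Scal_0+\widetilde\Scal$'' is not correct; the diagonal part of the Kelvin matrix carries coefficient $\alpha_1=(1+2k_0)/(2\mu)$, which equals $\mu^{-1}$ only when $k_0=1/2$, i.e.\ $\lambda+\mu=0$.
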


As immediate consequences of this theorem together with Theorem  \ref{theo_decomposition_div_free}, we extend the result of \cite{AKM19} to domains with $C^{1,\Ga}$ boundaries:

\begin{coro}\label{coro_polynomially_cpt_3d}
If $\GO$ is a bounded domain in $\Rbb^3$ whose boundary is $C^{1,\Ga}$ for some $\Ga >1/2$, then the operator $\Kcal(\Kcal^2-k_0^2I)$ is compact on $\Hsb^3$.
\end{coro}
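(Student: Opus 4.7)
The plan is to combine the orthogonal decomposition
\[
    \Hsb^3 = \Hdrf^- \oplus \Hdrf^+ \oplus (\Hdrf^-+\Hdrf^+)^\perp
\]
of Theorem \ref{theo_decomposition_div_free} with the summand-wise compactness statements of Theorem \ref{theo_free_np_3D}, exploiting the fact that any two polynomials in $\Kcal$ commute so that the three linear factors of $\Kcal(\Kcal^2-k_0^2 I)=\Kcal(\Kcal-k_0 I)(\Kcal+k_0 I)$ may be rearranged at will. Let $P_-$, $P_+$, $P_0$ denote the bounded orthogonal projections onto the three summands respectively; these are well-defined since the decomposition is orthogonal with respect to the inner product \eqref{eq_inner_product}, and they satisfy $P_-+P_++P_0 = I$.

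I would next treat the three summands separately. On $\Hdrf^-$, the operator $\Kcal+k_0 I$ is compact into $\Hsb^3$ by Theorem \ref{theo_free_np_3D}, and post-composing with the bounded operator $\Kcal(\Kcal-k_0 I)$ on $\Hsb^3$ shows that $\Kcal(\Kcal^2-k_0^2 I)P_-$ is compact. A symmetric argument based on the factorisation $\Kcal(\Kcal+k_0 I)(\Kcal-k_0 I)$ yields compactness of $\Kcal(\Kcal^2-k_0^2 I)P_+$. For the third summand I would use the inclusion \eqref{eq_perpsub} to view $(\Hdrf^-+\Hdrf^+)^\perp$ as a closed subspace of $\Hdf$, so that the compactness of $\Kcal:\Hdf\to\Hsb^3$ (the step which consumes the regularity hypothesis $\Ga>1/2$) restricts to compactness on this subspace; rewriting the polynomial as $(\Kcal^2-k_0^2 I)\Kcal$ then produces the desired compactness of $\Kcal(\Kcal^2-k_0^2 I) P_0$.

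Finally, summing the three pieces, $\Kcal(\Kcal^2-k_0^2 I)$ is expressed as a sum of three compact operators on $\Hsb^3$ and is therefore itself compact. The argument is essentially algebraic once the decomposition theorem and the summand-wise compactness results are in hand, so no genuine obstacle arises; the only point requiring care is to track where the hypothesis $\Ga>1/2$ is used, namely for the third summand only, the first two remaining valid already when $\Ga>0$.
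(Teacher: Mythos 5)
Your argument is correct and is essentially the route the paper intends: the paper presents Corollary \ref{coro_polynomially_cpt_3d} as an ``immediate consequence'' of Theorems \ref{theo_decomposition_div_free} and \ref{theo_free_np_3D} without spelling out the details, and your proposal simply fills in the decomposition-plus-commuting-factors bookkeeping that makes it immediate. Your observation that the hypothesis $\Ga>1/2$ is consumed only by the third summand (via the compactness of $\Kcal$ on $\Hdf$) is likewise consistent with how Theorem \ref{theo_free_np_3D} is stated.
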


To prove Theorem \ref{theo_decomposition_div_free} and \ref{theo_free_np_3D}, we introduce an eNP-type operator, denoted by $\Kcal^\divf$, related to the divergence-free solutions to the Lam\'e system (the superscript $\divf$ in $\Kcal^\divf$ stands for `divergence-free'). In fact, $\Kcal^\divf$ is defined in terms of a certain conormal derivative of the Kelvin matrix of the fundamental solution. The double layer potential defined by this conormal derivative produces divergence-free solutions to the Lam\'e system (see Subsection \ref{subsec_divfree}). It turns out that $\Kcal^\divf$ is the conjugate of the Cauchy transform in two dimensions (see Section \ref{sec_2D}).

It is proved (Theorem \ref{theo_div_rot_free_kd}) that the subspace
$\Hdrf^\pm$ is the null space of the operator $\Kcal^\divf \pm 1/2I$, namely, $\Hdrf^\pm=\Ker \left( \Kcal^\divf \pm 1/2I\right)$. This result plays a crucial role in proving Theorem \ref{theo_decomposition_div_free}.
The operator $\Kcal^\divf$ carries all the information of non-compact part of the eNP operator. In fact, $\Kcal +2k_0 \Kcal^\divf$ is compact on $\Hsb^m$ if $\p\GO$ is $C^{1,\Ga}$,
from which compactness of $\Kcal \pm k_0I$ on $\Hdrf^{\mp}$ in Theorem \ref{theo_free_np_3D} follows. To prove compactness of $\Kcal$ on $\Hdf$, we derive a general formula for $\Kcal$ expressed in terms of the single layer potential for the Laplace operator (Proposition \ref{lemm_np_modulo_cpt}). To prove Theorem \ref{theo_codim_betti}, we employ the de Rham theory and the singular homology theory for which we refer to \cite{Bott-Tu82, Bredon93, Hatcher02, Lee13}.

By the analogy to the Plemelj's symmetrization principle, which is expressed as
\beq
    \label{eq_plemelj}\Kcal \Scal=\Scal \Kcal^*,
\eeq
the operator $\Kcal$ is realized as a self-adjoint operator on $\Hsb^m$ (see \cite{AJKKY, Duduchava-Natroshvili98}). It is worthwhile mentioning that the Plemelj's symmetrization principle holds for the Laplace operator and the corresponding NP operator is self-adjoint \cite{KPS-ARMA-07}. The operator $\Kcal(\Kcal^2-k_0^2I)$ is a self-adjoint compact operator on $\Hsb^3$, and hence has real eigenvalues converging to $0$. We use infinite dimensionality of the subspaces $\Hdrf^{-}$, $\Hdrf^{+}$, and $\Hdf$ to prove that the operators $\Kcal^2-k_0^2I$, $\Kcal(\Kcal-k_0I)$, and $\Kcal(\Kcal+k_0I)$ are not compact on $\Hsb^3$. As a consequence we obtain the following theorem, which was proved in \cite{AKM19} when $\p\GO$ is $C^\infty$.

\begin{theo}\label{thm_eigenvalue3D}
If $\GO$ is a bounded domain in $\Rbb^3$ whose boundary is $C^{1,\Ga}$ for some $\Ga >1/2$, then the eigenvalues of the eNP operator $\Kcal$ on $\Hsb^3$ consist of three infinite real sequences converging to $0$, $k_0$, and $-k_0$.
\end{theo}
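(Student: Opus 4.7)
The plan is to combine the Plemelj-type self-adjointness of $\Kcal$ on $\Hsb^3$ with the polynomial compactness of Corollary \ref{coro_polynomially_cpt_3d} in order to localize all possible accumulation points of $\sigma(\Kcal)$ to the set $\{-k_0,0,k_0\}$, and then to use Theorems \ref{theo_free_np_3D} and \ref{theo_infinite_dim_c1a} in a Weyl-type contradiction argument to produce three distinct infinite sequences of eigenvalues clustering at $-k_0$, $0$ and $k_0$ respectively.

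For the localization step I would observe that by \eqref{eq_plemelj} the operator $\Kcal$ is self-adjoint on $\Hsb^3$ with the inner product \eqref{eq_inner_product}; setting $p(t):=t(t^2-k_0^2)$, Corollary \ref{coro_polynomially_cpt_3d} makes $p(\Kcal)$ a compact self-adjoint operator, so $\sigma(p(\Kcal))$ is countable with $0$ as its only possible accumulation point. The spectral mapping theorem yields $p(\sigma(\Kcal))=\sigma(p(\Kcal))$, so that for any $\lambda\in\sigma(\Kcal)\setminus\{-k_0,0,k_0\}$ the value $p(\lambda)$ is a nonzero eigenvalue of $p(\Kcal)$ of finite multiplicity. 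Choosing $\varepsilon>0$ so that $p$ has no zeros in $(\lambda-\varepsilon,\lambda+\varepsilon)$ and letting $E:=\chi_{(\lambda-\varepsilon,\lambda+\varepsilon)}(\Kcal)$ be the Borel spectral projection, the restriction $p(\Kcal)|_{E\Hsb^3}$ is compact self-adjoint and bounded below by $\min_{|s-\lambda|<\varepsilon}|p(s)|>0$, forcing $E\Hsb^3$ to be finite-dimensional. Letting $\varepsilon\to 0$ then shows that $\lambda$ is an isolated eigenvalue of $\Kcal$ of finite multiplicity, so $\sigma(\Kcal)$ consists of real eigenvalues of finite multiplicity whose only possible accumulation points are $-k_0$, $0$ and $k_0$.

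Next, to show that each of these three points really is an accumulation point of eigenvalues, I would argue by contradiction in the case $-k_0$, the analogues at $k_0$ and $0$ being obtained by replacing the pair $(\Hdrf^-,\Kcal+k_0I)$ by $(\Hdrf^+,\Kcal-k_0I)$ and by $(\Hdf,\Kcal)$ respectively. If only finitely many eigenvalues of $\Kcal$, counted with multiplicity, lay in $(-k_0-\delta,-k_0+\delta)$ for some $\delta>0$, then the corresponding spectral projection $P$ would have finite rank and $V:=(I-P)\Hsb^3$ would be a closed $\Kcal$-invariant subspace of finite codimension on which $\|(\Kcal+k_0I)f\|\geq\delta\|f\|$ for every $f\in V$ by the spectral theorem. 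Since $\Hdrf^-$ is infinite-dimensional by Theorem \ref{theo_infinite_dim_c1a}, so is $\Hdrf^-\cap V$, and I can choose an orthonormal sequence $\{f_n\}\subset\Hdrf^-\cap V$; then $f_n\rightharpoonup 0$ weakly in $\Hsb^3$, while by Theorem \ref{theo_free_np_3D} the operator $\Kcal+k_0I:\Hdrf^-\to\Hsb^3$ is compact, so along a subsequence $(\Kcal+k_0I)f_n$ converges in norm, necessarily to $0$, contradicting $\|(\Kcal+k_0I)f_n\|\geq\delta$. The hypothesis $\Ga>1/2$ enters only in the $0$-accumulation case, precisely to invoke both compactness of $\Kcal:\Hdf\to\Hsb^3$ and infinite-dimensionality of $\Hdf$.

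The main obstacle is conceptual: the subspaces $\Hdrf^\pm$ and $\Hdf$ need not be $\Kcal$-invariant, which blocks a direct diagonalization of $\Kcal$ on each piece. The contradiction argument above circumvents this by working instead with a genuinely $\Kcal$-invariant spectral subspace $V$ of finite codimension, together with the elementary but essential fact that an infinite-dimensional subspace still intersects a finite-codimension subspace in an infinite-dimensional one.
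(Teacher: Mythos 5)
Your proof is correct and relies on exactly the same ingredients as the paper's: Corollary \ref{coro_polynomially_cpt_3d} to localize the essential spectrum, Theorem \ref{theo_free_np_3D} for compactness of $\Kcal\pm k_0 I$ on $\Hdrf^\mp$ and of $\Kcal$ on $\Hdf$, and Theorem \ref{theo_infinite_dim_c1a} for infinite-dimensionality of the three subspaces. The paper phrases the second step as ``$\Kcal^2-k_0^2I$, $\Kcal(\Kcal-k_0I)$, $\Kcal(\Kcal+k_0I)$ are not compact on $\Hsb^3$'' (checked by restricting each to the relevant subspace where it is a nonzero multiple of the identity modulo compact), which is equivalent to your spectral-projection contradiction; your write-up simply makes explicit the reduction the paper leaves implicit, so this is the same proof in slightly more expanded form.
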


The following theorem shows that the eigenfunctions corresponding to eigenvalues converging to $0$, $k_0$ and $-k_0$ respectively lie in $\Hdf$, $\Hdrf^+$ and $\Hdrf^-$ in the limit. This is natural in view of Theorem \ref{theo_free_np_3D}. Throughout this paper, $\| f \|_*^2 =\jbracket{f, f}_*$.

\begin{theo}\label{theo_enp_eigenfunctions}
    Let $\GO$ be a bounded domain in $\Rbb^3$ whose boundary is $C^{1,\Ga}$ for some $\Ga >1/2$. Let $\{ f_j\}_{j=1}^\infty$ be an orthonormal system in $\Hsb^3$ consisting of eigenfunctions of $\Kcal$. Let $\Gl_j\in \Rbb$ be the corresponding eigenvalue, {\it i.e.}, $\Kcal [f_j]=\Gl_j f_j$. We decompose $f_j$ into the sum $f_j=f^-_j+f^+_j+f^o_j$ where
    \[
        f^\pm_j \in \Hdrf^\pm, \quad
        f^o_j\in (\Hdrf^++\Hdrf^-)^\perp \subset \Hdf.
    \]
    Then the following statements hold as $j \to \infty$:
    \begin{enumerate}[label={\rm (\roman*)}]
        \item \label{enum_eigen_0}If $\Gl_j\to 0$, then $\| f_j-f^o_j\|_*\to 0$.
        \item \label{enum_eigen_+}If $\Gl_j\to k_0$, then $\|f_j-f^+_j\|_*\to 0$.
        \item \label{enum_eigen_-}If $\Gl_j\to -k_0$, then $\|f_j-f^-_j\|_*\to 0$.
    \end{enumerate}
\end{theo}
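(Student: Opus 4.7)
The plan hinges on combining the orthogonal decomposition \eqref{eq_H3orthogonal} from Theorem \ref{theo_decomposition_div_free} with the three restricted compactness statements of Theorem \ref{theo_free_np_3D}. Write $K^-:=(\Kcal+k_0 I)|_{\Hdrf^-}$, $K^+:=(\Kcal-k_0 I)|_{\Hdrf^+}$, and $K^o:=\Kcal|_{\Hdf}$, all compact operators into $\Hsb^3$ (the last one requiring $\alpha>1/2$). Note that $f_j^o\in(\Hdrf^-+\Hdrf^+)^\perp\subset \Hdf$ by \eqref{eq_perpsub}, so $K^o f_j^o$ makes sense. Applying $\Kcal$ to the decomposition $f_j=f_j^-+f_j^++f_j^o$ and using the eigenvalue equation $\Kcal f_j=\lambda_j f_j$ rearranges to
\beq
(\lambda_j+k_0)f_j^- + (\lambda_j-k_0)f_j^+ + \lambda_j f_j^o = K^- f_j^- + K^+ f_j^+ + K^o f_j^o.
\eeq

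The key observation is that the three summands on the left-hand side lie in the mutually orthogonal subspaces $\Hdrf^-$, $\Hdrf^+$, and $(\Hdrf^-+\Hdrf^+)^\perp$, so the Pythagorean identity gives
\beq
|\lambda_j+k_0|^2\|f_j^-\|_*^2 + |\lambda_j-k_0|^2\|f_j^+\|_*^2 + |\lambda_j|^2\|f_j^o\|_*^2 = \bigl\| K^- f_j^- + K^+ f_j^+ + K^o f_j^o \bigr\|_*^2.
\eeq
Since $\{f_j\}$ is an orthonormal system in $\Hsb^3$, $f_j \rightharpoonup 0$ weakly, and the boundedness of the three orthogonal projections onto the summands of \eqref{eq_H3orthogonal} yields $f_j^-,f_j^+,f_j^o\rightharpoonup 0$ as well. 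The compactness of $K^-,K^+,K^o$ then forces the right-hand side above to tend to zero.

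It then remains only to conclude by case analysis on $\lim_j\lambda_j$. In case \ref{enum_eigen_0}, when $\lambda_j\to 0$, the coefficients $|\lambda_j\pm k_0|$ both tend to $k_0>0$, so the displayed identity forces $\|f_j^-\|_*\to 0$ and $\|f_j^+\|_*\to 0$, whence $\|f_j-f_j^o\|_*^2=\|f_j^-\|_*^2+\|f_j^+\|_*^2\to 0$. Cases \ref{enum_eigen_+} and \ref{enum_eigen_-} are symmetric: when $\lambda_j\to\pm k_0$, the other two coefficients stay bounded away from zero (converging to $2k_0$ and $k_0$), which kills the corresponding two components of $f_j$ in norm. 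The argument is essentially a packaging of previously established facts; the only point needing attention is the inclusion $f_j^o\in\Hdf$ so that $\Kcal|_{\Hdf}$ may be applied, which is exactly \eqref{eq_perpsub} and is the reason the hypothesis $\alpha>1/2$ is imposed.
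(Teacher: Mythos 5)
Your proof is correct, and it takes a genuinely different route from the paper's. The paper applies the compact operator $\Kcal+2k_0\Kcal^\divf$ to the orthonormal system and then exploits the fact that $\Hdrf^\pm$ are the $\mp 1/2$-eigenspaces of $\Kcal^\divf$; this turns $\lambda_j f_j+2k_0\Kcal^\divf[f_j]$ into $(\lambda_j+k_0)f_j^-+(\lambda_j-k_0)f_j^++(\lambda_j f_j^o+2k_0\Kcal^\divf[f_j^o])$, and the third term forces an extra step for parts (ii) and (iii): the paper invokes the invertibility of $\Kcal^\divf\pm\tfrac12 I$ on $(\Hdrf^-+\Hdrf^+)^\perp$ (Theorem \ref{theo_isolated_c1a}) to extract $\|f_j^o\|_*\to0$. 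You instead write $\Kcal f_j^\pm$ and $\Kcal f_j^o$ directly in terms of the restricted compact operators from Theorem \ref{theo_free_np_3D}, which gives the cleaner Pythagorean identity $|\lambda_j+k_0|^2\|f_j^-\|_*^2+|\lambda_j-k_0|^2\|f_j^+\|_*^2+|\lambda_j|^2\|f_j^o\|_*^2\to0$ with a scalar coefficient in front of $\|f_j^o\|_*^2$; the three cases then fall out uniformly without any invertibility argument. Your route bypasses $\Kcal^\divf$ entirely and is self-contained once Theorems \ref{theo_decomposition_div_free} and \ref{theo_free_np_3D} are in hand, whereas the paper's route reuses the $\Kcal^\divf$-machinery it has already built and illustrates the role of $\Kcal^\divf$'s spectral structure. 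Both need $\alpha>1/2$ for the same reason (compactness of $\Kcal$ on $\Hdf$, respectively invertibility on $(\Hdrf^-+\Hdrf^+)^\perp$), so neither yields a sharper hypothesis.
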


Moreover, we obtain estimates on the rotation and the divergence of the solutions to the interior and exterior problems \eqref{eq_lame_bvp} and \eqref{eq_lame_bvp_ext} with the eigenfunctions $f_j$ as the boundary values (Theorem \ref{theo_div_rot_asymptotic}).

All the results in three dimensions are carried to higher dimensions without much alteration as we show in Section \ref{sec:higher}. We use the exterior calculus to define the div-free eNP operator in higher dimensions. In two dimensions, we prove that the div-free eNP operator $\Kcal^\divf$ is actually the conjugate of the Cauchy transform, as mentioned before. Using this fact, we prove that the decomposition $\Hsb^2=\Hdrf^- \oplus \Hdrf^+$ holds, that $\Kcal \pm k_0 I$ is compact on $\Hdrf^\mp$, and that a theorem analogous to Theorem \ref{theo_enp_eigenfunctions} holds.

This paper is organized as follows. In section \ref{sec:divfree}, the div-free eNP operator $\Kcal^\divf$ is defined, its important properties are derived, and Theorem \ref{theo_decomposition_div_free} is proved. In section \ref{sect_sphere}, the case when $\GO$ is a ball is dealt with. In particular, we prove that if $\GO$ is a ball, the decomposition \eqref{eq_decomposition_div_free} is actually an orthogonal decomposition, and subspaces $\Hdrf^\pm$ and $\Hdf$ are infinite-dimensional by counting dimensions of those subspaces in the space of homogeneous harmonic polynomials. We discuss the relation between the decomposition \eqref{eq_decomposition_div_free} and the eNP eigenfunctions on the sphere obtained in \cite{DLL19}. In section \ref{sec:theo12}, Theorem \ref{theo_free_np_3D} is proved. In section \ref{sec:sub}, it is proved that the subspaces $\Hdrf^\pm$ and $\Hdf$ on the Lipschitz boundaries are infinite-dimensional using the results for spheres. It is also proved that the inclusion $(\Hdrf^-+\Hdrf^+)^\perp \subset \Hdf$ in \eqref{eq_perpsub} is of finite codimension. In Section \ref{sec:topology}, we prove Theorem \ref{theo_free_np_3D} which provides an upper bound on the codimension in terms of the first Betti number. In section \ref{sec:eigenspace}, Theorem \ref{thm_eigenvalue3D} and \ref{theo_enp_eigenfunctions} regarding spectral structure of the eNP operator are proved. Section \ref{sec:higher} and \ref{sec_2D} are for the higher- and two-dimensional cases, respectively. The main part of this paper ends with short discussions, where we mention one problem arising from this work. We include in Appendix \ref{sect_smoothing} a proof of a smoothing property of weakly singular integral operators. This fact is used in Section \ref{sec:theo12} and \ref{sec:topology}. We also include in Appendix \ref{sect_dilation_elastic} a proof of the fact that by dilating the domain if necessary, the single layer potential in two-dimensions becomes invertible and the bilinear form in \eqref{eq_inner_product} is a genuine inner product.

Throughout this paper we use notation $A \lesssim B$ to imply $A \le CB$ for some constant $C$.

\section{Div-free eNP operator and proof of Theorem \ref{theo_decomposition_div_free}}\label{sec:divfree}

\subsection{Div-free eNP operator}\label{subsec_divfree}

Let $\GO$ be a bounded domain in $\Rbb^3$ with the Lipschitz boundary.
Since $\GD u = \nabla (\nabla \cdot u) - \nabla \times (\nabla \times u)$, the Lam\'e operator $\Lcal_{\Gl, \Gm}$ defined by \eqref{eq_lame_operator} can be represented as
\beq
    \label{eq_lame_vc}
    \Lcal_{\Gl, \Gm}u=-\Gm\nabla \times (\nabla \times u)+\frac{\Gm}{2k_0}\nabla (\nabla \cdot u),
\eeq
where $k_0$ is the number defined by \eqref{eq_k0}. Then, the conormal-type derivative
    \beq\label{eq_conormal_d}
        \partial^\divf_\nu u(x):=-\Gm \nv_x\times (\nabla \times u)(x)+\frac{\Gm}{2k_0}(\nabla \cdot u)(x)\nv_x
    \eeq
    defined for $x \in \p\GO$, which is different from the conormal derivative defined in \eqref{eq_dnu}, is compatible with the representation \eqref{eq_lame_vc} of the Lam\'e operator in the sense of Lemma \ref{lemm_green_vc} below. We use the notation $\partial^\divf_\nu$ to emphasize that it is a conormal-type derivative but different from $\partial_\nu$. The superscript $\divf$ is used since the double layer potential defined by this conormal-type derivative is divergence-free as we will see later (Lemma \ref{lemm_Dcalf}). In what follows, we call $\partial^\divf_\nu$ the div-free conormal derivative.

\begin{rema*}
    One can consider more general conormal-type derivative
    \[
        u(x) \longmapsto ( \mu \nabla u(x)^T+r \nabla u(x))\nv_x+(\mu+\lambda-r)(\nabla \cdot u(x))\nv_x
    \]
    (see \cite[(6.1.4)]{HMT10}, \cite{Mitrea99} for example). Here $r\in [-\mu, \mu]$ is a parameter. The the div-free conormal derivative $\partial^\divf_\nu$ corresponds to the case when $r=-\mu$, while the usual conormal derivative $\partial_\nu$ does to the case when $r=\mu$. The case $r=\mu (\mu+\lambda)/(3\mu+\lambda)$ is employed in \cite{DKV-Duke-88} (see also \cite{Kup-book-65}).
\end{rema*}

By the divergence theorem, we obtain the following lemma. Here $\jbracket{\cdot, \cdot}_{\GO}$ denotes either the inner product on $L^2(\GO)^3$ or $L^2(\GO)$ (or the duality pairing between $H^{-1}(\GO)^3$ and $H^{1}(\GO)^3$ or between $H^{-1}(\GO)$ and $H^{1}(\GO)$), and $\jbracket{\cdot, \cdot}_{\overline{\GO}^c}$ does analogously for $\overline{\GO}^c:=\Rbb^3\setminus \overline{\GO}$. Likewise, $\jbracket{\cdot, \cdot}_{\p\GO}$ denotes either the inner product on $L^2(\p\GO)^3$ or the duality pairing between $(\Hsb^3)^*$ and $\Hsb^3$.

\begin{lemm}\label{lemm_green_vc}
    Let $\Omega\subset \Rbb^3$ be a bounded Lipschitz domain.
   \begin{enumerate}[label=\rm{(\roman*)}]
        \item \label{enum_lame_bilinear_in} It holds for all $u, v\in H^1(\GO)^3$ that
        \beq
            \label{eq_lame_bilinear_1_in}
                       \jbracket{\Lcal_{\Gl, \Gm}u, v}_{\GO}
            =\jbracket{\partial^\divf_\nu u, v}_{\p\GO}
            -\Gm\jbracket{\nabla\times u, \nabla\times v}_{\GO}
            -\frac{\Gm}{2k_0}\jbracket{\nabla\cdot u, \nabla\cdot v}_{\GO}
        \eeq
        and
        \beq
            \label{eq_lame_bilinear_in}
            \begin{aligned}
                &\jbracket{\Lcal_{\Gl, \Gm}u, v}_{\GO}-\jbracket{u, \Lcal_{\Gl, \Gm}v}_{\GO} =\jbracket{\partial^\divf_\nu u, v}_{\p\GO}
            -\jbracket{u, \partial^\divf_\nu v}_{\p\GO}.
            \end{aligned}
        \eeq

        \item \label{enum_lame_bilinear_ex} If $u,v\in H^1(\overline{\GO}^c)^3$ satisfy $|u(x)|$, $|v(x)|=O(|x|^{-1/2-\delta})$, $|\nabla u(x)|$, $|\nabla v(x)|=O(|x|^{-3/2-\delta})$ and $|\Lcal_{\lambda, \mu} u(x)|$, $|\Lcal_{\lambda, \mu} v(x)|=O(|x|^{-5/2-\delta})$ as $|x|\to \infty$ for some $\delta>0$, then we have
        \beq
            \label{eq_lame_bilinear_1_ex}
            \begin{aligned}
                &\jbracket{\Lcal_{\Gl, \Gm}u, v}_{\overline{\GO}^c}
            =-\jbracket{\partial^\divf_\nu u, v}_{\p\GO}
            -\Gm\jbracket{\nabla\times u, \nabla\times v}_{\overline{\GO}^c}
            -\frac{\Gm}{2k_0}\jbracket{\nabla\cdot u, \nabla\cdot v}_{\overline{\GO}^c}
            \end{aligned}
        \eeq
        and
        \beq
            \label{eq_lame_bilinear_ex}
            \begin{aligned}
                &\jbracket{\Lcal_{\Gl, \Gm}u, v}_{\overline{\GO}^c}-\jbracket{u, \Lcal_{\Gl, \Gm}v}_{\overline{\GO}^c}
            =-\jbracket{\partial^\divf_\nu u, v}_{\p\GO}
            +\jbracket{u, \partial^\divf_\nu v}_{\p\GO}.
            \end{aligned}
        \eeq
    \end{enumerate}

\end{lemm}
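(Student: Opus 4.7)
The plan is to derive \eqref{eq_lame_bilinear_1_in} directly from the representation \eqref{eq_lame_vc} together with two elementary Green-type identities for the operators $\nabla\times$ and $\nabla\cdot$, and then obtain \eqref{eq_lame_bilinear_in}, \eqref{eq_lame_bilinear_1_ex}, \eqref{eq_lame_bilinear_ex} as immediate consequences.

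For part \ref{enum_lame_bilinear_in}, I would first prove \eqref{eq_lame_bilinear_1_in} assuming $u,v\in C^\infty(\overline{\GO})^3$ and then invoke density of $C^\infty(\overline{\GO})^3$ in $H^1(\GO)^3$. The smooth case uses two classical identities. First, applying the divergence theorem to $(\nabla\times u)\times v$ together with the pointwise identity $\nabla\cdot(A\times B)=B\cdot(\nabla\times A)-A\cdot(\nabla\times B)$ with $A=\nabla\times u$, $B=v$ gives
\[
\jbracket{\nabla\times(\nabla\times u), v}_{\GO} = \jbracket{\nabla\times u, \nabla\times v}_{\GO} + \jbracket{\nv\times(\nabla\times u), v}_{\p\GO}.
\]
Second, applying the divergence theorem to $(\nabla\cdot u)v$ yields
\[
\jbracket{\nabla(\nabla\cdot u), v}_{\GO} = -\jbracket{\nabla\cdot u, \nabla\cdot v}_{\GO} + \jbracket{(\nabla\cdot u)\nv, v}_{\p\GO}.
\]
Multiplying the first by $-\Gm$, the second by $\Gm/(2k_0)$, and summing, the resulting boundary integrand is precisely $(\p^\divf_\nu u)\cdot v$ by the definition \eqref{eq_conormal_d}, and the volume part assembles into $\jbracket{\Lcal_{\Gl,\Gm}u,v}_{\GO}$ by \eqref{eq_lame_vc}. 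This proves \eqref{eq_lame_bilinear_1_in} for smooth vector fields; every term is continuous in the $H^1$-topology (with the boundary integrals interpreted as $H^{-1/2}$--$H^{1/2}$ dualities and $\p^\divf_\nu u$ defined via the identity itself), so density extends it to all of $H^1(\GO)^3$. The symmetric identity \eqref{eq_lame_bilinear_in} is then obtained by writing \eqref{eq_lame_bilinear_1_in} once with $(u,v)$ and once with $(v,u)$ and subtracting: the two bilinear volume terms are symmetric in $(u,v)$ and cancel.

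For part \ref{enum_lame_bilinear_ex}, I would apply the same argument on the bounded Lipschitz domain $\GO_R := \overline{\GO}^c\cap B_R$ for large $R$. On $\p\GO$ the outward unit normal to $\GO_R$ is $-\nv$, which is the source of the minus sign in \eqref{eq_lame_bilinear_1_ex}. On $\p B_R$ the outward normal is $x/|x|$, and by the assumed decay one estimates
\[
\bigl|\jbracket{\p^\divf_\nu u, v}_{\p B_R}\bigr| \lesssim R^2\cdot R^{-3/2-\delta}\cdot R^{-1/2-\delta} = R^{-2\delta} \longrightarrow 0,
\]
while the volume integrals on $\GO_R$ converge absolutely since, by the decay hypotheses on $|\Lcal_{\Gl,\Gm}u|$, $|\nabla u|$, $|\nabla v|$ and $|u|$, $|v|$, the integrands $\Lcal_{\Gl,\Gm}u\cdot v$, $(\nabla\times u)\cdot(\nabla\times v)$ and $(\nabla\cdot u)(\nabla\cdot v)$ are all $O(|x|^{-3-2\delta})$, which is integrable over $\Rbb^3\setminus\overline{\GO}$. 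Passing to the limit $R\to\infty$ gives \eqref{eq_lame_bilinear_1_ex}, and \eqref{eq_lame_bilinear_ex} then follows from \eqref{eq_lame_bilinear_1_ex} by the same symmetric-subtraction argument as in part \ref{enum_lame_bilinear_in}.

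No substantive obstacle is anticipated: the computation for smooth vector fields is a bookkeeping exercise in vector calculus, and the only mild subtleties are (a) justifying the density extension to $H^1$ — which is standard once one recognizes that each term of \eqref{eq_lame_bilinear_1_in} depends continuously on $u,v\in H^1(\GO)^3$ with the appropriate trace interpretation — and (b) tracking the sign flip on $\p\GO$ and verifying that the decay rates are strong enough to kill the $\p B_R$ contribution in the exterior case, which they are with room to spare.
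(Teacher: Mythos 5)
Your proof is correct and follows essentially the same route as the paper's: starting from the representation \eqref{eq_lame_vc}, applying divergence-theorem Green identities for $\nabla\times$ and $\nabla\cdot$ to assemble the boundary term into $\p^\divf_\nu u$, and deducing the symmetric identity by applying the first formula twice. You supply more detail than the paper does (explicit density extension and the $\p B_R$ decay estimate for the exterior case), but the substance is the same.
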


\begin{proof}
\ref{enum_lame_bilinear_in} Using the representation \eqref{eq_lame_vc} of the Lam\'e operator $\Lcal_{\Gl, \Gm}$, \eqref{eq_lame_bilinear_1_in} immediately follows from the formulas
\begin{align}
    &\jbracket{\nabla \varphi, u}_{\GO}=\jbracket{\varphi, \nv\cdot u}_{\p\GO}-\jbracket{\varphi, \nabla \cdot u}_{\GO}, \label{eq_grad_adj} \\
    &\jbracket{\nabla\times u, v}_{\GO}=\jbracket{\nv\times u, v}_{\p\GO}+\jbracket{u, \nabla \times v}_{\GO} \label{eq_rot_adj}
\end{align}
for $\varphi\in H^1(\Omega)$ and $u, v\in H^1(\Omega)^3$. These formulas can be easily proved by using  divergence theorem. The identity \eqref{eq_lame_bilinear_in} can be proved by using \eqref{eq_lame_bilinear_1_in} twice.

\ref{enum_lame_bilinear_ex} can proved similarly.
\end{proof}

We define the double layer potential $\Dcal^\divf$ associated with the conormal derivative \eqref{eq_conormal_d} by
\beq
    \label{eq_dd}
    \Dcal^\divf [f](x):=\int_{\p\GO}(\p^\divf_{\nu_y}\Gamma (x-y))^T f(y)\, \df \Gs (y), \quad x\in \Rbb^3\setminus \p \GO
\eeq
for $f \in \Hsb^3$ where $\p^\divf_{\nu_y}\Gamma (x-y)$ is defined by
$$
\p^\divf_{\nu_y}\Gamma (x-y) b= \p^\divf_{\nu_y}(\Gamma (x-y) b)
$$
for any constant vector $b$. Then we have the following lemma.

\begin{lemm}\label{theo_sd_d}
    Let $\Omega\subset \Rbb^3$ be a bounded Lipschitz domain.
    \begin{enumerate}[label=\rm{(\roman*)}]
        \item \label{enum_sd_d_in}If $u\in H^1(\GO)^3$ satisfies $\Lcal_{\Gl, \Gm}u=0$ in the weak sense, then we have
        \beq
            \label{eq_sd_d_in}
            \Dcal^\divf [u|_-](x)-\Scal [\partial^\divf_\nu u|_-](x)
            =\begin{cases}
               u(x) & \text{if } x\in \GO, \\
               0 & \text{if } x\in \Rbb^3\setminus \overline{\GO}.
            \end{cases}
        \eeq
        \item \label{enum_sd_d_ex}If $u\in H^1(\Rbb^3\setminus \overline{\GO})^3$ satisfies $\Lcal_{\Gl, \Gm}u=0$, $|u(x)|=O(|x|^{-1/2-\delta})$ and $|\nabla u(x)|=O(|x|^{-3/2-\delta})$ as $|x|\to \infty$ for some $\delta>0$, then we have
        \beq
            \label{eq_sd_d_ex}
            \Dcal^\divf [u|_+](x)-\Scal [\partial^\divf_\nu u|_+](x)
            =\begin{cases}
               0 & \text{if } x\in \GO, \\
               -u(x) & \text{if } x\in \Rbb^3\setminus \overline{\GO}.
            \end{cases}
        \eeq
    \end{enumerate}
\end{lemm}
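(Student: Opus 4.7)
The plan is a standard Somigliana-type representation argument, adapted to the conormal derivative $\partial^\divf_\nu$ and using the Green's identity of Lemma \ref{lemm_green_vc} in place of the usual one. Fix a point $x$ and a constant vector $b\in\Rbb^3$, and set $v(y):=\Gamma(x-y)b$. Since $\Lcal_{\Gl,\Gm}\Gamma=\Gd_0 I$ and $\Gamma$ is symmetric, $v$ is smooth and $\Lcal_{\Gl,\Gm}$-harmonic for $y\neq x$. I will apply the Green identities \eqref{eq_lame_bilinear_in} (for part (i)) and \eqref{eq_lame_bilinear_ex} (for part (ii)) to the pair $(u,v)$ on the domain with a small ball $B_\ve(x)$ excised, then let $\ve\to 0$. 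The two identities on the right-hand side of \eqref{eq_sd_d_in}/\eqref{eq_sd_d_ex} correspond, respectively, to $x$ lying in the domain (where the singularity contributes $u(x)\cdot b$) and to $x$ lying outside it (where $v\in H^1$ near $\p\GO$ and the identity applies with no excision, giving~$0$).

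For part (i) with $x\in\GO$, apply \eqref{eq_lame_bilinear_in} on $\GO_\ve:=\GO\setminus\overline{B_\ve(x)}$; the volume terms vanish, so $0=\jbracket{\p^\divf_\nu u,v}_{\p\GO}-\jbracket{u,\p^\divf_\nu v}_{\p\GO}+\int_{\p B_\ve(x)}(\p^\divf_\nu u\cdot v-u\cdot \p^\divf_\nu v)\,d\Gs$, where $\nu$ is the outward normal of $\GO_\ve$, pointing toward $x$ on $\p B_\ve(x)$. Using $|v|=O(\ve^{-1})$ and area $O(\ve^2)$, the term $\int_{\p B_\ve(x)}\p^\divf_\nu u\cdot v\,d\Gs$ is $O(\ve)$. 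For the remaining term, parametrize $y=x-\ve\go$ with $\go\in S^2$ so $\nu_y=\go$ and $z:=x-y=\ve\go$, then compute
\[
\nabla_y\cdot v(y)=-\frac{k_0\,(b\cdot \go)}{2\pi\Gm\,\ve^2},\qquad \nabla_y\times v(y)=-\frac{1}{4\pi\Gm\,\ve^2}\,\go\times b,
\]
using $\Ga_1-\Ga_2=2k_0/\Gm$ and $\Ga_1+\Ga_2=1/\Gm$. Substituting into $\p^\divf_\nu v=-\Gm\,\nu\times(\nabla\times v)+(\Gm/2k_0)(\nabla\cdot v)\nu$ and simplifying $\go\times(\go\times b)=(b\cdot\go)\go-b$ yields the crucial cancellation $\p^\divf_\nu v(y)=-b/(4\pi\ve^2)$, so $\int_{\p B_\ve(x)}u\cdot \p^\divf_\nu v\,d\Gs\to -u(x)\cdot b$ as $\ve\to 0$. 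Recognizing $\jbracket{\p^\divf_\nu u,v}_{\p\GO}=b\cdot \Scal[\p^\divf_\nu u|_-](x)$ (since $\Gamma=\Gamma^T$) and $\jbracket{u,\p^\divf_\nu v}_{\p\GO}=b\cdot \Dcal^\divf[u|_-](x)$ by \eqref{eq_dd}, and letting $b$ be arbitrary, gives \eqref{eq_sd_d_in} for $x\in\GO$. When $x\in\Rbb^3\setminus\overline{\GO}$, $v\in H^1(\GO)^3$ with $\Lcal_{\Gl,\Gm}v=0$, and a direct application of \eqref{eq_lame_bilinear_in} without excision gives $0$.

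For part (ii), repeat the argument on $(B_R\setminus\overline{\GO})\setminus\overline{B_\ve(x)}$ using \eqref{eq_lame_bilinear_ex}; the boundary terms on $\p B_R$ vanish as $R\to\infty$ by the decay hypotheses $|u|=O(|x|^{-1/2-\Gd})$, $|\nabla u|=O(|x|^{-3/2-\Gd})$ combined with $|v|,|\nabla v|=O(R^{-1}),O(R^{-2})$ on $\p B_R$. The sign on $\p\GO$ flips because the outward normal of $\Rbb^3\setminus\overline{\GO}$ is $-\nu$, which produces the minus sign in front of $u(x)$ in \eqref{eq_sd_d_ex}. The main technical point is the explicit computation on $\p B_\ve(x)$ described above; once the remarkable cancellation $\p^\divf_\nu v=-b/(4\pi\ve^2)$ is verified from the Kelvin matrix, everything else is a routine limiting argument.
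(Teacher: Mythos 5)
Your proof is correct and takes essentially the same route as the paper: apply the Green-type identity \eqref{eq_lame_bilinear_in} (respectively \eqref{eq_lame_bilinear_ex}) to the pair $(u, \Gamma(x-\cdot)b)$, excising a small ball $B_\ve(x)$ when the pole lies in the relevant domain, showing the $\Scal$-type boundary term is $O(\ve)$ while the $\Dcal^\divf$-type term produces $u(x)\cdot b$ in the limit. Your explicit computation $\p^\divf_{\nu_y}(\Gamma(x-y)b)|_{\p B_\ve(x)} = -b/(4\pi\ve^2)$ (with $\nu_y$ pointing toward $x$), obtained from $\alpha_1-\alpha_2=2k_0/\mu$, $\alpha_1+\alpha_2=1/\mu$, and the cancellation $\go\times(\go\times b)-(b\cdot\go)\go=-b$, is correct and simply makes explicit the step the paper abbreviates as ``a direct calculation using local coordinates.''
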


\begin{proof}
\ref{enum_lame_bilinear_in} We fix $x\in \Rbb^3\setminus \partial\GO$ and $b\in \Rbb^3$, and apply \eqref{eq_lame_bilinear_in} to $u(y)$ with $\Lcal_{\lambda, \mu}u=0$ and $v(y)=\Gamma (x-y)b$. Then we have
\[
    0=b\cdot \Scal [\partial^\divf_\nu u](x)-b\cdot \Dcal^\divf [u|_-](x)
\]
if $x\in \Rbb^3\setminus \overline{\Omega}$. Since $b\in \Rbb^3$ is arbitrary, we have \eqref{eq_sd_d_in} in the case of $x\in \Rbb^3\setminus \overline{\Omega}$.

If $x\in \Omega$, we take a sufficiently small $\varepsilon>0$ such that
\[
    B_\varepsilon(x):=\{ y\in \Rbb^3 \mid |x-y|<\varepsilon\}\subset \Omega.
\]
Then we have
\begin{equation}
    \label{eq_sd_d_in_wip}
    \begin{aligned}
    0=&b\cdot \Scal [\partial^\divf_\nu u|_-](x)-b\cdot \int_{\partial B_\varepsilon (x)} \Gamma (x-y) \partial^\divf_\nu u(y)\, \df\sigma (y) \\
    &-b\cdot \Dcal^\divf [u|_-](x)+b\cdot \int_{\partial B_\varepsilon (x)} \left(\partial^\divf_{\nu_y}\Gamma (x-y) \right)^T u(y)\, \df \sigma (y).
\end{aligned}
\end{equation}
Since $\Gamma (x-y)=O(|x-y|^{-1})$ as $|x-y|\to 0$, we have
\[
    \lim_{\varepsilon\to +0}\int_{\partial B_\varepsilon (x)} \Gamma (x-y) \partial^\divf_\nu u(y)\, \df\sigma (y)=0.
\]
On the other hand, a direct calculation using local coordinates yields
\[
    \lim_{\varepsilon\to +0}\int_{\partial B_\varepsilon (x)} \left(\partial^\divf_{\nu_y}\Gamma (x-y) \right)^T u(y)\, \df \sigma (y)=u(x).
\]
Hence \eqref{eq_sd_d_in_wip} becomes
\[
    0=b\cdot \Scal [\partial^\divf_\nu u|_-](x)-b\cdot \Dcal^\divf [u|_-](x)
    +b\cdot u(x).
\]
Since $b\in \Rbb^3$ is arbitrary, we obtain \eqref{eq_sd_d_in} in the case when $x\in \GO$.

\ref{enum_lame_bilinear_ex} is proved similarly.
\end{proof}

We recall the definition \eqref{eq_conormal_d} of $\partial^\divf_\nu$ to show
\begin{equation}\label{abc}
    \partial^\divf_\nu (\Gamma (x)b)=\frac{(x\cdot b)\nv_x-(\nv_x \cdot b)x}{4\pi |x|^3}+\frac{x\cdot \nv_x}{4\pi |x|^3}b
\end{equation}
for any constant vector $b$. We include the derivation of \eqref{abc} here for readers' sake:
\begin{align*}
    \partial^\divf_\nu (\Gamma (x)b)
    &=\partial^\divf_\nu \left(-\frac{\alpha_1}{4\pi}\frac{b}{|x|}-\frac{\alpha_2}{4\pi}\frac{(b\cdot x)x}{|x|^3}\right) \\
    &=\frac{\alpha_1}{4\pi}\mu \nv_x \times \left(\nabla\times \frac{b}{|x|}\right)+\frac{\alpha_2}{4\pi}\nv_x\times \left(\nabla \times \frac{(b\cdot x)x}{|x|^3}\right) \\
    &\quad -\frac{\alpha_1}{4\pi}\left(\nabla\cdot \frac{b}{|x|}\right)\nv_x-\frac{\alpha_2}{4\pi} \left(\nabla\cdot \frac{(b\cdot x)x}{|x|^3}\right)\nv_x \\
    &=\frac{(x\cdot b)\nv_x-(\nv_x \cdot b)x}{4\pi |x|^3}+\frac{x\cdot \nv_x}{4\pi |x|^3}b.
\end{align*}
Thus, $\Dcal^\divf$ admits the following decomposition:
\begin{equation}
    \label{eq_d_dp_decomposition}
    \Dcal^\divf =-\Dcal_1+\Dcal_2
\end{equation}
where
\begin{align}
    \Dcal_1[f](x)&=\frac{1}{4\pi}\int_{\p \GO} \frac{(x-y)(\nv_y\cdot f(y))-\nv_y ((x-y)\cdot f(y))}{|x-y|^3}\, \df \Gs (y) \label{eq_d1}
\end{align}
and
\begin{align}
    \Dcal_2[f](x)&=-\frac{1}{4\pi} \int_{\p \GO} \frac{(x-y)\cdot \nv_y}{|x-y|^3}f(y)\, \df \Gs (y) \label{eq_d2}
\end{align}
for $x\in \Rbb^3\setminus \partial\Omega$. From explicit form of $\Dcal^\divf$, we can observe that it is independent of Lam\'e constants even though both of the div-free conormal derivative $\partial^\divf_\nu$ and the fundamental solution \eqref{Kelvin} depend on them. Moreover, $\Dcal_2$ is nothing but the double layer potential with respect to the Laplacian.

We obtain the following lemma using the explicit forms of the operators $\Dcal_1$ and $\Dcal_2$.

\begin{lemm}\label{lemm_Dcalf}
Let $\Omega\subset \Rbb^3$ be a bounded Lipschitz domain. Then, for $f\in \Hsb^3$, it holds that
\beq
\GD \Dcal^\divf [f] =0, \quad \nabla \cdot \Dcal^\divf [f] =0 \quad \mbox{in } \Rbb^3\setminus \partial\Omega.
\eeq
\end{lemm}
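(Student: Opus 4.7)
The plan is to argue directly from the decomposition \eqref{eq_d_dp_decomposition}, $\Dcal^\divf=-\Dcal_1+\Dcal_2$. For $x\in \Rbb^3\setminus \p\GO$ both kernels are smooth near $x$, so one may differentiate under the integral sign, and each of the two claims reduces to an algebraic identity on the kernels. Setting $G(x,y):=1/|x-y|$ and using $(x-y)/|x-y|^3=-\nabla_x G(x,y)$, every component of the integrand of $\Dcal_1[f]$ and of $\Dcal_2[f]$ becomes a linear combination of first partials $\partial_{x_i}G(x,y)$ with coefficients depending only on $y$. Since $\Delta_x G(\cdot,y)=0$ away from $y$ and $\Delta_x$ commutes with $\partial_{x_i}$, applying $\Delta$ componentwise yields $\Delta \Dcal_1[f]=\Delta \Dcal_2[f]=0$, hence $\Delta \Dcal^\divf[f]=0$ in $\Rbb^3\setminus \p\GO$.

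For the divergence-free property, the plan is to rewrite
\begin{align*}
    \Dcal_1[f](x) &= \frac{1}{4\pi}\int_{\p\GO}\bigl[-(\nv_y\cdot f(y))\,\nabla_x G(x,y)+\nv_y\,(f(y)\cdot \nabla_x)G(x,y)\bigr]\, \df\Gs(y), \\
    \Dcal_2[f](x) &= \frac{1}{4\pi}\int_{\p\GO}(\nv_y\cdot \nabla_x)G(x,y)\, f(y)\, \df\Gs(y),
\end{align*}
and then apply $\nabla_x\cdot$ to each. Using $\Delta_x G=0$ on the first piece of $\Dcal_1[f]$ and the fact that $\nv_y$ and $f(y)$ are independent of $x$, both divergences collapse to the same iterated directional derivative of $G$:
\[
    \nabla\cdot \Dcal_1[f](x)=\frac{1}{4\pi}\int_{\p\GO}(\nv_y\cdot \nabla_x)(f(y)\cdot \nabla_x)G(x,y)\, \df\Gs(y)=\nabla\cdot \Dcal_2[f](x),
\]
the last equality being commutativity of $\partial_{x_i}$ and $\partial_{x_j}$. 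Subtracting then gives $\nabla\cdot \Dcal^\divf[f]=0$.

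There is no real analytic obstacle: everything lives at the level of elementary vector-calculus identities and reduces to $\Delta_x G=0$ together with commutativity of partial derivatives in $x$. The key design feature—that the Lam\'e parameters have already disappeared from $\Dcal^\divf$ in the explicit form \eqref{eq_d_dp_decomposition}—means that no information from $\Lcal_{\Gl,\Gm}$ needs to be invoked, and the entire argument is an algebraic rewriting of the two kernels.
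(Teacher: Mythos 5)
Your proof is correct and follows essentially the same route as the paper: the harmonicity claim is obtained because each kernel is a first derivative of the Newtonian potential $G(x,y)=1/|x-y|$ in $x$, and the divergence-free claim is obtained by showing $\nabla\cdot\Dcal_1[f]$ and $\nabla\cdot\Dcal_2[f]$ coincide (the paper writes out the common expression, which is precisely your iterated directional derivative $(\nv_y\cdot\nabla_x)(f(y)\cdot\nabla_x)G$), so they cancel in $\Dcal^\divf=-\Dcal_1+\Dcal_2$. The only difference is presentational: you organize the computation around $G$ rather than computing the divergence kernels directly.
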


\begin{proof}
Since $(x-y)/|x-y|^3$ is harmonic in $x$ if $x \neq y$, we see that $\GD \Dcal^\divf [f] =0$ in $\Rbb^3\setminus \partial\Omega$.

Straight-forward computations show that
\begin{align*}
&\nabla \cdot \Dcal_j[f](x) \\
&= - \frac{1}{4\pi}\int_{\p \GO} \frac{\nv_y\cdot f(y)}{|x-y|^3}\,\df \Gs (y) + \frac{3}{4\pi}\int_{\p \GO} \frac{((x-y) \cdot \nv_y)( (x-y)\cdot f(y))}{|x-y|^5}\, \df \Gs (y)
\end{align*}
for $j=1,2$, and hence $\nabla \cdot \Dcal^\divf [f] =0$ in $\Rbb^3\setminus \partial\Omega$.
\end{proof}

We define the corresponding eNP-type operator $\Kcal^\divf$ by
\beq
    \label{eq_kd}
    \Kcal^\divf [f](x):=\rmop{p.v.}\int_{\p\GO}(\p^\divf_{\nu_y}\Gamma (x-y))^T f(y)\, \df \Gs (y).
\eeq
Because $\nabla \cdot \Dcal^\divf [f] =0$, we respectively call $\Dcal^\divf$ and $\Kcal^\divf$ the div-free double layer potential and the div-free eNP operator throughout this paper.

The div-free eNP operator $\Kcal^\divf$ admits the following decomposition accordingly:
\begin{equation}
    \label{eq_kd_decomposition}
        \Kcal^\divf=-\Kcal_1+\Kcal_2,
\end{equation}
where
\begin{equation}
    \label{eq_k1}
    \Kcal_1[f](x)=\frac{1}{4\pi}\,\mathrm{p.v.}\int_{\p \GO} \frac{(x-y)(\nv_y\cdot f(y))-\nv_y ((x-y)\cdot f(y))}{|x-y|^3} \df \Gs (y)
\end{equation}
and
\begin{equation}
    \label{eq_k2}
    \Kcal_2[f](x)=-\frac{1}{4\pi} \,\mathrm{p.v.}\int_{\p \GO} \frac{(x-y)\cdot \nv_y}{|x-y|^3}f(y) \df \Gs (y)
\end{equation}
for $x\in \partial\Omega$. On the other hand, the eNP operator $\Kcal$ defined in \eqref{eq_np} is decomposed as
\beq
    \label{eq_np_decomposition}
    \Kcal=2k_0 (\Kcal_1+\Kcal_2)-3(1-2k_0)\Kcal_3,
\eeq
where the operator $\Kcal_3$ is defined as
\beq
    \label{eq_k3}
    \Kcal_3[f](x):=\frac{1}{4\pi}\mathrm{p.v.}\int_{\p\GO} \frac{((x-y)\cdot f(y))((x-y)\cdot \nv_y)(x-y)}{|x-y|^5}\, \df \Gs (y)
\eeq
(see also \cite{AKM19}). Like the div-free double layer potential, the div-free eNP operator $\Kcal^\divf$ is independent of the Lam\'e constants and the operator $\Kcal_2$ is nothing but the NP operator with respect to the Laplacian.

We also observe from \eqref{eq_d1} that the operator $\Dcal_1$ can be expressed in terms of the single layer potential $\Scal_0$ with respect to the Laplacian defined by \eqref{eq_lapla_single}: if $f \in \Hsb^3$, then
\beq\label{eq_double_single}
\Dcal_1[f](x) = \nabla \Scal_0[f \cdot \nv](x) - (\nabla \cdot \Scal_0[\nv_1 f](x), \nabla \cdot \Scal_0[\nv_2 f](x), \nabla \cdot \Scal_0[\nv_3 f](x)).
\eeq
Here, $\nv=(\nv_1,\nv_2,\nv_3)$.

It is known (see \cite{AAL99, EFV92, Verchota84}) that the following jump formula holds:
\begin{equation}
    \label{eq_jump_grad}
    \nabla \Scal_0[\varphi]|_\pm (x)=\pm \frac{1}{2} \varphi(x) \nv_x +\frac{1}{4\pi}\rmop{p.v.}\int_{\partial\Omega} \frac{x-y}{|x-y|^3}\varphi(y)\, \df\sigma (y)
\end{equation}
for $\varphi \in \Hsb$. Using this relation, we prove the following jump formula for the div-free double layer potential.
\begin{prop}
    \label{prop_dd_jump}
    Let $\Omega\subset \Rbb^3$ be a bounded Lipschitz domain. Then it holds that
    \beq
        \label{eq_dd_jump}
        \Dcal^\divf [f]|_\pm (x)=\left(\Kcal^\divf\mp \frac{1}{2}I\right)[f](x), \quad\textrm{a.e. } x \in \p \GO
    \eeq
    for all $f\in \Hsb^3$.
\end{prop}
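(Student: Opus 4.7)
The plan is to exploit the decomposition \eqref{eq_d_dp_decomposition}, $\Dcal^\divf=-\Dcal_1+\Dcal_2$, compute the jumps of $\Dcal_1$ and $\Dcal_2$ separately, and reassemble using \eqref{eq_kd_decomposition}.

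First, the operator $\Dcal_2$ in \eqref{eq_d2} is precisely the classical harmonic double layer potential, since its kernel equals $\p_{\nu_y}(-1/(4\pi|x-y|))$. Hence the Plemelj--Sokhotski jump relation on Lipschitz boundaries (cf.~\cite{Verchota84}) gives
\[
    \Dcal_2[f]|_\pm(x)=\bigl(\Kcal_2\mp \tfrac{1}{2}I\bigr)[f](x), \quad \text{a.e. } x\in\p\GO.
\]

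The crucial step is to show that $\Dcal_1$ has \emph{no} jump across $\p\GO$. Writing the representation \eqref{eq_double_single} componentwise,
\[
    (\Dcal_1[f])_i(x)=\p_i \Scal_0[f\cdot \nv](x)-\sum_{j=1}^3\p_j \Scal_0[\nv_i f_j](x),
\]
and applying \eqref{eq_jump_grad} to each scalar single layer (the densities $f\cdot \nv$ and $\nv_i f_j$ lie in $L^2(\p\GO)$, where \eqref{eq_jump_grad} holds classically), one obtains
\[
    \p_i \Scal_0[f\cdot\nv]|_\pm(x)= \pm \tfrac{1}{2}(f(x)\cdot \nv_x)(\nv_x)_i+\tfrac{1}{4\pi}\,\text{p.v.}\int_{\p\GO}\frac{(x-y)_i(f(y)\cdot \nv_y)}{|x-y|^3}\df\sigma (y),
\]
\[
    \sum_{j=1}^3\p_j \Scal_0[\nv_i f_j]|_\pm(x)=\pm \tfrac{1}{2}(\nv_x)_i(f(x)\cdot \nv_x)+\tfrac{1}{4\pi}\,\text{p.v.}\int_{\p\GO}\frac{\nv_i(y)((x-y)\cdot f(y))}{|x-y|^3}\df\sigma (y).
\]
The two $\pm \tfrac{1}{2}$ surface-delta terms are identical and cancel in the difference, leaving only the principal-value parts, which recombine precisely to the kernel in \eqref{eq_k1}. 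Therefore $\Dcal_1[f]|_\pm=\Kcal_1[f]$ on $\p\GO$; in other words, $\Dcal_1$ is continuous across the boundary.

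Combining everything with \eqref{eq_d_dp_decomposition} and \eqref{eq_kd_decomposition} yields
\[
    \Dcal^\divf[f]|_\pm=-\Kcal_1[f]+\Kcal_2[f]\mp \tfrac{1}{2}f=\Kcal^\divf[f]\mp \tfrac{1}{2}f,
\]
which is \eqref{eq_dd_jump}. The only delicate point is the cancellation of the two surface-delta contributions inside $\Dcal_1$: although $\Dcal_1$'s kernel is as singular as an ordinary double layer, the specific algebraic form $(x-y)(\nv_y\cdot f)-\nv_y((x-y)\cdot f)$ in \eqref{eq_d1} is exactly what forces the two $\pm \tfrac{1}{2}$ jumps to coincide and annihilate. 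This is the structural reason $\Dcal^\divf$ inherits the same $\mp \tfrac{1}{2}I$ jump as a scalar double layer.
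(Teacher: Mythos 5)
Your proof is correct and follows essentially the same route as the paper's: decompose $\Dcal^\divf=-\Dcal_1+\Dcal_2$, use the representation \eqref{eq_double_single} together with the jump formula \eqref{eq_jump_grad} to show that the $\pm\tfrac{1}{2}$ surface terms in $\Dcal_1$ cancel so $\Dcal_1[f]|_\pm=\Kcal_1[f]$, and invoke the classical jump of the harmonic double layer for $\Dcal_2$. The only (minor) addition you make beyond what the paper writes is the explicit remark that the relevant densities $f\cdot\nv$ and $\nv_i f_j$ land in $L^2(\p\GO)$, where the gradient jump relation for the harmonic single layer is valid on a Lipschitz boundary.
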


\begin{proof}
It follows from \eqref{eq_jump_grad} that
\[
\nabla \cdot \Scal_0[\nv_j f]|_\pm (x) = \pm \frac{1}{2} (f \cdot \nv)(x) \nv_j +\frac{1}{4\pi}\rmop{p.v.}\int_{\partial\Omega} \frac{(x-y) \cdot f(y) }{|x-y|^3} \nv_j(y)\, \df\sigma (y)
\]
for $j=1,2,3$.
Thus one can see from \eqref{eq_double_single} that
\begin{align*}
    \Dcal_1[f]|_\pm (x)=\pm \frac{1}{2}((\nv_x\cdot f(x))\nv_x - (\nv_x\cdot f(x))\nv_x)+\Kcal_1[f](x)=\Kcal_1[f](x),
\end{align*}
where $\Kcal_1$ is defined in \eqref{eq_k1}. Since $\Dcal_2$ is the double layer potential with respect to the Laplacian, it holds that
\begin{align} \label{double_jump_laplacian}
    \Dcal_2[f]|_\pm (x)=\Kcal_2[f](x)\mp \frac{1}{2}f(x),
\end{align}
where $\Kcal_2$ is defined in \eqref{eq_k2}.
Hence, from \eqref{eq_d_dp_decomposition} and \eqref{eq_kd_decomposition}, we have
\[
    \Dcal^\divf [f]|_\pm=-\Dcal_1[f]|_\pm+\Dcal_2[f]|_\pm
    =-\Kcal_1[f]+\Kcal_2[f]\mp \frac{1}{2}f
    =\Kcal^\divf [f]\mp \frac{1}{2}f
\]
as desired.
\end{proof}

We introduce the subspaces
\begin{equation}
    \label{eq_div_free_pm}
    \begin{aligned}
        \Hdf^-&:=\{ f\in \Hsb^m \mid \nabla\cdot v_-^f=0 \text{ in } \GO\}, \\
        \Hdf^+&:=\{ f\in \Hsb^m \mid \nabla\cdot v_+^f=0 \text{ in } \Rbb^m \setminus\overline{\GO}\},
    \end{aligned}
\end{equation}
where $v_-^f$ and $v_+^f$ are the unique solutions to the boundary value problems \eqref{eq_laplace_bvp} and \eqref{eq_laplace_bvp_ext} respectively. Clearly, it holds that
\beq
    \label{eq_free_spaces_shorthand}
    \Hdf=\Hdf^-\cap \Hdf^+.
\eeq

Lemma \ref{lemm_Dcalf} and Proposition \ref{prop_dd_jump}  together with the decomposition
\[
f = \left(\Kcal^\divf+ \frac{1}{2}I\right)[f] - \left(\Kcal^\divf- \frac{1}{2}I\right)[f]
\]
yields the following theorem.

\begin{theo}
For a bounded Lipschitz domain $\Omega\subset \Rbb^3$, it holds that
\beq\label{eq_Hdfpm}
\Hsb^3 = \Hdf^- + \Hdf^+.
\eeq
\end{theo}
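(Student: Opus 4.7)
\smallskip
\noindent\textbf{Proof proposal.}
The plan is to give an explicit decomposition of any $f\in\Hsb^3$ using the div-free double layer potential $\Dcal^\divf$, then use Lemma \ref{lemm_Dcalf} and the uniqueness of the Laplace Dirichlet problem to verify that each piece lies in the corresponding subspace $\Hdf^\pm$. Concretely, for $f\in\Hsb^3$ I would set
\[
    f_- := \Dcal^\divf[f]|_-, \qquad f_+ := -\Dcal^\divf[f]|_+.
\]
Proposition \ref{prop_dd_jump} immediately gives
\[
    f_-+f_+=\bigl(\Kcal^\divf+\tfrac12 I\bigr)[f]-\bigl(\Kcal^\divf-\tfrac12 I\bigr)[f]=f,
\]
so it remains only to verify the membership $f_\pm\in\Hdf^\pm$.

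For the interior part, let $u_-(x):=\Dcal^\divf[f](x)$ for $x\in\GO$. By Lemma \ref{lemm_Dcalf}, $u_-$ is harmonic and divergence-free in $\GO$, and its trace on $\p\GO$ equals $f_-$ by construction. Uniqueness of the interior Dirichlet problem \eqref{eq_laplace_bvp} in $H^1(\GO)^3$ forces $v_-^{f_-}=u_-$, hence $\nabla\cdot v_-^{f_-}=0$ and $f_-\in\Hdf^-$. For the exterior part, define $u_+(x):=-\Dcal^\divf[f](x)$ for $x\in\Rbb^3\setminus\overline{\GO}$; again by Lemma \ref{lemm_Dcalf} this is harmonic and divergence-free in the exterior, with boundary trace $f_+$. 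Inspection of the kernels \eqref{eq_d1}--\eqref{eq_d2} shows $\Dcal^\divf[f](x)=O(|x|^{-2})$ and $\nabla\Dcal^\divf[f](x)=O(|x|^{-3})$ at infinity, which places $u_+$ in the function class determined by \eqref{eq_laplace_bvp_ext} with the $H^1$-decay requirement for $m=3$. Uniqueness of the exterior Dirichlet problem then gives $v_+^{f_+}=u_+$, so $\nabla\cdot v_+^{f_+}=0$ and $f_+\in\Hdf^+$.

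There is no serious obstacle here, as all the hard analytic content has already been established in Lemma \ref{lemm_Dcalf} (div-free harmonicity of $\Dcal^\divf$) and Proposition \ref{prop_dd_jump} (jump relations). The only small verification needed is the decay bound for $\Dcal^\divf[f]$ at infinity, which follows directly from the explicit formulas \eqref{eq_d1} and \eqref{eq_d2} together with the compactness of $\p\GO$. Note that the argument yields a decomposition, not a direct sum: it is not asserted that $\Hdf^-\cap\Hdf^+=\{0\}$, consistent with the fact that $\Hdf=\Hdf^-\cap\Hdf^+$ is infinite-dimensional.
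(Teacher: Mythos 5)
Your proposal is correct and follows essentially the same route as the paper: the paper's (one-line) proof uses precisely the decomposition $f = (\Kcal^\divf + \tfrac12 I)[f] - (\Kcal^\divf - \tfrac12 I)[f] = \Dcal^\divf[f]|_- - \Dcal^\divf[f]|_+$ together with Lemma \ref{lemm_Dcalf} and Proposition \ref{prop_dd_jump}. You merely spell out the additional bookkeeping (decay at infinity for the exterior piece and the uniqueness of the Dirichlet problems) that the paper leaves implicit.
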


We emphasize that the sum in \eqref{eq_Hdfpm} is not direct because $\Hdf=\Hdf^- \cap \Hdf^+$ is infinite-dimensional as shown in Theorem \ref{theo_infinite_dim_c1a}. The decomposition \eqref{eq_Hdfpm} holds in two dimensions as well. In that case the sum is direct (see Theorem \ref{coro_direct_sum_2d}).

We also investigate fundamental properties of the $L^2$-adjoint $(\Kcal^\divf)^*$ of $\Kcal^\divf$. The operator $(\Kcal^\divf)^*$ is represented as
\[
    (\Kcal^\divf)^*[f](x)=\rmop{p.v.}\int_{\p\GO} \p^\divf_{\nu_x}\Gamma (x-y) f(y)\, \df \Gs (y).
\]
Then $(\Kcal^\divf)^*$ enjoys the following jump relation analogous to the eNP operator.

\begin{prop}
    \label{theo_jump_npds}
    Let $\Omega\subset \Rbb^3$ be a bounded Lipschitz domain. Then it holds that
    \beq
        \label{eq_jump_npds}
        \partial^\divf_\nu \Scal [f]|_\pm =\left( (\Kcal^\divf)^*\pm \frac{1}{2}I\right)[f]
    \eeq
    for $f\in (\Hsb^3)^*$.
\end{prop}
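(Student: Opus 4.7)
The plan is a direct pointwise computation in the classical Plemelj style, but with the conormal derivative replaced by $\p^\divf_\nu$. Fix $x_0 \in \p\GO$ and let $\nv := \nv_{x_0}$, treated as a constant vector (the definition of $\p_\nu u|_\pm$ requires evaluating along $x_0 \pm t\nv_{x_0}$ with $\nv_{x_0}$ held fixed). Applying the identity \eqref{abc} with $b = f(y)$ under the integral defining $\Scal[f]$ splits $\p^\divf_\nu \Scal[f](x) = I_1(x) + I_2(x)$ for $x$ off $\p\GO$, where
\begin{align*}
I_1(x) &= \frac{1}{4\pi}\int_{\p\GO}\frac{\nv((x-y)\cdot f(y)) - (\nv\cdot f(y))(x-y)}{|x-y|^3}\, \df\Gs(y),\\
I_2(x) &= \frac{1}{4\pi}\int_{\p\GO}\frac{(x-y)\cdot \nv}{|x-y|^3}\, f(y)\, \df\Gs(y).
\end{align*}
Since $\nv$ is constant, each component of $I_1$ and $I_2$ is a linear combination of first derivatives $\p_{x_k}\Scal_0[f_j](x)$ of the Laplace single layer, so the Verchota-type gradient jump formula \eqref{eq_jump_grad} applies term by term.

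Componentwise, $I_1(x)_i = \nv_i\sum_j \p_{x_j}\Scal_0[f_j](x) - \sum_j \nv_j\, \p_{x_i}\Scal_0[f_j](x)$, and the jump contributions produced by \eqref{eq_jump_grad}, namely $\pm\tfrac12\,\nv_i(\nv\cdot f(x_0))$ and $\mp\tfrac12(\nv\cdot f(x_0))\,\nv_i$, cancel exactly — hence $I_1$ has no jump and $I_1|_\pm(x_0)$ coincides with its principal value at $x_0$. On the other hand $I_2(x)_j = (\nv\cdot \nabla_x)\Scal_0[f_j](x)$, whose jump is $\pm\tfrac12 f_j(x_0)|\nv|^2 = \pm\tfrac12 f_j(x_0)$. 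Summing the two principal-value parts reproduces exactly
\[
\rmop{p.v.}\int_{\p\GO}\bigl.\p^\divf_{\nu_x}\Gamma(x-y)\bigr|_{x=x_0}\, f(y)\, \df\Gs(y) = (\Kcal^\divf)^*[f](x_0),
\]
the representation of $(\Kcal^\divf)^*$ recorded in the excerpt (which itself follows from the symmetry $\Gamma(x-y) = \Gamma(y-x)$ of the Kelvin matrix together with the definition \eqref{eq_kd}). This yields \eqref{eq_jump_npds}.

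The main obstacle is really only careful bookkeeping — interpreting $\nv_x$ as the constant $\nv_{x_0}$ off the boundary, and lifting the pointwise/nontangential jump of $\nabla\Scal_0$ to the $\Hsb^*$-valued statement we need, which is standard from Verchota's trace theory. The decisive fact, namely the exact cancellation in $I_1$, is a purely algebraic consequence of the antisymmetry of its kernel in $(\nv, x-y)$ and requires no analytic input beyond \eqref{eq_jump_grad}.
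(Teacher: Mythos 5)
Your proof is correct and follows essentially the same route as the paper's: apply \eqref{abc} under the integral, recognize each piece as a linear combination of $\p_{x_k}\Scal_0[f_j]$, invoke \eqref{eq_jump_grad} term by term, and observe that the two jump contributions from the first piece cancel while the second piece contributes $\pm\tfrac12 f$. The paper records exactly this cancellation compactly as $\pm\tfrac12\big((\nv_x\cdot f)\nv_x - (\nv_x\cdot f)\nv_x + |\nv_x|^2 f\big)$; you have merely written out the intermediate bookkeeping more explicitly.
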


\begin{proof}
The jump relation \eqref{eq_jump_npds} follows from \eqref{abc} and \eqref{eq_jump_grad}. In fact, we have
\begin{align*}
    &\partial^\divf_\nu \Scal [f]|_\pm (x) \\
    &=\pm \frac{1}{2} \big( (\nv_x\cdot f(x))\nv_x-(\nv_x\cdot f(x))\nv_x+|\nv_x|^2 f(x) \big)+(\Kcal^\divf)^*[f](x) \\
    &=(\Kcal^\divf)^* [f](x)\pm \frac{1}{2}f(x),
    \end{align*}
    as desired.
\end{proof}

We now prove that the div-free eNP operator $\Kcal^\divf$ is symmetrized by the single layer potential.

\begin{prop}
    \label{theo_plemelj_kd}
    Let $\Omega\subset \Rbb^3$ be a bounded Lipschitz domain. Then the operator $\Kcal^\divf$ satisfies the relation
    \begin{equation}
        \label{eq_plemelj_kd}
        \Scal(\Kcal^\divf)^*=\Kcal^\divf\Scal.
    \end{equation}
\end{prop}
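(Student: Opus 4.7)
The plan is to run the classical Plemelj symmetrization argument in the present div-free setting, using the layer potential identities already established in Section \ref{sec:divfree}. Concretely, fix $f\in (\Hsb^3)^*$ and set $u:=\Scal[f]$. Since $\Gamma(x-y)$ is a fundamental solution of the Lam\'e system, $u$ is a weak solution of $\Lcal_{\lambda,\mu}u=0$ both in $\Omega$ and in $\Rbb^3\setminus\overline{\Omega}$, and the decay $|u(x)|=O(|x|^{-1})$, $|\nabla u(x)|=O(|x|^{-2})$ at infinity, which follows from the explicit form \eqref{Kelvin} of the Kelvin matrix, comfortably meets the hypotheses of Lemma \ref{theo_sd_d}\ref{enum_sd_d_ex}. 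Moreover, $u$ is continuous across $\p\GO$ so that $u|_-=u|_+=\Scal[f]$ on $\p\GO$, and by Proposition \ref{theo_jump_npds} we can read off
\[
\partial^\divf_\nu u|_{\pm}=\bigl((\Kcal^\divf)^*\pm \tfrac{1}{2}I\bigr)[f].
\]

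Next, I would plug these boundary data into Lemma \ref{theo_sd_d}\ref{enum_sd_d_in} to obtain, for $x\in\GO$,
\[
\Dcal^\divf[\Scal[f]](x)-\Scal\!\left[\bigl((\Kcal^\divf)^*-\tfrac{1}{2}I\bigr)[f]\right]\!(x)=\Scal[f](x),
\]
and then let $x$ approach $\p\GO$ from the interior. The single layer term is continuous across $\p\GO$, while the jump formula \eqref{eq_dd_jump} of Proposition \ref{prop_dd_jump} replaces $\Dcal^\divf[\Scal[f]]|_-$ by $(\Kcal^\divf+\tfrac{1}{2}I)[\Scal[f]]$. The resulting boundary identity becomes
\[
\Kcal^\divf[\Scal[f]]+\tfrac{1}{2}\Scal[f]-\Scal[(\Kcal^\divf)^*[f]]+\tfrac{1}{2}\Scal[f]=\Scal[f],
\]
and the $\tfrac{1}{2}\Scal[f]$ terms cancel algebraically, leaving exactly $\Kcal^\divf \Scal[f]=\Scal (\Kcal^\divf)^*[f]$. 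Running the parallel computation with Lemma \ref{theo_sd_d}\ref{enum_sd_d_ex} and the outside trace yields the same identity, so nothing beyond this is needed. Since $f\in (\Hsb^3)^*$ was arbitrary, the relation \eqref{eq_plemelj_kd} holds as an identity of operators.

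The only genuine point to verify carefully is the legitimacy of this layer-potential calculus when the density $f$ merely lies in the negative Sobolev space $(\Hsb^3)^*$ rather than in $L^2(\p\GO)^3$; however, this subtlety is already absorbed into the statements of Propositions \ref{prop_dd_jump} and \ref{theo_jump_npds}, which are formulated in exactly that framework. Once those jump formulas are available, the argument becomes a short two-line computation on $\p\GO$, with no estimates or regularity of $\p\GO$ beyond Lipschitz needed, so I do not anticipate a substantive obstacle.
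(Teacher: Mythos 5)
Your proposal is correct and takes essentially the same route as the paper: both proofs apply the representation formula of Lemma \ref{theo_sd_d}\ref{enum_sd_d_in} to $u=\Scal[f]$, take a boundary trace, and invoke the jump relations \eqref{eq_dd_jump} and \eqref{eq_jump_npds} to cancel the $\pm\tfrac{1}{2}$ terms. The only cosmetic difference is that you use the interior branch of \eqref{eq_sd_d_in} together with the trace from inside, while the paper uses the vanishing exterior branch and the trace from outside.
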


\begin{proof}
    We substitute $u=\Scal [f]$ to \eqref{eq_sd_d_in} to obtain
    \[
        \Dcal^\divf [\Scal [f]](x)=\Scal[\partial^\divf_\nu \Scal [f]|_-](x)
    \]
    for $x\in \Rbb^3 \setminus \overline{\GO}$. We then take a limit to $\p\GO$ from outside to obtain
    \[
        \Dcal^\divf [\Scal [f]]|_+=\Scal[\partial^\divf_\nu \Scal[f]|_-].
    \]
    By the jump relations \eqref{eq_dd_jump} and \eqref{eq_jump_npds}, we have
    \[
        \left(\Kcal^\divf- \frac{1}{2}I\right) \Scal [f]=\Scal \left( (\Kcal^\divf)^*-\frac{1}{2}I\right)[f],
    \]
    which yields \eqref{eq_plemelj_kd}.
\end{proof}

The symmetrization principle (Proposition \ref{theo_plemelj_kd}) implies that the div-free eNP  operator $\Kcal^\divf$ is also self-adjoint with respect to the inner product \eqref{eq_inner_product}.

\begin{coro}
    \label{coro_kd_self_adjoint}
    Let $\Omega \subset \Rbb^3$ be a bounded Lipschitz domain. Then $\Kcal^\divf: \Hsb^3 \to \Hsb^3$ is a bounded self-adjoint operator with respect to the inner product \eqref{eq_inner_product}.
\end{coro}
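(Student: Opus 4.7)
The plan is to derive the corollary directly from the Plemelj-type symmetrization identity $\Scal (\Kcal^\divf)^* = \Kcal^\divf \Scal$ established in Proposition \ref{theo_plemelj_kd}, together with the fact that $\Scal: (\Hsb^3)^* \to \Hsb^3$ is an isomorphism. The heavy lifting has already been done: what remains is essentially a formal calculation matching the intertwining relation to the definition \eqref{eq_inner_product} of the inner product.

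For boundedness, I would decompose $\Kcal^\divf = -\Kcal_1 + \Kcal_2$ as in \eqref{eq_kd_decomposition}. The operator $\Kcal_2$ is the NP operator for the Laplacian, and by \eqref{eq_double_single} the operator $\Kcal_1$ can be expressed in terms of boundary traces of gradients of single layer potentials for the Laplacian. Both are Calder\'on-Zygmund-type singular integral operators on the Lipschitz surface $\p\GO$, hence bounded on $L^2(\p\GO)^3$ by the Coifman-McIntosh-Meyer theorem and on $H^1(\p\GO)^3$ by the standard theory used in \cite[Proposition 2.4]{AAL99}. Interpolation gives boundedness on $\Hsb^3 = H^{1/2}(\p\GO)^3$, and since the inner product \eqref{eq_inner_product} induces a norm equivalent to the usual Sobolev norm, $\Kcal^\divf$ is bounded with respect to $\jbracket{\cdot, \cdot}_*$.

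For self-adjointness, fix $f, g \in \Hsb^3$ and put $\vph := \Scal^{-1}[f] \in (\Hsb^3)^*$. Applying $\Scal^{-1}$ on the left in \eqref{eq_plemelj_kd} gives the intertwining identity $\Scal^{-1} \Kcal^\divf = (\Kcal^\divf)^* \Scal^{-1}$. Substituting into \eqref{eq_inner_product},
\[
    \jbracket{\Kcal^\divf f, g}_* = -\jbracket{\Scal^{-1}[\Kcal^\divf f], g}_{\p\GO} = -\jbracket{(\Kcal^\divf)^*\vph, g}_{\p\GO}.
\]
Since $\Kcal^\divf$ is bounded on $\Hsb^3$, its $L^2$-adjoint $(\Kcal^\divf)^*$ extends by duality to a bounded operator on $(\Hsb^3)^*$, and the $L^2$ adjoint identity extends to the $(\Hsb^3)^*$-$\Hsb^3$ duality pairing by density. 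Therefore
\[
    -\jbracket{(\Kcal^\divf)^*\vph, g}_{\p\GO} = -\jbracket{\vph, \Kcal^\divf g}_{\p\GO} = -\jbracket{\Scal^{-1}[f], \Kcal^\divf g}_{\p\GO} = \jbracket{f, \Kcal^\divf g}_*,
\]
establishing self-adjointness.

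The only mildly delicate point is the extension of the $L^2$ adjoint relation to the duality pairing between $(\Hsb^3)^*$ and $\Hsb^3$; this is routine once boundedness of $\Kcal^\divf$ on $\Hsb^3$ is in hand, so no real obstacle arises here. All the substantive content is packaged in the symmetrization identity \eqref{eq_plemelj_kd} and in the singular integral bounds from the cited works, so the corollary follows as a direct consequence.
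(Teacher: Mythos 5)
Your self-adjointness argument is essentially the same as the paper's: both reduce to the symmetrization identity $\Scal(\Kcal^\divf)^* = \Kcal^\divf \Scal$ of Proposition \ref{theo_plemelj_kd}, which the paper treats as having already done the work (its proof opens with ``We only have to show that $\Kcal^\divf$ is bounded on $\Hcal^3$''), and your computation with $\Scal^{-1}$ correctly fills in the routine details, including the extension of the $L^2$-adjoint relation to the $(\Hsb^3)^*$--$\Hsb^3$ duality pairing.

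Where you diverge, and where there is a mild gap, is the boundedness step. You propose to treat $\Kcal_1$ directly as a Calder\'on--Zygmund operator, citing Coifman--McIntosh--Meyer for $L^2(\p\GO)^3$ and ``the standard theory used in \cite[Proposition 2.4]{AAL99}'' for $H^1(\p\GO)^3$, then interpolate. The $L^2$ bound is fine, but the $H^1$ bound is where you lean on a reference that does not actually assert what you need: \cite[Proposition 2.4]{AAL99} concerns the full eNP operator $\Kcal$, not the individual piece $\Kcal_1$, and on a Lipschitz boundary $H^1$-boundedness of a singular integral operator is not an automatic corollary of $L^2$-boundedness; it requires a separate commutator-with-tangential-derivatives argument in the style of Verchota. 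You would have to either carry out that argument for $\Kcal_1$ or find a reference that does so. The paper sidesteps this entirely with a nicer trick: it takes a second pair of Lam\'e constants $(\lambda',\mu')$ with $k_0'\neq k_0$, observes that \eqref{eq_np_decomposition} together with the $2\times 2$ matrix $\begin{pmatrix} 2k_0 & -3(1-2k_0) \\ 2k_0' & -3(1-2k_0')\end{pmatrix}$ (determinant $-6(k_0-k_0')\neq 0$) expresses $\Kcal_1 + \Kcal_2$ and $\Kcal_3$ as linear combinations of the two eNP operators $\Kcal$ and $\Kcal'$, which are already known to be bounded on $\Hsb^3$, and then subtracts off $\Kcal_2$ (the harmonic NP operator, bounded by Verchota) to get $\Kcal_1$. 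This reduces boundedness of $\Kcal^\divf$ to boundedness of $\Kcal$ for two choices of Lam\'e constants, which is the fact the paper already has in hand, and avoids any fresh singular integral analysis. Your route is likely salvageable, but as written it assumes more than the cited proposition actually provides; the paper's algebraic argument is both cleaner and genuinely self-contained.
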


\begin{proof}
    We only have to show that $\Kcal^\divf$ is bounded on $\Hcal^3$. In addition to the Lam\'e constants $(\lambda, \mu)$, we take another Lam\'e constants $(\lambda^\prime, \mu^\prime)$ satisfying \eqref{eq_s_convex_condition} and let the constant $k_0^\prime>0$ and the eNP operator $\Kcal^\prime$ be analogously defined by \eqref{eq_k0} and \eqref{eq_np} from $(\lambda^\prime, \mu^\prime)$, so that
    \[
        \det
        \begin{pmatrix}
            2k_0 & -3(1-2k_0) \\
            2k_0^\prime & -3(1-2k_0^\prime)
        \end{pmatrix}
        =-6(k_0-k_0^\prime)\neq 0.
    \]
    Then, by \eqref{eq_np_decomposition}, the operators $\Kcal_1+\Kcal_2$ and $\Kcal_3$ are represented as a linear combination of the eNP operators $\Kcal$ and $\Kcal^\prime$, which are bounded on $\Hsb^3$ as we already mentioned in Introduction. Here, note that, although the inner product \eqref{eq_inner_product} on $\Hsb^3$ depends on the Lam\'e constants, the associated norm is equivalent to the usual Sobolev norm regardless of the Lam\'e constants. Thus, the operators $\Kcal_1+\Kcal_2$ and $\Kcal_3$ are bounded on $\Hcal^3$. Since $\Kcal_2$ is the NP operator for the Laplacian, it is bounded on $\Hsb^3$ (essentially due to Verchota \cite{Verchota84}). Hence, all operators $\Kcal_j$ ($j=1, 2, 3$) are bounded on $\Hsb^3$ and so is $\Kcal^\divf$ by \eqref{eq_kd_decomposition}.
\end{proof}

\subsection{Proof of Theorem \ref{theo_decomposition_div_free}}

In this subsection we prove Theorem \ref{theo_decomposition_div_free}. In doing so, the following theorem is crucial.

\begin{theo}
    \label{theo_div_rot_free_kd}
    If $\Omega\subset \Rbb^3$ is a bounded Lipschitz domain, then we have
    \beq
        \label{eq_div_rot_free_kd}
        \Hdrf^\pm=\Ker \left( \Kcal^\divf\pm \frac{1}{2}I \right).
    \eeq
\end{theo}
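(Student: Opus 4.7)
The plan is to combine three ingredients already assembled in this section: the representation formula (Lemma \ref{theo_sd_d}), the jump relation (Proposition \ref{prop_dd_jump}), and the div-free Green identity (Lemma \ref{lemm_green_vc}). It suffices to handle the $-$ case, since the $+$ case follows from the exterior counterparts of these three lemmas by an identical argument.

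For the forward inclusion $\Hdrf^- \subset \Ker(\Kcal^\divf - \tfrac{1}{2}I)$, I would start from $f \in \Hdrf^-$: then $v_-^f$ is divergence- and curl-free in $\GO$, which forces $\Lcal_{\lambda,\mu}v_-^f = 0$ (via $\Delta = \nabla\nabla\cdot - \nabla\times\nabla\times$) and $\p^\divf_\nu v_-^f|_- = 0$ directly from \eqref{eq_conormal_d}. Feeding these into Lemma \ref{theo_sd_d}\ref{enum_sd_d_in} collapses the representation to $\Dcal^\divf[f] \equiv 0$ in $\Rbb^3\setminus\overline{\GO}$, and taking the exterior trace via Proposition \ref{prop_dd_jump} yields $(\Kcal^\divf - \tfrac{1}{2}I)[f] = 0$.

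For the reverse inclusion, the plan is to reverse-engineer the representation formula. If $(\Kcal^\divf - \tfrac{1}{2}I)[f] = 0$, then $\Dcal^\divf[f]|_+ = 0$; combined with componentwise harmonicity and decay at infinity of $\Dcal^\divf[f]$ in $\Rbb^3\setminus\overline{\GO}$, this forces $\Dcal^\divf[f] \equiv 0$ there. The jump relation then gives $\Dcal^\divf[f]|_- = f$, so uniqueness of the Dirichlet problem for the Laplacian identifies $\Dcal^\divf[f] = v_-^f$ in $\GO$. By Lemma \ref{lemm_Dcalf}, $v_-^f$ is divergence-free, hence $\Lcal_{\lambda,\mu}v_-^f = 0$. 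I would then apply Lemma \ref{theo_sd_d}\ref{enum_sd_d_in} to $v_-^f$ itself; subtracting the already-known values of $\Dcal^\divf[f]$ on each side yields $\Scal[\p^\divf_\nu v_-^f|_-] \equiv 0$ on $\Rbb^3 \setminus \p\GO$, and passing to the boundary trace reduces the problem to the isomorphism $\Scal: (\Hsb^3)^* \to \Hsb^3$, which forces $\p^\divf_\nu v_-^f|_- = 0$.

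To close the loop I would insert $u = v = v_-^f$ into \eqref{eq_lame_bilinear_1_in}: since $\Lcal_{\lambda,\mu}v_-^f = 0$ and the boundary pairing vanishes, one obtains
\[
0 = -\mu\,\|\nabla\times v_-^f\|_{L^2(\GO)}^2 - \tfrac{\mu}{2k_0}\,\|\nabla\cdot v_-^f\|_{L^2(\GO)}^2,
\]
whence $v_-^f$ is both divergence- and curl-free in $\GO$, i.e.\ $f \in \Hdrf^-$. The $\Hdrf^+$ case is completely parallel using Lemmas \ref{theo_sd_d}\ref{enum_sd_d_ex} and \ref{lemm_green_vc}\ref{enum_lame_bilinear_ex}; the required decay hypotheses on $v_+^f$ are automatic, since an $H^1$-harmonic field on $\Rbb^3\setminus\overline{\GO}$ decays like $|v_+^f| = O(|x|^{-1})$ and $|\nabla v_+^f| = O(|x|^{-2})$. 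I do not anticipate a serious obstacle; the only mildly subtle step is the appeal to the isomorphism of the Lam\'e single layer to pass from $\Scal[\p^\divf_\nu v_-^f|_-] = 0$ on $\p\GO$ to $\p^\divf_\nu v_-^f|_- = 0$, which is valid under \eqref{eq_s_convex_condition} and, in the degenerate case $(\mu,k_0) = (1,1/2)$, via Verchota's isomorphism for $\Scal_0$.
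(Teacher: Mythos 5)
Your forward inclusion is essentially the paper's: you apply the representation formula (Lemma \ref{theo_sd_d}\ref{enum_sd_d_in}) to the harmonic extension and read off the identity via the jump relation. The paper takes the interior trace (obtaining $f=(\Kcal^\divf+\tfrac12 I)[f]$, which is equivalent), you take the exterior trace; otherwise the arguments coincide, modulo the observation (which the paper makes explicitly just before Lemma \ref{lemm_div_rot_dd}) that $v_-^f$ and $u^f$ agree once $f\in\Hdrf^-$.

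For the reverse inclusion your route is genuinely different and noticeably longer. The paper simply plugs $g=f$ into the already-established quadratic-form identity \eqref{eq_div_rot_npd_identity_in} of Lemma \ref{lemm_div_rot_npd_identity}: if $(\Kcal^\divf-\tfrac12 I)[f]=0$ then $\mu\|\nabla\times u^f\|^2_{\GO}+\tfrac{\mu}{2k_0}\|\nabla\cdot u^f\|^2_{\GO}=0$ and one is done in one line. You instead avoid citing that lemma and reconstruct the same information from scratch: you first use the jump relation, harmonicity, and decay to pin down $\Dcal^\divf[f]$ on both sides of $\p\GO$; you apply Lemma \ref{lemm_Dcalf} to get the divergence-free condition (so you already know $\nabla\cdot v_-^f=0$ at that stage, making the divergence part of your final Green-identity step redundant); you then re-run Lemma \ref{theo_sd_d}\ref{enum_sd_d_in} and subtract to isolate $\Scal[\p^\divf_\nu v_-^f|_-]\equiv 0$, invoke the isomorphism of $\Scal$ to kill the conormal derivative, and finally apply the Green identity \eqref{eq_lame_bilinear_1_in}. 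Every step checks out (the decay of the double layer potential is $O(|x|^{-2})$ so exterior uniqueness applies; the $+$ case indeed meets the decay hypotheses of Lemma \ref{theo_sd_d}\ref{enum_sd_d_ex}), so the proof is correct. What it buys you is an argument that never passes through the Plemelj symmetrization \eqref{eq_plemelj_kd} or the self-adjointness of $\Kcal^\divf$, which underlie Lemma \ref{lemm_div_rot_npd_identity}; what it costs is several extra steps that the paper's quadratic-form identity packages in advance. Since Lemma \ref{lemm_div_rot_npd_identity} is stated and proved right before this theorem and is designed for exactly this use, the paper's route is the intended one, but yours is a valid alternative.
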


For the proof of Theorem \ref{theo_div_rot_free_kd}, we prepare two lemmas. The first lemma is the representation of the divergence-free and rotation-free solutions by the double layer potential $\Dcal^\divf$. The second lemma provides two identities which relate the div-free eNP operator $\Kcal^\divf$ to the $L^2$ norm of the divergence and rotation of the solution to the Lam\'e equation.

In what follows, we define
\beq\label{eq_uf}
u^f(x):=\Scal[\Scal^{-1}[f]](x), \quad x\in \Rbb^3
\eeq
for $f\in \Hsb^3$. Note that $u^f|_{\overline{\Omega}}$ and $u^f|_{\Rbb^3\setminus \Omega}$ are respectively the unique solutions to the interior and exterior boundary value problems
\beq
    \label{eq_lame_bvp}
    \begin{cases}
        \Lcal_{\Gl, \Gm}u=0 & \text{in } \GO, \\
        u=f & \text{on } \p\GO
    \end{cases}
\eeq
and
\beq
    \label{eq_lame_bvp_ext}
    \begin{cases}
        \Lcal_{\Gl, \Gm}u=0 & \text{in } \Rbb^3 \setminus \overline{\GO}, \\
        u=f & \text{on } \p\GO, \\
        u(x)=O(|x|^{-1}) & \text{as } |x|\to \infty
    \end{cases}.
\eeq
See \cite[Corollary 4.2]{Howell80} for the uniqueness of the solution to \eqref{eq_lame_bvp_ext}.
We emphasize that if $f \in \Hdf^-$, then the solution $v_-^f$ to \eqref{eq_laplace_bvp} is the solution to \eqref{eq_lame_bvp} for any pair $(\Gl, \Gm)$ of Lam\'e constants. Likewise, if $f \in \Hdf^+$, then $v_+^f$ is the solution to \eqref{eq_lame_bvp_ext}.

\begin{lemm}
    \label{lemm_div_rot_dd}
    Let $\Omega\subset\Rbb^3$ be a bounded Lipschitz domain. If $f\in \Hdrf^-$, then we have
    \beq
        \label{eq_div_rot_dd_in}
        u^f=\Dcal^\divf [f] \quad \text{in } \GO,
    \eeq
    and if $f\in \Hdrf^+$, then
    \beq
        \label{eq_div_rot_dd_ex}
        u^f=-\Dcal^\divf [f] \quad \text{in } \Rbb^3\setminus\overline{\GO}.
    \eeq
\end{lemm}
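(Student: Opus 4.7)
The plan is to pass through three observations: first, that $v_\mp^f$ solves the Lam\'e equation under the div-free and rot-free hypotheses, so it coincides with $u^f$; second, that the representation formulas of Lemma \ref{theo_sd_d} express $u^f$ (up to sign) as $\Dcal^\divf[f]$ minus a single layer potential of its div-free conormal derivative; third, that this div-free conormal derivative vanishes because of the div-free and rot-free conditions, via the Green identities in Lemma \ref{lemm_green_vc}.

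\textbf{Interior case.} Let $f\in \Hdrf^-$. Then $v_-^f\in H^1(\GO)^3$ is harmonic, divergence-free and curl-free in $\GO$. From the definition \eqref{eq_lame_operator} of $\Lcal_{\Gl,\Gm}$ it follows that $\Lcal_{\Gl,\Gm}v_-^f=\Gm\GD v_-^f+(\Gl+\Gm)\nabla(\nabla\cdot v_-^f)=0$ in $\GO$. Since $v_-^f|_{\p\GO}=f$, the function $v_-^f$ solves the boundary value problem \eqref{eq_lame_bvp}. By uniqueness of that problem, $u^f=v_-^f$ in $\GO$. Now apply \eqref{eq_sd_d_in} to $u^f\in H^1(\GO)^3$ to obtain, for $x\in\GO$,
\[
u^f(x)=\Dcal^\divf[f](x)-\Scal[\p^\divf_\nu u^f|_-](x).
\]
It therefore suffices to show $\p^\divf_\nu u^f|_-=0$ in $(\Hsb^3)^*$. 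Pick an arbitrary $v\in H^1(\GO)^3$ and apply \eqref{eq_lame_bilinear_1_in} to the pair $(u^f,v)$. Since $\Lcal_{\Gl,\Gm}u^f=0$ and since $\nabla\cdot u^f=\nabla\cdot v_-^f=0$ and $\nabla\times u^f=\nabla\times v_-^f=0$ in $\GO$, all three terms on the right-hand side of \eqref{eq_lame_bilinear_1_in} other than the boundary pairing vanish, so $\jbracket{\p^\divf_\nu u^f|_-,v}_{\p\GO}=0$. Since the trace map $H^1(\GO)^3\to \Hsb^3$ is surjective, this forces $\p^\divf_\nu u^f|_-=0$, whence \eqref{eq_div_rot_dd_in}.

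\textbf{Exterior case.} Let $f\in \Hdrf^+$. Then $v_+^f$ is harmonic, div-free and curl-free in $\Rbb^3\setminus\overline{\GO}$, in $H^1$ of that exterior, with $v_+^f(x)=O(|x|^{-1})$ and $\nabla v_+^f(x)=O(|x|^{-2})$ at infinity; as above, $\Lcal_{\Gl,\Gm}v_+^f=0$. Uniqueness for \eqref{eq_lame_bvp_ext} (cf.\ \cite[Corollary 4.2]{Howell80}) gives $u^f=v_+^f$ in $\Rbb^3\setminus\overline{\GO}$. The decay assumptions in Lemma \ref{lemm_green_vc}\ref{enum_lame_bilinear_ex} and Lemma \ref{theo_sd_d}\ref{enum_sd_d_ex} are met (with $\Gd=1/2$), so \eqref{eq_sd_d_ex} yields, for $x\in\Rbb^3\setminus\overline{\GO}$,
\[
-u^f(x)=\Dcal^\divf[f](x)-\Scal[\p^\divf_\nu u^f|_+](x).
\]
Applying \eqref{eq_lame_bilinear_1_ex} with $v\in H^1(\Rbb^3\setminus\overline{\GO})^3$ of compact support (so the decay assumptions are trivially met for $v$), the same vanishing argument gives $\p^\divf_\nu u^f|_+=0$, and hence \eqref{eq_div_rot_dd_ex}.

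The main technical point is the step identifying $\p^\divf_\nu u^f|_\pm=0$: the conormal-type derivative is only defined as an element of $(\Hsb^3)^*$ via Green's identity, so pointwise vanishing of $\nabla\cdot u^f$ and $\nabla\times u^f$ in $L^2$ does not literally substitute into \eqref{eq_conormal_d} on the boundary. Routing through \eqref{eq_lame_bilinear_1_in}--\eqref{eq_lame_bilinear_1_ex} and the surjectivity of the trace map is the clean way to make the argument rigorous.
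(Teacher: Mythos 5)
Your proof is correct and follows essentially the same route as the paper: identify $u^f$ with $v_\mp^f$, establish $\partial^\divf_\nu u^f|_\pm = 0$, and then apply the representation formulas of Lemma \ref{theo_sd_d}. The only difference is that you justify the vanishing of the div-free conormal derivative weakly via the Green identities of Lemma \ref{lemm_green_vc} together with surjectivity of the trace map, whereas the paper's proof invokes the pointwise formula \eqref{eq_conormal_d} directly; this is a more careful treatment of the same step rather than a different approach.
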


\begin{proof}
    If $f\in \Hdrf^-$, then we have $\p_\nu^\divf u^f|_-=0$ by the definition \eqref{eq_conormal_d} of the conormal derivative $\partial^\divf_\nu$. We apply \eqref{eq_sd_d_in} for $u^f$ and obtain $\Dcal^\divf [f](x)=u^f(x)$ for $x\in \GO$.

    Similarly, if $f\in \Hdrf^+$, then we have $\p_\nu^\divf u^f|_+=0$. Noting that $u^f=\Scal[\Scal^{-1}[f]]$ satisfies $|u^f(x)|=O(|x|^{-1})$ and $|\nabla u^f(x)|=O(|x|^{-2})$ as $|x|\to \infty$, we apply \eqref{eq_sd_d_ex} for $u^f$ and obtain $\Dcal^\divf [f](x)=-u^f(x)$ for $x\in \Rbb^3\setminus\overline{\GO}$.
\end{proof}

\begin{lemm}
    \label{lemm_div_rot_npd_identity}
    Let $\Omega\subset \Rbb^3$ be a bounded Lipschitz domain. Then, for $f, g \in \Hsb^3$, we have
    \beq
        \label{eq_div_rot_npd_identity_in}
        \jbracket{\left(-\Kcal^\divf+\frac{1}{2}I\right)[f], g }_*
        =\Gm \jbracket{ \nabla\times u^f, \nabla\times u^g }_{\GO}+\frac{\Gm}{2k_0}
        \jbracket{ \nabla \cdot u^f, \nabla \cdot u^g }_{\GO}
    \eeq
    and
    \beq
        \label{eq_div_rot_npd_identity_ex}
        \jbracket{\left( \Kcal^\divf+ \frac{1}{2}I\right)[f], g }_*
        =\Gm \jbracket{ \nabla\times u^f, \nabla\times u^g }_{\overline{\GO}^c} +\frac{\Gm}{2k_0}\jbracket{ \nabla \cdot u^f, \nabla \cdot u^g }_{\overline{\GO}^c}.
    \eeq
\end{lemm}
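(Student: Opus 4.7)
The idea is to apply the Green-type identities \eqref{eq_lame_bilinear_1_in} and \eqref{eq_lame_bilinear_1_ex} to $u=u^f$ and $v=u^g$, and then rewrite the resulting boundary pairings using the jump formula \eqref{eq_jump_npds} together with the symmetrization principle \eqref{eq_plemelj_kd}. Since $\Lcal_{\Gl,\Gm}u^f=\Lcal_{\Gl,\Gm}u^g=0$ on both sides of $\p\GO$, the bulk term $\jbracket{\Lcal_{\Gl,\Gm}u,v}$ in each identity vanishes; for the exterior identity we also need the decay assumptions, and these hold because $u^f=\Scal[\Scal^{-1}f]$ satisfies $|u^f(x)|=O(|x|^{-1})$, $|\nabla u^f(x)|=O(|x|^{-2})$ from the explicit form \eqref{Kelvin} of $\GG$.

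Set $\vph:=\Scal^{-1}[f]\in(\Hsb^3)^*$, so that $u^f=\Scal[\vph]$. By the jump relation \eqref{eq_jump_npds},
\[
    \p^\divf_\nu u^f|_-=\bigl((\Kcal^\divf)^*-\tfrac12 I\bigr)[\vph], \qquad \p^\divf_\nu u^f|_+=\bigl((\Kcal^\divf)^*+\tfrac12 I\bigr)[\vph].
\]
Applying $\Scal$ and using Plemelj's identity $\Scal(\Kcal^\divf)^*=\Kcal^\divf \Scal$ gives
\[
    \Scal\bigl[\p^\divf_\nu u^f|_\mp\bigr]=\bigl(\Kcal^\divf\mp\tfrac12 I\bigr)[f].
\]
On the other hand, from the definition \eqref{eq_inner_product} of $\jbracket{\cdot,\cdot}_*$ we have $\jbracket{\psi,g}_{\p\GO}=-\jbracket{\Scal[\psi],g}_*$ for $\psi\in(\Hsb^3)^*$ and $g\in\Hsb^3$, hence
\beq\label{eq_boundary_to_star}
    \jbracket{\p^\divf_\nu u^f|_\mp,\,g}_{\p\GO}=-\jbracket{\bigl(\Kcal^\divf\mp\tfrac12 I\bigr)[f],\,g}_*.
\eeq

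Now the interior identity \eqref{eq_lame_bilinear_1_in} applied to $u=u^f$, $v=u^g$ yields
\[
    \jbracket{\p^\divf_\nu u^f|_-,u^g|_-}_{\p\GO}
    =\Gm\jbracket{\nabla\times u^f,\nabla\times u^g}_{\GO}
      +\frac{\Gm}{2k_0}\jbracket{\nabla\cdot u^f,\nabla\cdot u^g}_{\GO},
\]
and since $u^g|_-=g$, combining with \eqref{eq_boundary_to_star} (the lower sign) gives exactly \eqref{eq_div_rot_npd_identity_in}. The exterior identity \eqref{eq_lame_bilinear_1_ex} applied to the same $u^f,u^g$ produces an extra minus sign in front of the boundary term,
\[
    -\jbracket{\p^\divf_\nu u^f|_+,u^g|_+}_{\p\GO}
    =\Gm\jbracket{\nabla\times u^f,\nabla\times u^g}_{\overline{\GO}^c}
      +\frac{\Gm}{2k_0}\jbracket{\nabla\cdot u^f,\nabla\cdot u^g}_{\overline{\GO}^c},
\]
and this minus sign cancels the minus sign on the right-hand side of \eqref{eq_boundary_to_star} (upper sign), producing \eqref{eq_div_rot_npd_identity_ex}.

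The only real subtlety is bookkeeping of signs and of the jump: one must make sure that the conormal-type derivative $\p^\divf_\nu$, though different from the usual $\p_\nu$, still obeys both a Plemelj-style symmetrization (Proposition \ref{theo_plemelj_kd}) and the jump relation (Proposition \ref{theo_jump_npds}), and that the exterior decay of $u^f$ is good enough to legitimately use \eqref{eq_lame_bilinear_1_ex}. These points are either already established in the preceding subsection or immediate from the explicit form of the Kelvin matrix, so no further machinery is required.
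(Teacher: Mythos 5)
Your proposal is correct and follows essentially the same route as the paper: apply the Green-type identities \eqref{eq_lame_bilinear_1_in}--\eqref{eq_lame_bilinear_1_ex} to $u=u^f$, $v=u^g$, then rewrite the boundary pairing via the jump relation (Proposition \ref{theo_jump_npds}) and Plemelj's symmetrization (Proposition \ref{theo_plemelj_kd}). The only cosmetic difference is that you apply $\Scal$ first and commute it with $(\Kcal^\divf)^*$, whereas the paper keeps everything at the level of $\Scal^{-1}$ and reads off the $\jbracket{\cdot,\cdot}_*$ pairing directly; the computation is identical.
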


\begin{proof}
    We apply \eqref{eq_lame_bilinear_1_in} to $u=u^f=\Scal[\Scal^{-1}[f]]$ and $v=u^g =\Scal[\Scal^{-1}[g]]$ to obtain
    \[
        \jbracket{\partial^\divf_\nu \Scal [\Scal^{-1}[f]]|_-, g}_{\p\GO}
        =\Gm \jbracket{ \nabla\times u^f, \nabla\times u^g }_{\GO}+\frac{\Gm}{2k_0}
        \jbracket{ \nabla \cdot u^f, \nabla \cdot u^g }_{\GO}.
    \]
    By the jump relation (Theorem \ref{theo_jump_npds}) and the symmetrization principle (Theorem \ref{theo_plemelj_kd}), we have
    \begin{align*}
        \jbracket{\left((\Kcal^\divf)^*-\frac{1}{2}I\right)\Scal^{-1}[f], g}_{\p\GO}
        &=\jbracket{\Scal^{-1}\left(\Kcal^\divf- \frac{1}{2}I\right)[f], g}_{\p\GO} \\
        &=\jbracket{\left(-\Kcal^\divf+ \frac{1}{2}I\right)[f], g}_*.
    \end{align*}
    Hence we obtain \eqref{eq_div_rot_npd_identity_in}.

    The identity \eqref{eq_div_rot_npd_identity_ex} is proved similarly.
\end{proof}

By adding formulas \eqref{eq_div_rot_npd_identity_in} and \eqref{eq_div_rot_npd_identity_ex}, we obtain the following corollary.

\begin{coro}\label{cor213}
If $\Omega\subset \Rbb^3$ is a bounded Lipschitz domain, then, for $f,g\in \Hsb^3$, we have
\beq\label{eq_2.23+2.24}
\begin{aligned}
    \jbracket{f,g}_*=
    &\mu \jbracket{ \nabla\times u^f, \nabla\times u^g }_{\GO} +\mu \jbracket{ \nabla\times u^f, \nabla\times u^g }_{\overline{\GO}^c} \\
    &+\frac{\mu}{2k_0}\jbracket{ \nabla \cdot u^f, \nabla \cdot u^g }_{\GO} +\frac{\mu}{2k_0} \jbracket{ \nabla \cdot u^f, \nabla \cdot u^g }_{\overline{\GO}^c}.
\end{aligned}
\eeq
\end{coro}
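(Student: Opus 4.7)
The plan is to obtain the identity by simply summing the two formulas \eqref{eq_div_rot_npd_identity_in} and \eqref{eq_div_rot_npd_identity_ex} from Lemma \ref{lemm_div_rot_npd_identity}, exploiting the fact that the two operator prefactors on the left-hand sides cancel the $\Kcal^\divf$ term and combine to the identity.

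Concretely, I would start from the two identities
\begin{align*}
\jbracket{\left(-\Kcal^\divf+\tfrac{1}{2}I\right)[f], g}_*
&=\Gm \jbracket{ \nabla\times u^f, \nabla\times u^g }_{\GO}+\tfrac{\Gm}{2k_0}\jbracket{ \nabla \cdot u^f, \nabla \cdot u^g }_{\GO}, \\
\jbracket{\left(\Kcal^\divf+\tfrac{1}{2}I\right)[f], g}_*
&=\Gm \jbracket{ \nabla\times u^f, \nabla\times u^g }_{\overline{\GO}^c}+\tfrac{\Gm}{2k_0}\jbracket{ \nabla \cdot u^f, \nabla \cdot u^g }_{\overline{\GO}^c},
\end{align*}
and add them. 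On the left-hand side, linearity of the inner product $\jbracket{\cdot,\cdot}_*$ gives
\[
\jbracket{\left(-\Kcal^\divf+\tfrac{1}{2}I\right)[f]+\left(\Kcal^\divf+\tfrac{1}{2}I\right)[f], g}_* = \jbracket{f,g}_*,
\]
while the right-hand side is precisely the claimed sum of four bilinear terms over $\Omega$ and $\overline{\GO}^c$.

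There is essentially no obstacle here once Lemma \ref{lemm_div_rot_npd_identity} is in hand; the only thing to verify is that the $\Kcal^\divf$ contributions cancel, which they do by construction, and that $\tfrac{1}{2}I+\tfrac{1}{2}I=I$. This completes the proof.
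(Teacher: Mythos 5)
Your proof is correct and is exactly the argument the paper uses: the corollary is obtained by adding the two identities of Lemma \ref{lemm_div_rot_npd_identity}, using $\left(-\Kcal^\divf+\tfrac{1}{2}I\right)+\left(\Kcal^\divf+\tfrac{1}{2}I\right)=I$ on the left-hand side. Nothing more is needed.
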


Moreover, we obtain a spectral radius estimate of $\Kcal^\divf$.
\begin{coro}\label{coro_spec_rad_kd}
    For a bounded Lipschitz domain $\Omega\subset \Rbb^3$, we have $\sigma (\Kcal^\divf)\subset [-1/2, 1/2]$.
\end{coro}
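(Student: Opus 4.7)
The plan is to combine the self-adjointness of $\Kcal^\divf$ with respect to $\jbracket{\cdot,\cdot}_*$ (Corollary \ref{coro_kd_self_adjoint}) with the two quadratic-form identities of Lemma \ref{lemm_div_rot_npd_identity} specialized to $g=f$. Because the bilinear form $\jbracket{\cdot,\cdot}_*$ is a genuine inner product on $\Hsb^3$ and $\Kcal^\divf$ is self-adjoint with respect to it, the spectrum $\Gs(\Kcal^\divf)$ is real and coincides with the closure of the numerical range, so it suffices to show that
\[
    -\tfrac12\|f\|_*^2 \;\leq\; \jbracket{\Kcal^\divf [f], f}_* \;\leq\; \tfrac12\|f\|_*^2 \qquad \text{for every } f\in \Hsb^3.
\]

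For the upper bound, I would set $g=f$ in \eqref{eq_div_rot_npd_identity_in} to obtain
\[
    \jbracket{\left(-\Kcal^\divf + \tfrac12 I\right)[f], f}_*
    = \Gm\,\|\nabla\times u^f\|_{L^2(\GO)}^2 + \frac{\Gm}{2k_0}\,\|\nabla\cdot u^f\|_{L^2(\GO)}^2.
\]
Under the ellipticity condition \eqref{eq_s_convex_condition} we have $\Gm>0$, and moreover $\Gm/(2k_0) = \Gl+2\Gm>0$ since $3\Gl+2\Gm>0$ and $\Gm>0$ together force $\Gl+2\Gm>0$. Hence the right-hand side is non-negative, giving $\jbracket{\Kcal^\divf[f],f}_* \leq \tfrac12\|f\|_*^2$. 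For the lower bound, I would do the same with $g=f$ in \eqref{eq_div_rot_npd_identity_ex}, which yields
\[
    \jbracket{\left(\Kcal^\divf + \tfrac12 I\right)[f], f}_*
    = \Gm\,\|\nabla\times u^f\|_{L^2(\overline{\GO}^c)}^2 + \frac{\Gm}{2k_0}\,\|\nabla\cdot u^f\|_{L^2(\overline{\GO}^c)}^2 \;\geq\; 0,
\]
so $\jbracket{\Kcal^\divf[f],f}_* \geq -\tfrac12\|f\|_*^2$.

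Combining the two inequalities gives the spectral containment $\Gs(\Kcal^\divf)\subset [-1/2,1/2]$ via the standard fact that for a bounded self-adjoint operator $T$ on a Hilbert space, $\Gs(T)\subset [\inf_{\|f\|=1}\jbracket{Tf,f}, \sup_{\|f\|=1}\jbracket{Tf,f}]$. There is no real obstacle here: the identities of Lemma \ref{lemm_div_rot_npd_identity} have already done the hard work, since both right-hand sides are manifestly non-negative sums of $L^2$-norms with positive coefficients; the only subtlety worth flagging is the verification that $\Gm/(2k_0)>0$ under \eqref{eq_s_convex_condition}, which is an immediate algebraic consequence of $\Gm>0$ and $3\Gl+2\Gm>0$.
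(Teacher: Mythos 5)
Your proof is correct and takes essentially the same approach as the paper: it specializes the two quadratic-form identities of Lemma \ref{lemm_div_rot_npd_identity} to $g=f$, observes that the right-hand sides are non-negative (since $\Gm>0$ and $\Gm/(2k_0)=\Gl+2\Gm>0$), and deduces the operator inequalities $-\tfrac12 I \leq \Kcal^\divf \leq \tfrac12 I$, from which the spectral containment follows by self-adjointness. The only addition is your explicit check that $\Gm/(2k_0)>0$ under \eqref{eq_s_convex_condition}, which the paper leaves implicit.
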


\begin{proof}
    By \eqref{eq_div_rot_npd_identity_in}, we have
    \[
        \jbracket{\left(-\Kcal^\divf+\frac{1}{2}I\right)[f], f }_*
        =\Gm \| \nabla\times u^f \|_{\GO}^2+\frac{\Gm}{2k_0}
        \| \nabla \cdot u^f\|_{\GO}^2\geq 0
    \]
    for all $f\in \Hsb^3$. Thus we have $\Kcal^\divf\leq 1/2I$. Similarly, by \eqref{eq_div_rot_npd_identity_ex}, we obtain $\Kcal^\divf\geq -1/2I$. Hence we have $\sigma (\Kcal^\divf)\subset [-1/2, 1/2]$.
\end{proof}

In fact, it turns out that the spaces $\Ker (\Kcal^\divf\pm 1/2I)$ are infinite dimensional  if $\p\GO$ is Lipschitz (see Theorem \ref{theo_div_rot_free_kd} and Theorem \ref{theo_infinite_dim_c1a}). Thus the spectral radius of $\Kcal^\divf$ is exactly $1/2$.

We will employ Corollary \ref{cor213} and \ref{coro_spec_rad_kd} in later sections, although they are not used in this section.

\begin{lemm}
    \label{lemm_div_free_closed}
    If $\Omega\subset \Rbb^3$ is a bounded Lipschitz domain, then the subspaces $\Hdf^\pm$ are closed in $\Hsb^3$.
\end{lemm}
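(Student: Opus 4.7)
The plan is to realize $\Hdf^\pm$ as kernels of continuous linear maps. For the interior case, let $T_-\colon \Hsb^3 \to H^1(\GO)^3$ be the solution operator $f \mapsto v_-^f$ for the Dirichlet problem \eqref{eq_laplace_bvp}. By standard well-posedness of the Dirichlet problem on bounded Lipschitz domains (and continuous dependence on boundary data in the $H^{1/2}\to H^1$ setting), $T_-$ is bounded. Composing with the continuous divergence operator $\nabla \cdot \colon H^1(\GO)^3 \to L^2(\GO)$ yields a bounded linear map
\[
\Phi_-\colon \Hsb^3 \longrightarrow L^2(\GO), \qquad \Phi_-(f) := \nabla \cdot v_-^f.
\]
By definition, $\Hdf^- = \Ker \Phi_-$, which is closed as the kernel of a bounded linear operator.

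For the exterior case, recall that in three dimensions the solution $v_+^f$ to \eqref{eq_laplace_bvp_ext} lies in $H^1(\Rbb^3 \setminus \overline{\GO})^3$ because harmonicity together with the decay $u=O(|x|^{-1})$ (and hence $\nabla u = O(|x|^{-2})$) ensures the integrability needed for the $H^1$ norm on the exterior domain. The corresponding solution operator $T_+\colon \Hsb^3 \to H^1(\Rbb^3\setminus\overline{\GO})^3$ is again bounded by standard well-posedness of the exterior Dirichlet problem. Therefore
\[
\Phi_+\colon \Hsb^3 \longrightarrow L^2(\Rbb^3 \setminus \overline{\GO}), \qquad \Phi_+(f) := \nabla \cdot v_+^f,
\]
is bounded, and $\Hdf^+ = \Ker \Phi_+$ is closed.

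I do not anticipate a genuine obstacle here; the only technical point to verify carefully is the continuous dependence of the Dirichlet solutions on $H^{1/2}$ boundary data in both the interior and (appropriately weighted or decaying) exterior settings, for which one can invoke the classical results on Lipschitz domains, for instance by representing $v_\pm^f$ via the single layer potential $\Scal_0$ with respect to the Laplacian and using the invertibility results of Verchota \cite{Verchota84}. Once that is in hand, the closedness of $\Hdf^\pm$, and hence of $\Hdf = \Hdf^- \cap \Hdf^+$, follows immediately.
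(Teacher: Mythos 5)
Your proof is correct and takes a genuinely different route. You realize $\Hdf^\pm$ as kernels of the bounded maps $\Phi_\pm(f)=\nabla\cdot v_\pm^f$ by composing the Dirichlet solution operator (bounded on Lipschitz domains) with the divergence; this is elementary and essentially self-contained. The paper instead proves sequential closedness from the quadratic identity of Lemma~\ref{lemm_div_rot_npd_identity}: with $f_j\in\Hdf^-$ and $f_j\to f$,
\[
\jbracket{\left(-\Kcal^\divf+\tfrac{1}{2}I\right)[f_j-f],\,f_j-f}_* \;\ge\; \frac{\mu}{2k_0}\,\|\nabla\cdot u^{f_j-f}\|_{L^2(\GO)}^2 \;=\; \frac{\mu}{2k_0}\,\|\nabla\cdot u^f\|_{L^2(\GO)}^2,
\]
using $\nabla\cdot u^{f_j}=0$ in $\GO$, and then lets $j\to\infty$. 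Both arguments in substance prove continuity of $f\mapsto\nabla\cdot v_\pm^f$ and conclude by closedness of the kernel; yours does so by importing standard well-posedness for the Laplace Dirichlet problem, while the paper's stays inside its div-free eNP framework and gets extra mileage out of Lemma~\ref{lemm_div_rot_npd_identity}, which it reuses elsewhere. One caution on the exterior case: the decay $v_+^f=O(|x|^{-1})$ does \emph{not} put $v_+^f$ in $L^2(\Rbb^3\setminus\overline{\GO})$ in three dimensions (the tail integral $\int_{|x|>R}|x|^{-2}\,dx$ diverges), so the assertion $v_+^f\in H^1(\Rbb^3\setminus\overline{\GO})^3$ is delicate; the paper states it as well, so this is not specific to your write-up, but the safe formulation is to use only $\nabla v_+^f=O(|x|^{-2})\in L^2(\Rbb^3\setminus\overline{\GO})^{3\times 3}$, with $f\mapsto\nabla v_+^f$ bounded via $v_+^f=\Scal_0[\Scal_0^{-1}[f]]$ and Verchota. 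That is all $\Phi_+$ requires, so your conclusion stands.
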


\begin{proof}
    Suppose that the sequence $\{ f_j\}_{j=1}^\infty$ in $\Hdf^-$ converges to some function $f\in \Hsb^3$. For an arbitrary integer $j$, we have
    \begin{align*}
        \jbracket{\left(-\Kcal^\divf+\frac{1}{2}I\right)[f_j-f], f_j-f}_*
        &\geq \frac{\Gm}{2k_0}\| \nabla\cdot u^{f_j}-\nabla\cdot u^f \|^2_{\GO} \\
        &=\frac{\Gm}{2k_0}\| \nabla\cdot u^f\|^2_{\GO}
    \end{align*}
    by \eqref{eq_div_rot_npd_identity_in}. We take limit $j\to \infty$ and obtain $\nabla\cdot u^f=0$ in $\GO$. Hence, $f\in \Hdf^-$.

    It is proved similarly that $\Hdf^+$ is a closed subspace of $\Hsb^3$.
\end{proof}

\begin{proof}[Proof of Theorem \ref{theo_div_rot_free_kd}]
    If $f\in \Hdrf^-$, then we infer from the jump relation \eqref{eq_dd_jump} and the identity \eqref{eq_div_rot_dd_in} that
    \[
        f=\left(\Kcal^\divf+ \frac{1}{2}I\right)[f].
    \]
    Thus, $f\in \Ker (\Kcal^\divf- 1/2I)$. Hence, we have $\Hdrf^-\subset \Ker (\Kcal^\divf- 1/2I)$. Similarly, we can prove $\Hdrf^+\subset \Ker (\Kcal^\divf+ 1/2I)$ by \eqref{eq_dd_jump} and \eqref{eq_div_rot_dd_ex}.

    The reverse inclusions $\Ker (\Kcal^\divf\pm 1/2I) \subset \Hdrf^\pm$ are immediate consequences of \eqref{eq_div_rot_npd_identity_in} and \eqref{eq_div_rot_npd_identity_ex}.
\end{proof}

We are ready to prove Theorem \ref{theo_decomposition_div_free}.

\begin{proof}[Proof of Theorem \ref{theo_decomposition_div_free}]
    The subspaces $\Hdrf^-$ and $\Hdrf^+$ are orthogonal to each other since they are eigenspaces corresponding to distinct eigenvalues (Theorem \ref{theo_div_rot_free_kd}) of the self-adjoint operator $\Kcal^\divf$ (Corollary \ref{coro_kd_self_adjoint}).

    To prove \eqref{eq_perpsub}, we first note that
\beq\label{defg}
\Ker \left(\Kcal^\divf+ \frac{1}{2}I\right) + \Ker \left(\Kcal^\divf- \frac{1}{2}I\right)
= \Ker \left(\Kcal^\divf+ \frac{1}{2}I\right)\left(\Kcal^\divf- \frac{1}{2}I\right).
\eeq
In fact, the inclusion relation $\subset$ is easy to prove. The opposite inclusion relation can be shown using the identity
\[
f= -\left(\Kcal^\divf- \frac{1}{2}I\right)[f] + \left(\Kcal^\divf+ \frac{1}{2}I\right)[f].
\]

Since $\Kcal^\divf$ is self-adjoint, we infer from Theorem \ref{theo_div_rot_free_kd} and \eqref{defg} that
    \begin{equation}\label{eq_perp_closure}
        (\Hdrf^-+\Hdrf^+)^\perp =\overline{\Ran\left(\Kcal^\divf- \frac{1}{2}I\right)\left(\Kcal^\divf+ \frac{1}{2}I\right)}.
    \end{equation}
Since $\Hdf=\Hdf^+ \cap \Hdf^-$ is a closed subspace by Lemma \ref{lemm_div_free_closed}, it is enough to prove
    \beq
        \label{eq_div_free_inclusion_weak}
        \Hdf\supset \Ran\left(\Kcal^\divf- \frac{1}{2}I\right)\left(\Kcal^\divf+ \frac{1}{2}I\right).
    \eeq
    If $f \in \Ran\left(\Kcal^\divf- 1/2I\right)\left(\Kcal^\divf+ 1/2I\right)$, then
    \[
        f=\left(\Kcal^\divf+ \frac{1}{2}I\right)[f_+]=\left(\Kcal^\divf- \frac{1}{2}I\right)[f_-]
    \]
    for some $f_+, f_- \in \Hsb^3$. Then we have
    \[
        f=\Dcal^\divf [f_+]|_-=\Dcal^\divf [f_-]|_+
    \]
    by the jump relation \eqref{eq_dd_jump}. Since $\Dcal^\divf [f_+]|_- \in \Hdf^-$ and $\Dcal^\divf [f_-]|_+\in \Hdf^+$ by Lemma \ref{lemm_Dcalf}, we have $f\in \Hdf$. This proves \eqref{eq_perpsub}.
\end{proof}

Before  concluding this section we prove two results to be used in later sections. One shows that the subspace $\Hdf$ is invariant under the div-free eNP operator $\Kcal^\divf$; the other shows that $(\Hdrf^\pm)^\perp\subset \Hdf^\mp$.

\begin{theo}\label{theo_div_free_invariant}
    Let $\Omega\subset \Rbb^3$ be a bounded Lipschitz domain. Then the operator $\Kcal^\divf$ preserves the subspace $\Hdf$.
\end{theo}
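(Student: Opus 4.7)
The plan is to reduce the claim to the uniqueness of the Dirichlet problem for the Laplacian and to exploit Lemma \ref{lemm_Dcalf}, which says that $\Dcal^\divf [f]$ is harmonic and divergence-free on $\Rbb^3 \setminus \p\GO$ for every $f\in \Hsb^3$. Fix $f\in \Hdf = \Hdf^-\cap \Hdf^+$ and set $g:= \Kcal^\divf [f]$. We must show $g\in \Hdf^-$ and $g\in \Hdf^+$, i.e.\ that the harmonic extensions $v_-^g$ and $v_+^g$ are divergence-free on their respective domains.

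First I would handle the interior. The jump relation \eqref{eq_dd_jump} gives
\[
    \Dcal^\divf [f]|_- = \left(\Kcal^\divf + \tfrac{1}{2}I\right)[f] = g + \tfrac{1}{2}f.
\]
Consider
\[
    w := \Dcal^\divf [f] - \tfrac{1}{2}\, v_-^f \quad \text{in } \GO.
\]
By Lemma \ref{lemm_Dcalf}, $\Dcal^\divf [f]$ is harmonic and divergence-free on $\GO$. By definition, $v_-^f$ is harmonic in $\GO$, and the hypothesis $f\in \Hdf^-$ means $\nabla\cdot v_-^f=0$. Hence $w$ is harmonic and divergence-free in $\GO$, with boundary value $w|_-=\Dcal^\divf [f]|_- - \tfrac{1}{2}f = g$. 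By the uniqueness of the Dirichlet problem for the Laplacian on $\GO$, we conclude $v_-^g = w$, so $\nabla\cdot v_-^g = 0$ and therefore $g \in \Hdf^-$.

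For the exterior the argument is completely parallel. The jump relation gives $\Dcal^\divf [f]|_+ = g - \tfrac{1}{2}f$, so I would set
\[
    w' := \Dcal^\divf [f] + \tfrac{1}{2}\, v_+^f \quad \text{in } \Rbb^3\setminus \overline{\GO},
\]
which is harmonic and divergence-free there (using Lemma \ref{lemm_Dcalf} and $f\in \Hdf^+$), has boundary value $g$, and decays at infinity because both $\Dcal^\divf [f](x)=O(|x|^{-2})$ and $v_+^f(x)=O(|x|^{-1})$ as $|x|\to\infty$. Uniqueness for the exterior Dirichlet problem (in the class prescribed for \eqref{eq_laplace_bvp_ext}) then yields $v_+^g=w'$, hence $\nabla\cdot v_+^g=0$, i.e.\ $g\in \Hdf^+$.

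The two pieces combine to give $g=\Kcal^\divf [f]\in \Hdf^-\cap \Hdf^+=\Hdf$, as required. There is really no major obstacle in this argument: the only point deserving care is the decay rate of $w'$ to make the exterior uniqueness applicable, but this is standard given the $|x|^{-2}$ decay of the harmonic double layer-type potential $\Dcal^\divf[f]$ and the prescribed decay of $v_+^f$.
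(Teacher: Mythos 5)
Your proof is correct and follows essentially the same route as the paper: you use the jump relation \eqref{eq_dd_jump} together with Lemma \ref{lemm_Dcalf} and the uniqueness of the interior and exterior Dirichlet problems, which is exactly what the paper's (more tersely written) proof relies on when it asserts $\Kcal^\divf [f]=\Dcal^\divf [f]|_\pm\pm\frac{1}{2}f\in \Hdf^\pm$. The only cosmetic difference is that you spell out the auxiliary functions $w$ and $w'$ explicitly and verify the decay at infinity, whereas the paper leaves these routine verifications to the reader.
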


\begin{proof}
    Let $f\in \Hdf$. Then, by the jump relation \eqref{eq_dd_jump}, we have
    \[
        \Kcal^\divf [f]=\Dcal^\divf [f]|_\pm\pm\frac{1}{2}f\in \Hdf^\pm.
    \]
    Hence, $\Kcal^\divf [f]\in \Hdf$.
\end{proof}

The following lemma will be used in Section \ref{sec:topology}.

\begin{lemm}\label{lemm_drf_df_pm}
    For a bounded Lipschitz domain $\Omega\subset \Rbb^3$, we have $(\Hdrf^\pm)^\perp\subset \Hdf^\mp$ with respect to the inner product \eqref{eq_inner_product}.
\end{lemm}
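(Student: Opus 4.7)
The plan is to combine self-adjointness of $\Kcal^\divf$ with the jump relations for the div-free double layer potential, and then identify the resulting interior/exterior extensions with the harmonic extensions $v_\pm^f$.

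First I would use Corollary \ref{coro_kd_self_adjoint} together with Theorem \ref{theo_div_rot_free_kd}, which identifies $\Hdrf^\pm$ with $\Ker(\Kcal^\divf \pm \tfrac{1}{2}I)$. Since $\Kcal^\divf$ is self-adjoint on $\Hsb^3$ with respect to the inner product \eqref{eq_inner_product}, the standard orthogonal decomposition gives
\[
(\Hdrf^+)^\perp = \overline{\Ran\bigl(\Kcal^\divf + \tfrac{1}{2}I\bigr)}, \qquad (\Hdrf^-)^\perp = \overline{\Ran\bigl(\Kcal^\divf - \tfrac{1}{2}I\bigr)}.
\]
Since $\Hdf^\mp$ is closed in $\Hsb^3$ by Lemma \ref{lemm_div_free_closed}, it suffices to show that the (unclosed) ranges satisfy $\Ran(\Kcal^\divf + \tfrac{1}{2}I) \subset \Hdf^-$ and $\Ran(\Kcal^\divf - \tfrac{1}{2}I) \subset \Hdf^+$.

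For the first inclusion, take $f \in \Ran(\Kcal^\divf + \tfrac{1}{2}I)$, so that $f = (\Kcal^\divf + \tfrac{1}{2}I)[g]$ for some $g \in \Hsb^3$. The jump relation \eqref{eq_dd_jump} then gives $f = \Dcal^\divf [g]|_-$. By Lemma \ref{lemm_Dcalf}, the function $\Dcal^\divf [g]$ is both harmonic and divergence-free in $\Omega$. By uniqueness of the Dirichlet problem \eqref{eq_laplace_bvp} for the Laplacian on $\Omega$, we must have $v_-^f = \Dcal^\divf [g]$ in $\Omega$, and hence $\nabla \cdot v_-^f = 0$ in $\Omega$. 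This means $f \in \Hdf^-$. The argument for $\Ran(\Kcal^\divf - \tfrac{1}{2}I) \subset \Hdf^+$ is entirely analogous: for $f = (\Kcal^\divf - \tfrac{1}{2}I)[g]$, the jump relation \eqref{eq_dd_jump} gives $f = \Dcal^\divf[g]|_+$, the function $\Dcal^\divf [g]$ is harmonic and divergence-free in $\Rbb^3 \setminus \overline{\Omega}$ and satisfies the correct decay at infinity (the kernels in \eqref{eq_d1}--\eqref{eq_d2} yield $\Dcal^\divf[g](x) = O(|x|^{-2})$), so uniqueness of the exterior Dirichlet problem \eqref{eq_laplace_bvp_ext} identifies $v_+^f = \Dcal^\divf[g]$ outside $\Omega$, giving $\nabla \cdot v_+^f = 0$ and $f \in \Hdf^+$.

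There is no substantial obstacle here: every ingredient has already been prepared. The only delicate point is making sure the closure step is justified, which is why Lemma \ref{lemm_div_free_closed} is invoked at the outset, and making sure the uniqueness argument in the exterior is valid, which relies on $\Dcal^\divf[g]$ having the correct decay at infinity and matching the decay condition built into the definition of $v_+^f$.
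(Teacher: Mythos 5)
Your proof is correct and takes the same route as the paper: orthogonal decomposition of $\Hsb^3$ via self-adjointness of $\Kcal^\divf$, identification of $(\Hdrf^\pm)^\perp$ with the closed range of $\Kcal^\divf \pm \tfrac{1}{2}I$, reduction to the unclosed range by Lemma \ref{lemm_div_free_closed}, and the jump relation together with Lemma \ref{lemm_Dcalf}. You spell out the uniqueness-of-Dirichlet-problem step and the $O(|x|^{-2})$ decay at infinity that the paper leaves implicit, but the argument is the same.
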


\begin{proof}
    We only prove the inclusion relation $(\Hdrf^+)^\perp\subset \Hdf^-$; the other inclusion relation can be proved similarly.

    Since $\Kcal^\divf$ is self-adjoint on $\Hsb^3$ by Corollary \ref{coro_kd_self_adjoint}, we have the orthogonal decomposition
    \begin{equation}
        \label{eq_orthogonal_pm}
        \Hsb^3=\Ker \left(\Kcal^\divf + \frac{1}{2}I\right) \oplus \overline{\Ran \left(\Kcal^\divf + \frac{1}{2}I\right)}.
    \end{equation}
    Since $\Ker (\Kcal^\divf + 1/2I)=\Hdrf^+$ by Theorem \ref{theo_div_rot_free_kd}, we have
    \[
        \overline{\Ran \left(\Kcal^\divf + \frac{1}{2}I\right)}=(\Hdrf^+)^\perp.
    \]
    Since $\Hdf^-$ is a closed subspace of $\Hsb^3$ by Lemma \ref{lemm_div_free_closed}, it is enough to show
    \begin{equation}
        \label{eq_ran_kd_pm_included}
        \Ran \left(\Kcal^\divf + \frac{1}{2}I\right)\subset \Hdf^-.
    \end{equation}
    If $f=(\Kcal^\divf + 1/2I)[g]\in \Ran (\Kcal^\divf + 1/2I)$, then we have $f=\Dcal^\divf [g]|_-$ by Proposition \ref{prop_dd_jump}. Since $\Dcal^\divf [g]$ is a divergence-free harmonic vector field in $\Omega$ by Lemma \ref{lemm_Dcalf}, we have $f \in \Hdrf^-$. This proves \eqref{eq_ran_kd_pm_included}.
\end{proof}

\section{The spherical case}\label{sect_sphere}

In this section we consider the case when $\GO$ is a ball. The main results of this section are that if $\GO$ is a ball, then the equality holds in \eqref{eq_perpsub}, or equivalently, the decomposition \eqref{eq_decomposition_div_free} is orthogonal, and that three subspaces involved in the decomposition are infinite-dimensional, which is proved by considering homogeneous spherical harmonics and computing the dimensions of subspaces of homogeneous degree $k$. The result of this section will be used in an essential way to prove that all three subspaces involved in the decomposition are infinite-dimensional even if $\partial \Omega$ is Lipschitz (Theorem \ref{theo_infinite_dim_c1a}).

\subsection{Orthogonal decomposition}

\begin{theo}\label{theo_sphere_orthogonal}
    If $\GO$ is a ball in $\Rbb^3$, then we have the orthogonal decomposition
    \begin{equation}\label{eq_sphere_orthogonal}
        \Hsb^3=\Hdrf^- \oplus \Hdrf^+ \oplus \Hdf
    \end{equation}
    with respect to the inner product \eqref{eq_inner_product}.
\end{theo}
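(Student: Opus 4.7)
The plan is to upgrade the orthogonal decomposition of Theorem \ref{theo_decomposition_div_free}, which says $\Hsb^3=\Hdrf^-\oplus\Hdrf^+\oplus(\Hdrf^-+\Hdrf^+)^\perp$ with $(\Hdrf^-+\Hdrf^+)^\perp\subset\Hdf$, to the claimed identity by proving the reverse inclusion $\Hdf\subset(\Hdrf^-+\Hdrf^+)^\perp$ for a ball. I would reduce this to the two trivial intersections $\Hdrf^-\cap\Hdf=\{0\}$ and $\Hdrf^+\cap\Hdf=\{0\}$. Indeed, given $h\in\Hdf$, split it along the known orthogonal sum as $h=h_-+h_++h_0$ with $h_\pm\in\Hdrf^\pm$ and $h_0\in(\Hdrf^-+\Hdrf^+)^\perp\subset\Hdf$; subtraction yields $h_-+h_+\in\Hdf$. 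Since $h_-\in\Hdrf^-\subset\Hdf^-$ means $\nabla\cdot v_-^{h_-}=0$, the linearity of $f\mapsto v_-^f$ and the requirement $\nabla\cdot v_-^{h_-+h_+}=0$ force $h_+\in\Hdf^-$; combined with $h_+\in\Hdrf^+\subset\Hdf^+$ this gives $h_+\in\Hdrf^+\cap\Hdf$, and symmetrically $h_-\in\Hdrf^-\cap\Hdf$. The assumed trivial intersections then make $h_\pm=0$ and $h=h_0\in(\Hdrf^-+\Hdrf^+)^\perp$.

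To prove $\Hdrf^-\cap\Hdf=\{0\}$ for $\Omega=B_1$ (the case $\Hdrf^+\cap\Hdf=\{0\}$ being symmetric on swapping interior and exterior), I plan to use the scalar spherical harmonic expansion. Any $f\in\Hdrf^-$ satisfies $v_-^f=\nabla\varphi$ with $\varphi$ harmonic in $B_1$, so $\varphi=\sum_{k\geq 0,|m|\leq k}c_{k,m}r^k Y_k^m$. Each Cartesian component $\partial_i(r^k Y_k^m)$ is a harmonic polynomial of degree $k-1$, so its decaying harmonic extension to $\overline{B_1}^c$ is $r^{-(2k-1)}\partial_i(r^k Y_k^m)$, giving
\[
v_+^f=\sum_{k\geq 1,|m|\leq k}c_{k,m}\,r^{-(2k-1)}\,\nabla(r^k Y_k^m).
\]
A short calculation using $\Delta(r^k Y_k^m)=0$ and the Euler identity $\hat r\cdot\nabla(r^k Y_k^m)=kr^{k-1}Y_k^m$ yields
\[
\nabla\cdot v_+^f=\sum_{k\geq 1,|m|\leq k}c_{k,m}\,k(1-2k)\,r^{-(k+1)}\,Y_k^m.
\]
Since $k(1-2k)\neq 0$ for $k\geq 1$ and the scalar spherical harmonics are $L^2$-orthogonal on each sphere $r=\mathrm{const}$, the constraint $\nabla\cdot v_+^f\equiv 0$ on $\overline{B_1}^c$ forces all $c_{k,m}=0$, so $f=0$.

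The main obstacle is making the reduction step airtight: it relies on the linearity of $f\mapsto v_\pm^f$, the coincidence of the Laplace and Lamé extensions for divergence-free rot-free boundary data, and the fact that $\Hdf$ is closed (Lemma \ref{lemm_div_free_closed}). Once the reduction is in place, the spherical-harmonic calculation is elementary. An equivalent, more spectral-theoretic route would diagonalize $\Kcal^\divf$ on the ball: using the jump relation (Proposition \ref{prop_dd_jump}) together with Lemma \ref{theo_sd_d} applied to the explicit div-free harmonic extensions $r^k\hat r\times\nabla_{S^2}Y_k^m$ and $r^{-(k+1)}\hat r\times\nabla_{S^2}Y_k^m$, one verifies that each $\vec r\times\nabla Y_k^m|_{\partial B_1}$ is an eigenfunction of $\Kcal^\divf$ with eigenvalue distinct from $\pm 1/2$, whence self-adjointness of $\Kcal^\divf$ (Corollary \ref{coro_kd_self_adjoint}) separates these eigenfunctions from $\Hdrf^\pm=\Ker(\Kcal^\divf\mp\tfrac12 I)$ and delivers the orthogonality directly.
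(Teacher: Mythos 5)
Your proof is correct, and its overall strategy coincides with the paper's. The reduction step — that Theorem \ref{theo_decomposition_div_free} leaves only the two intersections $\Hdrf^\pm\cap\Hdf$ to be shown trivial, because $\Hdf\cap(\Hdrf^-+\Hdrf^+)$ splits as $(\Hdf\cap\Hdrf^-)\oplus(\Hdf\cap\Hdrf^+)$ by the linearity argument you give — is exactly what the paper uses (cf.\ the remark before Corollary \ref{coro_finite_codim}). For the key step, the paper passes to the graded decomposition $\Hdf=\overline{\oplus_k\Wcal_k^o}$ of Lemma \ref{lemm_divfree_decomposition_sphere} and proves $\Wcal_k^o\cap\Wcal_k^-=\{0\}$ (Theorem \ref{theo_Pksum}) by deriving the tangency identity $x\cdot p(x)=0$ from the exterior divergence constraint (Lemma \ref{lemm_div_free_tangent}) and combining it with rotation-freeness via Euler's identity. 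Your route is a dual packaging of the same computation: you parametrize $\Hdrf^-$ by the scalar potential $\varphi$ (the isomorphism of Theorem \ref{theo_structure_df_drf}), Kelvin-transform to the exterior, and read off $\nabla\cdot v_+^f$ from Euler's identity. Both calculations boil down to $\nabla\cdot\bigl(|x|^{-(2d+1)}p\bigr)=-(2d+1)|x|^{-(2d+3)}\,x\cdot p$ for a degree-$d$ harmonic $p$ together with $x\cdot\nabla p=dp$; the paper exploits the first factor via tangency, you exploit the second via the potential. One thing worth making explicit in your write-up is the convergence step: your termwise Kelvin transform of the spherical-harmonic series is precisely what Lemma \ref{lemm_divfree_decomposition_sphere} (via Corollary \ref{cor213}) justifies, and invoking it would make the passage from the series for $\varphi$ to the identity for $\nabla\cdot v_+^f$ airtight. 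Your closing spectral sketch, diagonalizing $\Kcal^\divf$ on each $\Wcal_k^o$ and invoking self-adjointness, parallels the paper's alternative proof via Proposition \ref{prop_div_free_homogeneous_single_layer} and Lemma \ref{lemm_orbital_angular_momentum}.
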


To prove Theorem \ref{theo_sphere_orthogonal},
we assume, without loss of generality, that $\GO$ is the unit ball:
\[
    \GO:=\{ x\in \Rbb^3 \mid |x|=1\}.
\]
For a nonnegative integer $k$, we denote by $P_k$ the space of all homogeneous harmonic polynomial. We also denote the space $\{ p|_{\p\GO} \mid p\in P_k\}$
by the same character $P_k$ by abusing notation (without causing confusion). Since $\Hsb= \overline{\oplus_{k=0}^\infty P_k}$, we see that
\[
    \Hsb^3=\overline{\bigoplus_{k=0}^\infty P_k^3}
\]
where the closure is taken in the norm topology on $\Hsb^3$. For $u: \Rbb^3 \setminus \{0\}\to \Cbb$, we denote by $u^*$ the Kelvin transform of $u$, namely,
\begin{equation}
    \label{eq_inversion}
    u^*(x):=\frac{1}{|x|}u\left(\frac{x}{|x|^2}\right).
\end{equation}
If $u$ is harmonic, then $u^*$ is also harmonic, and vice versa. If $u$ is vector-valued, i.e., if $u=(u_1,u_2,u_3)$, then $u^*:=(u_1^*,u_2^*,u_3^*)$.

To make notation short, we set for $k \ge 0$
\beq\label{eq_wcaldef}
\Wcal_k^\pm = \Hdrf^\pm \cap P_k^3, \quad \Wcal_k^o = \Hdf \cap P_k^3.
\eeq

\begin{lemm}\label{lemm_div_free_tangent}
    If $f\in \Wcal_k^o$ and $p \in P_k^3$ is such that $p|_{\p\GO}=f$, then the following holds:
    \begin{enumerate}[label={\rm (\roman*)}]
    \item \label{enum_sphere_div}$\nabla \cdot p(x)=0$ for $x \in \GO$ and $\nabla \cdot p^*(x)=0$ for $x \in \Rbb^3 \setminus \GO$.
    \item \label{enum_sphere_inv} The function $u^f$ defined by \eqref{eq_uf} satisfies
    \beq\label{eq_ufp}
    u^f(x)=p(x), \ \ x \in \GO; \quad u^f(x)=p^*(x), \ \ x \in \Rbb^3 \setminus \overline{\GO}.
    \eeq
    \item \label{enum_sphere_tan}$x\cdot f(x)=0$ for all $x\in \p\GO$.
    \end{enumerate}
\end{lemm}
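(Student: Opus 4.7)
The plan is to exploit the explicit form of the Kelvin transform on homogeneous polynomials: since each component of $p\in P_k^3$ is homogeneous of degree $k$, a short homogeneity calculation gives the pointwise identity $p^*(x) = |x|^{-2k-1} p(x)$ for $x\neq 0$. This formula will be the main computational tool throughout.

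For (i), I would argue by uniqueness of harmonic extensions. The polynomial $p$ is componentwise harmonic on all of $\Rbb^3$ and restricts to $f$ on $\p\GO$, so uniqueness for \eqref{eq_laplace_bvp} gives $v_-^f = p$ in $\GO$. Likewise $p^*$ is componentwise harmonic on $\Rbb^3\setminus \{0\}$, agrees with $f$ on the unit sphere, and decays at infinity, so $v_+^f = p^*$ in $\Rbb^3\setminus \overline{\GO}$. Since $f\in \Hdf$, both $\nabla\cdot v_-^f = 0$ in $\GO$ and $\nabla\cdot v_+^f = 0$ in the exterior; the former makes the polynomial $\nabla\cdot p$ vanish on a nonempty open set, hence vanish on all of $\Rbb^3$, while the latter yields $\nabla\cdot p^* = 0$ in $\Rbb^3\setminus \overline{\GO}$.

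For (ii), divergence-freeness together with componentwise harmonicity forces $\Lcal_{\Gl,\Gm} p = 0$ on $\Rbb^3$ and $\Lcal_{\Gl,\Gm} p^* = 0$ on $\Rbb^3\setminus\{0\}$, since the Lamé operator reduces to $\Gm\Delta$ on divergence-free fields. As $p|_{\p\GO} = f$, uniqueness for the interior Lamé problem \eqref{eq_lame_bvp} gives $u^f = p$ in $\GO$. For the exterior, $p^*(x) = O(|x|^{-k-1}) = O(|x|^{-1})$ as $|x|\to\infty$, so the decay condition in \eqref{eq_lame_bvp_ext} is satisfied, and uniqueness yields $u^f = p^*$ in $\Rbb^3\setminus\overline{\GO}$.

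For (iii), I would differentiate the identity $p^*(x) = |x|^{-2k-1} p(x)$: the product rule gives
\[
    \nabla\cdot p^*(x) = -(2k+1)|x|^{-2k-3}\, x\cdot p(x) + |x|^{-2k-1}\, \nabla\cdot p(x).
\]
Part (i) shows $\nabla\cdot p\equiv 0$ on $\Rbb^3$ and $\nabla\cdot p^* = 0$ for $|x|>1$, so the first term on the right must vanish for $|x|>1$. Hence $x\cdot p(x) = 0$ on an open set, and being a polynomial it vanishes identically on $\Rbb^3$; restricting to $\p\GO$ yields $x\cdot f(x) = 0$. The only subtle point in the whole lemma is the book-keeping with the Kelvin transform: once $p^*(x) = |x|^{-2k-1} p(x)$ is in hand, each of the three assertions reduces either to a short calculation or to invoking uniqueness for the relevant boundary value problem.
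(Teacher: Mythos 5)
Your proposal is correct and follows essentially the same approach as the paper's proof: uniqueness of the harmonic extensions identifies $v_\pm^f$ with $p$ and $p^*$, the divergence-free condition transfers to those polynomials, the homogeneity identity $p^*(x)=|x|^{-2k-1}p(x)$ yields (iii) by a direct differentiation, and (ii) follows because divergence-free harmonic vector fields automatically solve the Lam\'e system. The only cosmetic difference is that the paper derives (ii) by citing its earlier remark that for $f\in\Hdf^\pm$ the Laplace solution coincides with the Lam\'e solution, whereas you spell out the same point via $\Lcal_{\Gl,\Gm}p=\Gm\Delta p=0$.
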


\begin{proof}
    By the uniqueness for the interior and exterior problems for the harmonic equations (\eqref{eq_laplace_bvp} and \eqref{eq_laplace_bvp_ext}), we have
    \[
    v_-^f= p, \quad v_+^f=p^*.
    \]
    Since $f\in \Hdf$, we thus have $\nabla\cdot p=0$ in $\GO$ and $\nabla \cdot p^*(x)=0$ for $x \in \Rbb^3 \setminus \GO$. This proves \ref{enum_sphere_div} and \ref{enum_sphere_inv}.

    Since $p$ is a homogeneous polynomial of degree $k$, we have
    \beq\label{eq_pstarp}
        p^*(x)=\frac{1}{|x|^{2k+1}}p(x) \quad \text{in } \Rbb^3 \setminus \GO.
    \eeq
    Thus we see from \ref{enum_sphere_div}
    \[
        0=\nabla\cdot p^*(x)=-\frac{2k+1}{|x|^{2k+3}}x\cdot p(x)+ \frac{1}{|x|^{2k+1}} \nabla\cdot p(x)
    \]
    if $|x| \ge 1$. Since $\nabla\cdot p$ is a homogeneous polynomial, we infer from \ref{enum_sphere_div} again that $\nabla\cdot p=0$ in $\Rbb^3$. Thus \ref{enum_sphere_tan} follows.
\end{proof}

\begin{lemm}
    \label{lemm_divfree_decomposition_sphere}
    We have the decompositions
    \begin{equation}
        \label{eq_divfree_decomposition_sphere}
        \Hdf=\overline{\bigoplus_{k=0}^\infty \Wcal_k^o}
    \end{equation}
    and
    \begin{equation}
        \label{eq_div_rot_free_decomposition_sphere}
        \Hdrf^\pm=\overline{\bigoplus_{k=0}^\infty \Wcal^\pm_k}.
    \end{equation}
\end{lemm}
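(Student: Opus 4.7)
The plan is to use the standard spherical harmonic decomposition of $\Hsb^3$ together with the uniqueness of the expansion of a harmonic function into homogeneous harmonic polynomials (interior case) or their Kelvin transforms (exterior case).

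First, for $f\in \Hsb^3$ I would write $f=\sum_{k=0}^{\infty} f_k$ with $f_k\in P_k^3$, where the series converges in $\Hsb^3$. Letting $p_k\in P_k^3$ denote the unique homogeneous harmonic polynomial of degree $k$ with $p_k|_{\p\GO}=f_k$, the interior harmonic extension admits the expansion $v_-^f=\sum_k p_k$ (converging uniformly on compact subsets of $\GO$ and in $H^1(\GO)$), while the exterior extension is $v_+^f=\sum_k p_k^*$, convergent uniformly on compact subsets of $\Rbb^3\setminus\overline{\GO}$. Since $v_-^{f_k}=p_k$ and $v_+^{f_k}=p_k^*$, each partial sum corresponds to the extension of a finite partial sum of the spherical harmonic expansion.

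Next, I would take divergence and rotation term by term. Inside $\GO$, $\nabla\cdot p_k$ and the components of $\nabla\times p_k$ are homogeneous polynomials of degree $k-1$. Outside, using $p_k^*(x)=p_k(x)/|x|^{2k+1}$ (as already computed in the proof of Lemma \ref{lemm_div_free_tangent}), $\nabla\cdot p_k^*$ and the components of $\nabla\times p_k^*$ are homogeneous of degree $-(k+2)$. By the uniqueness of the power (respectively Laurent) series expansion of a harmonic function, $\sum_k \nabla\cdot p_k\equiv 0$ in $\GO$ forces $\nabla\cdot p_k=0$ for every $k$, and the same is true of the rotation, and analogously for the exterior expressions. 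This yields the equivalences
\[
    f\in \Hdrf^-\iff p_k\in \Hdrf^- \ \forall k,\qquad
    f\in \Hdrf^+\iff p_k\in \Hdrf^+ \ \forall k,
\]
and $f\in \Hdf$ if and only if each $f_k\in \Wcal_k^o$.

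Finally, from these equivalences the direct sums $\bigoplus_k \Wcal_k^\pm$ and $\bigoplus_k \Wcal_k^o$ are dense in $\Hdrf^\pm$ and $\Hdf$ respectively. The reverse inclusions follow from closedness: $\Hdrf^\pm=\Ker(\Kcal^\divf\pm \tfrac12 I)$ is closed by Theorem \ref{theo_div_rot_free_kd} and the boundedness of $\Kcal^\divf$ (Corollary \ref{coro_kd_self_adjoint}), and $\Hdf$ is closed by Lemma \ref{lemm_div_free_closed}. Combining gives \eqref{eq_divfree_decomposition_sphere} and \eqref{eq_div_rot_free_decomposition_sphere}.

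The main obstacle I anticipate is the bookkeeping for the exterior case: one must justify differentiating the series $\sum_k p_k^*$ term by term to obtain $\sum_k \nabla\cdot p_k^*$ and $\sum_k \nabla\times p_k^*$, and then separate the homogeneous components cleanly despite the fact that the series converges a priori only in a Sobolev-space sense. This is handled by restricting to compact spherical shells $\{1<r_1\le|x|\le r_2\}$, where standard elliptic interior estimates upgrade $H^1$-convergence to uniform convergence of the derivatives, after which uniqueness of the (Laurent) expansion in homogeneous components applies.
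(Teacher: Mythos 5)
Your proof is correct, and it shares the core strategy of the paper's proof---expand $f$ into spherical harmonics $f_k\in P_k^3$, show the divergence-free (resp.\ divergence- and rotation-free) constraint passes to each homogeneous piece so that $f_k\in\Wcal_k^o$ (resp.\ $\Wcal_k^\pm$), and then close using the closedness results (Lemma~\ref{lemm_div_free_closed}, Theorem~\ref{theo_div_rot_free_kd}). The intermediate step, however, is implemented differently. You work directly with the Laplace extensions $v_-^f=\sum_k p_k$ and $v_+^f=\sum_k p_k^*$, differentiate term by term, and invoke uniqueness of the expansion of a harmonic function into homogeneous components, justifying the termwise differentiation by upgrading $H^1$-convergence to $C^1_{\mathrm{loc}}$-convergence via interior elliptic estimates. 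The paper instead works with the Lam\'e extensions $u^{f_k}=\Scal[\Scal^{-1}[f_k]]$ and uses the $*$-energy identity of Corollary~\ref{cor213}, which gives the bound $\|\nabla\cdot u^g\|_{L^2}\lesssim\|g\|_*$; this allows one to pass the constraint $\nabla\cdot u^f=0$ to $\sum_k\nabla\cdot u_k=0$ purely in $L^2$, with no pointwise regularity bookkeeping, after which homogeneity (degree $k-1$ inside, degree $-(k+2)$ outside) isolates the terms. Your route has the modest advantage of staying with the Laplace extensions that appear in the definitions of $\Hdf$ and $\Hdrf^\pm$, so there is no need to relate $u^{f_k}$ to a harmonic polynomial; the paper's route trades that for the convenience of never leaving the Hilbert-space framework. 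Both are sound, and you have correctly flagged and handled the one point that genuinely needs care (termwise differentiation for the exterior Laurent series).
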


\begin{proof}
    We only prove \eqref{eq_divfree_decomposition_sphere}; \eqref{eq_div_rot_free_decomposition_sphere} is proved similarly. Let $f\in \Hdf$. Since $\Hcal^3=\overline{\bigoplus_{k=0}^\infty P_k^3}$, we can take functions $f_k\in P_k^3$ such that
    \[
        f=\sum_{k=0}^\infty f_k.
    \]
    Let $u_k:=u^{f_k}$ for ease of notation. Since $\nabla\cdot u^f=0$, we infer from Corollary \ref{cor213} that
    \begin{align*}
        &\left\| \sum_{k=0}^N \nabla\cdot u_k\right\|_{L^2(\GO)}^2 + \left\| \sum_{k=0}^N \nabla\cdot u_k\right\|_{L^2(\Rbb^3 \setminus \GO)}^2 \\
        &= \left\| \nabla\cdot \left( \sum_{k=0}^N u_k-u^f \right) \right\|_{L^2(\GO)}^2 + \left\| \nabla\cdot \left( \sum_{k=0}^N u_k-u^f \right) \right\|_{L^2(\Rbb^3 \setminus \GO)}^2 \\
        &\lesssim \left\| \sum_{k=0}^N f_k-f\right\|_*.
    \end{align*}
    Taking a limit $N\to\infty$, we obtain
    \[
        \sum_{k=0}^\infty \nabla \cdot u_k=0 \quad \text{in } \Omega \cup (\Rbb^3 \setminus \overline{\GO}).
    \]
    According to Lemma \ref{lemm_div_free_tangent} \ref{enum_sphere_inv}, $u_k$ is homogeneous of degree $k$ in $\Omega$ and of degree $-k-1$ in $\Rbb^3 \setminus \overline{\GO}$. Thus we have
    \[
        \nabla \cdot u_k=0 \quad \text{in } \Omega \cup (\Rbb^3 \setminus \overline{\GO})
    \]
    for each $k$, and conclude that $f_k \in \Hdf$ for each $k$. So \eqref{eq_divfree_decomposition_sphere} follows.
\end{proof}

\begin{theo}\label{theo_Pksum}
    It holds for $k\ge 0$ that
    \beq\label{eq_Pksum}
    P_k^3 = \Wcal_k^o \oplus \Wcal_k^- \oplus \Wcal_k^+ ,
    \eeq
    where the decomposition is orthogonal with respect to the inner product \eqref{eq_inner_product}.
\end{theo}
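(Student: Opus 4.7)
My plan is to prove the decomposition in three stages: pairwise orthogonality, construction of explicit linear injections from spaces of scalar harmonic polynomials into each summand, and a dimension count forcing exhaustion of $P_k^3$.

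For orthogonality, $\Wcal_k^-\perp \Wcal_k^+$ follows directly from Theorem \ref{theo_decomposition_div_free}. For $\Wcal_k^o\perp \Wcal_k^-$, I would take $f\in \Wcal_k^o\subset \Hdf$ and $g\in \Wcal_k^-\subset \Hdrf^-$ and apply identity \eqref{eq_div_rot_npd_identity_in}. Its right-hand side vanishes: since $f\in \Hdf$, uniqueness for the interior Lam\'e problem gives $u^f=v_-^f$ on $\GO$ (because $v_-^f$ is harmonic and divergence-free, hence solves $\Lcal_{\Gl,\Gm}u=0$), so $\nabla\cdot u^f=0$ there; and since $g\in \Hdrf^-$, both $\nabla\cdot u^g$ and $\nabla\times u^g$ vanish on $\GO$. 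Meanwhile, by self-adjointness of $\Kcal^\divf$ (Corollary \ref{coro_kd_self_adjoint}) combined with $\Kcal^\divf g=-\tfrac{1}{2}g$ (Theorem \ref{theo_div_rot_free_kd}), the left-hand side equals $\jbracket{f,\,(-\Kcal^\divf+\tfrac{1}{2}I)g}_*=\jbracket{f,g}_*$, so $\jbracket{f,g}_*=0$. The relation $\Wcal_k^o\perp \Wcal_k^+$ follows analogously from \eqref{eq_div_rot_npd_identity_ex}.

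Next I would introduce three linear maps,
\[
\begin{aligned}
    T\colon P_{k+1}\to \Wcal_k^-, &\quad T(h)=\nabla h, \\
    S\colon P_{k-1}\to \Wcal_k^+, &\quad S(h)=|x|^2\nabla h-(2k-1)hx, \\
    R\colon P_k\to \Wcal_k^o, &\quad R(Y)=x\times \nabla Y,
\end{aligned}
\]
and verify that they take values in $P_k^3$ (componentwise harmonicity by a direct Laplacian computation using Euler's identity) and in the stated subspaces. Membership in $\Wcal_k^-$ for $T(h)$ is immediate. For $S(h)$, I would check the Kelvin-transform identity $S(h)^*(x)=\nabla(h^*)(x)$ in $\overline{\GO}^c$, where $h^*(x):=h(x)/|x|^{2k-1}$ is harmonic outside, whence its gradient is div-free and rot-free. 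For $R(Y)$, a direct computation gives $\nabla\cdot R(Y)=0$ and $x\cdot R(Y)=0$; substituting into $\nabla\cdot p^*=|x|^{-(2k+1)}\nabla\cdot p-(2k+1)|x|^{-(2k+3)}\,x\cdot p$ yields $\nabla\cdot R(Y)^*=0$ outside, placing $R(Y)\in \Hdf\cap P_k^3=\Wcal_k^o$. Each of $T$, $S$, $R$ is injective for $k\geq 1$ by short computations; the degenerate case $k=0$ (where $R\equiv S\equiv 0$ and $T$ bijects $P_1$ onto $P_0^3$) is handled directly.

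Since $\dim P_{k+1}+\dim P_{k-1}+\dim P_k=(2k+3)+(2k-1)+(2k+1)=6k+3=\dim P_k^3$, and the three image subspaces are pairwise orthogonal (hence linearly independent) by Step~1, the sum $T(P_{k+1})+S(P_{k-1})+R(P_k)$ must equal $P_k^3$; consequently $T$, $S$, $R$ are bijections onto $\Wcal_k^-$, $\Wcal_k^+$, $\Wcal_k^o$, and the decomposition is orthogonal. The most delicate step I expect is the injectivity of $S$: it proceeds by taking the divergence of both sides of $|x|^2\nabla h=(2k-1)hx$, evaluating $\nabla\cdot(|x|^2\nabla h)=2(k-1)h$ and $\nabla\cdot((2k-1)hx)=(2k-1)(k+2)h$ via Euler's relation, and deducing $k(2k+1)h=0$, which forces $h=0$ for $k\geq 1$.
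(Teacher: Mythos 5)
The orthogonality step (your Step 1) contains a sign error that turns out to be fatal, not cosmetic. Theorem \ref{theo_div_rot_free_kd} states $\Hdrf^{\pm}=\Ker(\Kcal^\divf\pm \tfrac12 I)$, so for $g\in\Hdrf^-=\Ker(\Kcal^\divf-\tfrac12 I)$ one has $\Kcal^\divf g=+\tfrac12 g$, \emph{not} $-\tfrac12 g$. Consequently $(-\Kcal^\divf+\tfrac12 I)g=0$, and the left-hand side of \eqref{eq_div_rot_npd_identity_in} vanishes identically once you move the operator to $g$ by self-adjointness. The identity then reduces to the tautology $0=0$ and tells you nothing about $\jbracket{f,g}_*$. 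Trying the companion identity \eqref{eq_div_rot_npd_identity_ex} does not rescue this: there the left-hand side really is $\jbracket{f,g}_*$, but the right-hand side is $\mu\jbracket{\nabla\times u^f,\nabla\times u^g}_{\overline{\GO}^c}$ (the divergence terms vanish since $f\in\Hdf$), and neither $\nabla\times u^f$ nor $\nabla\times u^g$ vanishes in the exterior, so you cannot conclude the inner product is zero. The same problem, mutatis mutandis, occurs for $\Wcal_k^o\perp\Wcal_k^+$.

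This gap is structural, not a matter of choosing a better identity. The orthogonality $\Hdf\perp\Hdrf^\mp$ is a genuinely ball-specific fact: on general domains it is known to fail up to the first Betti number (Theorem~\ref{theo_codim_betti}), so no argument relying only on the general Green-type identities of Section~\ref{sec:divfree} can produce it. What the paper uses at this point is the extra algebraic structure available on the sphere: for $f\in\Wcal_k^o$ one has $p=x\times\nabla q$ (Lemma~\ref{lemm_orbital_angular_momentum}) and the eigenrelation $\Scal^{-1}[f]=-\mu(2k+1)f$ (Proposition~\ref{prop_div_free_homogeneous_single_layer}), which converts $\jbracket{f,g}_*$ into a boundary integral of $(\nabla q)\times v_-^g$ dotted with $\nv$; the divergence theorem and the curl-freeness of $v_-^g$ then give zero. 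Your tangentiality observation $x\cdot R(Y)=0$ is exactly what feeds this argument, but you never turn it into an inner-product computation.

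Your Steps 2 and 3 are essentially the paper's Theorem~\ref{theo_structure_df_drf} and the dimension count in Corollary~\ref{coro_dim}; the maps $T$, $S$, $R$ (your $S$ is the Kelvin-transformed gradient, and your injectivity computation for $S$ is correct), the membership checks, and the count $\dim P_{k+1}+\dim P_{k-1}+\dim P_k=\dim P_k^3$ are all sound. But your Step~3 invokes ``the three image subspaces are pairwise orthogonal ... by Step~1'' to deduce linear independence, so the failure of Step~1 propagates: without orthogonality you cannot conclude the images span $P_k^3$, and surjectivity onto each $\Wcal_k^\bullet$ (which you do not prove independently, unlike the paper) is left unjustified. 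To repair the argument you would either need to prove surjectivity directly (as in Theorem~\ref{theo_structure_df_drf}), or establish the orthogonality by the ball-specific computation of the second proof of Theorem~\ref{theo_sphere_orthogonal}.
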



\begin{proof}
By \eqref{eq_decomposition_div_free} and Lemma \ref{lemm_divfree_decomposition_sphere}, we have
\[
P_k^3 = \Wcal_k^o + \Wcal_k^- \oplus \Wcal_k^+.
\]
Thus, in order to prove \eqref{eq_Pksum}, it suffices to prove
\beq\label{eq:xxx}
\Wcal_k^\pm \cap \Wcal_k^o = \{ 0 \}.
\eeq

To prove \eqref{eq:xxx}, let $f \in \Wcal_k^o$. Choose $p \in P_k^3$ such that $p|_{\p\GO}=f$. By Lemma \ref{lemm_div_free_tangent} \ref{enum_sphere_tan}, we have $x \cdot p(x)=0$ in $\Rbb^3$.
Thus, we have for $j=1,2,3$
\[
0= \frac{\p}{\p x_j} (x\cdot p) = p_j(x) + \sum_{k=1}^3 x_k \frac{\p p_k}{\p x_j} .
\]
If further $f \in \Wcal_k^-$, then $\nabla \times p=0$, and hence $\frac{\p p_k}{\p x_j}= \frac{\p p_j}{\p x_k}$  in $\GO$. Thus we have
\[
0= p_j(x) + x \cdot \nabla p_j = (k+1) p_j,
\]
where the last equality holds because $p_j$ is a homogeneous polynomial of degree $k$. This proves $\Wcal_k^- \cap \Wcal_k^o = \{ 0 \}$.

By applying the same argument to $p^*$, one can prove $\Wcal_k^+ \cap \Wcal_k^o = \{ 0 \}$.
\end{proof}

Now we prove Theorem \ref{theo_sphere_orthogonal}.

\begin{proof}[Proof of Theorem \ref{theo_sphere_orthogonal}]
        According to Theorem \ref{theo_decomposition_div_free}, it suffices to prove
        \begin{equation}\label{eq_Hdfequal}
            \Hdf \cap \Hdrf^\pm=\{0\}.
        \end{equation}

        Let $f\in \Hdf \cap \Hdrf^-$. By Lemma \ref{lemm_divfree_decomposition_sphere}, we can decompose $f$ into
        \[
            f=\sum_{k=0}^\infty f^o_k=\sum_{k=0}^\infty f^-_k
        \]
        where $f^o_k\in \Wcal^o_k$ and $f^-_k\in \Wcal^-_k$. Thus we have $f^o_k=f^-_k$ for all $k\geq 0$. Since we already obtained $\Wcal^o_k\cap \Wcal^-_k=\{0\}$ in Theorem \ref{theo_Pksum}, we have $f^o_k=f^-_k=0$. Hence we obtain $f=0$. This proves $\Hdf \cap \Hdrf^-=\{0\}$.

        That $\Hdf \cap \Hdrf^+=\{0\}$ can be proved similarly.
    \end{proof}

\subsection{Dimension of homogeneous subspaces}

In this subsection, we explicitly compute the dimensions of subspaces $\Wcal_k^o$ and $\Wcal_k^\pm$ for each $k$. In the course of computation, we give an alternative proof of  Theorem \ref{theo_sphere_orthogonal}.

We begin with the following proposition which shows that $\Wcal^o_k$ is an eigenspace of $\Scal$ corresponding to an eigenvalue $-1/(\mu (2k+1))$.

\begin{prop}
    \label{prop_div_free_homogeneous_single_layer}
    If $f\in \Wcal_k^o$, then we have
    \beq\label{eq_Scal_ball}
        \Scal^{-1}[f]=-\mu (2k+1)f,
    \eeq
    and thus
    \beq\label{eq_inner_ball}
        \jbracket{f, g}_*=\mu (2k+1)\jbracket{f, g}_{\p\GO}
    \eeq
    for all $g\in \Hsb^3$.
\end{prop}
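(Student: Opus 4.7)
My plan is to use the explicit description of $u^f$ from Lemma \ref{lemm_div_free_tangent} to compute the Lam\'e conormal derivatives $\p_\nu u^f|_\pm$ on $\p\GO$, and then read off $\Scal^{-1}[f]$ from the standard single-layer jump relation $\p_\nu\Scal[\varphi]|_+ - \p_\nu\Scal[\varphi]|_- = \varphi$. Applied to $\varphi = \Scal^{-1}[f]$, for which $\Scal[\varphi] = u^f$ in all of $\Rbb^3$, this directly produces the claimed formula for $\Scal^{-1}[f]$, and the inner-product identity \eqref{eq_inner_ball} then drops out of the definition \eqref{eq_inner_product}.

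Fix a representative $p\in P_k^3$ with $p|_{\p\GO}=f$. By Lemma \ref{lemm_div_free_tangent} we have $u^f=p$ in $\GO$, $u^f=p^*$ in $\Rbb^3\setminus\overline{\GO}$, $\nabla\cdot p=0$, and $x\cdot p(x)=0$ on $\p\GO$. Both of these last identities in fact hold on all of $\Rbb^3$ by homogeneity (a homogeneous polynomial vanishing on the unit sphere vanishes everywhere), and the analogous identities $\nabla\cdot p^*=0$ and $x\cdot p^*=0$ hold on $\Rbb^3\setminus\{0\}$.

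The main computation is on the unit sphere, where $\nv_x=x$. Because $\nabla\cdot p=0$, the conormal derivative simplifies to
\[
    \p_\nu p = \Gm\bigl[(\nabla p)x + (\nabla p)^T x\bigr].
\]
Euler's identity for a degree-$k$ homogeneous vector field yields $(\nabla p)x = kp$, and differentiating $x\cdot p\equiv 0$ in $x_j$ yields $(\nabla p)^T x = -p$; together these give $\p_\nu u^f|_- = \p_\nu p = \Gm(k-1)f$ on $\p\GO$. Repeating the computation for $p^*$, which is homogeneous of degree $-(k+1)$ with the same tangency and divergence-free properties, gives $\p_\nu u^f|_+ = \p_\nu p^* = -\Gm(k+2)f$. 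Therefore
\[
    \p_\nu u^f|_+ - \p_\nu u^f|_- = -\Gm(k+2)f - \Gm(k-1)f = -\Gm(2k+1)f,
\]
and the jump formula for the Lam\'e single layer immediately yields $\Scal^{-1}[f] = -\Gm(2k+1)f$, which is \eqref{eq_Scal_ball}. Substituting this into \eqref{eq_inner_product} gives \eqref{eq_inner_ball}.

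There is no serious analytic obstacle here; the substantive content has been packaged into Lemma \ref{lemm_div_free_tangent}, and what remains is a short piece of algebra driven by Euler's identity on homogeneous fields. The one thing to be careful about is the bookkeeping of signs and the correct homogeneity degree $-(k+1)$ of the Kelvin transform $p^*$, since a sign error there would flip the eigenvalue and break the subsequent orthogonal decomposition argument.
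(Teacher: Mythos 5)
Your proof is correct, and the only substantive difference from the paper's argument is the choice of conormal derivative. The paper extracts $\Scal^{-1}[f]=\partial^\divf_\nu u^f|_+-\partial^\divf_\nu u^f|_-$ from the jump relation for the div-free conormal derivative $\partial^\divf_\nu$ (Proposition \ref{theo_jump_npds}); since $\nabla\cdot p=\nabla\cdot p^*=0$, only the rotation terms survive, and the computation closes via $p^*=|x|^{-(2k+1)}p$ and the identity $x\times(x\times f)=-f$ for tangential $f$ on the unit sphere. You instead read off $\Scal^{-1}[f]=\partial_\nu u^f|_+-\partial_\nu u^f|_-$ from the standard jump relation for the Lam\'e conormal derivative $\partial_\nu$, and compute each one-sided limit separately: $\partial_\nu p=\mu(k-1)f$ and $\partial_\nu p^*=-\mu(k+2)f$, driven by Euler's identity $(x\cdot\nabla)p=kp$ and the tangency consequence $\sum_j x_j\partial_i p_j=-p_i$. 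The two approaches lean on the same inputs (Lemma \ref{lemm_div_free_tangent}, the Kelvin transform relation, tangency of $p$ on the sphere) and of course produce the same jump $-\mu(2k+1)f$; yours has the small advantage that each one-sided conormal derivative comes out explicitly as a multiple of $f$, which makes the sign bookkeeping easy to audit, whereas the paper's use of $\partial^\divf_\nu$ makes the vanishing of divergence terms structural rather than something to track. Either variant is perfectly acceptable.
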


\begin{proof}
Let $f\in \Wcal_k^o$. Since $u^f=\Scal [\Scal^{-1}f]$, it follows from the jump relation \eqref{eq_jump_npds} that
    \[
        \Scal^{-1}[f]=\partial^\divf_\nu \Scal [\Scal^{-1}f]|_+-\partial^\divf_\nu \Scal [\Scal^{-1}f]|_-=\partial^\divf_\nu u^f|_+-\partial^\divf_\nu u^f|_-.
    \]
    Since $\p\GO$ is the unit sphere, the unit normal vector field on $\p\GO$ is $x$. Let $p \in P_k^3$ be such that $p|_{\partial\Omega}=f$. We have for $x \in \p\GO$
    \begin{align*}
        \partial^\divf_\nu u^f|_+(x)-\partial^\divf_\nu u^f|_-(x) &=\partial^\divf_\nu p^*(x)-\partial^\divf_\nu p(x) \\
        &=-\mu x\times \left(\nabla \times p^*(x)\right) + \mu x\times \left(\nabla \times p(x)\right) \\
        &= \mu (2k+1) \, x\times (x\times f(x)) \\
        &=-\mu (2k+1) f(x),
    \end{align*}
    where the first equality holds by Lemma \ref{lemm_div_free_tangent} \ref{enum_sphere_inv}, the second one by the definition \eqref{eq_conormal_d} of $\partial^\divf_\nu$ and Lemma \ref{lemm_div_free_tangent} \ref{enum_sphere_div}, the third one by the relation \eqref{eq_pstarp} of $p^*$ and $p$, and the last one by Lemma \ref{lemm_div_free_tangent} \ref{enum_sphere_tan}.
Thus, \eqref{eq_Scal_ball} follows.
\end{proof}

\begin{lemm}\label{lemm_orbital_angular_momentum}
    Let $p \in P_k^3$ be such that $p|_{\p\GO} \in \Wcal_k^o$. If we set
    \beq\label{eq_qdef}
        q(x):=-\frac{1}{k(k+1)}x\cdot (\nabla \times p),
    \eeq
    then $q \in P_k$ and
    \begin{equation}
        \label{eq_orbital_angular_momentum}
        p(x)=x\times \nabla q(x).
    \end{equation}
\end{lemm}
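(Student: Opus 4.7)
The plan is to verify two things: that $q$ lies in $P_k$ (homogeneous and harmonic of degree $k$), and that the vector identity $p = x \times \nabla q$ holds. The key ingredients are already in hand from Lemma \ref{lemm_div_free_tangent}: since $p$ is a homogeneous harmonic polynomial of degree $k$ with $p|_{\p\GO} \in \Wcal_k^o$, we have $\nabla \cdot p = 0$ in $\GO$ (hence everywhere, since $\nabla \cdot p$ is a homogeneous polynomial vanishing on an open set) and $x \cdot p(x) = 0$ on $\p\GO$ (hence everywhere by homogeneity). Euler's identity will also be used repeatedly: $(x \cdot \nabla)p = k p$ and $(x \cdot \nabla)(\nabla \times p) = (k-1)(\nabla \times p)$.

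For the first claim, $q$ is clearly a homogeneous polynomial of degree $k$. To check harmonicity, write $w := \nabla \times p$ and use $\Delta(x_i w_i) = 2 \partial_i w_i + x_i \Delta w_i$; since $w$ is harmonic (componentwise, as the curl of a harmonic field) and $\nabla \cdot w = \nabla \cdot (\nabla \times p) = 0$, we get $\Delta q = 0$, so $q \in P_k$.

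For the second claim, I will compute $\nabla q$ and then $x \times \nabla q$. Using the standard identity $\nabla(x \cdot w) = (x \cdot \nabla) w + (w \cdot \nabla) x + x \times (\nabla \times w) + w \times (\nabla \times x)$, the last term vanishes, the second equals $w$, and $\nabla \times w = \nabla \times (\nabla \times p) = \nabla(\nabla \cdot p) - \Delta p = 0$. Combined with Euler's identity for $w$ (degree $k-1$), this yields
\[
\nabla q = -\frac{1}{k(k+1)}\bigl[w + (k-1)w\bigr] = -\frac{1}{k+1}(\nabla \times p).
\]
Then I compute $x \times (\nabla \times p)$ directly in coordinates via $\epsilon_{ijk}\epsilon_{klm} = \delta_{il}\delta_{jm} - \delta_{im}\delta_{jl}$, obtaining
\[
\bigl(x \times (\nabla \times p)\bigr)_i = \partial_i(x \cdot p) - p_i - (x \cdot \nabla)p_i.
\]
Since $x \cdot p \equiv 0$ and $(x \cdot \nabla)p = k p$ by Euler, this equals $-(k+1)p_i$. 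Multiplying by $-1/(k+1)$ gives $x \times \nabla q = p$, completing the proof.

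No step appears to be a serious obstacle — the whole argument is a chain of standard vector-calculus identities applied to a harmonic, divergence-free, tangential homogeneous polynomial. The only thing to be careful about is consistently invoking the global (rather than merely interior or boundary) versions of $\nabla \cdot p = 0$ and $x \cdot p = 0$, which follow from homogeneity.
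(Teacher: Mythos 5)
Your proof is correct and follows essentially the same route as the paper: both derive $\nabla q = -\tfrac{1}{k+1}\,\nabla\times p$ from the same vector identity, then use $x\cdot p \equiv 0$ and Euler's relation to show $x\times(\nabla\times p) = -(k+1)p$. Your separate verification of $\Delta q = 0$ via $\Delta(x_i w_i) = 2\partial_i w_i + x_i\Delta w_i$ is fine but slightly redundant, since once you have $\nabla q = -\tfrac{1}{k+1}\,\nabla\times p$ the identity $\Delta q = \nabla\cdot\nabla q = -\tfrac{1}{k+1}\nabla\cdot(\nabla\times p) = 0$ is immediate; and your Levi-Civita computation of $x\times(\nabla\times p)$ is simply the coordinate version of the paper's use of $\nabla(x\cdot p) = (k+1)p + x\times(\nabla\times p)$.
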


\begin{proof}
    Straight-forward computations yield
    \begin{align}
        \nabla q(x)
        &=-\frac{1}{k(k+1)} \Big( \big( (\nabla\times p)\cdot \nabla \big) x+(x\cdot \nabla)(\nabla\times p)+x\times \big( \nabla \times (\nabla \times p) \big) \nonumber \\
        &\quad+(\nabla \times p)\times (\nabla \times x) \Big) \nonumber \\
        &=-\frac{1}{k(k+1)} \Big( \nabla\times p+(x\cdot \nabla)(\nabla\times p)+x\times \big( \nabla\times (\nabla \times p) \big) \Big). \label{eq_nabla_q}
    \end{align}
    Since $p|_{\p\GO} \in \Hdf$, $\nabla\cdot p=0$ by Lemma \ref{lemm_div_free_tangent} \ref{enum_sphere_div}, and hence
    \[
    \nabla \times (\nabla \times p)=\nabla (\nabla \cdot p)-\Delta p=0.
    \]
    Since $\nabla \times p$ is a homogeneous polynomial of degree $k-1$, we have
    \begin{align*}
        (x\cdot \nabla)(\nabla \times p)=(k-1)\nabla\times p.
    \end{align*}
    It then follows from \eqref{eq_nabla_q} that
    \[
        \nabla q(x)=-\frac{1}{k+1}\nabla \times p(x).
    \]
    This implies that $\GD q=0$. Since $q$ is homogeneous of degree $k$, we have $q \in P_k$. Moreover, we have
    \begin{equation}
        \label{eq_nabla_q_reduced}
        x\times \nabla q(x)=-\frac{1}{k+1}x\times (\nabla \times p(x)).
    \end{equation}

    Since $x\cdot p(x)|_{\p\GO}=0$ by Lemma \ref{lemm_div_free_tangent} \ref{enum_sphere_tan} and $x\cdot p(x)$ is a homogeneous polynomial, we have $x\cdot p(x)=0$ for all $x\in \Rbb^3$. Thus we have
    \beq\label{eq_gradxp}
    \begin{aligned}
        0&=\nabla (x\cdot p)=p + p\times (\nabla \times x) +(x\cdot \nabla)p+x\times (\nabla \times p)+p\times (\nabla \times x) \\
        &=(k+1)p+x\times (\nabla \times p),
    \end{aligned}
    \eeq
    where the last equality holds since $(p\cdot \nabla)x=p$, $\nabla \times x=0$, and $(x \cdot \nabla)p=kp$ by the assumption that $p$ is a homogeneous polynomial of degree $k$. It then follows from \eqref{eq_nabla_q_reduced} that
    \[
        x\times \nabla q(x)=-\frac{1}{k+1}\times (-(k+1)p(x))=p(x),
    \]
    which is the desired identity \eqref{eq_orbital_angular_momentum}.
\end{proof}

Proposition \ref{prop_div_free_homogeneous_single_layer} and Lemma \ref{lemm_orbital_angular_momentum} yield an alternative proof of Theorem \ref{theo_sphere_orthogonal}.

\begin{proof}[Proof of Theorem \ref{theo_sphere_orthogonal} by Lemma \ref{lemm_orbital_angular_momentum}]
    Let $f\in \Wcal_k^o$. Take $p\in P_k^3$ such that $f=p|_{\p\GO}$. By Lemma \ref{lemm_orbital_angular_momentum}, we can take $q\in P_k$ such that $p=x\times \nabla q$. Hence we have $v_-^f=x\times \nabla q$ ($v_-^f$ is the solution to the interior boundary value problem \eqref{eq_laplace_bvp}). Moreover, since the Kelvin transform becomes $p^*(x)=(x\times \nabla q)^*(x)=x\times \nabla q^*(x)$, we have the solution to the exterior problem given by $v^f_+=x\times \nabla q^*$. Then, by Proposition \ref{prop_div_free_homogeneous_single_layer}, we have
    \begin{align*}
        \jbracket{f, g}_*
        &=\mu (2k+1)\jbracket{x\times \nabla q, g}_{\p\GO}
        =\mu(2k+1)\int_{\p\GO}(\nv_x \times (\nabla q))\cdot v_-^{g}(x)\, \df \sigma (x) \\
        &=\mu(2k+1)\int_{\p\GO}((\nabla q)\times v_-^{g}(x))\cdot \nv_x \, \df \sigma (x) \\
        &=\mu(2k+1)\int_{\GO}\nabla \cdot ((\nabla q)\times v_-^{g}(x)) \, \df x \\
        &=\mu(2k+1)\int_{\GO}(\underbrace{(\nabla\times \nabla q)}_{=0}\cdot v_-^{g}(x)-\nabla q \cdot (\underbrace{\nabla \times v_-^{g}(x)}_{=0})) \, \df x \\
        &=0
    \end{align*}
    for all $g\in \Wcal^-_k$. Similarly, one can show
    \[
        \jbracket{f, g}_*=0
    \]
    for all $g\in \Wcal^+_k$. If $g \in P_l^3$ for some $l \neq k$, then by Lemma \ref{lemm_orbital_angular_momentum} we have
    \begin{align*}
        \jbracket{f, g}_*
        =\mu (2k+1)\jbracket{f, g}_{L^2(\p\GO)^3} =0.
    \end{align*}
Therefore, we have
    \[
    \Wcal_k^o \subset (\Hdrf^- +\Hdrf^+)^\perp.
    \]
    This together with Lemma \ref{lemm_divfree_decomposition_sphere} yields $\Hdf\subset (\Hdrf^-+\Hdrf^+)^\perp$.
\end{proof}


We now prove the following theorem which characterizes the subspaces $\Wcal_k^o$, $\Wcal_k^-$, and $\Wcal_k^+$ in terms of homogeneous harmonic polynomials.

\begin{theo}
    \label{theo_structure_df_drf}
    For $k\geq 1$, the following maps are linear isomorphisms:
    \begin{align}
        q\in P_k &\longmapsto x\times \nabla q|_{\p\GO}\in \Wcal_k^o, \label{eq_df_sphere_generate}\\
        q\in P_{k+1} & \longmapsto \nabla q|_{\p\GO}\in \Wcal_k^-, \label{eq_drfm_sphere_generate}\\
        q\in P_{k-1} & \longmapsto \nabla q^*|_{\p\GO}\in \Wcal_k^+. \label{eq_drfp_sphere_generate}
    \end{align}
\end{theo}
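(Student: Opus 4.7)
The plan is to verify well-definedness, injectivity, and surjectivity for each of the three maps. Well-definedness amounts to checking that the given formula produces a harmonic vector polynomial of degree $k$ whose interior and exterior harmonic extensions have the required divergence and rotation properties; injectivity exploits uniqueness of harmonic extensions together with the homogeneity of the objects involved; surjectivity requires constructing an appropriate scalar potential from a given divergence- or rotation-free field.

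For the map in \eqref{eq_drfm_sphere_generate}, $\nabla q$ is component-wise harmonic, divergence-free (since $\Delta q = 0$), and rotation-free, so its boundary values lie in $\Wcal_k^-$; injectivity follows since a nonzero vector polynomial of positive homogeneity degree cannot vanish on the sphere; for surjectivity, $v_-^f$ is rotation-free in the simply connected ball, hence equals $\nabla q$ for some $q$, whose homogeneity and the identity $\Delta q = \nabla \cdot v_-^f = 0$ promote it to $P_{k+1}$. For the map in \eqref{eq_df_sphere_generate}, a direct computation shows $x \times \nabla q$ is component-wise harmonic (the Laplacian of $x_j \partial_l q$ is $2\partial_j \partial_l q$, which vanishes under the antisymmetry of the cross product) and divergence-free, and the pointwise identity $(x \times \nabla q)^* = x \times \nabla q^*$ yields exterior divergence-freeness; surjectivity is exactly Lemma \ref{lemm_orbital_angular_momentum}, and injectivity follows because $x \times \nabla q \equiv 0$ forces $\nabla q$ to be everywhere radial, hence $q$ constant and therefore zero for $k \geq 1$.

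The main obstacle is the map \eqref{eq_drfp_sphere_generate}, since $\nabla q^*$ is not manifestly the restriction of a homogeneous polynomial. For well-definedness I would compute $\nabla q^*(x) = \nabla q(x) - (2k-1) q(x) x$ on the unit sphere and observe that this coincides with the restriction of the polynomial $p(x) := |x|^2 \nabla q(x) - (2k-1) q(x) x$; a short calculation using $\Delta(|x|^2 \nabla q) = (4k-2) \nabla q$ and $\Delta(q(x) x) = 2 \nabla q$ yields $\Delta p = 0$, so $p \in P_k^3$, while the exterior divergence- and rotation-freeness and the decay at infinity are automatic. Injectivity follows from uniqueness of the exterior Dirichlet problem together with the fact that a harmonic function homogeneous of degree $-k$ on $\Rbb^3\setminus\{0\}$ that vanishes outside $\GO$ must vanish identically. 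For surjectivity, given $f \in \Wcal_k^+$, the exterior extension $v_+^f$ is rotation-free in the simply-connected domain $\Rbb^3 \setminus \overline{\GO}$, so $v_+^f = \nabla \psi$ for a scalar potential $\psi$ which I would normalize to vanish at infinity; since $v_+^f$ agrees with the Kelvin transform of $v_-^f \in P_k^3$ and is therefore homogeneous of degree $-k-1$ on $\Rbb^3 \setminus \{0\}$, the potential $\psi$ is forced to be harmonic and homogeneous of degree $-k$; its inverse Kelvin transform $q := \psi^*$ is then harmonic and homogeneous of degree $k-1$ on the punctured space, and by the removable singularity theorem for bounded harmonic functions (using $k \geq 1$) extends to a genuine element of $P_{k-1}$ satisfying $\nabla q^*|_{\p\GO} = f$.
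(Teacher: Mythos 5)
Your proof is correct and arrives at the same result, but for the surjectivity of the maps in \eqref{eq_drfm_sphere_generate} and \eqref{eq_drfp_sphere_generate} you take a genuinely different route than the paper. The paper's proof is constructive: given $f=p|_{\p\GO}\in\Wcal_k^-$ it writes down the explicit potential $q=(k+1)^{-1}x\cdot p(x)$, and given $f=p|_{\p\GO}\in\Wcal_k^+$ it writes down $q=-k^{-1}(x\cdot p^*)^*$, then verifies by direct computation (Euler's identity for homogeneous functions together with $\nabla\times p=0$ resp.\ $\nabla\times p^*=0$) that these are harmonic, homogeneous of the correct degree, and satisfy $\nabla q=p$ resp.\ $\nabla q^*=p^*$. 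You instead invoke the Poincar\'e lemma on the simply connected sets to obtain a potential $q$ (or $\psi$) abstractly, then deduce the required homogeneity from the homogeneity of the data (and, for \eqref{eq_drfp_sphere_generate}, extend across the origin by the removable-singularity theorem before taking the inverse Kelvin transform). Both are valid; the paper's formula approach has the advantage of avoiding any topological input and making degree and harmonicity manifest, while yours is shorter to state but requires more care in justifying that the abstractly-obtained potential is homogeneous and extends to $\Rbb^3\setminus\{0\}$. For well-definedness of \eqref{eq_drfp_sphere_generate} you supply the explicit polynomial $p(x)=|x|^2\nabla q(x)-(2k-1)q(x)x$ that realizes $\nabla q^*|_{\p\GO}$ as an element of $P_k^3$, a useful addition that the paper leaves implicit, and your Laplacian computations checking $\Delta p=0$ are correct. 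Two small imprecisions worth tightening: in the injectivity argument for \eqref{eq_drfp_sphere_generate}, the components of $\nabla q^*$ are homogeneous of degree $-k-1$ (not $-k$; it is $q^*$ itself that has degree $-k$), and in the surjectivity argument for \eqref{eq_drfp_sphere_generate} the Poincar\'e lemma should be applied on $\Rbb^3\setminus\{0\}$ directly (where $\nabla\times v^f_+=\nabla\times p^*=0$ holds by analytic continuation) rather than merely on $\Rbb^3\setminus\overline{\GO}$, so that the scaling identity $\psi(tx)=t^{-k}\psi(x)$ and the removable-singularity step make sense globally; both are easily repaired.
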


A few remarks are in order before proving Theorem \ref{theo_structure_df_drf}. We first observe that since $\dim P_k=2k+1$ for each $k \ge 1$, we can compute the dimensions of the subspaces $\Wcal_k^o$, $\Wcal_k^-$, and $\Wcal_k^+$ as the following corollary shows.
\begin{coro}\label{coro_dim}
The following holds for each $k \ge 1$:
\beq\label{eq_dim_df_generate}
        \dim \Wcal_k^o =2k+1, \quad \dim \Wcal_k^- =2k+3, \quad
        \dim \Wcal_k^+ =2k-1 .
\eeq
In particular, the subspaces $\Hdf$, $\Hdrf^-$ and $\Hdrf^+$ are infinite-dimensional.
\end{coro}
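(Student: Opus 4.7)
The plan is to verify, for each of the three maps, that (a) the image lies in the claimed subspace, (b) the map is injective, and (c) the map is surjective. For well-definedness and the image, each map produces a homogeneous polynomial of degree $k$: for the first two maps this is immediate, while for the third we compute on $|x|=1$ that $\nabla q^*(x) = -(2k-1)\, x\, q(x) + \nabla q(x)$, so $\nabla q^*|_{\partial\Omega}$ is the restriction of the polynomial $p(x) := -(2k-1)\, x\, q(x) + |x|^2 \nabla q(x) \in P_k^3$. The PDE conditions are then verified by direct vector-calculus identities. For the first map, $\nabla\cdot(x\times \nabla q) = (\nabla\times x)\cdot \nabla q - x\cdot(\nabla\times\nabla q) = 0$, and the Kelvin transform formula $(x\times\nabla q)^*(x) = |x|^{-(2k+1)}(x\times\nabla q)(x)$ combined with $x\cdot(x\times\nabla q)=0$ gives divergence-freeness in the exterior too, so the image lies in $\Hdf\cap P_k^3 = \Wcal_k^o$. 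For the second map, $\nabla\times\nabla q=0$ and $\nabla\cdot\nabla q = \Delta q = 0$ place the image in $\Wcal_k^-$. For the third, $q^*$ is harmonic in $\Rbb^3\setminus\{0\}$ and decays, so $\nabla q^*$ is simultaneously div-free, rot-free, and decaying in the exterior, giving membership in $\Wcal_k^+$.

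For injectivity, each kernel calculation is short. If $x\times\nabla q = 0$ on $\partial\Omega$, then since $x\times\nabla q$ is a homogeneous polynomial of degree $k$, it vanishes identically on $\Rbb^3$, forcing $\nabla q$ to be everywhere radial; hence $q$ is a function of $|x|$ alone, and being a homogeneous polynomial of degree $k$ it equals $c|x|^k$, which is harmonic only when $k=0$, so $q=0$ for $k\ge 1$. If $\nabla q = 0$ on $\partial\Omega$, then the harmonic polynomial $\nabla q$ vanishes inside $\Omega$ by uniqueness and hence everywhere, so $q$ is constant, but a homogeneous polynomial of degree $k+1\ge 2$ that is constant must vanish. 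For the third map, taking the inner product of $\nabla q^*|_{\partial\Omega} = -(2k-1)xq(x)+\nabla q(x)$ with $x$ on $|x|=1$ and applying Euler's identity $x\cdot\nabla q = (k-1)q$ yields $x\cdot\nabla q^*|_{\partial\Omega} = -kq$; hence if $\nabla q^* = 0$ on the sphere, then $q\equiv 0$ on $\partial\Omega$ and, being homogeneous of degree $k-1$, identically zero.

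For surjectivity, the first map is immediate from Lemma \ref{lemm_orbital_angular_momentum}: given $f\in\Wcal_k^o$ with polynomial extension $p$, the polynomial $q := -\frac{1}{k(k+1)} x\cdot(\nabla\times p) \in P_k$ satisfies $p = x\times\nabla q$. For $\Wcal_k^-$, given $f$ with polynomial extension $p \in P_k^3$, Lemma \ref{lemm_div_free_tangent} gives $v_-^f = p$; then $p$ is harmonic, rot-free, and div-free in $\Omega$ and hence in $\Rbb^3$ by polynomial extension. Setting $q(x) := \tfrac{1}{k+1}\,x\cdot p(x)\in P_{k+1}$, one computes $\partial_i q = \tfrac{1}{k+1}(p_i + \sum_j x_j\partial_i p_j) = \tfrac{1}{k+1}(p_i + \sum_j x_j\partial_j p_i) = \tfrac{1}{k+1}(p_i + k p_i) = p_i$ using rot-freeness and Euler's identity, and $\Delta q = \tfrac{2}{k+1}\nabla\cdot p = 0$. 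For $\Wcal_k^+$, the Kelvin transform $p^*$ of the extension $p$ equals $v_+^f$ on the exterior and is therefore div-free and rot-free there; writing out these conditions gives the polynomial identities $|x|^2\nabla\cdot p = (2k+1)\,x\cdot p$ and $|x|^2\nabla\times p = (2k+1)\,x\times p$, so in particular $|x|^2$ divides $x\cdot p$. Setting $q(x):= -\tfrac{1}{k\,|x|^2}\,x\cdot p(x)\in P_{k-1}$, a short computation analogous to the $\Wcal_k^-$ case shows $\nabla q^* = p^*$ in the exterior, hence on $\partial\Omega$. I expect the main obstacle to be this last surjectivity step: extracting $q\in P_{k-1}$ requires the divisibility $|x|^2\mid x\cdot p(x)$, which in turn rests on the interplay between the Kelvin transform and the divergence/rotation conditions on $p^*$, and then one must verify that the resulting $q$ is harmonic—both consequences of carefully unpacking that $p^*$ is div-free and rot-free in the exterior.
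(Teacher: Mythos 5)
Your proposal is correct and follows essentially the same route as the paper: the paper deduces the dimension count immediately from Theorem \ref{theo_structure_df_drf}, which establishes exactly the three isomorphisms you construct, and your verification of image membership, injectivity, and surjectivity for each map mirrors the paper's proof of that theorem (in particular using Lemma \ref{lemm_orbital_angular_momentum} for the surjectivity of the first map, the Euler-identity computation $q=(k+1)^{-1}x\cdot p$ for the second, and the Kelvin-transform argument for the third). The only cosmetic differences are local: for injectivity of the second and third maps the paper uses the shorter observation that a homogeneous polynomial vector field vanishing on the unit sphere vanishes identically, and for the third map the paper writes the inverse as $q=-k^{-1}(x\cdot p^*)^*$ and invokes harmonicity-preservation of the Kelvin transform, which is equivalent to your identity $q=-\tfrac{1}{k|x|^2}x\cdot p$ together with the divisibility $|x|^2\mid x\cdot p$; you correctly identify that divisibility (coming from $\nabla\cdot p^*=0$) as the one nontrivial point.
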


We will use Corollary \ref{coro_dim} to prove Theorem \ref{theo_infinite_dim_c1a} which asserts that subspaces $\Hdf$ and  $\Hdrf^\pm$ are of infinite dimensions on domains with Lipschitz boundaries.

One can prove the decomposition \eqref{eq_Pksum} using \eqref{eq_dim_df_generate}.
In fact, according to Theorem \ref{theo_decomposition_div_free} and Lemma \ref{lemm_divfree_decomposition_sphere}, it holds for $k \ge 0$ that
\[
P_k^3 \subset \Wcal_k^o + \Wcal_k^- + \Wcal_k^+.
\]
By \eqref{eq_dim_df_generate}, we have
\[
\dim P_k^3 = 3(2k+1) = \dim \Wcal_k^o + \dim \Wcal_k^- + \dim \Wcal_k^+
\]
if $k \ge 1$. Thus \eqref{eq_Pksum} holds for $k \ge 1$.
One can easily see that
    \[
        \dim \Wcal_0^o= \dim \Wcal_0^+=0, \quad
        \dim \Wcal_0^-=3 .
    \]
Thus \eqref{eq_Pksum} holds for $k =0$   .

\begin{proof}[Proof of Theorem \ref{theo_structure_df_drf}]
    For $q\in P_k$, let $p(x)=x\times \nabla q(x)$. Then, we have
    \[
        \nabla \cdot p(x) =(\nabla \times x)\cdot \nabla q+(\nabla\times \nabla q)\cdot x=0.
    \]
    Since
    \[
        p^*(x)=\frac{1}{|x|^{2k+1}} (x\times \nabla q(x)),
    \]
    we also have
    \[
        \nabla\cdot p^*(x)=-\frac{2k+1}{|x|^{2k+3}} x\cdot (x\times \nabla q(x)) + \frac{1}{|x|^{2k+1}} \nabla\cdot p(x)=0.
    \]
    Thus the function $f:=x\times \nabla q|_{\p\GO}$ belongs to $\Wcal_k^o$, that is, the linear transform defined in \eqref{eq_df_sphere_generate} maps $P_k$ into $\Wcal_k^o$.

    Let $f\in \Wcal_k^o$ and let $p \in P_k^3$ be such that $p|_{\partial\Omega}=f$. Lemma \ref{lemm_orbital_angular_momentum} says that the mapping
    \[
        f=p|_{\partial\Omega} \in \Wcal_k^o \longmapsto -\frac{1}{k(k+1)}x\cdot (\nabla \times p)\in P_k
    \]
    is the inverse mapping of the linear mapping \eqref{eq_df_sphere_generate}. Thus it is an
    isomorphism.

    If $q\in P_{k+1}$, then we have
    \begin{align*}
        \nabla \cdot \nabla q=\Delta q=0, \quad \nabla\times (\nabla q)=0.
    \end{align*}
    Thus $\nabla q|_{\p\GO}\in \Hdrf^-\cap P_k^3$.

    To prove that the linear mapping \eqref{eq_drfm_sphere_generate} is an isomorphism, we let $f\in \Hdrf^-\cap P_k^3$ and take $p\in P_k^3$ such that $p|_{\partial\Omega}=f$. We set $q=(k+1)^{-1}x\cdot p(x)$. Then we have
    \[
        \nabla q(x)=\frac{1}{k+1}(\underbrace{(x\cdot \nabla)p}_{=kp}+\underbrace{(p\cdot \nabla) x}_{=p}+x\times (\underbrace{\nabla \times p}_{=0})+p\times (\underbrace{\nabla \times x}_{=0}))=p(x).
    \]
    Moreover, we obtain $\Delta q=\nabla\cdot p=0$. Hence $q\in P_{k+1}^3$. Therefore the mapping \eqref{eq_drfm_sphere_generate} is surjective. If $q\in P_{k+1}$ satisfies $\nabla q|_{\p\GO}=0$, then $\nabla q=0$ for all $\Rbb^3$ since $\nabla q$ is a homogeneous polynomial. Thus $q(x)$ is a constant. Since $q(x)$ is a homogeneous polynomial of positive degree, we have $q(x)=0$. Therefore the linear mapping \eqref{eq_drfm_sphere_generate} is an isomorphism.

    If $q\in P_{k-1}$, then
    we have
    \[
        \nabla \cdot \nabla q^*=\Delta q^*=0, \quad \nabla \times \nabla q^*=0, \quad \Delta (\nabla q^*)=\nabla (\Delta q^*)=0.
    \]
    Thus $\nabla q^*|_{\p\GO}\in \Wcal_k^+$.

    To prove that the linear mapping \eqref{eq_drfp_sphere_generate} is an isomorphism, we let $f\in \Hdrf^+\cap P_k^3$ and take $p\in P_k^3$ such that $p|_{\partial\Omega}=f$. Since $f\in \Hdrf^+$, we have $\nabla \times p^*=0$. Set $q(x):=-k^{-1}(x\cdot p^*)^*(x)$. Then we have $q^*(x)=-k^{-1}x\cdot p^*(x)$ and
    \[
        \nabla q^*(x)=-\frac{1}{k}(\underbrace{(x\cdot \nabla)p^*}_{=-(k+1)p^*}+\underbrace{(p^*\cdot \nabla) x}_{=p^*}+x\times (\underbrace{\nabla \times p^*}_{=0})+p^*\times (\underbrace{\nabla \times x}_{=0}))=p^*(x).
    \]
    Moreover, we obtain $\Delta q^*=\nabla\cdot p^*=0$. Hence $q=(q^*)^*\in P_{k-1}^3$. Therefore the mapping \eqref{eq_drfp_sphere_generate} is surjective. If $q\in P_{k-1}$ satisfies $\nabla q^*|_{\p\GO}=0$, then $\nabla q^*=0$ for all $\Rbb^3\setminus \{ 0\}$ since $|x|^{2k+1}\nabla q^*$ is a homogeneous polynomial. Thus $q^*(x)$ is a constant. Hence $q(x)=c|x|^{-1}$ for some constant $c$. Since $q(x)$ is a polynomial, we have $q(x)=0$. Therefore the linear mapping \eqref{eq_drfp_sphere_generate} is an isomorphism.
\end{proof}

\begin{exam*}
    We give an explicit representation of $\Wcal_k^o$ and $\Wcal_k^\pm$ for $k=0, 1, 2$ as follows.
    \begin{align*}
        \Wcal_0^o &=\{ 0\}, \\
        \Wcal_0^- &=\rmop{span}\{ (1, 0, 0), (0, 1, 0), (0, 0, 1)\}, \\
        \Wcal_0^+ &=\{ 0\},
    \end{align*}
    \begin{align*}
        \Wcal_1^o &=\rmop{span}\{ (0, -z, y), (z, 0, -x), (-y, x, 0)\}, \\
        \Wcal_1^- &=\rmop{span}\{ (y, x, 0), (z, 0, x), (0, z, y), (x, 0, -z), (0, y, -z)\}, \\
        \Wcal_1^+ &=\rmop{span}\{ (x, y, z)\},
    \end{align*}
    and
    \begin{align*}
        \Wcal_2^o
            &=\rmop{span} \left\{
        \begin{aligned}
            &(-y^2+z^2, xy, -zx),
            (-xy, x^2-z^2, yz), \\
            &(zx, -yz, -x^2+y^2),
            (yz, 0, -xy),
            (0, zx, -xy)
        \end{aligned} \right\}, \\
        \Wcal_2^- &=\rmop{span}\left\{
        \begin{aligned}
            &(-2xy, -x^2-y^2+2z^2, 4yz), (2xy, x^2-y^2, 0), \\
            &(-x^2-y^2+2z^2, -2xy, 4zx), (x^2-y^2, -2xy, 0), \\
            &(-2zx, -2yz, -x^2-y^2+2z^2), (yz, zx, xy), \\
            &(2zx, -2yz, x^2-y^2)
        \end{aligned}\right\}, \\
        \Wcal_2^+ &=\rmop{span}\left\{
            \begin{aligned}
                &(-3xy, x^2-2y^2+z^2, -3yz), \\
                &(-2x^2+y^2+z^2, -3xy, -3zx), \\
                &(-3zx, -3yz, x^2+y^2-2z^2)
            \end{aligned}    \right\}.
    \end{align*}
\end{exam*}

\subsection{ENP eigenfunctions on ball}
According to Theorem \ref{thm_eigenvalue3D} whose proof will be given in section \ref{sec:eigenspace}, the eigenvalues of the eNP operator on domains with $C^{1,\alpha}$ boundary consist of three sequences converging to $0$, $-k_0$ and $k_0$, respectively. If the given domain is a ball, then the eigenvalues are computed explicitly in \cite{DLL19}: they are
\begin{align*}
            \xi_o^k&:=\frac{3}{4k+2}, \\
            \xi_-^k&:=\frac{3\lambda-2\mu (2k^2-2k-3)}{2(\lambda+2\mu)(4k^2-1)}, \\
            \xi_+^k&:=\frac{-3\lambda+2\mu (2k^2+2k-3)}{2(\lambda+2\mu)(4k^2-1)}
        \end{align*}
for $k \ge 1$. The corresponding eigenfunctions are found in the same paper. Suppose that $\Omega\subset \Rbb^3$ is the unit ball centered at $0$. It turns out that if $p_{k}$ is a homogeneous harmonic polynomial of degree $k$, then $\nabla p_{k}(x)\times x$, $\nabla p_{k}(x)$ and $\nabla p_{k-1}^*(x)$ ($x \in \partial \Omega$) are eigenfunctions corresponding to $\xi_o^k$, $\xi_-^k$ and $\xi_+^k$, respectively. According to Theorem \ref{theo_structure_df_drf}, it amounts to saying that $\Wcal_k^o$, $\Wcal_k^-$ and $\Wcal_k^+$ are eigenspaces of the eNP operator corresponding to $\xi_o^k$, $\xi_-^k$ and $\xi_+^k$, respectively. In particular, $\Hdf$, $\Hdrf^-$ and $\Hdrf^+$ are invariant under $\Kcal$, and they are eigenspaces corresponding to the eigenvalues converging to $0$, $-k_0$ and $k_0$, respectively. If $\GO$ is a bounded domain in $\Rbb^3$ whose boundary is $C^{1,\Ga}$ for some $\Ga >1/2$, then $\Hdf$, $\Hdrf^-$ and $\Hdrf^+$ may not be invariant under $\Kcal$ in general. However, Theorem \ref{theo_enp_eigenfunctions} shows that they are eigenspaces in the limiting sense.

\section{Proof of Theorem \ref{theo_free_np_3D}}\label{sec:theo12}

We now consider general bounded domains $\Omega$ in $\Rbb^3$ to prove Theorem \ref{theo_free_np_3D}. In this section, the boundary $\partial\Omega$ is assumed to be $C^{1, \alpha}$ for some $\alpha > 1/2$ except for Lemma \ref{lemm_np23_compact} where $\alpha$ is assumed to be larger than $0$. We begin with the following lemma, which states that the eNP operator $\Kcal$ and $\Kcal^\divf$ respectively defined in \eqref{eq_np} and \eqref{eq_kd}, are equivalent modulo a compact operator:

\begin{lemm}
    \label{lemm_np23_compact}
If $\p\GO$ is $C^{1,\Ga}$ for some $\Ga>0$, then $\Kcal +2k_0 \Kcal^\divf$ is compact on $\Hsb^3$.
\end{lemm}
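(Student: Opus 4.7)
The plan is to exploit the explicit decompositions \eqref{eq_np_decomposition} and \eqref{eq_kd_decomposition} so that the singular Riesz-transform-type term $\Kcal_1$ cancels out of $\Kcal+2k_0\Kcal^\divf$. A direct algebraic computation gives
\[
    \Kcal + 2k_0 \Kcal^\divf = 2k_0(\Kcal_1+\Kcal_2)-3(1-2k_0)\Kcal_3 + 2k_0(-\Kcal_1+\Kcal_2) = 4k_0 \Kcal_2 - 3(1-2k_0)\Kcal_3.
\]
The decisive feature is that the non-smoothing part of $\Kcal$, carried by $\Kcal_1$, is eliminated by adding $2k_0\Kcal^\divf$; only $\Kcal_2$ and $\Kcal_3$ remain, and inspection of \eqref{eq_k2} and \eqref{eq_k3} shows that their integral kernels both contain the geometric factor $(x-y)\cdot \nv_y$.

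The next step is to establish that $\Kcal_2$ and $\Kcal_3$ are each compact on $\Hsb^3$ when $\p\GO$ is of class $C^{1,\Ga}$. Parametrizing $\p\GO$ locally as a graph $\Gvf$ of a $C^{1,\Ga}$ function and applying the first-order Taylor expansion of $\Gvf$ around $y$, one obtains the standard geometric estimate
\[
    |(x-y)\cdot \nv_y| \lesssim |x-y|^{1+\Ga}, \quad x,y\in \p\GO.
\]
Substituting this bound into the definitions of $\Kcal_2$ and $\Kcal_3$, and using $|(x-y)_i|\leq|x-y|$ in the numerator of $\Kcal_3$, yields pointwise kernel estimates of the form $|x-y|^{\Ga-2}$ for both operators. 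Since $\p\GO$ is a two-dimensional surface, a kernel of order $\Ga-2$ is integrable whenever $\Ga>0$, so both $\Kcal_2$ and $\Kcal_3$ are weakly singular integral operators.

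The conclusion will then follow from the smoothing property of weakly singular integral operators on $\p\GO$, which is the content of Appendix \ref{sect_smoothing}: such operators map $\Hsb^3$ compactly into itself. Applied to $\Kcal_2$ and $\Kcal_3$ individually, this gives the compactness of the linear combination $4k_0\Kcal_2-3(1-2k_0)\Kcal_3$, and hence of $\Kcal+2k_0\Kcal^\divf$.

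The algebraic cancellation itself is routine; the main technical point is matching the weakly singular kernel estimate with the correct functional-analytic framework. Compactness must be established on $\Hsb^3=H^{1/2}(\p\GO)^3$ rather than merely on $L^2(\p\GO)^3$, and the regularity of $\p\GO$ is only $C^{1,\Ga}$ with $\Ga>0$ small, so that classical elliptic or pseudo-differential machinery is not directly available. This is precisely what Appendix \ref{sect_smoothing} is expected to handle by proving a gain of regularity of order $\Ga$ for operators with kernel $|x-y|^{\Ga-2}$ on a $C^{1,\Ga}$ surface, from which compactness on $H^{1/2}$ follows by Rellich's theorem.
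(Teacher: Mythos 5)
Your algebraic cancellation is correct and is exactly the paper's starting point: $\Kcal + 2k_0\Kcal^\divf = 4k_0\Kcal_2 - 3(1-2k_0)\Kcal_3$, so the task reduces to showing $\Kcal_2$ and $\Kcal_3$ are compact on $\Hsb^3$. Your pointwise kernel bound $|K(x,y)|\lesssim |x-y|^{\alpha-2}$, derived from $|(x-y)\cdot\nv_y|\lesssim|x-y|^{1+\alpha}$ on a $C^{1,\alpha}$ surface, is also right.

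The gap is in the final inference. For a weakly singular kernel of order $|x-y|^{\alpha-2}$ on a two-dimensional surface, Theorem~\ref{theo_NP_bounded_Sobolev_g}\,(i) and Corollary~\ref{coro_T_cpt}\,(i) (taking $\beta=\alpha$, $\gamma=1$) only yield compactness of $\Tcal$ on $H^s(\p\GO)$ for $s<\min\{\beta+\gamma-1,\gamma\}=\alpha$. To reach $s=1/2$ this forces $\alpha>1/2$, whereas the lemma allows \emph{any} $\alpha>0$, so the ``weak singularity $+$ Rellich'' route you sketch does not close when $\alpha$ is small. The paper supplies the missing ingredient: a $T(1)$-type cancellation, namely that $\Kcal_2$ and $\Kcal_3$ map constant vectors to constant vectors ($\Kcal_2[b]=b/2$ is the Gauss integral for the scalar NP operator; $\Kcal_3[b]=-b/6$ is deduced by evaluating \eqref{eq_np_decomposition} on constants for varying Lam\'e constants). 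This is precisely the extra hypothesis in Theorem~\ref{theo_NP_bounded_Sobolev_g}\,(ii) and Corollary~\ref{coro_T_cpt}\,(ii), which then give compactness on $H^s$ for all $s<\gamma=1$, in particular on $H^{1/2}$, regardless of how small $\alpha>0$ is. (You also omit verification of the H\"older-in-$x$ kernel estimate \eqref{eq_singularity_1}, which is needed before the appendix theorem applies at all and which the paper checks explicitly for $\Kcal_3$; that is routine, but the $T(1)$ cancellation is the decisive step your proposal overlooks.)
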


\begin{proof}
We recall the decompositions \eqref{eq_kd_decomposition} and \eqref{eq_np_decomposition}, and the operators $\Kcal_j$ ($j=1, 2, 3$) therein. We claim that, if $\p\GO$ is $C^{1,\Ga}$ for some $\Ga>0$, then the operators $\Kcal_2$ and $\Kcal_3$ are compact on $\Hsb^3$.

We apply Corollary \ref{coro_T_cpt}. One can directly check that $\Kcal_2$ and $\Kcal_3$ satisfy the conditions \eqref{eq_singularity_0} and \eqref{eq_singularity_1} with $\beta=\alpha$ and $\gamma=1$. For instance, if we denote the (matrix-valued) integral kernel of $\Kcal_3$ by $K_3(x, y)=(K_{3, ij}(x, y))_{i, j=1}^3$, where
\[
    K_{3, ij} (x, y):=\frac{1}{4\pi}\frac{((x-y)\cdot \nv_y)(x_i-y_i)(x_j-y_j)}{|x-y|^5},
\]
then we have
\[
    |K_3(x, y)|\lesssim \frac{|(x-y)\cdot \nv_y|}{|x-y|^3}\lesssim \frac{1}{|x-y|^{2-\alpha}}
\]
because $\partial\Omega$ is $C^{1, \alpha}$. If $2|x-z|<|x-y|$, then
\begin{align*}
    &|K_{3, ij} (x, y)-K_{3, ij} (z, y)| \\
    &\leq
    \frac{1}{4\pi}\frac{|(x-z)\cdot (\nv_y-\nv_x)+(x-z)\cdot \nv_x||z_i-y_i||z_j-y_j|}{|z-y|^5} \\
    &\quad +\frac{|(x-y)\cdot \nv_y|}{4\pi}\left|\frac{(x_i-y_i)(x_j-y_j)}{|x-y|^5}-\frac{(z_i-y_i)(z_j-y_j)}{|z-y|^5}\right| .
\end{align*}
Since $2|x-z|<|x-y|$ implies $|z-y|>|x-y|/2$, we have
\begin{align*}
    |K_{3, ij} (x, y)-K_{3, ij} (z, y)|
    &\lesssim \frac{|x-z|+|x-z|^{1+\alpha}}{|x-y|^{3-\alpha}}
    +|x-y|^{1+\alpha}\frac{|x-z|}{|x-y|^4} \\
    &\lesssim \frac{|x-z|}{|x-y|^{3-\alpha}}.
\end{align*}
Moreover, we obtain $\Kcal_3[b]=-b/6$ for any constant vector field $b\in \Rbb^3$ on $\partial\Omega$. In fact, since $\Kcal_2$ is the NP operator for the Laplace operator, it is well-known that $\Kcal_2[b]=b/2$. It is also known that $\Kcal [b]=b/2$ for any Lam\'e constants (see \cite[Lemma 2.2]{AJKKY}). It thus follows from \eqref{eq_np_decomposition} that
\[
    \frac{1}{2} b =2k_0 \left(\Kcal_1[b]+\frac{1}{2} b\right)-3(1-2k_0)\Kcal_3[b].
\]
Since this relation holds for any Lam\'e constants, or for any $k_0 \in (0, \varepsilon)$ for some $\varepsilon >0$, we see that $\Kcal_3[b]=-b/6$. We then apply Corollary \ref{coro_T_cpt} to infer that $\Kcal_2$ and $\Kcal_3$ are compact on $\Hsb^3$.

Now we substitute the relation $\Kcal^\divf=-\Kcal_1+\Kcal_2$ given in \eqref{eq_kd_decomposition} to \eqref{eq_np_decomposition} to obtain the conclusion.
\end{proof}

In the rest of this section, we always assume that $\partial\Omega$ is $C^{1, \alpha}$ for some $\alpha>1/2$. For the proof of Theorem \ref{theo_free_np_3D}, we derive a representation formula for the eNP operator $\Kcal$ which relates $\Kcal$ with the space $\Hdf$ (Proposition \ref{lemm_np_modulo_cpt}). We begin with the following lemma.

\begin{lemm}\label{lemm:multi}
Let $\psi \in C^{0,\Ga}(\p\GO)$ for some $\Ga>1/2$. The mapping $\varphi \mapsto \varphi \psi$ is bounded on $\Hsb$ and $\Hsb^*$.
\end{lemm}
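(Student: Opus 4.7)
The plan is to establish boundedness on $\Hsb = H^{1/2}(\p\GO)$ directly via the intrinsic Slobodeckij seminorm, and then deduce boundedness on $\Hsb^* = H^{-1/2}(\p\GO)$ by duality. Recall that on a two--dimensional Lipschitz surface $\p\GO \subset \Rbb^3$, the norm on $H^{1/2}(\p\GO)$ is equivalent to
\[
\|\vph\|_{H^{1/2}(\p\GO)}^2 \sim \|\vph\|_{L^2(\p\GO)}^2 + \iint_{\p\GO\times\p\GO} \frac{|\vph(x)-\vph(y)|^2}{|x-y|^3}\,\df \Gs(x)\,\df \Gs(y),
\]
the exponent $3$ arising as $\dim(\p\GO) + 2\cdot\tfrac12$.

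First I would bound $\|\vph\psi\|_{L^2} \leq \|\psi\|_{L^\infty}\|\vph\|_{L^2}$, and then treat the Slobodeckij part by splitting
\[
(\vph\psi)(x)-(\vph\psi)(y) = \vph(x)\bigl(\psi(x)-\psi(y)\bigr) + \psi(y)\bigl(\vph(x)-\vph(y)\bigr),
\]
applying the triangle inequality in the weighted $L^2$ space. The piece coming from the second summand is immediately controlled by $\|\psi\|_{L^\infty}^2[\vph]_{1/2}^2$. For the piece coming from the first summand, I would insert the H\"{o}lder estimate $|\psi(x)-\psi(y)| \leq [\psi]_{C^{0,\Ga}}|x-y|^\Ga$ to reduce the task to bounding
\[
[\psi]_{C^{0,\Ga}}^2\int_{\p\GO} |\vph(x)|^2\left(\int_{\p\GO}\frac{\df \Gs(y)}{|x-y|^{3-2\Ga}}\right)\df \Gs(x).
\]

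The key step (and the reason the hypothesis $\Ga > 1/2$ appears) is the verification that the inner integral is uniformly bounded in $x\in\p\GO$. Since $\p\GO$ is locally a graph over a two--dimensional parameter domain, a comparison with a planar disk shows that this inner integral is finite precisely when $3-2\Ga < 2$, i.e.\ $\Ga > 1/2$, in which case it is majorized by a constant depending only on $\p\GO$ and $\Ga$. Combining everything yields $\|\vph\psi\|_{\Hsb} \lesssim \|\psi\|_{C^{0,\Ga}}\|\vph\|_{\Hsb}$. I expect the integrability threshold to be the only genuinely nontrivial point; everything else is a routine splitting.

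To transfer boundedness to $\Hsb^*$, I would argue by duality. Given $f\in\Hsb^*$, define $f\psi$ as a functional on $\Hsb$ by
\[
\jbracket{f\psi,\vph}_{\p\GO} := \jbracket{f,\psi\vph}_{\p\GO}, \quad \vph\in \Hsb.
\]
This makes sense because $\psi\vph \in \Hsb$ by the first part, and we obtain
\[
|\jbracket{f\psi,\vph}_{\p\GO}| \leq \|f\|_{\Hsb^*}\,\|\psi\vph\|_{\Hsb} \lesssim \|\psi\|_{C^{0,\Ga}}\,\|f\|_{\Hsb^*}\,\|\vph\|_{\Hsb},
\]
so that the map $f\mapsto f\psi$ is bounded on $\Hsb^*$ with norm $\lesssim \|\psi\|_{C^{0,\Ga}}$, completing the proof.
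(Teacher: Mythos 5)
Your argument is correct and is essentially the paper's proof: both use the Slobodeckij seminorm for $H^{1/2}(\p\GO)$, the same product splitting, the same observation that $\int_{\p\GO}|x-y|^{-(3-2\Ga)}\,\df\Gs(y)$ is uniformly bounded when $\Ga>1/2$, and the same duality argument for $\Hsb^*$. The only cosmetic difference is that you spell out the local-graph comparison behind the integrability threshold, which the paper leaves implicit.
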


\begin{proof}
Since $\partial\Omega$ is two-dimensional, the Sobolev norm on $\Hsb=H^{1/2}(\p\GO)$ is equivalent to following norm (see \cite{Gilbarg-Trudinger01})
    \begin{equation}\label{eq_Sobolev_Besov}
        \left(\| \varphi\|_{L^2(\p\GO)}^2+ \int_{\p\GO} \int_{\p\GO} \frac{|\varphi (x)-\varphi (y)|^2}{|x-y|^3}\, \df \sigma (x)\df \sigma (y)\right)^{1/2}.
    \end{equation}
If $\psi \in C^{0,\Ga}(\p\GO)$ for some $\Ga>1/2$, then $|\psi(x)-\psi(y)| \lesssim |x-y|^{\Ga}$, and hence
    \begin{align*}
        & \int_{\p\GO} \int_{\p\GO} \frac{|(\varphi\psi) (x) -(\varphi\psi)(y)|^2}{|x-y|^3}\, \df \sigma (x) \df \sigma (y) \\
        & \lesssim \int_{\p\GO} \int_{\p\GO} \frac{|\varphi (x)-\varphi (y)|^2}{|x-y|^3}\, \df \sigma (x) \df \sigma (y) \\
        &\quad +  \int_{\p\GO} \int_{\p\GO} \frac{|\varphi (x)|^2}{|x-y|^{3-2\Ga}}\, \df \sigma (x) \df \sigma (y) \\
        & \lesssim \int_{\p\GO} \int_{\p\GO} \frac{|\varphi (x)-\varphi (y)|^2}{|x-y|^3}\, \df \sigma (x) \df \sigma (y) + \int_{\p\GO} |\varphi (x)|^2\, \df \sigma (x),
    \end{align*}
    where the last inequality holds since $\Ga >1/2$. Thus, we have
    \[
    \| \varphi \psi \|_\Hsb \lesssim \| \varphi \|_\Hsb
    \]
    for all $\varphi \in \Hsb$.

    If $\varphi \in \Hsb^*$ and $\eta \in \Hsb$, then
    \[
    | \langle \varphi \psi, \eta \rangle_{\p\GO} | \le \| \varphi \|_{\Hsb^*} \| \eta \psi \|_\Hsb \lesssim \| \varphi \|_{\Hsb^*} \| \eta \|_\Hsb.
    \]
    Here $\langle \ , \ \rangle_{\p\GO}$ is the duality pairing between $\Hsb^*$ and $\Hsb$ as before. Thus, we have
    \[
    \| \varphi \psi \|_{\Hsb^*} \lesssim \| \varphi \|_{\Hsb^*}
    \]
    for all $\varphi \in \Hsb^*$.
\end{proof}

Let $g\in \Hsb^3$. A straight-forward calculation shows that
\beq
    \label{eq_div_single_layer}
    \nabla\cdot \Scal [g](x)=\frac{k_0}{2\pi\Gm}\int_{\p\GO} \frac{(x-y)\cdot g(y)}{|x-y|^3}\, \df \Gs (y)
\eeq
for $x\in \Rbb^3\setminus \p\GO$.  Using the single layer potential $\Scal_0$ with respect to the Laplacian as defined in \eqref{eq_lapla_single}, it can be written as
\[
    \nabla\cdot \Scal [g](x)= \frac{2k_0}{\Gm} \nabla \cdot \Scal_0[g](x).
\]
We now invoke the jump formula \eqref{eq_jump_grad} for $\Scal_0$ to derive
the following jump relation:
    \begin{equation}
        (\nabla\cdot \Scal [g])|_\pm (x)=\frac{2k_0}{\Gm} \left(\Jcal_\mathrm{div}[g](x)\pm \frac{1}{2}\nv_x\cdot g(x)\right), \quad x \in \p\GO \label{eq_jump_j_div}
    \end{equation}
for $g\in \Hsb^3$, where the boundary integral operator $\Jcal_\mathrm{div}$ is defined by
\beq
    \label{eq_div_sl_bdry}
    \Jcal_\mathrm{div}[g](x):=\frac{1}{4\pi}\rmop{p.v.}\int_{\p\GO}\frac{(x-y)\cdot g(y)}{|x-y|^3}\, \df \Gs (y), \quad x \in \p\GO.
\eeq

Let $\Jcal_\mathrm{div}^*$ be the (formal) $L^2$-adjoint of $\Jcal_\mathrm{div}$, namely,
\beq
    \label{eq_jds_explicit}
    \Jcal_\mathrm{div}^*[\varphi](x)=-\frac{1}{4\pi}\rmop{p.v.}\int_{\p\GO} \frac{x-y}{|x-y|^3}\varphi(y)\, \df \Gs (y).
\eeq
The jump formula \eqref{eq_jump_grad} can be rewritten as
   \begin{equation}
   \nabla \Scal_0[\varphi]|_\pm (x)= -\Jcal_\mathrm{div}^*[\varphi](x)\pm \frac{1}{2} \varphi(x) \nv_x, \quad x \in \p\GO \label{eq_jump_ds0}
    \end{equation}
for $\varphi\in \Hcal$.

\begin{lemm}\label{lemm:Jdiv}
    If $\p\GO$ is $C^{1,\Ga}$ for some $\Ga > 1/2$, then $\Jcal_\mathrm{div}^*$ is bounded from $\Hsb$ into $\Hsb^3$ and from $\Hsb^*$ into $(\Hsb^3)^*$, and $\Jcal_\mathrm{div}$ is bounded from $\Hsb^3$ into $\Hsb$ and from $(\Hsb^3)^*$ into $\Hsb^*$.
    \end{lemm}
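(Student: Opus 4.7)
The strategy is to rewrite $\Jcal^*_\mathrm{div}$ using the jump formula \eqref{eq_jump_ds0} for $\nabla\Scal_0$, then deduce the four boundedness claims from classical mapping properties of the Laplacian single layer $\Scal_0$ together with Lemma \ref{lemm:multi}. Averaging the two identities in \eqref{eq_jump_ds0} over the $\pm$-signs cancels the jump term $\pm\frac12\varphi\nv$ and gives
\begin{equation*}
    \Jcal^*_\mathrm{div}[\varphi]
    = -\tfrac{1}{2}\bigl(\nabla\Scal_0[\varphi]|_+ + \nabla\Scal_0[\varphi]|_-\bigr).
\end{equation*}
The tangential component of $\nabla\Scal_0[\varphi]$ is continuous across $\p\GO$ and coincides with the surface gradient $\nabla_T\Scal_0[\varphi]$, while the average of the two normal components equals $\Kcal_0^*[\varphi]$, where $\Kcal_0^*$ is the $L^2$-adjoint of the classical Neumann--Poincar\'e operator for the Laplacian. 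Consequently,
\begin{equation*}
    \Jcal^*_\mathrm{div}[\varphi] = -\nabla_T\Scal_0[\varphi] - \Kcal_0^*[\varphi]\,\nv.
\end{equation*}

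I would then bound the two summands separately. For the normal-vector term, it is classical (Verchota, together with the Plemelj symmetrization for the Laplacian) that $\Kcal_0^*$ is bounded on both $\Hsb^*$ and $\Hsb$; moreover each component $\nv_j$ lies in $C^{0,\Ga}(\p\GO)$ with $\Ga>1/2$, so Lemma \ref{lemm:multi} makes multiplication by $\nv_j$ bounded on $\Hsb$ and $\Hsb^*$. Hence $\varphi\mapsto \Kcal_0^*[\varphi]\nv$ is bounded from $\Hsb$ to $\Hsb^3$ and from $\Hsb^*$ to $(\Hsb^3)^*=(\Hsb^*)^3$. For the tangential term, Verchota's isomorphism gives $\Scal_0:\Hsb^*\to \Hsb$, and on a $C^{1,\Ga}$ boundary with $\Ga>1/2$ the single layer is smoothing of order one, so additionally $\Scal_0:\Hsb\to H^{3/2}(\p\GO)$ is bounded. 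Since $\nabla_T$ is a first-order surface operator, $\nabla_T:\Hsb\to (\Hsb^*)^3$ and $\nabla_T:H^{3/2}(\p\GO)\to \Hsb^3$ are bounded, and composition yields $\nabla_T\Scal_0$ bounded $\Hsb^*\to (\Hsb^3)^*$ and $\Hsb\to \Hsb^3$.

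Combining the two pieces proves the two assertions for $\Jcal^*_\mathrm{div}$. The corresponding assertions for $\Jcal_\mathrm{div}$ then follow at once by taking the $L^2$-formal adjoint in the duality pairings $\langle\,\cdot\,,\,\cdot\,\rangle_{\p\GO}$ and using reflexivity of $\Hsb$ and $\Hsb^3$. The main technical obstacle is the smoothing estimate $\Scal_0:\Hsb\to H^{3/2}(\p\GO)$ on a merely $C^{1,\Ga}$ boundary: this is the place where the hypothesis $\Ga>1/2$ is essential beyond its role in Lemma \ref{lemm:multi}. It can be obtained either from the pseudodifferential calculus on $C^{1,\Ga}$ manifolds (where $\Scal_0$ has order $-1$), or by splitting the kernel of $\Scal_0$ using the identity $(x-y)\cdot\nv_y = O(|x-y|^{1+\Ga})$ and invoking the weakly-singular smoothing estimates recorded in Appendix \ref{sect_smoothing}.
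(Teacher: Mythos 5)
Your decomposition
\[
    \Jcal^*_\mathrm{div}[\varphi] = -\nabla_T\Scal_0[\varphi] - \Kcal_0^*[\varphi]\,\nv,
\]
obtained by averaging the two jump relations, is correct and is essentially the same splitting into normal and tangential parts that the paper uses (the paper writes it via a local orthonormal frame $\nv,\tau_1,\tau_2$ and a partition of unity). The treatment of the normal term $-\Kcal_0^*[\varphi]\nv$, and the derivation of the $\Hsb^*\to(\Hsb^3)^*$ estimate for the whole operator, are sound and match the paper.

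The gap is in how you pass to the $\Hsb\to\Hsb^3$ estimate. You reduce it to the smoothing claim $\Scal_0:\Hsb\to H^{3/2}(\p\GO)$, which you correctly flag as the crux, but neither of your two proposed routes is adequate here. The weakly singular estimates of Appendix \ref{sect_smoothing} applied to the kernel of $\Scal_0$ (with $\beta=\gamma=1$) only give $\Scal_0: L^2(\p\GO)\to H^s(\p\GO)$ for $s<1$, not the desired gain of one full derivative starting from $H^{1/2}$; and since $\Scal_0[1]$ is not constant, part (ii) of Theorem \ref{theo_NP_bounded_Sobolev_g} does not apply either. The suggested ``kernel splitting using $(x-y)\cdot\nv_y=O(|x-y|^{1+\alpha})$'' is not relevant to $\Scal_0$: its kernel $|x-y|^{-1}$ does not involve $\nv_y$, and the tangential part of $\nabla\Scal_0$ is a genuine zero-order Calder\'on--Zygmund operator, not weakly singular. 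Appealing to pseudodifferential calculus on $C^{1,\alpha}$ manifolds is a real option but a non-trivial external import. The more economical argument, which is almost certainly what the paper intends by ``we immediately see,'' avoids any extra smoothing estimate: each scalar component $(\Jcal_\mathrm{div}^*)_i$ has kernel $-(x_i-y_i)/(4\pi|x-y|^3)$, which is odd under swapping $x\leftrightarrow y$, so $(\Jcal_\mathrm{div}^*)_i$ is skew-adjoint on $L^2(\p\GO)$. Hence boundedness of $(\Jcal_\mathrm{div}^*)_i$ on $\Hsb^*$ transfers by duality to boundedness on $\Hsb$, and the $\Hsb\to\Hsb^3$ (and, by adjointness, $(\Hsb^3)^*\to\Hsb^*$) bounds follow from the single $\Hsb^*\to(\Hsb^3)^*$ estimate you have already proved.
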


    \begin{proof}
        Since $\p\GO$ is $C^{1,\Ga}$, the unit normal vector field $\nv$ is $C^{0,\Ga}$. Let $\tau_1$ and $\tau_2$ be locally defined orthonormal vector fields tangent to $\p\GO$. We may choose them to be $C^{0,\Ga}$. Then, we have
    \[
    \Jcal_\mathrm{div}^*[\varphi]= (\Jcal_\mathrm{div}^*[\varphi] \cdot \nv) \nv + \sum_{j=1}^2 (\Jcal_\mathrm{div}^*[\varphi] \cdot \tau_j) \tau_j
    \]
    for $\varphi \in \Hsb^*$.

    Since $\Jcal_\mathrm{div}^*[\varphi] \cdot \nv = -\Kcal_0^*[\varphi]$ where $\Kcal_0$ is the NP operator with respect to the Laplace operator. Thus, we have
    \[
    \| \Jcal_\mathrm{div}^*[\varphi] \cdot \nv \|_{\Hsb^*} \le C \| \varphi \|_{\Hsb^*}
    \]
    for some constant $C$. Note that $\Jcal_\mathrm{div}^*[\varphi] \cdot \tau_j= -\tau_j \cdot \nabla \Scal_0[\varphi]$. Since $\Scal_0$ is bounded as a mapping from $\Hsb^*=H^{-1/2}(\p\GO)$ into $\Hsb=H^{1/2}(\p\GO)$ \cite{Verchota84} and $\tau_j \cdot \nabla$ is a tangential derivative on $\p\GO$, we have
    \[
    \| \Jcal_\mathrm{div}^*[\varphi] \cdot \tau_j \|_{\Hsb^*} \le C \| \varphi \|_{\Hsb^*}
    \]
    for some constant $C$. We then glue the above local estimates by a partition of unity and infer from Lemma \ref{lemm:multi} that $\Jcal_\mathrm{div}^*: \Hsb^* \to (\Hsb^*)^3$ is bounded.

    By duality, we see that $\Jcal_\mathrm{div}: \Hsb^3 \to \Hsb$ is bounded, from which we immediately see that $\Jcal_\mathrm{div}^*: \Hsb \to \Hsb^3$ and $\Jcal_\mathrm{div}: (\Hsb^3)^* \to \Hsb^*$ are bounded.
    \end{proof}

We define operators $\Qcal_\pm$ by
\begin{equation}
    \label{eq_qpm}
    \Qcal_\pm [\varphi]:=- \frac{\mu}{k_0} \nabla \Scal_0 [\varphi]|_\pm
\end{equation}
for $\varphi \in \Hcal$. We show that $\Qcal_\pm$ is extended as a bounded from $\Hsb^*$ to $(\Hsb^3)^*$. To do so, we invoke the following theorem.

\begin{theo}[{\cite[Corollary 6.6]{Gesztesy-Mitrea11}}]\label{theo_trace_harmonic}
    If $\Omega\subset \Rbb^m$ ($m\geq 2$) is a bounded domain with the $C^{1, \alpha}$ boundary for some $\alpha>1/2$, then the trace $u\in C^\infty (\overline{\Omega})\mapsto u|_-\in L^2 (\partial\Omega)$ is extended to a unique bounded linear operator
    \[
        \{ u\in L^2 (\Omega) \mid \Delta u\in L^2 (\Omega)\}
        \longrightarrow H^{-1/2}(\partial\Omega).
    \]
    Here, we equip the space $\{ u\in L^2 (\Omega) \mid \Delta u\in L^2 (\Omega)\}$ with the graph norm $\|u\|_{L^2 (\Omega)}+\|\Delta u\|_{L^2 (\Omega)}$.
\end{theo}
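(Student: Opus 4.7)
The plan is to extend the trace by duality. Since $H^{-1/2}(\partial\Omega) = (H^{1/2}(\partial\Omega))^*$, it suffices to define, for each $\psi \in H^{1/2}(\partial\Omega)$, a bounded pairing $\langle u|_-, \psi \rangle$ in terms of $u$ and $\Delta u$. The identity to exploit is Green's second formula: for smooth $u$ and any $v \in H^2(\Omega)$ with $v|_{\partial\Omega} = 0$ and $\partial_\nu v|_{\partial\Omega} = \psi$, one has
\[
\int_{\partial\Omega} u \cdot \psi \, d\sigma = \int_\Omega \bigl( u \cdot \Delta v - \Delta u \cdot v \bigr) \, dx,
\]
and the right-hand side is bounded by $(\|u\|_{L^2(\Omega)} + \|\Delta u\|_{L^2(\Omega)}) \|v\|_{H^2(\Omega)}$, a quantity that makes perfect sense for $u$ in the graph-norm space. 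So the whole difficulty is to produce such a $v$ with the controlled bound $\|v\|_{H^2(\Omega)} \lesssim \|\psi\|_{H^{1/2}(\partial\Omega)}$.

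The first step is therefore to construct a bounded lifting $E: H^{1/2}(\partial\Omega) \to H^2(\Omega)$ satisfying $(E\psi)|_{\partial\Omega} = 0$ and $\partial_\nu(E\psi)|_{\partial\Omega} = \psi$. Using a partition of unity subordinate to a finite atlas of $C^{1,\alpha}$ charts flattening $\partial\Omega$ locally to $\{x_n = 0\}$, one may take $E\psi$ locally of the form $x_n (P\psi)(x', x_n)\, \chi(x_n)$, where $P\psi$ denotes the Poisson extension of $\psi$ into the upper half-space (so that $(P\psi)|_{x_n = 0} = \psi$ and $\|P\psi\|_{H^{3/2}} \lesssim \|\psi\|_{H^{1/2}}$), and $\chi$ is a smooth cutoff supported near $0$. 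Pulling back and gluing yields a global $E\psi \in H^2(\Omega)$ with the required estimate. The hypothesis $\alpha > 1/2$ enters precisely here: it is the threshold at which pullback by a $C^{1,\alpha}$ diffeomorphism preserves $H^2$ regularity, because second derivatives interact via the chain rule with the $C^{0,\alpha}$ first derivatives of the chart at the fractional Sobolev scale of the data. This is the main technical obstacle.

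Granting $E$, I would define
\[
\langle u|_-, \psi \rangle_{\partial\Omega} := \int_\Omega \bigl( u \cdot \Delta(E\psi) - \Delta u \cdot E\psi \bigr) \, dx
\]
for $u$ in the graph-norm space. Cauchy--Schwarz yields $|\langle u|_-, \psi \rangle| \lesssim (\|u\|_{L^2} + \|\Delta u\|_{L^2}) \|\psi\|_{H^{1/2}}$, giving both continuity and the required extension to $H^{-1/2}(\partial\Omega)$. The definition is independent of the particular choice of $E$: any two admissible liftings differ by some $w \in H^2_0(\Omega)$, and for $w \in C_c^\infty(\Omega)$ the identity $\int_\Omega (u \cdot \Delta w - \Delta u \cdot w)\, dx = 0$ is immediate from integration by parts, and by density this extends to all of $H^2_0(\Omega)$. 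Uniqueness of the extension from $C^\infty(\overline\Omega)$ follows from the density of $C^\infty(\overline\Omega)$ in the graph-norm space, which is a standard mollification argument combined with an inward shift of the domain to deal with values up to the boundary. Thus the entire construction hinges on the lifting $E$, and the hypothesis $\alpha > 1/2$ is precisely the sharp regularity that makes that lifting available.
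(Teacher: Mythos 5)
This theorem is cited by the paper from Gesztesy--Mitrea (their Corollary~6.6); the paper gives no proof of it, so your attempt must stand on its own. Your framework -- Green's second identity turned into a duality, reducing everything to a bounded lifting $E\colon H^{1/2}(\partial\Omega) \to H^2(\Omega)$ with $(E\psi)|_{\partial\Omega}=0$ and $\partial_\nu(E\psi)|_{\partial\Omega}=\psi$ -- is a correct blueprint. But the lifting lemma is where all the content lies, and you do not establish it.

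Your stated reason for the $\alpha > 1/2$ threshold is wrong: pullback by a $C^{1,\alpha}$ diffeomorphism, with $\alpha<1$, does \emph{not} preserve $H^2$ in general. The chain rule produces the term $(\partial^2\Phi)\,(\nabla v)\circ\Phi$, and the second derivatives of a $C^{1,\alpha}$ chart are merely distributions, not $L^2$ functions. Already the pullback of the linear function $y_m$ by the graph chart $(x',x_m) \mapsto (x', x_m-\varphi(x'))$ is $y_m+\varphi(y')$, which lies in $H^2$ only if the graph function $\varphi$ lies in $H^2$ -- something $C^{1,\alpha}$ never guarantees. (There is also a small error in the flat model: the Poisson extension satisfies $\|P\psi\|_{H^1}\lesssim\|\psi\|_{H^{1/2}}$, not $\|P\psi\|_{H^{3/2}}\lesssim\|\psi\|_{H^{1/2}}$; the bound $\|x_n(P\psi)\chi\|_{H^2}\lesssim\|\psi\|_{H^{1/2}}$ does hold, but it rests on the weighted square-function estimate $\|x_n\nabla^2 P\psi\|_{L^2}\lesssim\|\psi\|_{H^{1/2}}$, which is valid even though $P\psi\notin H^2$.) What can save your specific lifting is that it vanishes to first order at the boundary, so after pullback the dangerous term is essentially a product of $\nabla^2\varphi\in B^{\alpha-1}_{\infty,\infty}$ against the boundary data $\psi\in H^{1/2}$ and its harmonic extension; the $L^2$ bound for that product is a paraproduct/Sobolev-multiplication estimate whose validity requires $(\alpha-1)+1/2>0$, i.e. $\alpha>1/2$. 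That estimate is the heart of the matter, and you neither state nor prove it. Relatedly, even to invoke Green's identity with $u\in C^\infty(\overline\Omega)$ and $v=E\psi\in H^2(\Omega)$ you already need $\partial_\nu\colon H^2(\Omega)\to H^{1/2}(\partial\Omega)$ to be bounded, which on a $C^{1,\alpha}$ domain requires multiplication by $\nu\in C^{0,\alpha}$ to be bounded on $H^{1/2}(\partial\Omega)$ -- exactly Lemma~\ref{lemm:multi} of this paper, and again $\alpha>1/2$. So the gap is genuine: the lifting is asserted rather than proved, and the one justification you offer for it is false.
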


\begin{lemm}\label{lem:44}
If $\Omega\subset \Rbb^3$ is a bounded domain with the $C^{1, \alpha}$ boundary for some $\alpha>1/2$, then $\Qcal_\pm$ is bounded from $\Hsb^*$ to $(\Hsb^3)^*$.
\end{lemm}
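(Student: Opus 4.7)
The approach is to interpret $\nabla \Scal_0[\varphi]|_\pm$ as a distributional boundary trace via Theorem \ref{theo_trace_harmonic}, applied componentwise to the harmonic function $\Scal_0[\varphi]$, and to use the scalar version of Verchota's isomorphism theorem to bound the $H^1$-norm of $\Scal_0[\varphi]$ on either side of $\p\GO$ by $\|\varphi\|_{\Hsb^*}$. This will extend $\Qcal_\pm$ from $\Hsb$ to a bounded operator defined on the full dual $\Hsb^*$.

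First I would treat $\Qcal_-$. For $\varphi \in \Hsb^*$, Verchota's theorem gives $\Scal_0[\varphi]|_{\p\GO} \in \Hsb$ with $\| \Scal_0[\varphi]|_{\p\GO} \|_\Hsb \lesssim \|\varphi\|_{\Hsb^*}$, and since $\Scal_0[\varphi]|_\Omega$ is the unique $H^1(\Omega)$-solution of the Dirichlet problem $\Delta u=0$ in $\Omega$, $u|_- = \Scal_0[\varphi]|_{\p\GO}$, the standard elliptic estimate gives $\|\Scal_0[\varphi]\|_{H^1(\Omega)} \lesssim \|\varphi\|_{\Hsb^*}$. For each $j=1,2,3$, the component $\partial_j \Scal_0[\varphi]$ lies in $L^2(\Omega)$ and satisfies $\Delta\, \partial_j \Scal_0[\varphi] = 0$; Theorem \ref{theo_trace_harmonic} then defines the trace $\partial_j \Scal_0[\varphi]|_-$ in $H^{-1/2}(\p\GO) = \Hsb^*$ and yields
$\|\partial_j \Scal_0[\varphi]|_-\|_{\Hsb^*} \lesssim \|\partial_j \Scal_0[\varphi]\|_{L^2(\Omega)} \lesssim \|\varphi\|_{\Hsb^*}$. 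Collecting the three components produces $\Qcal_-[\varphi] \in (\Hsb^*)^3 = (\Hsb^3)^*$ with the required bound.

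For $\Qcal_+$ the same argument applies with $\Omega$ replaced by the bounded $C^{1, \alpha}$ domain $D := B_R \setminus \overline{\Omega}$, where $B_R$ is a large open ball containing $\overline{\Omega}$. One needs $\|\Scal_0[\varphi]\|_{H^1(D)} \lesssim \|\varphi\|_{\Hsb^*}$; this follows by combining Verchota's estimate on the exterior trace $\Scal_0[\varphi]|_+ \in \Hsb$ with the decay $\Scal_0[\varphi](x) = O(|x|^{-1})$ and $\nabla \Scal_0[\varphi](x) = O(|x|^{-2})$ at infinity, which controls $\Scal_0[\varphi]|_{\p B_R}$ via standard interior estimates for harmonic functions far from $\p\GO$. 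Theorem \ref{theo_trace_harmonic} applied on $D$ componentwise, followed by restriction of the boundary trace from $\p D = \p\GO \cup \p B_R$ to $\p\GO$ (which is bounded from $H^{-1/2}(\p D)$ to $H^{-1/2}(\p\GO)$), yields the bound on $\Qcal_+$. The only delicate step is precisely this exterior $H^1$-bookkeeping; once it is settled, the rest is a clean componentwise invocation of Theorem \ref{theo_trace_harmonic}.
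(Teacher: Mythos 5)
Your proposal is correct and follows the same strategy as the paper: apply Theorem \ref{theo_trace_harmonic} componentwise to $\nabla\Scal_0[\varphi]$, using the bounded domain $B_R\setminus\overline{\GO}$ for the exterior trace and then restricting from $\p B_R\cup\p\GO$ to $\p\GO$. The only difference is cosmetic — the paper simply invokes the energy estimate $\|\nabla\Scal_0[\varphi]\|_{L^2(\Rbb^3\setminus\GO)}\lesssim\|\varphi\|_{\Hsb^*}$ directly, whereas you reassemble it from Verchota's isomorphism, elliptic estimates for the Dirichlet problem on $B_R\setminus\overline{\GO}$, and smoothness of the kernel near $\p B_R$.
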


\begin{proof}
We denote the norms on $(\Hsb^3)^*$ and $\Hsb^*$ by $\|\cdot\|_{(\Hsb^3)^*}$ and $\|\cdot\|_{\Hsb^*}$ respectively. Let $\varphi \in \Hcal^*$. We apply Theorem \ref{theo_trace_harmonic} to each component of $u=\nabla \Scal_0 [\varphi]$ to have
$$
\| \Qcal_-[\varphi] \|_{(\Hcal^*)^3} \lesssim \|\nabla \Scal_0 [\varphi]\|_{L^2 (\Omega)^3} \lesssim \| \varphi \|_{\Hcal^*}.
$$

To deal with $\Qcal_+$, let $B$ be a ball such that $\overline{\Omega}\subset B$. Then, by Theorem \ref{theo_trace_harmonic} again, we have
    \[
    \| u|_-\|_{H^{-1/2}(\partial B \cup \partial \Omega)^3}
        \lesssim \| \nabla \Scal_0 [\varphi] \|_{L^2 (B\setminus \Omega)^3}
        \lesssim \| \nabla \Scal_0 [\varphi] \|_{L^2 (\Rbb^m\setminus \Omega)^3} \lesssim \| \varphi \|_{\Hcal^*}.
 \]
Here we emphasize that $u|_-$ in the left hand side stands for the trace on $\partial B \cup \partial\Omega$ from the interior of $B\setminus \Omega$, which coincides on $\partial\Omega$ with the trace $u|_+$ from the exterior of $\Omega$. It thus follows that
    \[
    \| \Qcal_+[\varphi] \|_{(\Hcal^*)^3} \lesssim \| u|_-\|_{H^{-1/2}(\partial B \cup \partial \Omega)^3}
        \lesssim \| \varphi \|_{\Hcal^*},
 \]
as desired.
\end{proof}

We then obtain the following corollary from the formula \eqref{eq_jump_ds0}:
\begin{coro}
If $\Omega\subset \Rbb^3$ is a bounded domain with the $C^{1, \alpha}$ boundary for some $\alpha>1/2$, then the following identity holds for all $\varphi \in \Hcal^*$:
\begin{align}
   \Qcal_\pm [\varphi] = \frac{\mu}{k_0} \left(\Jcal_\mathrm{div}^*[\varphi]\mp \frac{1}{2} \varphi \nv \right) . \label{eq_jump_ds0_Q}
\end{align}
\end{coro}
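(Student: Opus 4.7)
The plan is a density argument on top of a one-line algebraic identity. First I would verify \eqref{eq_jump_ds0_Q} for $\varphi$ in the dense subspace $\Hcal = H^{1/2}(\partial\Omega) \subset \Hcal^*$. For such a $\varphi$, the jump formula \eqref{eq_jump_ds0} reads
\[
\nabla\Scal_0[\varphi]|_\pm = -\Jcal_\mathrm{div}^*[\varphi] \pm \tfrac{1}{2}\varphi\, \nv,
\]
so multiplying both sides by $-\mu/k_0$ and invoking the definition \eqref{eq_qpm} of $\Qcal_\pm$ immediately yields the claimed identity on $\Hcal$.

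To pass to all $\varphi \in \Hcal^*$, I would argue by continuity after checking that every term in \eqref{eq_jump_ds0_Q} defines a bounded operator $\Hcal^* \to (\Hcal^3)^*$: the left-hand side by Lemma \ref{lem:44}, the term $\Jcal_\mathrm{div}^*[\varphi]$ by Lemma \ref{lemm:Jdiv}, and the multiplication $\varphi \mapsto \varphi\, \nv$ by Lemma \ref{lemm:multi}, which applies because $\partial\Omega \in C^{1,\alpha}$ with $\alpha > 1/2$ forces $\nv \in C^{0,\alpha}(\partial\Omega)$. Since $C^\infty(\partial\Omega)$ is dense in $\Hcal^*$ and is contained in $\Hcal$, the space $\Hcal$ is dense in $\Hcal^*$; the identity on the dense subspace then extends to all of $\Hcal^*$.

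There is no serious obstacle in this proof, since the analytic content has been absorbed into the preceding lemmas. The only point that merits a line of care is to confirm that the extension $\Qcal_\pm : \Hcal^* \to (\Hcal^3)^*$ furnished by Lemma \ref{lem:44} (which realizes the boundary trace through Theorem \ref{theo_trace_harmonic}) agrees on $\Hcal$ with the classical jump $-\tfrac{\mu}{k_0}\nabla\Scal_0[\varphi]|_\pm$ used in \eqref{eq_qpm}; this consistency is automatic because the trace operator in Theorem \ref{theo_trace_harmonic} restricts to the usual nontangential trace on smooth data.
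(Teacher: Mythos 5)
Your proof is correct and follows the same route the paper intends: the identity is a direct consequence of the jump formula \eqref{eq_jump_ds0} combined with the definition \eqref{eq_qpm} of $\Qcal_\pm$, and the extension from $\Hcal$ to $\Hcal^*$ is by density, using Lemma \ref{lem:44}, Lemma \ref{lemm:Jdiv} and Lemma \ref{lemm:multi} for boundedness. You spell out the density argument explicitly, which the paper leaves implicit.
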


We now prove the following proposition which plays a crucial role in proving Theorem \ref{theo_free_np_3D}.

\begin{prop}\label{lemm_np_modulo_cpt}
    Assume that the boundary $\p\GO$ is $C^{1, \Ga}$ for some $\alpha>1/2$. Then the following identity holds for $f \in \Hsb^3$:
    \begin{equation}
        \label{eq_np_modulo_cpt}
        \Kcal [f]= - k_0\Scal \Qcal_-[\nabla \cdot u^f|_+]+ k_0\Scal \Qcal_+[\nabla \cdot u^f|_-]+\Tcal [f],
    \end{equation}
    where $\Tcal$ is a compact operator  on $\Hsb^3$.
\end{prop}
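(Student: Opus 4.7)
The plan is to combine Lemma~\ref{lemm_np23_compact} with the symmetrization principle \eqref{eq_plemelj_kd} to reduce \eqref{eq_np_modulo_cpt} to a congruence modulo compact operators for the adjoint operator $(\Kcal^\divf)^*$, and then to establish that congruence using only the jump formulas. Set $g:=\Scal^{-1}[f]\in(\Hsb^3)^*$, so that $u^f=\Scal[g]$. Lemma~\ref{lemm_np23_compact} together with Proposition~\ref{theo_plemelj_kd} immediately gives $\Kcal[f]=-2k_0\,\Scal(\Kcal^\divf)^*[g]+\Tcal_1[f]$ with $\Tcal_1$ compact on $\Hsb^3$. On the other side, using \eqref{eq_jump_ds0_Q} to write $\Qcal_\pm[\varphi]=\mp\tfrac{\mu}{2k_0}\varphi\nv+\tfrac{\mu}{k_0}\Jcal_\mathrm{div}^*[\varphi]$ and \eqref{eq_jump_j_div} to write $\nabla\cdot u^f|_\pm=\tfrac{2k_0}{\mu}(\Jcal_\mathrm{div}[g]\pm\tfrac12\nv\cdot g)$, a direct expansion in which the $(\nv\cdot g)\nv$ contributions cancel yields
\[
    -k_0\,\Scal\,\Qcal_-[\nabla\cdot u^f|_+]+k_0\,\Scal\,\Qcal_+[\nabla\cdot u^f|_-] = -2k_0\,\Scal\bigl(\Jcal_\mathrm{div}[g]\,\nv+\Jcal_\mathrm{div}^*[\nv\cdot g]\bigr)
\]
exactly. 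Subtracting, it suffices to prove
\[
    (\Kcal^\divf)^*[g]\equiv \Jcal_\mathrm{div}[g]\,\nv+\Jcal_\mathrm{div}^*[\nv\cdot g]\pmod{\text{compact on }(\Hsb^3)^*}.
\]

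For the reduced congruence, I would apply \eqref{eq_jump_npds} to obtain $(\Kcal^\divf)^*[g]=\tfrac12(\p^\divf_\nu u^f|_++\p^\divf_\nu u^f|_-)$ and split the average according to the definition \eqref{eq_conormal_d} of $\p^\divf_\nu$. The divergence piece equals $\tfrac{\lambda+2\mu}{2}(\nabla\cdot u^f|_++\nabla\cdot u^f|_-)\nv$, which by \eqref{eq_jump_j_div} and the identity $\lambda+2\mu=\mu/(2k_0)$ is exactly $\Jcal_\mathrm{div}[g]\,\nv$. For the rotational piece, differentiating $u^f=\Scal[g]$ using \eqref{Kelvin} yields
\[
    \nabla\times u^f(x)=\frac{1}{4\pi\mu}\int_{\p\GO}\frac{(x-y)\times g(y)}{|x-y|^3}\,\df\sigma(y),
\]
and the componentwise jump formula \eqref{eq_jump_grad} for $\Scal_0$ gives
\[
    -\tfrac{\mu}{2}\bigl(\nv\times(\nabla\times u^f|_+)+\nv\times(\nabla\times u^f|_-)\bigr)=-\nv\times R[g],
\]
where $R[g](x):=(4\pi)^{-1}\,\mathrm{p.v.}\!\int_{\p\GO}(x-y)\times g(y)/|x-y|^3\,\df\sigma(y)$.

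Opening $\nv_x\times R[g](x)$ by the BAC--CAB identity produces one singular integral with kernel $(\nv_x\cdot g(y))(x-y)/|x-y|^3$ and another with kernel $-(\nv_x\cdot(x-y))g(y)/|x-y|^3$. In the first I would replace $\nv_x$ by $\nv_y$, turning the integral into exactly $-\Jcal_\mathrm{div}^*[\nv\cdot g](x)$; the swap residue has kernel bounded by $|x-y|^{-2+\alpha}$ since $|\nv_x-\nv_y|\lesssim|x-y|^\alpha$ on a $C^{1,\alpha}$ boundary. The second integral has kernel bounded by $|x-y|^{-2+\alpha}$ because $|\nv_x\cdot(x-y)|\lesssim|x-y|^{1+\alpha}$. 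Both residues are therefore weakly singular and, by Corollary~\ref{coro_T_cpt} in Appendix~\ref{sect_smoothing}, define compact operators, giving $-\nv\times R[g]\equiv\Jcal_\mathrm{div}^*[\nv\cdot g]$ modulo compact. Combined with the divergence-piece identity this establishes the reduced congruence, and composition with $-2k_0\Scal$ absorbs the residues into a single compact operator $\Tcal$.

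The principal technical hurdle is ensuring the compactness of the two weakly singular residues in the correct functional-analytic setting: one must check that Corollary~\ref{coro_T_cpt} applies on $(\Hsb^3)^*$ and that $\Jcal_\mathrm{div}[g]\nv$ and $\Jcal_\mathrm{div}^*[\nv\cdot g]$ are genuine elements of $(\Hsb^3)^*$. Both requirements rely on the hypothesis $\alpha>1/2$, through the residue kernel estimates on the one hand and through Lemma~\ref{lemm:Jdiv} combined with the pointwise multiplication bound of Lemma~\ref{lemm:multi} (needed to multiply by the $C^{0,\alpha}$ normal vector field) on the other.
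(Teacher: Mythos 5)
Your proof is correct and follows essentially the same route as the paper's: the same expansion of $\Qcal_\pm[\nabla\cdot u^f|_\mp]$ via the jump formulas \eqref{eq_jump_j_div}, \eqref{eq_jump_ds0_Q}, the same cancellation of the $(\nv\cdot g)\nv$ terms yielding $-2k_0\Scal[\nv\Jcal_\mathrm{div}[g]+\Jcal_\mathrm{div}^*[\nv\cdot g]]$, the same use of Plemelj symmetrization, and the same residual operators ultimately declared compact. The only organizational difference is in identifying the main term: the paper compares the kernel of $\nv\Jcal_\mathrm{div}[g]+\Jcal_\mathrm{div}^*[\nv\cdot g]$ directly against $\Kcal_1^*$ (plugging into \eqref{eq_np_decomposition} and using compactness of $\Kcal_2^*,\Kcal_3^*$), whereas you derive the equivalent congruence $(\Kcal^\divf)^*[g]\equiv\nv\Jcal_\mathrm{div}[g]+\Jcal_\mathrm{div}^*[\nv\cdot g]$ by averaging the conormal-derivative jump \eqref{eq_jump_npds} and computing the curl of the Lam\'e single layer — a valid variant that produces exactly the paper's residuals (your swap residue is the paper's $\Tcal'$, and your second BAC-CAB term is $\Kcal_2^*[g]$).
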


Here, as we will observe in the proof, the function $\Scal \Qcal_+[\nabla\cdot u^f|_-]$ is the image of the following composition of the bounded operators.
\begin{center}
    \begin{tikzcd}[row sep=0.2cm, column sep=0.8cm]
        \Hsb^3 \arrow[r, "u^{[\cdot]}"] & H^1 (\Omega)^3 \arrow[r, "\nabla\cdot {[\cdot]}"] & L^2 (\Omega) \arrow[r, "{[\cdot]}|_-"] & \Hsb^* \arrow[r, "\Qcal_+"] & (\Hsb^3)^* \arrow[r, "\Scal"] & \Hsb^3 \\
        f \arrow[r, mapsto] & u^f \arrow[r, mapsto] & \nabla\cdot u^f \arrow[r, mapsto] & \nabla\cdot u^f|_- \arrow[r, mapsto] & \text{(omitted)}
    \end{tikzcd}
\end{center}
The operator $f\in \Hsb^3 \mapsto \Scal\Qcal_-[\nabla\cdot u^f|_+]\in \Hsb^3$ is described similarly.

\begin{proof}
    Let $f \in \Hsb^3$ and $g= \Scal^{-1}[f]\in (\Hsb^3)^*$ so that $u^f= \Scal[g]$. Since $\Lcal_{\lambda, \mu}u^f=0$ in $\Rbb^3\setminus \partial\Omega$, $\nabla\cdot u^f$ is harmonic there. Moreover, $\nabla\cdot u^f|_\Omega\in L^2 (\Omega)$ and $\nabla \cdot u^f|_{\Rbb^3\setminus \overline{\Omega}}\in L^2 (\Rbb^3\setminus \overline{\Omega})$ by Lemma \ref{lemm_div_rot_npd_identity}. Thus, we can apply Theorem \ref{theo_trace_harmonic} and the argument in the proof of Lemma \ref{lem:44} to infer that $(\nabla\cdot u^f)|_-$ and $(\nabla\cdot u^f)|_+$ belong to $(\Hsb^3)^*$.

It follows from the jump relations \eqref{eq_jump_j_div} and \eqref{eq_jump_ds0_Q} that
    \begin{align*}
    \Qcal_-[\nabla \cdot u^f|_+] &= \frac{k_0}{\Gm} \Qcal_- \left[ \nv \cdot g + 2\Jcal_\mathrm{div}[g] \right] \\
    &=2 \Jcal_\mathrm{div}^*\Jcal_\mathrm{div}[g]+\Jcal_\mathrm{div}^*[ \nv \cdot g] + \Jcal_\mathrm{div}[g] \nv
    + \frac{1}{2} (\nv\cdot g)\nv .
    \end{align*}
    Likewise, we have
    \begin{align*}
    \Qcal_+[\nabla \cdot u^f|_-] =
    2 \Jcal_\mathrm{div}^*\Jcal_\mathrm{div}[g]- \Jcal_\mathrm{div}^*[ \nv \cdot g] - \Jcal_\mathrm{div}[g] \nv
    + \frac{1}{2} (\nv\cdot g)\nv .
    \end{align*}
    It then follows that
    \begin{equation}
        \label{eq_ks_principal}
        \begin{aligned}
            - k_0 \Scal \Qcal_-[\nabla \cdot u^f|_+]+  k_0 \Scal \Qcal_+[\nabla \cdot u^f|_-]
            &=-2k_0 \Scal \big[ \nv \Jcal_\mathrm{div}[g]+ \Jcal_\mathrm{div}^* [\nv\cdot g] \big].
        \end{aligned}
    \end{equation}
    By the definition of $\Jcal_\mathrm{div}$ and $\Jcal_\mathrm{div}^*$, we have
    \begin{align*}
        &\nv_x \Jcal_\mathrm{div}[g](x) + \Jcal_\mathrm{div}^* [\nv\cdot g](x) \\
        &=\frac{1}{4\pi}\int_{\p\GO}\frac{\nv_x ((x-y)\cdot g(y))-(x-y)(\nv_y \cdot g(y))}{|x-y|^3}\, \df\sigma (y).
    \end{align*}
    One can see from the formula \eqref{eq_k1} for the operator $\Kcal_1$ that
    \[
        \Kcal_1^*[g](x)=\frac{1}{4\pi}\int_{\p\GO}\frac{(x-y)(\nv_x\cdot g(y))-\nv_x ((x-y)\cdot g(y))}{|x-y|^3}\, \df \sigma (y).
    \]
    Thus, we obtain
    \begin{equation}
        \label{eq_ks_modulo_cpt}
        \nv_x \Jcal_\mathrm{div}[g] + \Jcal_\mathrm{div}^* [\nv \cdot g]
        = -\Kcal_1^*[g] +\Tcal^\prime [g],
    \end{equation}
    where
    \[
        \Tcal^\prime [g](x)=\frac{1}{4\pi}\int_{\p\GO}\frac{(x-y)((\nv_x-\nv_y) \cdot g(y))}{|x-y|^3}\, \df \sigma (y).
    \]
    Therefore, we have
    \begin{equation}
        \label{eq_k1s_modulo_cpt}
        - k_0 \Scal \Qcal_-[\nabla \cdot u^f|_+]+ k_0 \Scal \Qcal_+[\nabla \cdot u^f|_-]=2k_0 \Scal\Kcal_1^* \Scal^{-1} [f]-2k_0 \Scal \Tcal^\prime \Scal^{-1}[f].
    \end{equation}

If $\p\GO$ is $C^{1,\Ga}$ for some $\Ga>0$, then
    \[
    |\nv_x-\nv_y| \le C |x-y|^\Ga
    \]
    for some constant $C$, and hence $\Tcal^\prime$ is a compact operator on $(\Hsb^3)^*$.
    We infer from \eqref{eq_np_decomposition} that $\Kcal^*$ is decomposed as
    \[
        \Kcal^*=2k_0 (\Kcal_1^*+\Kcal_2^*)+3(1-2k_0)\Kcal_3^*,
    \]
    where $\Kcal_2^*$ and $\Kcal_3^*$ are compact on $(\Hsb^3)^*$. Thus we have
    \[
        2k_0 \Scal\Kcal_1^*\Scal^{-1}= \Scal\Kcal^*\Scal^{-1}
    \]
    modulo a compact operator on $\Hsb^3$. We employ Plemelj's symmetrization principle to obtain
    \[
        \Scal\Kcal^*\Scal^{-1} = \Kcal.
    \]
    Hence, by \eqref{eq_k1s_modulo_cpt}, we obtain
    \[
        -k_0 \Scal \Qcal_-[\nabla \cdot u^f|_+]+k_0 \Scal \Qcal_+[\nabla \cdot u^f|_-]=\Kcal [f]+\Tcal [f]
    \]
    with some compact operator $\Tcal$ on $\Hsb^3$, as desired.
\end{proof}

Now we are ready to prove Theorem \ref{theo_free_np_3D}.
\begin{proof}[Proof of Theorem \ref{theo_free_np_3D}]
    By Lemma \ref{lemm_np23_compact}, $\Kcal + k_0 I = -2k_0 \left(\Kcal^\divf- 1/2I\right) + \text{compact}$ on $\Hsb^3$. Since $\Hdrf^-=\Ker \left( \Kcal^\divf- 1/2I\right)$ (Theorem \ref{theo_div_rot_free_kd}), we infer that $\Kcal + k_0 I$ is compact on $\Hdrf^-$. In the same way, one can show that $\Kcal - k_0 I$ is compact on $\Hdrf^+$.
    The compactness of $\Kcal$ on $\Hdf$ is an immediate consequence of Proposition \ref{lemm_np_modulo_cpt} since $\nabla \cdot u^f|_-= \nabla \cdot u^f|_+ =0$ if $f \in \Hdf$.
\end{proof}

\section{Properties of subspaces \texorpdfstring{$\Hdrf^-$}{H\_\^{\{div,rot\} } -}, \texorpdfstring{$\Hdrf^+$}{H\_\^{\{div,rot\} } +} and \texorpdfstring{$\Hdf$}{H\_\{div\}}}\label{sec:sub}

We first obtain the following theorem which asserts that all the subspaces appearing in the decomposition in Theorem \ref{theo_decomposition_div_free} are of infinite dimensions. This theorem plays a crucial role in proving Theorem \ref{thm_eigenvalue3D} in the next section.

\begin{theo}
    \label{theo_infinite_dim_c1a}
    If $\GO$ be a bounded Lipschitz domain in $\Rbb^3$, then $\Hdrf^-$, $\Hdrf^+$ and $\Hdf$ are infinite-dimensional.
\end{theo}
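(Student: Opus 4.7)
The plan is to transplant the spherical constructions of Theorem~\ref{theo_structure_df_drf} to an arbitrary Lipschitz $\GO$ by restricting globally defined harmonic vector fields. For $\Hdrf^-$, I will take $\{h_j\}_j$ a linearly independent sequence of harmonic polynomials of positive degree: each $\nabla h_j$ is div-free, rot-free, and harmonic on all of $\Rbb^3$, so $\nabla h_j|_{\p\GO}\in\Hdrf^-$. Linear independence will be verified via Green's identity: if $Q:=\sum c_jh_j$ satisfies $\nabla Q|_{\p\GO}=0$, then $\p_\nu Q|_{\p\GO}=0$, so $\int_\GO|\nabla Q|^2\,\df x=\int_{\p\GO}Q\,\p_\nu Q\,\df\Gs=0$, forcing $Q$ to be constant on $\GO$ and hence (by polynomiality) on $\Rbb^3$; the positive-degree condition then gives $c_j=0$. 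For $\Hdrf^+$ I fix $y_0\in\GO$ and replace each $h_j$ by its Kelvin transform $h_j^*$ centered at $y_0$, so that $h_j^*$ is harmonic in $\Rbb^3\setminus\{y_0\}$ with the appropriate decay at infinity and $\nabla h_j^*|_{\p\GO}\in\Hdrf^+$. An exterior Green's identity (the boundary term at infinity vanishes by the decay of $h_j^*$), combined with real-analyticity of $\sum c_jh_j^*$ on the connected set $\Rbb^3\setminus\{y_0\}$, will then yield linear independence.

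The hard part will be $\Hdf$: the obvious spherical candidate $p:=x\times\nabla q$ for $q$ a harmonic polynomial is div-free and harmonic on all of $\Rbb^3$ (Theorem~\ref{theo_structure_df_drf}) but does not decay at infinity, so $p|_{\p\GO}$ only obviously lies in $\Hdf^-$, not in $\Hdf$. My strategy is to pass to the quotient $\Hdf^-/\Hdrf^-$: since $\Hdrf^-\subset\Hdf^-$ are closed subspaces of $\Hsb^3$, the orthogonal decomposition $\Hdf^-=\Hdrf^-\oplus(\Hdf^-\cap(\Hdrf^-)^\perp)$, combined with Lemma~\ref{lemm_drf_df_pm} (which gives $(\Hdrf^-)^\perp\subset\Hdf^+$), yields $\Hdf^-\cap(\Hdrf^-)^\perp\subset\Hdf^-\cap\Hdf^+=\Hdf$. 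Hence it will suffice to prove $\dim\Hdf^-/\Hdrf^-=\infty$.

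For the latter, I will take a basis $\{q_k^{(j)}\}_{j=1}^{2k+1}$ of $P_k$ for each $k\ge 1$ and set $p_k^{(j)}:=x\times\nabla q_k^{(j)}$, so that $p_k^{(j)}|_{\p\GO}\in\Hdf^-$. If a finite combination $\sum c_{kj}p_k^{(j)}|_{\p\GO}$ lies in $\Hdrf^-$, then by uniqueness of the harmonic extension the polynomial $P:=\sum c_{kj}p_k^{(j)}$ must be rot-free on $\GO$; using the identity $\nabla\times(x\times\nabla q)=-(k+1)\nabla q$ for $q\in P_k$ (a direct computation from Euler's relation and $\Delta q=0$), this becomes $\sum c_{kj}(k+1)\nabla q_k^{(j)}=0$ on the open set $\GO$, hence on all of $\Rbb^3$ by polynomiality. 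Separating by homogeneous degree and invoking the basis property of $\{q_k^{(j)}\}_j$ in each $P_k$ will force all $c_{kj}=0$, completing the argument.
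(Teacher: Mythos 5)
Your proof is correct and, for $\Hdf$, takes a genuinely different route from the paper. For $\Hdrf^\pm$ your plan is essentially the paper's: both restrict gradients of (Kelvin-transformed) harmonic polynomials, and your Green's-identity injectivity check is a mild variant of the paper's maximum-principle argument; one small repair is that you should take the $h_j$ \emph{homogeneous} of positive degree (or otherwise ensure $\{1\}\cup\{h_j\}$ is linearly independent), since ``linearly independent of positive degree'' alone does not preclude a nontrivial combination $\sum c_j h_j$ from being constant. The interesting divergence is in the $\Hdf$ step. The paper constructs elements of $\Hdf$ directly: using the identity $\nabla\times\Scal_0[p\nv]|_{\pm}=\Scal_0[\nabla p\times\nv]|_{\p\GO}$, it defines $\Tcal_0[p]:=\Scal_0[\nabla p\times\nv]|_{\p\GO}\in\Hdf$ on all harmonic polynomials $p$ and shows $\Ker\Tcal_0$ consists of constants (if $\Tcal_0[p]=0$ then $\Dcal_0[p]$ is locally constant, hence $p$ is constant). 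You instead stay with the explicit polynomial fields $x\times\nabla q$, which lie only in $\Hdf^-$, and use the Hilbert-space geometry---Lemma~\ref{lemm_drf_df_pm}, self-adjointness of $\Kcal^\divf$---to identify $\Hdf^-\cap(\Hdrf^-)^\perp\subset\Hdf$ with $\Hdf^-/\Hdrf^-$, then kill the quotient relations via $\nabla\times(x\times\nabla q)=-(k+1)\nabla q$. Both routes are valid for Lipschitz $\p\GO$. Yours avoids the auxiliary curl-of-single-layer identity and keeps the construction inside homogeneous polynomial fields, but you never exhibit an explicit infinite family \emph{in} $\Hdf$ and you pay by invoking the operator-theoretic structure of $\Kcal^\divf$; the paper's layer-potential argument is more self-contained at the cost of the extra potential-theoretic identity.
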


\begin{proof}
    As before, $P_k$ is the space of all homogeneous harmonic polynomial of degree $k$ and $u^*(x)$ stands for the Kelvin transform of $u$ as defined in \eqref{eq_inversion}.

For each $k$ let $\Vcal_k^-:= \{ p\in P_k^3 \mid \nabla\cdot p=0, \, \nabla\times p =0\}$. Then $\Vcal_k^-$ is isomorphic to $\Wcal_k^-$ defined in \eqref{eq_wcaldef}, and hence $\dim \Vcal_k^- =2k+3$ by Corollary \ref{coro_dim}. If $p \in \oplus_{k=0}^\infty \Vcal_k^-$, then $p|_{\p\GO} \in \Hdrf^-$. The mapping $p \in \oplus_{k=0}^\infty \Vcal_k^- \mapsto p|_{\p\GO} \in \Hdrf^-$ is linear and it is injective because of the maximum principle and homogeneity. Thus, $\dim \Hdrf^- \ge \sum_{k=0}^\infty (2k+3)=\infty$.

To prove that $\dim \Hdrf^+=\infty$, we assume $0\in \GO$ without loss of generality, and define $\Vcal_k^+:=\{ p\in P_k^3 \mid \nabla\cdot p^*=0, \, \nabla\times p^* =0\}$ for $k \ge 1$. Then, $\Vcal_k^+$ is isomorphic to $\Wcal_k^+$, and hence $\dim \Vcal_k^+ =2k-1$. By considering the linear mapping $p \in \oplus_{k=1}^\infty \Vcal_k^+ \mapsto p^*|_{\p\GO} \in \Hdrf^+$, we infer $\dim \Hdrf^+ \ge \sum_{k=1}^\infty (2k-1)=\infty$.

To prove that $\Hdf$ is infinite-dimensional, we first show that if
$u$ is a $C^1$ function in a neighborhood of $\overline{\GO}$, then the following formula holds:
    \beq\label{eq:111}
        \nabla\times \Scal_0[un]|_+=\nabla\times \Scal_0[un]|_-=\Scal_0[\nabla u\times n]|_{\partial\Omega},
    \eeq
where $\Scal_0$ is the single layer potential with respect to the Laplacian given in \eqref{eq_lapla_single}. In fact,
if $x\in \Rbb^3 \setminus \overline{\Omega}$, then we have
    \begin{align*}
        \nabla\times \Scal_0[u \nv](x)
        &=\frac{1}{4\pi}\int_{\partial\Omega}\frac{(x-y)\times \nv_y}{|x-y|^3}u (y)\, \df \sigma (y) \\
        &=-\frac{1}{4\pi}\int_\Omega \nabla_y \times\left(\frac{x-y}{|x-y|^3}u (y)\right)\, \df y \\
        &=-\frac{1}{4\pi}\int_\Omega \nabla u(y)\times \frac{x-y}{|x-y|^3}\, \df y \\
        &=-\frac{1}{4\pi}\int_\Omega \nabla u(y)\times \nabla_y\frac{1}{|x-y|}\, \df y \\
        &=-\frac{1}{4\pi}\int_{\partial\Omega} \frac{\nabla u(y)\times \nv_y}{|x-y|}\, \df \sigma (y) \\
        &=\Scal_0[\nabla u \times \nv](x).
    \end{align*}
    If $x\in \Omega$, we apply the same argument to $\GO \setminus B_\Gve(x)$, where $B_\varepsilon (x)\subset \Rbb^3$ is the ball with the center at $x$ and the radius $\varepsilon$, and send $\Gve$ to $0$ to prove that
    \[
    \nabla\times \Scal_0[u \nv](x)= \Scal_0[\nabla u \times \nv](x).
    \]
    In the course of the proof, we use the fact that $(x-y)\times \nv_y=0$ if $y \in \p B_\Gve(x)$. This proves \eqref{eq:111}.

    The formula \eqref{eq:111} implies
    \beq\label{eq:222}
    \Scal_0[\nabla u\times n]|_{\partial\Omega} \in \Hdf.
    \eeq
Let $\Pcal$ be the space of all harmonic polynomials. Using \eqref{eq:222}, we define a linear transform $\Tcal_0: \Pcal\to \Hdf$ by
    \[
        \Tcal_0 [p]:=\Scal_0[\nabla p \times \nv]|_{\partial\Omega}.
    \]

    If $p \in \Pcal$ and $\Tcal_0 [p] = 0$, then we infer from \eqref{eq:111} that
    \[
       \nabla \times  \Scal_0[p\nv] = 0 \quad\text{in } \Rbb^3 \setminus \p\GO.
    \]
    Therefore we have
    \beq
    \nabla \left(\nabla \cdot \Scal_0[p \nv]\right) = \nabla \times \left(\nabla \times  \Scal_0[p\nv]\right) + \GD \Scal_0[p \nv] = 0 \quad\text{in } \Rbb^3\setminus \p\GO.
    \eeq
    Note that $\nabla \cdot \Scal_0[p \nv] = -\Dcal_0[p]$ where $\Dcal_0$ is the double layer potential with respect to the Laplace operator. Thus $\Dcal_0[p]$ is constant in each connected component of $\Rbb^3\setminus \p \GO$.
    Therefore we have from \eqref{double_jump_laplacian} that $p = \Dcal_0[p] |_- - \Dcal_0[p] |_+$ is a piecewise constant function on $\p\GO$, and hence $p$ is constant in $\Rbb^3$.
    This implies $\dim \Ker \Tcal_0 \le 3$ and hence
    \[
        \dim \Ran \Tcal_0=\dim (\Pcal/\Ker \Tcal_0)=\infty.
    \]
    This proves, in particular, that $\dim \Hdf = \infty$.
\end{proof}

In the rest part of this section, we assume that the boundary $C^{1, \alpha}$ for some $\alpha>1/2$. We now prove the following theorem which shows that the decomposition in Theorem \ref{theo_decomposition_div_free} is `almost' direct sum in the sense that intersections of two subspaces appearing in the decomposition are finite-dimensional.

\begin{theo}\label{theo_finite_dim}
     If $\p\GO$ is $C^{1, \Ga}$ for some $\Ga>1/2$, then
     \beq
     \dim (\Hdf \cap \Hdrf^\pm) < \infty.
     \eeq
\end{theo}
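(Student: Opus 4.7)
The plan is to exploit the three compactness statements of Theorem \ref{theo_free_np_3D} simultaneously on the intersection $V:=\Hdf\cap \Hdrf^-$ (the case of $\Hdrf^+$ is entirely symmetric). The key observation is that on $V$ we can realize $k_0 I$ as a difference of two compact operators, so the identity on $V$ becomes compact, which forces $V$ to be finite-dimensional.

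More precisely, I would first check that $V$ is a closed subspace of $\Hsb^3$. This is immediate: $\Hdf$ is closed by Lemma \ref{lemm_div_free_closed}, and $\Hdrf^-=\Ker(\Kcal^\divf-\tfrac12 I)$ by Theorem \ref{theo_div_rot_free_kd} is closed because $\Kcal^\divf$ is bounded on $\Hsb^3$ (Corollary \ref{coro_kd_self_adjoint}). Hence $V$, being the intersection of two closed subspaces, is itself a Hilbert space in the inherited topology. Denote by $\iota:V\hookrightarrow \Hsb^3$ the inclusion map.

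Next, since $\p\GO$ is $C^{1,\alpha}$ with $\alpha>1/2$, Theorem \ref{theo_free_np_3D} gives that $\Kcal:\Hdf\to \Hsb^3$ and $\Kcal+k_0 I:\Hdrf^-\to \Hsb^3$ are compact. Restricting each to $V$, both
\[
\Kcal\circ \iota:V\longrightarrow \Hsb^3,\qquad (\Kcal+k_0 I)\circ \iota:V\longrightarrow \Hsb^3
\]
are compact operators. Taking their difference gives
\[
k_0\,\iota = (\Kcal+k_0 I)\circ \iota-\Kcal\circ \iota,
\]
which is therefore compact. Since $k_0>0$ by \eqref{eq_k0} and \eqref{eq_s_convex_condition}, the inclusion $\iota$ itself is compact. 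But a compact inclusion of a Hilbert space into another Hilbert space forces the domain Hilbert space to be finite-dimensional (the closed unit ball of $V$ is mapped to a relatively compact subset of $\Hsb^3$, and since $\iota$ has closed range equal to $V$, the unit ball of $V$ is itself relatively compact). Hence $\dim V<\infty$.

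The argument for $\Hdf\cap \Hdrf^+$ is identical, using the compactness of $\Kcal-k_0 I$ on $\Hdrf^+$ in place of $\Kcal+k_0 I$ on $\Hdrf^-$. I do not expect any genuine obstacle here: the only ingredients are the three compactness results of Theorem \ref{theo_free_np_3D}, closedness of the subspaces involved, and the elementary fact that a Hilbert space with compact identity is finite-dimensional. The mild regularity assumption $\alpha>1/2$ enters only through the hypothesis of Theorem \ref{theo_free_np_3D} (specifically, through Proposition \ref{lemm_np_modulo_cpt}), not through any new estimate needed for this proof.
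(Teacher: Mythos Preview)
Your proof is correct and follows essentially the same approach as the paper: restrict both $\Kcal$ and $\Kcal+k_0 I$ to the intersection, use Theorem \ref{theo_free_np_3D} to see both are compact there, and subtract to conclude $k_0 I$ is compact on $\Hdf\cap\Hdrf^-$, forcing finite dimension. The paper's version is just more terse, omitting the explicit closedness check and the justification that a compact identity implies finite dimension.
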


\begin{proof}
    Since $\Kcal$ is compact on $\Hdf$ by Theorem \ref{theo_free_np_3D}, $\Kcal + k_0 I \equiv k_0 I$ modulo compact operators on $\Hdf \cap \Hdrf^-$. Since $\Kcal + k_0 I$ is compact on $\Hdf \cap \Hdrf^-$ by Theorem \ref{theo_free_np_3D} again, we infer that $k_0 I$ is compact on $\Hdf \cap \Hdrf^-$. Thus, $\Hdf \cap \Hdrf^-$ is of finite dimensions. It can be proved similarly that $\Hdf \cap \Hdrf^+$ is of finite dimensions.
\end{proof}

Since
\[
    \Hdf \cap (\Hdrf^-+\Hdrf^+)=(\Hdf \cap \Hdrf^-)\oplus (\Hdf \cap \Hdrf^+),
\]
we see immediately from Theorem \ref{theo_finite_dim} that the inclusion $(\Hdrf^-+\Hdrf^+)^\perp \subset \Hdf$ in \eqref{eq_perpsub} is of finite codimension, as stated in the following corollary.

\begin{coro}\label{coro_finite_codim}
If $\p\GO$ is $C^{1, \Ga}$ for some $\Ga>1/2$, then
\beq
\dim \Hdf/(\Hdrf^-+\Hdrf^+)^\perp < \infty.
\eeq
\end{coro}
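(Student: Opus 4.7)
The plan is to deduce the corollary directly from Theorem \ref{theo_decomposition_div_free} and Theorem \ref{theo_finite_dim}; most of the work has already been done, and the task reduces to bookkeeping with orthogonal decompositions.

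First I would exploit the orthogonal decomposition \eqref{eq_H3orthogonal} to obtain a matching orthogonal decomposition of $\Hdf$ itself. Since $(\Hdrf^-+\Hdrf^+)^\perp \subset \Hdf$ by \eqref{eq_perpsub}, any $h\in \Hdf$ may be written uniquely as $h=a+b$ with $a\in \Hdrf^-+\Hdrf^+$ and $b\in (\Hdrf^-+\Hdrf^+)^\perp$; the component $b$ already lies in $\Hdf$, so $a=h-b$ does as well. Hence
\begin{equation}
   \Hdf=\big(\Hdf\cap(\Hdrf^-+\Hdrf^+)\big)\oplus (\Hdrf^-+\Hdrf^+)^\perp,
\end{equation}
and consequently $\dim \Hdf/(\Hdrf^-+\Hdrf^+)^\perp=\dim \Hdf\cap(\Hdrf^-+\Hdrf^+)$.

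Next I would identify the intersection. Clearly $(\Hdf\cap\Hdrf^-)+(\Hdf\cap\Hdrf^+)\subset \Hdf\cap(\Hdrf^-+\Hdrf^+)$. Conversely, if $h\in \Hdf\cap(\Hdrf^-+\Hdrf^+)$, write $h=f^-+f^+$ with $f^\pm\in \Hdrf^\pm$; this decomposition is unique because $\Hdrf^-\perp \Hdrf^+$ as eigenspaces of the self-adjoint $\Kcal^\divf$ corresponding to distinct eigenvalues. From the definitions of the subspaces \eqref{eq_free_spaces} and \eqref{eq_div_free_pm} one has $\Hdrf^\pm\subset \Hdf^\pm$, so $f^-\in \Hdf^-$ and $f^+\in \Hdf^+$. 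Since $h\in \Hdf\subset \Hdf^+$, we obtain $f^-=h-f^+\in \Hdf^+$, and similarly $f^+\in \Hdf^-$, so that both $f^\pm$ lie in $\Hdf$. Combined with the orthogonality $\Hdrf^-\perp \Hdrf^+$, this yields the orthogonal identification
\begin{equation}
   \Hdf\cap(\Hdrf^-+\Hdrf^+)=(\Hdf\cap \Hdrf^-)\oplus (\Hdf\cap \Hdrf^+).
\end{equation}

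Finally, applying Theorem \ref{theo_finite_dim} (which requires the regularity assumption $\alpha>1/2$ and is where the compactness input from Theorem \ref{theo_free_np_3D} enters), both summands on the right are finite-dimensional, so the sum is finite-dimensional and the corollary follows. There is no real obstacle here; the only minor subtlety is the verification that each component $f^\pm$ of an element of $\Hdf\cap(\Hdrf^-+\Hdrf^+)$ lies in $\Hdf$, which is handled by the inclusions $\Hdrf^\pm\subset \Hdf^\pm$ together with the fact that $\Hdf=\Hdf^-\cap \Hdf^+$ recalled in \eqref{eq_free_spaces_shorthand}.
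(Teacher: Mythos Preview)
Your proof is correct and follows the same approach as the paper: the paper simply asserts the identity $\Hdf \cap (\Hdrf^-+\Hdrf^+)=(\Hdf \cap \Hdrf^-)\oplus (\Hdf \cap \Hdrf^+)$ and invokes Theorem~\ref{theo_finite_dim}, while you additionally supply the verification of this identity via the inclusions $\Hdrf^\pm\subset\Hdf^\pm$ and $\Hdf=\Hdf^-\cap\Hdf^+$, as well as the passage from the quotient to the intersection.
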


We now investigate the spectral structure of the div-free eNP operator.

\begin{theo}
    \label{theo_isolated_c1a}
    Assume that $\p\GO$ is $C^{1, \Ga}$ for some $\alpha>1/2$.
    \begin{itemize}
    \item[{\rm (i)}] The spectrum $\sigma (\Kcal^\divf)$ of $\Kcal^\divf$ on $\Hcal^3$ consists of $-1/2$, $1/2$, $0$ and a sequence of eigenvalues $ \lambda_j \in (-1/2, 1/2)$ of finite multiplicities converging to $0$, where $\pm 1/2$ are eigenvalues of infinite multiplicities.
    \item[{\rm (ii)}] The spaces $\rmop{Ran}(\Kcal^\divf\pm 1/2I)$ are closed in $\Hsb^3$. In particular, we have
    \begin{equation}
        \label{eq_perp_ran}
        (\Hdrf^-+\Hdrf^+)^\perp=\Ran \left( \Kcal^\divf+\frac{1}{2}I\right)\left( \Kcal^\divf -\frac{1}{2}I\right).
    \end{equation}
    \end{itemize}
\end{theo}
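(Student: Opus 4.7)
The plan is to decompose $\Kcal^\divf$ into block form along the orthogonal decomposition $\Hsb^3 = \Hdrf^- \oplus \Hdrf^+ \oplus (\Hdrf^- + \Hdrf^+)^\perp$ provided by Theorem \ref{theo_decomposition_div_free}. By Corollary \ref{coro_kd_self_adjoint}, $\Kcal^\divf$ is self-adjoint, and by Theorem \ref{theo_div_rot_free_kd} the first two summands are its eigenspaces for $\mp \tfrac{1}{2}$; by self-adjointness the third summand is also invariant. On the three blocks, $\Kcal^\divf$ acts as $-\tfrac{1}{2}I$, $+\tfrac{1}{2}I$, and a self-adjoint operator $K := \Kcal^\divf|_{(\Hdrf^- + \Hdrf^+)^\perp}$, respectively, so everything reduces to analyzing $K$.

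The first key step is to show that $K$ is compact. By the inclusion \eqref{eq_perpsub}, $(\Hdrf^- + \Hdrf^+)^\perp \subset \Hdf$. Lemma \ref{lemm_np23_compact} gives that $\Kcal + 2k_0 \Kcal^\divf$ is compact on $\Hsb^3$, and Theorem \ref{theo_free_np_3D} says $\Kcal : \Hdf \to \Hsb^3$ is compact; subtracting, $\Kcal^\divf : \Hdf \to \Hsb^3$ is compact. Since $\Hdf$ is $\Kcal^\divf$-invariant by Theorem \ref{theo_div_free_invariant}, the restriction $\Kcal^\divf|_{\Hdf}$ is a compact operator on $\Hdf$, and further restriction to the closed invariant subspace $(\Hdrf^- + \Hdrf^+)^\perp$ yields that $K$ is a self-adjoint compact operator.

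Next I would identify the spectrum of $K$. From Corollary \ref{coro_spec_rad_kd} and the block decomposition one has $\sigma(K) \subset [-\tfrac{1}{2}, \tfrac{1}{2}]$. If $\pm \tfrac{1}{2}$ lay in $\sigma(K)$, compactness and self-adjointness would force it to be an eigenvalue of $K$, but any eigenvector would sit in $(\Hdrf^- + \Hdrf^+)^\perp \cap \Hdrf^\mp = \{0\}$, a contradiction; hence $\sigma(K) \subset (-\tfrac{1}{2}, \tfrac{1}{2})$. The subspace $(\Hdrf^- + \Hdrf^+)^\perp$ is infinite-dimensional (since $\dim \Hdf = \infty$ by Theorem \ref{theo_infinite_dim_c1a} and the codimension is finite by Corollary \ref{coro_finite_codim}), so $0 \in \sigma(K)$; the spectral theorem for compact self-adjoint operators then exhibits $\sigma(K) \setminus \{0\}$ as a sequence of eigenvalues $\lambda_j$ of finite multiplicity with $\lambda_j \to 0$. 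Combined with $\Ker(\Kcal^\divf \mp \tfrac{1}{2}I) = \Hdrf^\pm$ (Theorem \ref{theo_div_rot_free_kd}) and the infinite-dimensionality of $\Hdrf^\pm$ (Theorem \ref{theo_infinite_dim_c1a}), this yields (i).

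For (ii), the block picture immediately delivers closedness: $\Kcal^\divf - \tfrac{1}{2}I$ acts as $-I$ on $\Hdrf^-$, as $0$ on $\Hdrf^+$, and as $K - \tfrac{1}{2}I$ on $(\Hdrf^- + \Hdrf^+)^\perp$; since $\tfrac{1}{2} \notin \sigma(K)$, the latter factor is boundedly invertible on its domain, so
\[
\Ran\bigl(\Kcal^\divf - \tfrac{1}{2}I\bigr) = \Hdrf^- \oplus (\Hdrf^- + \Hdrf^+)^\perp = (\Hdrf^+)^\perp,
\]
which is closed; the symmetric argument gives $\Ran(\Kcal^\divf + \tfrac{1}{2}I) = (\Hdrf^-)^\perp$. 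The product $(\Kcal^\divf + \tfrac{1}{2}I)(\Kcal^\divf - \tfrac{1}{2}I)$ vanishes on $\Hdrf^+ \oplus \Hdrf^-$ and acts on $(\Hdrf^- + \Hdrf^+)^\perp$ as $(K + \tfrac{1}{2}I)(K - \tfrac{1}{2}I)$, which is invertible there; hence its range is exactly $(\Hdrf^- + \Hdrf^+)^\perp$, which is \eqref{eq_perp_ran}. The essential analytic input --- and the one place where the $C^{1,\alpha}$ hypothesis with $\alpha > 1/2$ is indispensable --- is the compactness of $K$; without it $0$ could be an accumulation point of $\sigma(K)$ at both $\pm \tfrac{1}{2}$, destroying the closed-range property.
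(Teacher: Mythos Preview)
Your proof is correct and follows essentially the same approach as the paper's: block-diagonalize $\Kcal^\divf$ along $\Hdrf^-\oplus\Hdrf^+\oplus\Xcal$ with $\Xcal=(\Hdrf^-+\Hdrf^+)^\perp$, use Theorem~\ref{theo_free_np_3D} together with Lemma~\ref{lemm_np23_compact} to get compactness of $\Kcal^\divf|_\Xcal$, and read off the spectrum and the closed-range statements from the block picture. If anything you are slightly more explicit than the paper in ruling out $\pm\tfrac12\in\sigma(K)$ and in justifying $\dim\Xcal=\infty$ via Corollary~\ref{coro_finite_codim}.
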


\begin{proof}
    Set $\Xcal:=(\Hdrf^+ + \Hdrf^-)^\perp$ for convenience. Since $\Hdrf^\pm$ and $\Xcal$ are invariant under $\Kcal^\divf$ by Theorem \ref{theo_div_rot_free_kd} and Theorem \ref{theo_div_free_invariant}, we have
    \begin{align*}
    \Gs(\Kcal^\divf) &= \Gs(\Kcal^\divf, \Hdrf^+) \cup \Gs(\Kcal^\divf, \Hdrf^-) \cup \Gs(\Kcal^\divf, \Xcal) \\
    &= \left\{-\frac{1}{2}, \frac{1}{2}\right\} \cup \Gs(\Kcal^\divf, \Xcal),
    \end{align*}
    where $\pm 1/2$ are eigenvalues of infinite multiplicities since $\dim \Hdrf^\pm=\infty$ (Theorem \ref{theo_infinite_dim_c1a}). Since $\Kcal$ is compact on $\Xcal$ (Theorem \ref{theo_free_np_3D}), so is $\Kcal^\divf$ by Lemma \ref{lemm_np23_compact}. Thus $\Gs(\Kcal^\divf, \Xcal)$ consists of $0$ and eigenvalues of finite multiplicities converging to $0$.  Since $\sigma (\Kcal^\divf)\subset [-1/2, 1/2]$ (Corollary \ref{coro_spec_rad_kd}), the proof is complete.

    Since $\Gs(\Kcal^\divf, \Hdrf^- \oplus \Xcal)$ consists of $1/2$ and eigenvalues in $(-1/2, 1/2)$ of finite multiplicities converging to $0$, $\Kcal^\divf + 1/2I$ is invertible on $\Hdrf^- \oplus \Xcal$, and hence $\rmop{Ran}(\Kcal^\divf + 1/2I) = \Hdrf^- \oplus \Xcal$, in particular, $\rmop{Ran}(\Kcal^\divf + 1/2I)$ is closed in $\Hsb^3$. It can be proved in the same way that we have $\rmop{Ran}(\Kcal^\divf - 1/2I) = \Hdrf^+ \oplus \Xcal$. Since $\Kcal^\divf \pm 1/2I$ is invertible on $\Xcal$, we have $\Xcal= (\Kcal^\divf + 1/2I)(\Kcal^\divf - 1/2I)(\Xcal)$, and \eqref{eq_perp_ran} follows.
\end{proof}

\section{Proof of Theorem \ref{theo_codim_betti}}\label{sec:topology}

In this section, we prove Theorem \ref{theo_codim_betti}. We prove the inequality
\beq\label{000}
\dim \Hdf /(\Hdrf^-+\Hdrf^+)^\perp \le b_1(\p\GO)
\eeq
under the assumption that $(\mu, k_0)=(1, 1/2)$, or equivalently, $(\lambda, \mu)=(-1, 1)$. It amounts to saying that the inner product on $\Hcal^3$ defined in \eqref{eq_inner_product} is given by
\beq\label{eq_inner_product2}
    \jbracket{f, g}_*:=-\jbracket{\Scal_0^{-1}[f], g}_{\p\GO}
\eeq
where $\Scal_0$ is the single layer potential for the Laplace operator. In other words, $(\Hdrf^-+\Hdrf^+)^\perp$ in \eqref{000} is with respect to this inner product. Once we prove \eqref{000} with this inner product, then \eqref{000} holds for any pair of Lam\'e parameters. It is because the identity
\[
    \dim \Hdf /(\Hdrf^-+\Hdrf^+)^\perp = \dim \Hdf \cap (\Hdrf^-+\Hdrf^+)
\]
holds for any pair of Lam\'e parameters and the right hand side is independent of Lam\'e parameters.

For the proof of Theorem \ref{theo_codim_betti}, we introduce subspaces $\Mcal^\pm$ of $\Hdrf^\pm$ defined by
\begin{equation}
    \label{eq_mpm}
    \begin{aligned}
        \Mcal^-&:=\{ f\in \Hdrf^- \mid
            \exists \varphi\in C^\infty (\Omega) \text{ such that $v^f_-=\nabla \varphi$ in $\GO$} \}, \\
        \Mcal^+&:=\{ f\in \Hdrf^+ \mid
            \exists \varphi\in C^\infty (\Rbb^3\setminus\overline{\Omega}) \text{ such that  $v^f_+=\nabla \varphi$ in $\Rbb^3\setminus\overline{\Omega}$} \}.
    \end{aligned}
\end{equation}
Here $v^f = \Scal_0 [\Scal_0^{-1}[f]]$. We prove the following proposition.

\begin{prop}\label{prop_suff}
    Let $\Omega\subset \Rbb^3$ be a $C^{1, \alpha}$ domain for some $\alpha>1/2$. Then we have
    \[
        (\Hdrf^\mp)^\perp\subset \Hdf^\pm \subset (\Mcal^\mp)^\perp .
    \]
    In particular, we have
    \begin{equation}
        \label{eq_Hdf_above_below}
        (\Hdrf^-+\Hdrf^+)^\perp \subset \Hdf \subset (\Mcal^-+\Mcal^+)^\perp.
    \end{equation}
\end{prop}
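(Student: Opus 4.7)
The first inclusion $(\Hdrf^\mp)^\perp \subset \Hdf^\pm$ is precisely Lemma \ref{lemm_drf_df_pm}, so only $\Hdf^\pm \subset (\Mcal^\mp)^\perp$ requires work. The argument is symmetric in $\pm$ (swap the roles of interior and exterior and use Lemma \ref{lemm_green_vc}(ii) in place of (i)), so I will fix $f \in \Hdf^-$ and $g \in \Mcal^+$ and prove $\jbracket{f, g}_* = 0$.

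First I reduce $\jbracket{f, g}_*$ to a single bulk curl--curl integral. With $(\mu,k_0)=(1,1/2)$ the Lam\'e operator equals the Laplacian, $\Scal = \Scal_0$, and $u^{\cdot}$ is the harmonic extension. The hypothesis $g\in\Mcal^+\subset\Hdrf^+$ makes $u^g$ both divergence-free and curl-free in $\Rbb^3\setminus\overline\Omega$, while $f\in\Hdf^-$ gives $\nabla\cdot u^f=0$ in $\Omega$. Inserting these vanishings into Corollary \ref{cor213} kills three of the four terms and leaves
\[
\jbracket{f,g}_* = \jbracket{\nabla\times u^f,\,\nabla\times u^g}_\Omega.
\]

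The second step is an integration by parts that moves $u^g$ to the boundary. Since $\Delta u^f=0$ and $\nabla\cdot u^f=0$ in $\Omega$, one has $\nabla\times(\nabla\times u^f)=\nabla(\nabla\cdot u^f)-\Delta u^f=0$ there, so applying \eqref{eq_rot_adj} with $u=u^g$, $v=\nabla\times u^f$ gives
\[
\jbracket{f,g}_* = \jbracket{\nv\times g,\,(\nabla\times u^f)|_-}_{\p\Omega}.
\]
The third step exploits the gradient structure of $g$. Writing $\tilde\varphi:=\varphi|_{\p\Omega}$, the fact that $\nabla\varphi\in H^1$ near $\p\Omega$ gives $\tilde\varphi\in H^{3/2}(\p\Omega)$, so there exists an extension $\Phi\in H^2(\Omega)$ with $\Phi|_-=\tilde\varphi$. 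Because $\nv\times$ depends only on the tangential component of a vector field on $\p\Omega$, I get $\nv\times g=\nv\times\nabla\varphi|_+=\nv\times\nabla_T\tilde\varphi=\nv\times\nabla\Phi|_-$. Substituting this into the boundary pairing and applying \eqref{eq_rot_adj} in reverse with $u=\nabla\Phi$, $v=\nabla\times u^f$ yields
\[
\jbracket{f,g}_* = \jbracket{\nabla\times\nabla\Phi,\,\nabla\times u^f}_\Omega - \jbracket{\nabla\Phi,\,\nabla\times(\nabla\times u^f)}_\Omega = 0,
\]
as both integrands vanish identically.

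The main technical obstacle is the boundary trace $(\nabla\times u^f)|_-$, which a priori is only an $L^2(\Omega)^3$ vector field. However, $\nabla\times u^f$ lies in both $H(\mathrm{div},\Omega)$ and $H(\mathrm{curl},\Omega)$ in $\Omega$ (its divergence and curl both vanish there by the computation above), so its tangential trace is well-defined in $H^{-1/2}(\p\Omega)^3$ by the standard trace theory for $H(\mathrm{curl},\Omega)$, and the integration-by-parts identity \eqref{eq_rot_adj} extends to this setting by a density argument; alternatively, one may approximate $f$ in $\Hdf^-$ by data of higher Sobolev regularity and pass to the limit.
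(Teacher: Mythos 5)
Your argument is correct and follows a genuinely different route from the paper's. The paper represents $\varphi$ via a single layer potential $\varphi=\Scal_0[\psi]$ with $\psi\in\Hsb$ (a nontrivial regularity statement, Lemma \ref{lemm_mplus_decay}), and then invokes the adjoint identity $\jbracket{g,\nabla\Scal_0[\psi]|_+}_{\p\GO}=-\frac{\mu}{2k_0}\jbracket{\nabla\cdot\Scal[g]|_-,\psi}_{\p\GO}$ (Lemma \ref{lemm_adjoint_qp}), which kills the pairing because $\nabla\cdot\Scal[g]|_-=0$. You instead reduce $\jbracket{f,g}_*$ to the single bulk integral $\jbracket{\nabla\times u^f,\nabla\times u^g}_\GO$ via Corollary \ref{cor213}, push it to the boundary by one integration by parts, replace $\nv\times g$ by $\nv\times\nabla\Phi|_-$ using a suitable $H^2$ extension $\Phi$, and integrate by parts again to get zero. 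Your route is geometrically more transparent (it makes visible why $\Mcal^+$, rather than all of $\Hdrf^+$, is what you can annihilate — the gradient structure of $g$ is used precisely to convert the boundary term back into a vanishing bulk integral), while the paper's stays entirely within the boundary-integral framework it has already built up. One small imprecision: the passage "$\nabla\varphi\in H^1$ near $\p\GO$ gives $\tilde\varphi\in H^{3/2}(\p\GO)$, so there exists an extension $\Phi\in H^2(\GO)$" leans on the trace/extension theory at regularity $H^{3/2}(\p\GO)\leftrightarrow H^2(\GO)$, which is delicate on a $C^{1,\alpha}$ boundary with $\alpha>1/2$; it is cleaner to avoid naming $H^{3/2}(\p\GO)$ at all and simply apply the Stein extension to $\varphi\in H^2(B\setminus\overline\GO)$ across the Lipschitz interface, then restrict to $\GO$ — the matching of traces from the two sides is then automatic. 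With that rephrasing, and with your stated appeal to the $H(\rot)$ trace theory for the pairings involving $(\nabla\times u^f)|_-$, the proof is complete.
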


That $(\Hdrf^\mp)^\perp\subset \Hdf^\pm$ is already proved in Lemma \ref{lemm_drf_df_pm} (it holds even if the inner product is given by \eqref{eq_inner_product2}). To prove $\Hdf^\pm \subset (\Mcal^\mp)^\perp$, we prepare three lemmas.

\begin{lemm}\label{lemm_asymptotic}
    Let $K\subset \Rbb^3$ be a compact set. If a $C^1$ function $\varphi$ on $\Rbb^3\setminus K$ satisfies $|\nabla\varphi(x)|=O(|x|^{-2})$ as $|x|\to \infty$, then the limit $\varphi(\infty):=\lim_{|x|\to \infty}\varphi (x)\in \Rbb$ exists and $|\varphi (x)-\varphi (\infty)|=O(|x|^{-1})$ as $|x|\to \infty$.
\end{lemm}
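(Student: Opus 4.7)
The plan is straightforward: connect distant points by paths lying outside $K$, apply the fundamental theorem of calculus along them, and exploit the integrability of $|x|^{-2}$ both radially and over unit spheres. Fix $R > 0$ large enough that $K \subset \overline{B_R(0)}$; then on $\Rbb^3 \setminus \overline{B_R(0)}$ the function $\varphi$ is $C^1$ and there is a constant $C>0$ with $|\nabla \varphi(x)| \leq C|x|^{-2}$ for $|x| \geq R$. Note the complement $\Rbb^3 \setminus \overline{B_R(0)}$ is path-connected, so we may freely construct paths there.

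First I would handle the \emph{angular estimate}. For $|x| = |y| = r > R$, connect $x$ to $y$ by a great-circle arc on the sphere of radius $r$. Its length is at most $\pi r$, and $|\nabla \varphi|\leq C/r^2$ along it, hence
\[
    |\varphi(x) - \varphi(y)| \leq \pi r \cdot \frac{C}{r^2} = \frac{\pi C}{r}.
\]
Next the \emph{radial estimate}: for $R < r \leq s$ and $x$ with $|x| = r$, set $x' = (s/r)x$, and integrate along the straight radial segment to obtain
\[
    |\varphi(x) - \varphi(x')| \leq \int_r^s \frac{C}{t^2}\, dt = C\left(\frac{1}{r} - \frac{1}{s}\right) \leq \frac{C}{r}.
\]

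Combining these, for any $x, y$ with $R < |x| \leq |y|$, set $y' = (|x|/|y|)y$; then $|y'|=|x|$ and
\[
    |\varphi(x) - \varphi(y)| \leq |\varphi(x) - \varphi(y')| + |\varphi(y') - \varphi(y)| \leq \frac{(\pi + 1)C}{|x|}.
\]
In particular, for any sequence $x_n$ with $|x_n| \to \infty$, the values $\{\varphi(x_n)\}$ form a Cauchy sequence in $\Rbb$, so they converge; applying the same estimate to compare limits of different sequences shows the limit is independent of the sequence. Call this limit $\varphi(\infty)$. Letting $|y| \to \infty$ in the combined inequality with $x$ fixed yields
\[
    |\varphi(x) - \varphi(\infty)| \leq \frac{(\pi + 1)C}{|x|},
\]
which is the desired $O(|x|^{-1})$ bound.

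There is no substantive obstacle: the only point that requires attention is ensuring the paths avoid $K$, which is automatic once $R$ is chosen so that $K \subset \overline{B_R(0)}$, and ensuring the hypothesis $|\nabla\varphi(x)|=O(|x|^{-2})$ is used sharply enough to beat the $O(r)$ length of angular paths, which it does.
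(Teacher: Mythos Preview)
Your proof is correct. The path-integration argument (radial segment plus great-circle arc) cleanly yields the Cauchy estimate $|\varphi(x)-\varphi(y)|\le (\pi+1)C/\min(|x|,|y|)$, from which existence of the limit and the $O(|x|^{-1})$ decay follow immediately.

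The paper takes a different route: it composes $\varphi$ with the inversion $F_R(x)=R^2 x/|x|^2$, which maps $\Rbb^3\setminus\overline{B_R}$ to $B_R\setminus\{0\}$, and checks via the chain rule that $\varphi\circ F_R$ has bounded gradient on the punctured ball. This makes $\varphi\circ F_R$ Lipschitz, so it extends continuously to the origin; the Lipschitz bound then unwinds to the stated decay. Your approach is more elementary and self-contained---no coordinate change, no derivative computation for the composition---while the paper's inversion argument trades those computations for a single conceptual step (infinity becomes a removable singularity of a Lipschitz function). Both arguments use the hypothesis in the same essential way, and neither offers a real advantage in this particular lemma; they are simply two natural ways to package the same integrability of $|x|^{-2}$.
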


\begin{proof}
    Take a sufficiently large $R>0$ such that
    \[
        B_R:=\{ x\in \Rbb^3 \mid |x|<R\} \supset K\cup \{0\}.
    \]
    Let $F_R: B_R\setminus \{0\}\to \Rbb^3\setminus \overline{B_R}$ be the inversion with respect to $|x|=R$, that is, $F_R(x):=R^2x/|x|^2$. Then, for $x\in B_R\setminus \{0\}$, we have
    \[
        \nabla (\varphi \circ F_R)(x)
        =R^2\left(\frac{1}{|x|^2}(\nabla \varphi)(F_R(x))-\frac{2x}{|x|^4}x\cdot \nabla \varphi (F_R(x))\right).
    \]
    Since $|\nabla\varphi (x)|=O(|x|^{-2})$ as $|x|\to \infty$, we have
    \[
        |\nabla (\varphi\circ F_R)(x)|\leq C \quad (x\in B_R\setminus \{0\})
    \]
    for some constant $C>0$ independent of $x\in B_R\setminus \{0\}$. Hence we have
    \begin{equation}
        \label{eq_phif_origin}
        |(\varphi\circ F_R)(x)-(\varphi\circ F_R)(y)|
        \leq C|x-y|
    \end{equation}
    for all $x$, $y\in B_R\setminus \{0\}$. Therefore the limit $\varphi(\infty):=\lim_{x\to 0}(\varphi \circ F_R)(x)$ exists. Equivalently, we have $\lim_{|x|\to \infty} \varphi (x)=\varphi (\infty)$. Moreover, by taking $y\to 0$ in \eqref{eq_phif_origin}, we have
    \[
        |(\varphi\circ F_R)(x)-\varphi (\infty)|
        \leq C|x|
    \]
    for all $x\in B_R\setminus \{ 0\}$, and hence
    \[
        |\varphi (x)-\varphi (\infty)|\leq C|x|^{-1}
    \]
    for all $x \in \Rbb^3\setminus \overline{B_R}$.
    The proof is completed.
\end{proof}

\begin{lemm}\label{lemm_adjoint_qp}
    Let $\partial\Omega$ is $C^{1, \alpha}$ for some $\alpha>1/2$. The following relation holds for all $g\in (\Hsb^3)^*$ and $\psi\in \Hsb$:
    \[
        \jbracket{g, \nabla \Scal_0[\psi]|_\pm}_{\partial\Omega}=-\frac{\mu}{2k_0}\jbracket{\nabla\cdot \Scal [g]|_\mp, \psi}_{\partial\Omega}.
    \]
\end{lemm}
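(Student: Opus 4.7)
The plan is to use the jump relations already established to reduce the claim to the $L^2$-duality between $\Jcal_\mathrm{div}$ and $\Jcal_\mathrm{div}^*$. First, I would absorb the prefactor on the right-hand side by appealing to the identity $\nabla\cdot\Scal[g] = (2k_0/\Gm)\,\nabla\cdot\Scal_0[g]$ implicit in \eqref{eq_div_single_layer}, so that the target identity becomes
\[
\jbracket{g,\nabla\Scal_0[\psi]|_\pm}_{\p\GO} = -\jbracket{\nabla\cdot\Scal_0[g]|_\mp,\psi}_{\p\GO}.
\]

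Next, I would apply the jump formula \eqref{eq_jump_ds0} to rewrite $\nabla\Scal_0[\psi]|_\pm = -\Jcal_\mathrm{div}^*[\psi]\pm\tfrac{1}{2}\psi\nv$. An entirely analogous jump formula for the divergence of $\Scal_0$ acting on a vector density, namely $\nabla\cdot\Scal_0[g]|_\mp = \Jcal_\mathrm{div}[g]\mp\tfrac{1}{2}\nv\cdot g$, follows either by specializing \eqref{eq_jump_j_div} to the case $2k_0/\Gm = 1$ or by a componentwise application of \eqref{eq_jump_grad}. Substituting both jump relations into the target identity, the boundary contributions $\pm\tfrac{1}{2}\jbracket{\nv\cdot g,\psi}_{\p\GO}$ cancel between the two sides, and the claim collapses to
\[
\jbracket{g,\Jcal_\mathrm{div}^*[\psi]}_{\p\GO} = \jbracket{\Jcal_\mathrm{div}[g],\psi}_{\p\GO}.
\]

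Finally, I would verify this duality. Comparing the explicit kernels in \eqref{eq_div_sl_bdry} and \eqref{eq_jds_explicit}, one sees that $\Jcal_\mathrm{div}^*$ is the formal $L^2$-transpose of $\Jcal_\mathrm{div}$ (the kernel $(x-y)/(4\pi|x-y|^3)$ is simply reflected through the diagonal), so for smooth $g$ and $\psi$ the identity follows from Fubini after handling the principal value in the standard way (cutting out an $\varepsilon$-tube around the diagonal and sending $\varepsilon\to 0$). To extend the equality to arbitrary $g\in(\Hsb^3)^*$ and $\psi\in\Hsb$, I would invoke Lemma \ref{lemm:Jdiv}, which provides boundedness of $\Jcal_\mathrm{div}^*\colon\Hsb\to\Hsb^3$ and $\Jcal_\mathrm{div}\colon(\Hsb^3)^*\to\Hsb^*$; these two mapping properties make both sides of the displayed identity jointly continuous bilinear forms on $(\Hsb^3)^*\times\Hsb$, after which density of smooth functions completes the argument. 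The only delicate point is the passage from the $L^2$-duality for smooth data to the dual pairings in the statement, which is exactly why both sets of mapping properties in Lemma \ref{lemm:Jdiv} (on $\Hsb$ and on $\Hsb^*$) are needed.
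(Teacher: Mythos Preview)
Your proposal is correct and follows essentially the same route as the paper: invoke the jump formulas \eqref{eq_jump_ds0} and \eqref{eq_jump_j_div} and reduce to the $L^2$-adjointness of $\Jcal_\mathrm{div}$ and $\Jcal_\mathrm{div}^*$, using Lemma~\ref{lemm:Jdiv} to make the duality pairings well-defined. The paper's argument is terser — it applies \eqref{eq_jump_j_div} directly (which already carries the factor $2k_0/\Gm$) rather than first normalizing to $\Scal_0$, and it takes the adjoint identity $\jbracket{g,\Jcal_\mathrm{div}^*[\psi]}_{\p\GO}=\jbracket{\Jcal_\mathrm{div}[g],\psi}_{\p\GO}$ for granted once the mapping properties are in place — but the content is the same.
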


\begin{proof}
    We first note that $\Jcal_\mathrm{div}^*[\psi] \in \Hcal^3$ and $\Jcal_\mathrm{div}[g] \in (\Hsb^3)^*$ by Lemma \ref{lemm:Jdiv}. We invoke the jump formulas \eqref{eq_jump_j_div} and \eqref{eq_jump_ds0} and obtain
    \begin{align*}
        \jbracket{g, \nabla \Scal_0[\psi]|_\pm}_{\partial\Omega}
        &=\jbracket{g, -\Jcal_\mathrm{div}^*[\psi]\pm \frac{1}{2}\psi n}_{\partial\Omega}
        =-\jbracket{\Jcal_\mathrm{div}[g]\mp \frac{1}{2}g\cdot \nv, \psi}_{\partial\Omega} \\
        &=-\frac{\mu}{2k_0}\jbracket{\nabla\cdot \Scal [g]|_\mp, \psi}_{\partial\Omega},
    \end{align*}
    as desired.
\end{proof}

\begin{lemm}\label{lemm_mplus_decay}
    Assume $\partial\Omega$ is $C^{1, \alpha}$ for some $\alpha>1/2$. Then we have the following.
    \begin{enumerate}[label=(\arabic*)]
        \item \label{enum_inv_quad}If $f\in \Hdrf^+$, then $|v^f_+(x)|=O(|x|^{-2})$ as $|x|\to \infty$.
        \item \label{enum_inv_linear}If $f\in \Mcal^+$, then there exists a function $\varphi\in C^\infty (\Rbb^3\setminus \overline{\Omega})$ such that $v^f_+=\nabla \varphi$ in $\Rbb^3\setminus \overline{\Omega}$ and $|\varphi (x)|=O(|x|^{-1})$ as $|x|\to \infty$. Moreover, if $B\subset \Rbb^3$ is an open ball which contains $\overline{\Omega}$, then $\varphi \in H^1(B\setminus \overline{\Omega})$.
        \item \label{enum_scp_regular} If $\varphi\in C^\infty (\Rbb^3\setminus \overline{\Omega})$ is as in \ref{enum_inv_linear}, then there exists a function $\psi\in \Hsb$ such that $\varphi =\Scal_0[\psi]$ in $\Rbb^3\setminus \overline{\Omega}$.
    \end{enumerate}
\end{lemm}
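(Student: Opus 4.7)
The plan is to treat the three parts sequentially: part (1) supplies the improved decay of $\nabla\varphi$ that drives part (2), and part (2) provides the boundary trace needed for part (3).

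For part (1), I would use that $\Rbb^3\setminus \overline{B_R}$ is simply connected for any large ball $B_R\supset\overline{\GO}$, so the rotation-free field $v_+^f$ admits a $C^\infty$ scalar potential $\varphi_\infty$ on the exterior of $B_R$, and $\varphi_\infty$ is harmonic because $v_+^f$ is divergence-free. Since $|\nabla\varphi_\infty|=|v_+^f|=O(|x|^{-1})$, Lemma \ref{lemm_asymptotic} provides a limit $\varphi_\infty(\infty)$; after subtracting this constant, the resulting harmonic function vanishes at infinity and so has a spherical-harmonic expansion whose leading term is of order $|x|^{-1}$. This gives $\varphi_\infty(x)=O(|x|^{-1})$ and therefore $v_+^f(x)=\nabla\varphi_\infty(x)=O(|x|^{-2})$.

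For part (2), the function $\varphi\in C^\infty(\Rbb^3\setminus \overline{\GO})$ with $v_+^f=\nabla\varphi$ exists by the definition of $\Mcal^+$; part (1) combined with Lemma \ref{lemm_asymptotic} yields a limit $\varphi(\infty)$ together with $|\varphi(x)-\varphi(\infty)|=O(|x|^{-1})$, so we may subtract the constant to obtain the stated asymptotic. For the $H^1(B\setminus\overline{\GO})$ claim, $\nabla\varphi=v_+^f\in L^2(B\setminus\overline{\GO})^3$ is immediate from $v_+^f\in H^1(\Rbb^3\setminus \overline{\GO})^3$, so only $\varphi\in L^2(B\setminus\overline{\GO})$ remains to be argued. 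I would fix a larger ball $B_R\supset\overline{B}\cup\overline{\GO}$, observe that $\varphi$ is smooth and uniformly bounded on the annular set $A=\{R<|x|<R+1\}$ by the pointwise decay already established, and apply the Poincar\'e inequality on the connected bounded Lipschitz domain $B_R\setminus\overline{\GO}$ with normalization by the average of $\varphi$ on $A$; applying this first to smooth truncations of $\varphi$ and then passing to the limit circumvents the apparent circularity of assuming $\varphi\in H^1$ at the outset.

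For part (3), part (2) gives a well-defined trace $\varphi|_{\p\GO}\in\Hsb$. By Verchota's theorem the single-layer operator $\Scal_0:\Hsb^*\to\Hsb$ is an isomorphism, so we may set $\psi:=\Scal_0^{-1}[\varphi|_{\p\GO}]\in\Hsb^*$. The potential $\Scal_0[\psi]$ is harmonic in $\Rbb^3\setminus\overline{\GO}$, has the same trace $\varphi|_{\p\GO}$ on $\p\GO$, and is $O(|x|^{-1})$ at infinity; since $\varphi$ shares these three properties, the standard uniqueness theorem for the exterior Dirichlet problem with $O(|x|^{-1})$ decay forces $\varphi=\Scal_0[\psi]$ throughout $\Rbb^3\setminus\overline{\GO}$. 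The step I expect to require the most care is the $L^2$ control of $\varphi$ in part (2): smoothness of $\varphi$ on the open exterior does not automatically rule out growth near $\p\GO$, and a purely pointwise argument from $\nabla\varphi\in L^2$ is not available, so the truncation-plus-Poincar\'e combination is essential to close the argument without circular reasoning.
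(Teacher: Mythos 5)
Your approach to parts (1) and (2) is reasonable, though it diverges from the paper's. For (1) the paper argues directly: since the constant vector fields lie in $\Hdrf^-$ and $\Hdrf^+\subset(\Hdrf^-)^\perp$, one gets $\int_{\p\GO}\Scal_0^{-1}[f]\,\df\Gs=0$, and the representation $v_+^f=\Scal_0[\Scal_0^{-1}[f]]$ with zero total charge immediately gives the $O(|x|^{-2})$ decay. Your route via the scalar potential on the exterior of a large ball also works in spirit, but your invocation of Lemma \ref{lemm_asymptotic} is not justified at the point where you use it: that lemma requires $|\nabla\varphi_\infty|=O(|x|^{-2})$, while at that stage you only have $O(|x|^{-1})$. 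Your subsequent appeal to the spherical-harmonic expansion of the harmonic function $\varphi_\infty$ does close that circle (harmonicity plus $O(|x|^{-1})$ gradient decay forces the expansion to have no growing terms, hence $\varphi_\infty=a_0+O(|x|^{-1})$ and $\nabla\varphi_\infty=O(|x|^{-2})$), but as written the logic invokes the lemma before its hypothesis has been established. For (2) your truncation-plus-Poincar\'e argument is a more careful version of what the paper states tersely, and the caution you express about circularity is appropriate.

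The genuine gap is in part (3). The statement asserts $\psi\in\Hsb=H^{1/2}(\p\GO)$, but your construction only delivers $\psi=\Scal_0^{-1}[\varphi|_{\p\GO}]\in\Hsb^*=H^{-1/2}(\p\GO)$. Passing from $\Hsb^*$ to $\Hsb$ is precisely the nontrivial regularity gain that the hypothesis $\Ga>1/2$ is there for, and it is entirely missing from your proposal. The paper closes this by a bootstrap: by the jump formula one has $\nv\cdot\varphi|_+=(\Kcal_0^*+\tfrac12 I)[\psi]$, and since $\nv\cdot\varphi|_+=\nv\cdot f$ and $\nv\in C^{0,\Ga}$ with $\Ga>1/2$, Lemma \ref{lemm:multi} gives $\nv\cdot f\in\Hsb$. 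Then the mapping properties of the NP-type kernel established in Appendix \ref{sect_smoothing} (Corollary \ref{coro_regularity}, applied with $\beta=\Ga,\,\gamma=1$ and then $\beta=1,\,\gamma=\Ga$) upgrade $\psi$ first to $L^2(\p\GO)$ and then to $H^{1/2}(\p\GO)$. Without this step the conclusion $\psi\in\Hsb$, which is exactly what downstream applications (Lemma \ref{lemm_adjoint_qp} in the proof of Proposition \ref{prop_suff}) need, is not reached.
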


\begin{proof}
    \ref{enum_inv_quad} Let $\Ycal$ be the space of constant vector-valued functions. Since $\Ycal \subset \Hdrf^-$, we have $\Hdrf^+\subset (\Hdrf^-)^\perp \subset \Ycal^\perp$. Thus, if $f \in \Hdrf^+$, then $f \in \Ycal^\perp$, namely, \[
    \langle \Scal_0^{-1}[f], \psi \rangle_{\p\GO} =0
    \]
    for any constant function $\psi$, in particular, $\int_{\p\GO} \Scal_0^{-1}[f] \, \df \Gs=0$. Since $v_+^f =\Scal_0 [\Scal_0^{-1}[f]]$, we have
    \begin{align*}
    v_+^f(x)&= \int_{\p\GO} \Gamma_0(x-y) \Scal_0^{-1}[f](y) \, \df \Gs(y) \\
    &= \int_{\p\GO} \left[\Gamma_0(x-y)- \Gamma_0(x-y_0)\right] \Scal_0^{-1}[f](y) \, \df \Gs(y)
    \end{align*}
    for some $y_0$, where $\Gamma_0$ is the fundamental solution to the Laplacian. Thus, $|v^f_+(x)|=O(|x|^{-2})$ as $|x|\to \infty$.

    \freespace\noindent\ref{enum_inv_linear} Since $|\nabla\varphi(x)|=O(|x|^{-2})$ as $|x|\to \infty$ by \ref{enum_inv_quad}, we can apply Lemma \ref{lemm_asymptotic} to infer that the limit $\varphi (\infty)=\lim_{|x|\to \infty} \varphi(x)$ exists. Replacing $\varphi (x)$ with $\varphi(x)-\varphi(\infty)$ if necessary, we can assume that $\varphi(\infty)=0$.

    Let $B$ be a ball containing $\Omega$. Since $\nabla\varphi=v_+^f\in H^1(B\setminus \overline{\Omega})^3\subset L^2(B\setminus \overline{\Omega})^3$, we have $\varphi \in L^2(B\setminus \overline{\Omega})$. Thus we have $\varphi\in H^1(B\setminus \overline{\Omega})$.

    \freespace\noindent\ref{enum_scp_regular} Since $\varphi\in H^1(B\setminus \overline{\Omega})$, the boundary value $\varphi|_+$ exists on $\p\GO$ and belongs to $\Hsb$. Moreover, $\varphi$ is harmonic in $\Rbb^3\setminus \overline{\Omega}$ and $|\varphi (x)|=O(|x|^{-1})$ as $|x|\to \infty$. Thus, if we set $\psi:=\Scal_0^{-1}[\varphi|_+]\in \Hsb^*$, then we have $\varphi=\Scal_0[\psi]$ in $\Rbb^3\setminus \overline{\Omega}$.

    We claim $\psi\in \Hsb$. In fact, by the jump formula for $\Scal_0$ in terms of the NP operator $\Kcal_0^*$ for the Laplacian, we have
    \begin{equation}
        \label{eq_nf_nps}
        \nv\cdot \varphi|_+ =(\nv\cdot \nabla)\Scal_0[\psi]|_+=\Kcal_0^*[\psi]+\frac{1}{2}\psi.
    \end{equation}
    Since $\partial\Omega$ is $C^{1, \alpha}$ for some $\alpha>1/2$, we have $\nv\cdot \varphi|_+\in H^{1/2}(\partial\Omega)$ by Lemma \ref{lemm:multi}. Thus we have $(\Kcal_0^*+1/2I)[\psi]\in H^{1/2}(\partial\Omega)$. As we remarked in the proof of Lemma \ref{lemm_np23_compact}, the integral kernel of the operator $\Kcal_0$, which is same as $\Kcal_2$ in \eqref{eq_np_decomposition}, satisfies the conditions \eqref{eq_singularity_0} and \eqref{eq_singularity_1} with $\beta=\alpha$ and $\gamma=1$ and $\Kcal_0[1]=1/2$, we can apply Corollary \ref{coro_regularity} \ref{enum_regularity_T1_adj} to obtain $\psi\in L^2 (\partial\Omega)$.

    One can also prove that the integral kernel of $\Kcal_0^*$ satisfies the conditions \eqref{eq_singularity_0} and \eqref{eq_singularity_1} with $\beta=1$ and $\gamma=\alpha$. Then, we apply Corollary \ref{coro_regularity} \ref{enum_regularity} to obtain $\psi\in H^{1/2}(\partial \Omega)$, noting $\alpha>1/2$.
\end{proof}

Now we prove Proposition \ref{prop_suff}.

\begin{proof}[Proof of Proposition \ref{prop_suff}]
    We already proved $(\Hdrf^\pm)^\perp\subset \Hdf^\mp$ in Lemma \ref{lemm_drf_df_pm} as mentioned before.

    Let $f\in \Hdf^-$ and set $g:=\Scal^{-1}[f]\in (\Hsb^3)^*$. Then $\nabla\cdot \Scal[g]=0$ in $\Omega$. Let $h\in\Mcal^+$. Then there exists a function $\varphi\in C^\infty (\Rbb^3\setminus \overline{\Omega})$ such that $v^h_+=\nabla \varphi$ in $\Rbb^3\setminus \overline{\Omega}$. By Lemma \ref{lemm_mplus_decay}, there exists a function $\psi\in \Hsb$ such that $\varphi=\Scal_0[\psi]$ in $\Rbb^3\setminus \overline{\Omega}$. Then, since $\psi\in \Hsb$ and $\nabla\Scal_0[\psi]|_+=h\in \Hsb^3$, we can apply Lemma \ref{lemm_adjoint_qp} and obtain
    \[
        \jbracket{f, h}_*=-\jbracket{g, h}_{\partial\Omega} = -\jbracket{g, \nabla \Scal_0[\psi]|_+}_{\partial\Omega}=\frac{\mu}{2k_0}\jbracket{\nabla\cdot \Scal [g]|_-, \psi}_{\partial\Omega}=0.
    \]
    Hence we have $\Hdf^-\subset (\Mcal^+)^\perp$.

    Similarly, one can prove $\Hdf^+\subset (\Mcal^-)^\perp$.
\end{proof}

Next we estimate the dimension of $\Hdrf^\pm/\Mcal^\pm$ by using the de Rham theory (for the de Rham theory, see \cite{Bott-Tu82, Bredon93, Lee13} for example).

\begin{prop}\label{prop_quotient_betti}
    Assume that $\partial\Omega$ is $C^{1, \alpha}$ for some $\alpha>1/2$.
    \begin{enumerate}[label=(\arabic*)]
        \item \label{enum_hm_inner}$\dim (\Hdrf^-/\Mcal^-)\leq b_1(\Omega)$.
        \item \label{enum_hm_exterior}$\dim (\Hdrf^+/\Mcal^+)\leq b_1(\Rbb^3\setminus \overline{\Omega})$.
    \end{enumerate}
\end{prop}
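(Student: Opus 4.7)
The plan is to construct, for each sign, a linear map from $\Hdrf^\pm$ into the first de Rham cohomology of the ambient open manifold whose kernel is exactly $\Mcal^\pm$, and then invoke de Rham's theorem to identify the target's dimension with the first Betti number.

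For \ref{enum_hm_inner}, given $f\in\Hdrf^-$, the vector field $v_-^f$ is harmonic in $\Omega$, hence smooth there, and satisfies $\nabla\times v_-^f=0$ by the definition of $\Hdrf^-$. Thus the associated 1-form
\[
    \omega_f^- := v_{-,1}^f\,dx_1+v_{-,2}^f\,dx_2+v_{-,3}^f\,dx_3
\]
is a smooth closed 1-form on $\Omega$, and the assignment
\[
    \Phi^-\colon \Hdrf^- \longrightarrow H^1_{\mathrm{dR}}(\Omega),\qquad f\longmapsto [\omega_f^-]
\]
is a well-defined linear map. Its kernel is the set of $f$ for which $\omega_f^-$ is exact, i.e., for which $v_-^f=\nabla\varphi$ for some $\varphi\in C^\infty(\Omega)$; by \eqref{eq_mpm} this is precisely $\Mcal^-$. (Any distributional primitive of a smooth closed 1-form on an open set is automatically smooth, so the smoothness condition on $\varphi$ imposes no extra restriction.) Hence $\Phi^-$ induces an injection $\Hdrf^-/\Mcal^-\hookrightarrow H^1_{\mathrm{dR}}(\Omega)$, and de Rham's theorem yields
\[
    \dim(\Hdrf^-/\Mcal^-)\leq \dim H^1_{\mathrm{dR}}(\Omega)=\dim H^1(\Omega;\Rbb)=b_1(\Omega).
\]

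For \ref{enum_hm_exterior}, I would repeat the construction on the smooth open manifold $\Rbb^3\setminus\overline{\Omega}$: the map $\Phi^+\colon \Hdrf^+\to H^1_{\mathrm{dR}}(\Rbb^3\setminus\overline{\Omega})$ defined by $f\mapsto[\omega_f^+]$, where $\omega_f^+$ is the 1-form associated with $v_+^f$, has kernel $\Mcal^+$ by the same argument. De Rham's theorem then gives the desired inequality.

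The argument is essentially topological once the functional-analytic setting is fixed, so I do not anticipate a serious obstacle. The regularity hypothesis $\p\GO\in C^{1,\alpha}$ with $\alpha>1/2$ enters only through the surrounding constructions---the isomorphism $\Scal_0$, the existence of traces, the well-definedness of $v_\pm^f$, and so on---that are used to define $\Hdrf^\pm$ and $\Mcal^\pm$. The topological step itself takes place on the smooth open submanifolds $\Omega$ and $\Rbb^3\setminus\overline{\Omega}$, where standard de Rham theory applies without any regularity issues.
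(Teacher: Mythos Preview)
Your proof is correct and follows essentially the same approach as the paper: construct the linear map $f\mapsto[v_\pm^f]\in H^1_{\mathrm{dR}}$, identify its kernel with $\Mcal^\pm$, and invoke the de Rham theorem to bound the dimension of the quotient by $b_1$. The paper states it slightly more tersely (and only writes out the exterior case), but the content is identical.
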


\begin{proof}
    We denote the first singular homology group of a topological space $X$ with the real coefficient by $H^\mathrm{sing}_1(X)$. We remark that $H^\mathrm{sing}_1(X)$ is a real vector space and the first Betti number of $X$ is defined as the dimension of the first homology group, that is, $b_1(X):=\dim H^\mathrm{sing}_1(X)$. Moreover, we denote the de Rham cohomology group of an open subset $U\subset \Rbb^3$ by $H^1_\mathrm{dR}(U)$, which is defined as
    \begin{equation}
        \label{eq_derham}
        H^1_\mathrm{dR}(U):=\frac{\Ker(\nabla\times : C^\infty (U)^3\longrightarrow C^\infty (U)^3)}{\Ran (\nabla: C^\infty (U)\longrightarrow C^\infty (U)^3)}
    \end{equation}
    in terms of the vector calculus. The following formula holds:
    \begin{equation}
        \label{eq_de_rham_betti}
        \dim H^1_\mathrm{dR}(U)=b_1(U).
    \end{equation}
    In fact, this follows from the de Rham theorem (see \cite[Theorem 18.14]{Lee13}) and (a simple case of) universal coefficient theorem (see \cite[p.198 and p.199]{Hatcher02}):
    \[
        H^1_\mathrm{dR}(U)\simeq H_\mathrm{sing}^1(U)\simeq H^\mathrm{sing}_1(U)^*
    \]
    for any open sets $U\subset \Rbb^3$ where $\simeq$ means a vector space isomorphism and the vector space $H_\mathrm{sing}^1(U)$ stands for the first singular cohomology group with real coefficient and $H^\mathrm{sing}_1(U)^*$ stands for the dual space of $H^\mathrm{sing}_1(U)$.

    In the following, we only prove the assertion \ref{enum_hm_exterior}. The assertion \ref{enum_hm_inner} is proved similarly.
    We define a linear mapping
    \[
        A^+: \Hdrf^+ \longrightarrow H^1_\mathrm{dR}(\Rbb^3\setminus\overline{\Omega}), \quad
        A^+(f):=[v^f_+]\in H^1_\mathrm{dR}(\Rbb^3\setminus\overline{\Omega})
    \]
    where $[v^f_+]$ stands for the de Rham cohomology class to which the function $v^f_+\in \Rbb^3\setminus \overline{\Omega}$ belongs, that is, the image of $v^f_+$ by the natural projection
    \[
        \Ker (\nabla\times: C^\infty (\Rbb^3\setminus \overline{\Omega})\to \Rbb^3\setminus \overline{\Omega}) \longrightarrow H_\mathrm{dR}^1 (\Rbb^3\setminus \overline{\Omega}).
    \]

    Since $f\in \Hdrf^+$, $\nabla\times v_+^f=0$ in $\Rbb^3\setminus\overline{\Omega}$ and hence the operator $A^+$ is well-defined.

    By definition of the de Rham cohomology, we can easily show $\Ker A^+=\Mcal^+$, which implies that $A^+$ induces the injection $\overline{A^+}: \Hdrf^+/\Mcal^+\to H^1_\mathrm{dR}(\Rbb^3\setminus\overline{\Omega})$ defined by $\overline{A^+}([f]):=A^+(f)$.

    Hence we have $\dim (\Hdrf^+/\Mcal^+)\leq \dim H^1_\mathrm{dR}(\Rbb^3\setminus\overline{\Omega})$ by the injectivity of $\overline{A^+}: \Hdrf^+/\Mcal^+\to H^1_\mathrm{dR}(\Rbb^3\setminus\overline{\Omega})$. This and \eqref{eq_de_rham_betti} imply
    \[
        \dim (\Hdrf^+/\Mcal^+)\leq \dim H^1_\mathrm{dR}(\Rbb^3\setminus\overline{\Omega})=b_1(\Rbb^3\setminus\overline{\Omega}).
    \]
    This completes the proof.
\end{proof}

Now we are ready to prove Theorem \ref{theo_codim_betti}.

\begin{proof}[Proof of Theorem \ref{theo_codim_betti}]
    By Proposition \ref{prop_suff}, we have
    \begin{align*}
        &\dim \Hdf/(\Hdrf^-+\Hdrf^+)^\perp \\
        &\leq \dim (\Mcal^-+\Mcal^+)^\perp/(\Hdrf^-+\Hdrf^+)^\perp \\
        &=\dim (\Hdrf^-+\Hdrf^+)/(\overline{\Mcal^-+\Mcal^+}) \\
        &\leq \dim (\Hdrf^-+\Hdrf^+)/(\Mcal^-+\Mcal^+).
    \end{align*}
    Since $\Mcal^\pm\subset \Hdrf^\pm$ and $\Hdrf^- \cap \Hdrf^+=\{0\}$, we have
    \[
        (\Hdrf^-+\Hdrf^+)/(\Mcal^-+\Mcal^+)\simeq (\Hdrf^-/\Mcal^-)\oplus (\Hdrf^+/\Mcal^+).
    \]
    Hence we have
    \begin{align*}
        &\dim (\Hdrf^-+\Hdrf^+)/(\Mcal^-+\Mcal^+) \\
        &=\dim (\Hdrf^-/\Mcal^-)+\dim (\Hdrf^+/\Mcal^+) \\
        &\leq b_1(\Omega)+b_1(\Rbb^3\setminus \overline{\Omega})
    \end{align*}
    by Proposition \ref{prop_quotient_betti}. Since the equality
    \begin{equation}
        \label{eq_betti_sum}
        b_1(\Omega)+b_1(\Rbb^3\setminus \overline{\Omega})=b_1(\partial\Omega)
    \end{equation}
    holds, we obtain the desired inequality \eqref{eq_codim_betti}.

    The identity \eqref{eq_betti_sum} is probably known. However, since we could not find a proper reference, we describe a proof of \eqref{eq_betti_sum} for readers' sake. There exists a small interval $I=(-\varepsilon, \varepsilon)$ such that the mapping
    \[
        (x, t)\in \partial\Omega \times I \longmapsto  x+t\nv_x\in N:=\{ x+t\nv_x \mid x\in \partial\Omega, \, t\in I\}
    \]
    is a homeomorphism. We set $U:=\Omega\cup N$ and $V:=(\Rbb^3\setminus \Omega)\cup N$. Then $U$ and $V$ are homeomorphic to $\Omega$ and $\Rbb^3\setminus \overline{\Omega}$ respectively. Since $\{ U, V\}$ is an open covering of $\Rbb^3$, we invoke the Mayer-Vietoris exact sequence (see \cite[p.149]{Hatcher02} for example)
    \[
        H^\mathrm{sing}_2(\Rbb^3) \longrightarrow H^\mathrm{sing}_1(U\cap V)\longrightarrow H^\mathrm{sing}_1(U)\oplus H^\mathrm{sing}_1(V) \longrightarrow H^\mathrm{sing}_1(\Rbb^3),
    \]
    where $H^\mathrm{sing}_2(\Rbb^3)$ stands for the second homology group of $\Rbb^3$. Since $\Rbb^3$ is contractible, we have $H^\mathrm{sing}_2(\Rbb^3)=\{0\}$ and $H^\mathrm{sing}_1(\Rbb^3)=\{0\}$. Moreover, since $H^\mathrm{sing}_1(U)\simeq H^\mathrm{sing}_1(\Omega)$, $H^\mathrm{sing}_1(V)\simeq H^\mathrm{sing}_1(\Rbb^3\setminus \overline{\Omega})$ and $H^\mathrm{sing}_1(U\cap V)\simeq H^\mathrm{sing}_1(\partial\Omega)$ by the homotopy invariance of the homology group, we obtain the exact sequence
    \[
        0 \longrightarrow H^\mathrm{sing}_1(\partial\Omega) \longrightarrow H^\mathrm{sing}_1(\Omega) \oplus H^\mathrm{sing}_1(\Rbb^3\setminus \overline{\Omega}) \longrightarrow 0.
    \]
    This exact sequence is equivalent to the linear isomorphism
    \begin{equation}
        \label{eq_homology_sum}
        H^\mathrm{sing}_1(\partial\Omega)\simeq H^\mathrm{sing}_1(\Omega) \oplus H^\mathrm{sing}_1(\Rbb^3\setminus \overline{\Omega}).
    \end{equation}
        Thus, we count the dimension of both spaces in \eqref{eq_homology_sum} and obtain the identity \eqref{eq_betti_sum}.
\end{proof}

\section{Proofs of Theorem \ref{thm_eigenvalue3D} and \ref{theo_enp_eigenfunctions}, and more}\label{sec:eigenspace}

In this section, we consider a bounded $C^{1, \alpha}$-domain $\Omega\subset \Rbb^3$ for some $\alpha>1/2$ to prove Theorem \ref{thm_eigenvalue3D} and \ref{theo_enp_eigenfunctions}. Furthermore, we prove Theorem \ref{theo_div_rot_asymptotic} regarding estimates on the rotation and the divergence of the solutions to the problem \eqref{eq_lame_bvp} and \eqref{eq_lame_bvp_ext}.

\begin{proof}[Proof of Theorem \ref{thm_eigenvalue3D}]
Since $\Kcal(\Kcal^2-k_0^2I)$ is compact on $\Hsb^3$ by Corollary \ref{coro_polynomially_cpt_3d}, it suffices to prove that the operators $\Kcal^2-k_0^2I$, $\Kcal(\Kcal-k_0I)$, and $\Kcal(\Kcal+k_0I)$ are not compact on $\Hsb^3$.

According to Theorem \ref{theo_free_np_3D}, $\Kcal^2-k_0^2I \equiv -k_0^2 I$ modulo a compact operator on $\Hdf$. Since $\Hdf$ is infinite-dimensional by Theorem \ref{theo_infinite_dim_c1a}, $\Kcal^2-k_0^2I$ is not compact on $\Hdf$. By Theorem \ref{theo_free_np_3D} again, $\Kcal \equiv -k_0 I$ modulo a compact operator on $\Hdrf^{-}$. Thus, $\Kcal(\Kcal-k_0I) \equiv 2k_0^2 I$ modulo a compact operator on $\Hdrf^{-}$, and hence $\Kcal(\Kcal-k_0I)$ is not compact on $\Hdrf^{-}$.
Similarly, it can be proved that $\Kcal(\Kcal+k_0I)$ is not compact on $\Hdrf^{+}$.
\end{proof}

\begin{proof}[Proof of Theorem \ref{theo_enp_eigenfunctions}]
    Since $\{ f_j\}_{j=1}^\infty$ is an orthonormal system with respect to \eqref{eq_inner_product}, it weakly converges to $0$ as $j\to \infty$. Moreover, the operator $\Kcal+2k_0 \Kcal^\divf$ is compact by Lemma \ref{lemm_np23_compact}. Thus, we have $\|\Kcal [f_j]+2k_0 \Kcal^\divf [f_j]\|_*\to 0$ as $j\to \infty$. Then, by the assumption $\Kcal [f_j]=\lambda_j f_j$, we obtain
    \begin{equation}\label{eq_enp_eigenfunctions_conv}
        \lim_{j\to \infty} \| \lambda_j f_j+2k_0 \Kcal^\divf [f_j]\|_*^2=0.
    \end{equation}
    We set $\Xcal:=\Ran (\Kcal^\divf-1/2I)(\Kcal^\divf+1/2I)=(\Hdrf^-+\Hdrf^+)^\perp$, where the second equality is due to the proof of Theorem \ref{theo_decomposition_div_free}. Since $f^\pm_j\in \Ker (\Kcal^\df\pm 1/2I)=\Hdrf^\pm$ (Theorem \ref{theo_div_rot_free_kd}) and $f^o_j, \Kcal^\divf [f^o_j]\in \Xcal$, and the decomposition \eqref{eq_H3orthogonal} is orthogonal with respect to \eqref{eq_inner_product}, we have
    \begin{align*}
        &\| \lambda_j f_j+2k_0 \Kcal^\divf [f_j]\|_*^2 \\
        &=|\lambda_j +k_0|^2\|f^-_j\|_*^2
        +|\lambda_j -k_0|^2\|f^+_j\|_*^2
        +\| \lambda_jf^o_j+2k_0 \Kcal^\divf [f^o_j]\|_*^2.
    \end{align*}
    Hence, \eqref{eq_enp_eigenfunctions_conv} implies
    \begin{equation}
        \label{eq_enp_eigenfunctions_conv_each}
        \lim_{j\to \infty} |\lambda_j \mp k_0|\|f^\pm_j\|_*
        =\lim_{j\to \infty} \| \lambda_j f^o_j+2k_0 \Kcal^\divf [f^o_j]\|_*=0.
    \end{equation}

    If $\lambda_j\to 0$, then \eqref{eq_enp_eigenfunctions_conv_each} implies
    \[
        \lim_{j\to \infty}\|f^-_j\|_*=\lim_{j\to \infty}\|f^+_j\|_*=0.
    \]
    Thus we have
    \[
        \lim_{j\to \infty}\|f_j-f^o_j\|_*^2=\lim_{j\to \infty} (\|f_j^-\|_*^2+\|f_j^+\|_*^2)=0,
    \]
    which proves \ref{enum_eigen_0}.

    For the proof of \ref{enum_eigen_+}, we assume $\lambda_j\to k_0$. Then, by \eqref{eq_enp_eigenfunctions_conv_each}, we have
    \[
        \lim_{j\to\infty} \| f^-_j\|_*=\lim_{j\to \infty} \| k_0 f^o_j+2k_0 \Kcal^\divf [f^o_j]\|_*=0.
    \]
    The proof of Theorem \ref{theo_isolated_c1a} shows that $\Kcal^\divf+1/2I: \Xcal\to \Xcal$ is invertible. Hence we have
    \[
        \lim_{j\to \infty} \|f^o_j\|_*
        =\lim_{j\to \infty} \left\|\left(\Kcal^\divf+\frac{1}{2}I\right)^{-1}\left(\Kcal^\divf+\frac{1}{2}I\right)[f^o_j]\right\|_*=0.
    \]
    Thus we have
    \[
        \lim_{j\to \infty} \|f_j-f^+_j\|_*^2=\lim_{j\to \infty} (\|f^-_j\|_*^2+\|f^o_j\|_*^2)=0,
    \]
    which proves \ref{enum_eigen_+}. The other statement \ref{enum_eigen_-} is proved similarly.
\end{proof}

The following theorem characterizes eigenfunctions of the eNP operator in terms of corresponding solutions $u^{f_j}$ to the boundary value problems \eqref{eq_lame_bvp} and \eqref{eq_lame_bvp_ext}. The estimates \eqref{eq_div_0_0}, \eqref{eq_div_rot_0_+}, and  \eqref{eq_dvirot01} below characterize the subspaces $\Hdf$, $\Hdrf^+$, and $\Hdrf^-$, respectively. In view of \eqref{eq_2.23+2.24}, \eqref{eq_rot_1_0}, \eqref{eq_div_rot_1_-}, and \eqref{eq_dvirot02} may be regarded as complementary ones.

\begin{theo}
    \label{theo_div_rot_asymptotic}
    We use the same notation as in Theorem \ref{theo_enp_eigenfunctions}.
    \begin{enumerate}[label={\rm (\roman*)}]
        \item \label{enum_div_asymptotic}If $\Gl_j \to 0$, then
        \beq
            \label{eq_div_0_0}
            \lim_{j\to \infty}\| \nabla \cdot u^{f_j}\|_{\GO}=\lim_{j\to \infty}\| \nabla \cdot u^{f_j} \|_{\overline{\GO}^c}= 0
        \eeq
        and
        \beq
            \label{eq_rot_1_0}
            \lim_{j\to\infty}\| \nabla\times u^{f_j}\|_{\GO}=\lim_{j\to \infty}\| \nabla \times u^{f_j}\|_{\overline{\GO}^c}= \frac{1}{\sqrt{2\Gm}}.
        \eeq
        \item \label{enum_div_rot_+}If $\Gl_j \to k_0$, then
        \beq
            \label{eq_div_rot_0_+}
            \lim_{j\to\infty}\| \nabla \cdot u^{f_j}\|_{\overline{\GO}^c}=\lim_{j\to\infty}\| \nabla \times  u^{f_j} \|_{\overline{\GO}^c}=0
        \eeq
        and
        \beq
            \label{eq_div_rot_1_-}
            \lim_{j\to\infty} \left( \Gm \| \nabla\times u^{f_j} \|_{\GO}^2+\frac{\Gm}{2k_0}\| \nabla \cdot u^{f_j} \|_{\GO}^2\right)= 1.
        \eeq
        \item \label{enum_div_rot_-}If $\Gl_j \to -k_0$, then
        \beq\label{eq_dvirot01}
            \lim_{j\to\infty}\| \nabla \cdot u^{f_j}\|_{\GO}=\lim_{j\to\infty}\| \nabla \times u^{f_j} \|_{\GO}= 0
        \eeq
        and
        \beq\label{eq_dvirot02}
            \lim_{j\to\infty} \left( \Gm \| \nabla\times u^{f_j} \|_{\overline{\GO}^c}^2+\frac{\Gm}{2k_0}\| \nabla \cdot u^{f_j} \|_{\overline{\GO}^c}^2\right)= 1.
        \eeq
    \end{enumerate}
\end{theo}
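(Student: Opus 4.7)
The plan is to combine three tools: the partial quadratic identities in Lemma~\ref{lemm_div_rot_npd_identity}, the compact-perturbation relation in Lemma~\ref{lemm_np23_compact}, and the orthogonal decomposition $f_j = f_j^- + f_j^+ + f_j^o$ supplied by Theorem~\ref{theo_enp_eigenfunctions}. Since $u^{f}=\Scal[\Scal^{-1}[f]]$ is linear in $f$, we have $u^{f_j}=u^{f_j^-}+u^{f_j^+}+u^{f_j^o}$. By the definitions of $\Hdrf^\pm$ and $\Hdf$, together with the fact that a harmonic divergence-free (and rotation-free) vector field automatically solves the Lam\'e system, both $\nabla\cdot u^{f_j^-}$ and $\nabla\times u^{f_j^-}$ vanish in $\GO$, both $\nabla\cdot u^{f_j^+}$ and $\nabla\times u^{f_j^+}$ vanish in $\overline{\GO}^c$, and $\nabla\cdot u^{f_j^o}$ vanishes on both sides.

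First I would set up the key quadratic identities. Since the orthonormal sequence $\{f_j\}$ converges weakly to $0$ in $\Hsb^3$ and $\Kcal + 2k_0\Kcal^\divf$ is compact (Lemma~\ref{lemm_np23_compact}), one has $\Kcal^\divf [f_j]=-(2k_0)^{-1}\lambda_j f_j + r_j$ with $\|r_j\|_*\to 0$. Pairing Lemma~\ref{lemm_div_rot_npd_identity} with $f=g=f_j$ then yields
\begin{align*}
\mu\|\nabla\times u^{f_j}\|_\GO^2 + \tfrac{\mu}{2k_0}\|\nabla\cdot u^{f_j}\|_\GO^2 &= \tfrac{\lambda_j}{2k_0}+\tfrac{1}{2}+o(1),\\
\mu\|\nabla\times u^{f_j}\|_{\overline{\GO}^c}^2 + \tfrac{\mu}{2k_0}\|\nabla\cdot u^{f_j}\|_{\overline{\GO}^c}^2 &= -\tfrac{\lambda_j}{2k_0}+\tfrac{1}{2}+o(1).
\end{align*}
Cases (ii) and (iii) then drop out immediately. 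For (ii), when $\lambda_j\to k_0$ the exterior right-hand side tends to $0$, forcing each nonnegative term to vanish individually and giving \eqref{eq_div_rot_0_+}, while the interior sum tends to $1$, which is \eqref{eq_div_rot_1_-}. Case (iii) is the mirror statement obtained by swapping interior and exterior.

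Case (i) requires one additional step, because the displays above only assert that both sums tend to $1/2$, without separating rotation from divergence. Here the decomposition matters: since $\nabla\cdot u^{f_j^-}$ and $\nabla\cdot u^{f_j^o}$ both vanish in $\GO$, we have $\nabla\cdot u^{f_j} = \nabla\cdot u^{f_j^+}$ in $\GO$, and applying Corollary~\ref{cor213} to $f_j^+$ alone gives
\[
\tfrac{\mu}{2k_0}\|\nabla\cdot u^{f_j^+}\|_\GO^2 \le \|f_j^+\|_*^2 \to 0
\]
by Theorem~\ref{theo_enp_eigenfunctions}\,(i) (the three other nonnegative terms in the Corollary~\ref{cor213} sum for $f_j^+$ vanish identically since $f_j^+ \in \Hdrf^+$ controls both quantities on $\overline{\GO}^c$). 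An analogous argument with $f_j^-$ handles $\overline{\GO}^c$, yielding \eqref{eq_div_0_0}. Substituting back into the interior and exterior quadratic identities above then forces $\mu\|\nabla\times u^{f_j}\|_\GO^2\to 1/2$ and $\mu\|\nabla\times u^{f_j}\|_{\overline{\GO}^c}^2\to 1/2$, which is exactly \eqref{eq_rot_1_0}.

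The principal subtlety lies in case (i). The bulk normalization $\|f_j\|_*^2=1$ from Corollary~\ref{cor213} alone only says that the four quadratic contributions sum to $1$; extracting the equipartition of the rotation energy between the two sides requires both the interior/exterior split provided by Lemma~\ref{lemm_div_rot_npd_identity} (which Lemma~\ref{lemm_np23_compact} evaluates asymptotically to $1/2$ on each side), and the componentwise norm control from Theorem~\ref{theo_enp_eigenfunctions} (used to kill the divergence contributions in each region).
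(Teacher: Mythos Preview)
Your proof is correct and follows essentially the same route as the paper: you use the compactness of $\Kcal+2k_0\Kcal^{\mathrm{div}}$ together with Lemma~\ref{lemm_div_rot_npd_identity} to obtain the two asymptotic energy identities, read off cases (ii) and (iii) directly, and for case (i) invoke Theorem~\ref{theo_enp_eigenfunctions}\,(i) with the decomposition to kill the divergence contributions before extracting the rotation limits. One small slip: in the parenthetical for case~(i) you say three of the four Corollary~\ref{cor213} terms for $f_j^+$ vanish, but only the two exterior terms do (the interior rotation of $u^{f_j^+}$ need not vanish); the inequality $\tfrac{\mu}{2k_0}\|\nabla\cdot u^{f_j^+}\|_\GO^2\le\|f_j^+\|_*^2$ is still correct since all four terms are nonnegative.
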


\begin{proof}
Let $\Tcal:=\Kcal+2k_0\Kcal^\divf$, which is compact on $\Hcal^3$ by Lemma \ref{lemm_np23_compact}. As before, since $\{f_j\}_{j=1}^\infty$ is an orthonormal system on $\Hcal^3$, we have $\|\Tcal [f_j]\|_*\to 0$ as $j\to \infty$. Thus, by $\Kcal^\divf [f_j]=-(2k_0)^{-1}(\Kcal [f_j]-\Tcal [f_j])=-(2k_0)^{-1}(\lambda_j f_j-\Tcal [f_j])$, we obtain
\begin{align*}
    \lim_{j\to \infty}\jbracket{\left(\Kcal^\divf \pm \frac{1}{2}I\right)[f_j], f_j}_*
    &=-\frac{1}{2k_0}\lim_{j\to \infty}\jbracket{\left(\lambda_j I-\Tcal\mp k_0 I\right)[f_j], f_j}_* \\
    &=\pm \frac{1}{2}-\frac{1}{2k_0}\lim_{j\to \infty} \lambda_j,
\end{align*}
if the limit $\lim_{j\to \infty}\lambda_j$ exists. Combining this equation with \eqref{eq_lame_bilinear_1_in} and \eqref{eq_lame_bilinear_1_ex}, we obtain
\begin{align}
    \frac{1}{2}+\frac{1}{2k_0}\lim_{j\to \infty} \lambda_j
    &=\lim_{j\to \infty}\left(\Gm \|\nabla\times u^{f_j}\|_{\GO}^2+\frac{\Gm}{2k_0}
    \|\nabla \cdot u^{f_j}\|_{\GO}^2\right), \label{eq_lame_bilinear_lim_in}\\
    \frac{1}{2}-\frac{1}{2k_0}\lim_{j\to\infty}\lambda_j
    &=\lim_{j\to \infty}\left(\Gm \|\nabla\times u^{f_j}\|_{\overline{\GO}^c}^2 +\frac{\Gm}{2k_0}\|\nabla \cdot u^{f_j}\|_{\overline{\GO}^c}^2\right). \label{eq_lame_bilinear_lim_ex}
\end{align}

In what follows, we prove each statement.

\freespace\noindent\ref{enum_div_asymptotic} By Theorem \ref{theo_enp_eigenfunctions} \ref{enum_eigen_0} and Lemma \ref{lemm_div_rot_npd_identity}, we have
\begin{align*}
    \frac{\Gm}{2k_0} \| \nabla \cdot u^{f_j}\|_{\GO}^2
    &=\frac{\Gm}{2k_0} \| \nabla \cdot u^{f_j}-\nabla\cdot u^{f^o_j}\|_{\GO}^2 \\
    &\leq -\jbracket{\left(\Kcal^\divf- \frac{1}{2}I\right)[f_j-f^o_j], f_j-f^o_j}_* \to 0,
\end{align*}
and similarly
\[
    \frac{\Gm}{2k_0} \| \nabla \cdot u^{f_j}\|_{\overline{\GO}^c}^2
    \leq \jbracket{\left(\Kcal^\divf+ \frac{1}{2}I\right)[f_j-f^o_j], f_j-f^o_j}_*
    \to 0
\]
as $j\to \infty$. This proves \eqref{eq_div_0_0}. Moreover, by \eqref{eq_lame_bilinear_lim_in} and \eqref{eq_lame_bilinear_lim_ex}, we obtain
\[
    \frac{1}{2}=\mu \lim_{j\to\infty} \|\nabla \times u^{f_j}\|_\Omega^2
\]
and
\[
    \frac{1}{2}=\mu \lim_{j\to \infty} \|\nabla\times u^{f_j}\|_{\overline{\Omega}^c}^2,
\]
which prove \eqref{eq_rot_1_0}.

\freespace\noindent\ref{enum_div_rot_+} If $\lambda_j\to k_0$, then \eqref{eq_lame_bilinear_lim_in} and \eqref{eq_lame_bilinear_lim_ex} immediately imply \eqref{eq_div_rot_1_-} and \eqref{eq_div_rot_0_+} respectively. The assertion \ref{enum_div_rot_-} is proved similarly to \ref{enum_div_rot_+}.
\end{proof}

\section{Higher dimensional case}\label{sec:higher}

In this section we deal with the case of dimensions higher than 3. As we will see, most results in three dimensions hold to be true in higher dimensions with slight differences in details. We emphasize that the decomposition theorem for the surface vector fields is valid in the same form in higher dimensions. We will highlight those differences and will not repeat arguments if they are the same as the three-dimensional case. The decomposition of vector fields is different in two dimensions; $\Hdf=\{0\}$ as Theorem \ref{coro_direct_sum_2d} shows.

The div-free conormal derivative in higher dimensions is most conveniently introduced if we use the terminology of the exterior calculus. We recall some  of them necessary for our purpose. For more details, see \cite{Bott-Tu82,Bredon93} for example. Let $U$ be an open subset of $\Rbb^m$ ($m\geq 2$) and $p=0, 1, 2$. We denote the $p$-th exterior power of the cotangent bundle over $U$ by $\Lambda^p$, omitting the base space $U$. We denote the space of $C^\infty$-differential $p$-forms, which are the sections of the vector bundle $\Lambda^p$, by $C^\infty (U; \Lambda^p)$. For $p=0$, we just define $C^\infty (U; \Lambda^0):=C^\infty (U)$. We identify the vector-valued function $u=(u_1, \ldots, u_m)\in C^\infty (U)^m$ with a differential 1-form
\[
    u=\sum_{j=1}^m u_j\, \df x_j\in C^\infty (U; \Lambda^1)
\]
where $(x_1, \ldots, x_m)$ is the Cartesian coordinates on $\Rbb^m$. The exterior derivative $\df: C^\infty (U; \Lambda^p)\to C^\infty (U; \Lambda^{p+1})$ ($p=0, 1$) is defined by
\[
    \df \varphi (x):=\sum_{j=1}^m \frac{\partial \varphi}{\partial x_j}\, \df x_j \in C^\infty (U; \Lambda^1)
\]
for a function $\varphi\in C^\infty (U)$ and
\[
    \df u:=\sum_{i, j=1}^m \frac{\partial u_j}{\partial x_i}\, \df x_i \wedge \df x_j \in C^\infty (U; \Lambda^2)
\]
for a differential 1-form $u\in C^\infty (\Rbb^m; \Lambda^1)$. The formal adjoint $\df^*: C^\infty (U; \Lambda^p)\to C^\infty (U; \Lambda^{p-1})$ ($p=1, 2$) is defined as
\[
    \df^* u:=-\sum_{j=1}^m \frac{\partial u_j}{\partial x_j}
\]
for $u\in C^\infty (U; \Lambda^1)$ and
\[
    \df^* A:=\sum_{i, j=1}^m \frac{\partial}{\partial x_j}(A_{ij}-A_{ji})\, \df x_i
\]
for $A=\sum_{i, j=1}^m A_{ij}\, \df x_i \wedge \df x_j\in C^\infty (U; \Lambda^2)$.
Then, one can see easily that the following identity holds:
\begin{equation}
    \label{eq_vector_laplacian_higher}
    \Delta u:=\sum_{i, j=1}^m \frac{\partial^2 u_i}{\partial x_j^2}\, \df x_i
    =-\df \df^* u-{\df}^* \df u
\end{equation}
for $u\in C^\infty(U; \Lambda^1)$, and the Lam\'e operator $\Lcal_{\lambda, \mu}=\Gm \Delta u+(\Gl+\Gm)\nabla (\nabla\cdot u)$ can be expressed, analogously to \eqref{eq_lame_vc}, as
\begin{equation}
    \label{eq_lame_another_higher}
    \Lcal_{\lambda, \mu}u=-\mu\,{\df}^* \df u-\frac{\mu}{2k_0}\,\df  \df^* u.
\end{equation}

Let $\GO$ be a bounded domain in $\Rbb^m$ with the Lipschitz boundary $\p\GO$ and $U$ be either $\Omega$ or $\Rbb^m \setminus \overline{\Omega}$. The div-free conormal derivative $\partial^\divf_\nu$ on $\p\GO$ is defined by
\begin{equation}
    \label{eq_d_conormal_higher}
    \partial^\divf_\nu u(x):=\mu \iota_{\nv_x}\df u(x)-\frac{\mu}{2k_0}(\df^* u(x))\nv^\flat_x .
\end{equation}
Here $\iota_{\nv_x}$ is the interior product, namely,
\[
    \iota_{\nv_x}A:=-\sum_{i, j=1}^m (A_{ij}-A_{ji})\nv_j \, \df x_i\in C^\infty (U; \Lambda^1)
\]
for $\nv_x=(\nv_1, \ldots, \nv_m)$ and $A=\sum_{i, j=1}^m A_{ij}\,\df x_i \wedge \df x_j\in C^\infty(U; \Lambda^2)$, and $\nv^\flat_x$ is the differential 1-form defined by
\[
    \nv^\flat_x:=\sum_{j=1}^m \nv_j \, \df x_j \quad (\text{a.e. } x\in \partial\Omega).
\]

\begin{rema*}[Relation to vector calculus]
We identify
\begin{equation}
            \label{eq_identification_1form}
            \sum_{j=1}^m u_j \, \df x_j \simeq (u_1, \ldots, u_m)
        \end{equation}
and
\begin{equation}\label{eq_identification_2form}
A_1\, \df x_2 \wedge \df x_3 +A_2\, \df x_3 \wedge \df x_1+A_3\, \df x_1 \wedge \df x_2 \simeq (A_1, A_2, A_3).
\end{equation}
Then we have the following.
    \begin{itemize}
        \item The operator $\df^*$ acting on the 1-form $u$ is nothing but the divergence (with the minus sign):
        \[
            \df^* u=-\nabla \cdot u.
        \]
        \item If $m=3$, the exterior derivative $\df u$ of the 1-form $u$ corresponds to the rotation of $u$:
        \[
            \df u=\nabla \times u.
        \]
        \item The 1-form $\df^*A$ of the 2-form $A$ corresponds to the rotation of $A$:
        \[
            \df^*A=\nabla \times A.
        \]
        \item The div-free conormal derivative $\partial^\divf_\nu u$ is represented as
        \begin{equation}
            \label{eq_d_conormal_higher_vector}
            (\partial^\divf_\nu u)_i=\mu \sum_{j=1}^m\left(\frac{\partial u_i}{\partial x_j}-\frac{\partial u_j}{\partial x_i}\right) \nv_j+\frac{\mu}{2k_0}(\nabla \cdot u)\nv_i.
        \end{equation}
        In particular, when $m=3$, since
        \[
            \sum_{j=1}^3 \left(\frac{\partial u_1}{\partial x_j}-\frac{\partial u_j}{\partial x_1}\right) \nv_j=-(\nabla\times u)_3 \nv_2+(\nabla \times u)_2 \nv_3
            =-(\nv\times (\nabla\times u))_1
        \]
        and the other components is calculated similarly, the div-free conormal derivative \eqref{eq_d_conormal_higher_vector} coincides with that in the three dimensional case \eqref{eq_conormal_d} under the identification \eqref{eq_identification_1form} and \eqref{eq_identification_2form}.
        \item For $m=2$, the differential 1-form and 2-form are identified with
        \[
            u_1\, \df x_1+u_2\, \df x_2 \simeq (u_1, u_2), \quad
            A\, \df x_1\wedge \df x_2 \simeq A
        \]
        respectively. Then the exterior derivative and its formal adjoint are identified with
        \[
            \df u=-\frac{\partial u_1}{\partial x_2}+\frac{\partial u_2}{\partial x_1}, \quad \df^*A=-\left( -\frac{\partial A}{\partial x_2}, \frac{\partial A}{\partial x_1}\right).
        \]
        These relations appear in the definition of the div-free eNP in two-dimensional case in Section \ref{sec_2D}. For example, one can see that the div-free conormal derivative \eqref{eq_d_conormal_higher} coincides with \eqref{eq_conormal_d_2d} for the two-dimensional case .
    \end{itemize}
\end{rema*}


We define the $L^2$-inner products of differential $p$-forms ($p=1, 2$) by
\[
    \jbracket{u, v}_{L^2(U; \Lambda^1)}:=\sum_{j=1}^n \jbracket{u_j, v_j}_{L^2(U)}
\]
for differential 1-forms $u=\sum_{j=1}^m u_j\, \df x_j$ and $v=\sum_{j=1}^m v_j \, \df x_j$, and
\[
    \jbracket{A, B}_{L^2(U; \Lambda^2)}:=\frac{1}{2}\sum_{i, j=1}^m \jbracket{A_{ij}-A_{ji}, B_{ij}-B_{ji}}_{L^2(U)}
\]
for differential 2-forms $A=\sum_{i, j=1}^m A_{ij}\, \df x_i \wedge \df x_j$ and $B=\sum_{i, j=1}^m B_{ij}\, \df x_i \wedge \df x_j$. We also define an inner product
\[
    \jbracket{f, g}_{L^2(\partial\Omega; \iota^*\Lambda^1)}:=\sum_{j=1}^m \jbracket{f_j, g_j}_{L^2(\partial\Omega)}
\]
for $f(x)=\sum_{i=1}^m f_j(x)\, \df x_j$ and $g(x)=\sum_{i=1}^m g_j(x)\, \df x_j$ ($x\in \partial\Omega$). Here, in terms of differential geometry, $\iota^*\Lambda^1$ means the pull-back of the vector bundle $\Lambda^1=T^*\Rbb^m$ by the inclusion mapping $\iota: \partial\Omega \hookrightarrow \Rbb^m$. We denote the Sobolev space of order $s$ of the differential $p$-forms on $U$ by $H^s(U; \Lambda^p)$.

For $\varphi\in H^1(\Omega)$, $u\in H^1(\Omega; \Lambda^1)$ and $A\in H^1(\Omega; \Lambda^2)$, we have
\begin{equation}
    \label{eq_green_higher}
    \begin{aligned}
        \jbracket{\df \varphi, u}_{L^2(\Omega; \Lambda^1)}
        &=\jbracket{\varphi \nv^\flat, u}_{L^2(\partial\Omega; \iota^*\Lambda^1)}+\jbracket{\varphi, \df^* u}_{L^2(\Omega)}, \\
        \jbracket{\df u, A}_{L^2(\Omega; \Lambda^2)}
        &=\jbracket{u, \iota_\nv A}_{L^2(\partial\Omega; \iota^*\Lambda^1)}+\jbracket{u, {\df}^*A}_{L^2(\Omega; \Lambda^1)}.
    \end{aligned}
\end{equation}
The corresponding formulas in $\overline{\GO}^c= \Rbb^m\setminus \overline{\GO}$ is obtained similarly. For example, the second identity is obtained as follows. We calculate the integrand as
\begin{align*}
    &\frac{1}{2}\sum_{i, j=1}^m \left(\frac{\partial u_j}{\partial x_i}-\frac{\partial u_i}{\partial x_j}\right) (A_{ij}-A_{ji}) \\
    &=\sum_{j=1}^m \frac{\partial}{\partial x_j}\left( \sum_{i=1}^m u_i (A_{ji}-A_{ij})\right)
    -\sum_{i=1}^m u_i \sum_{j=1}^m \frac{\partial}{\partial x_j}(A_{ji}-A_{ij}).
\end{align*}
Thus we have
\begin{align*}
    &\jbracket{\df u, A}_{L^2(\Omega; \Lambda^2)} \\
    &=\sum_{i=1}^m \int_{\partial\Omega} u_i \sum_{j=1}^m \nv_j (A_{ji}-A_{ij})\, \df\sigma -\sum_{i=1}^m \int_\Omega u_i \sum_{j=1}^m \frac{\partial}{\partial x_j}(A_{ji}-A_{ij})\, \df x \\
    &=\jbracket{u, \iota_\nv A}_{L^2(\partial\Omega; \iota^*\Lambda^1)}+\jbracket{u, \df^*A}_{L^2(\Omega; \Lambda^1)}
\end{align*}
as desired.

Using these formulas, we obtain the following lemma similar to Lemma \ref{lemm_green_vc}. Here again $\jbracket{\cdot, \cdot}_{\GO}$ denotes either the inner product on $L^2(\GO)$ or $L^2(\GO; \Lambda^1)$ or $L^2(\GO; \Lambda^2)$ (or the duality pairing between $H^{-1}(\GO; \Lambda^1)$ and $H^{1}(\GO; \Lambda^1)$), and likewise for $\jbracket{\cdot, \cdot}_{\overline{\GO}^c}$ and $\jbracket{\cdot, \cdot}_{\p\GO}$.

\begin{lemm}\label{theo_lame_bilinear_higher}
    Let $\Omega$ be a bounded domain in $\Rbb^m$ with the Lipschitz boundary $\partial\Omega$. Then the following statements hold.
    \begin{enumerate}[label={\rm (\roman*)}]
        \item \label{enum_lame_bilinear_in_higher} It holds for all $u, v\in H^1(\GO; \Lambda^1)$ that
        \begin{equation}
            \label{eq_lame_bilinear_1_in_higher}
            \begin{aligned}
                &\jbracket{\Lcal_{\lambda, \mu}u, v}_{\GO}
            =\jbracket{\partial^\divf_\nu u|_-, v}_{\p\GO}
            -\mu\jbracket{\df u, \df v}_{\GO}
            -\frac{\mu}{2k_0}\jbracket{\df^* u, \df^* v}_{\GO}
            \end{aligned}
        \end{equation}
        and
        \begin{equation}
            \label{eq_lame_bilinear_in_higher}
            \begin{aligned}
                &\jbracket{\Lcal_{\lambda, \mu}u, v}_{\GO}-\jbracket{u, \Lcal_{\lambda, \mu}v}_{\GO} =\jbracket{\partial^\divf_\nu u|_-, v}_{\p\GO}
            -\jbracket{u, \partial^\divf_\nu v|_-}_{\p\GO}.
            \end{aligned}
        \end{equation}
        \item \label{enum_lame_bilinear_ex_higher} If $u$, $v\in H^1(\overline{\GO}^c)^m$ satisfy $|u(x)|$, $|v(x)|=O(|x|^{-m/2+1-\delta})$, $|\nabla u(x)|$, $|\nabla v(x)|=O(|x|^{-m/2-\delta})$ and $|\Lcal_{\lambda, \mu}u(x)|$, $|\Lcal_{\lambda, \mu}v(x)|=O(|x|^{-m/2-1-\delta})$ as $|x|\to \infty$ for some $\delta>0$, then we have
        \begin{equation}
            \label{eq_lame_bilinear_1_ex_higher}
            \begin{aligned}
                &\jbracket{\Lcal_{\lambda, \mu}u, v}_{\overline{\GO}^c}
            =-\jbracket{\partial^\divf_\nu u|_+, v}_{\p\GO}
            -\frac{\mu}{2}\jbracket{\df u, \df v}_{\overline{\GO}^c}
            -\frac{\mu}{2k_0}\jbracket{\df^* u, \df^* v}_{\overline{\GO}^c}
            \end{aligned}
        \end{equation}
        and
        \begin{equation}
            \label{eq_lame_bilinear_ex_higher}
            \begin{aligned}
                &\jbracket{\Lcal_{\lambda, \mu}u, v}_{\overline{\GO}^c}-\jbracket{u, \Lcal_{\lambda, \mu}v}_{\overline{\GO}^c}
                =-\jbracket{\partial^\divf_\nu u|_+, v}_{\p\GO}
            +\jbracket{u, \partial^\divf_\nu v|_+}_{\p\GO}.
            \end{aligned}
        \end{equation}
    \end{enumerate}
\end{lemm}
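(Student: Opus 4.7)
My plan is to follow exactly the strategy used in Lemma \ref{lemm_green_vc}, but with the vector-calculus identities replaced by their exterior-calculus counterparts. The representation \eqref{eq_lame_another_higher} of the Lamé operator and the two integration-by-parts formulas \eqref{eq_green_higher} do all the work.

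For part \ref{enum_lame_bilinear_in_higher}, I would start from
\[
\jbracket{\Lcal_{\lambda,\mu}u,v}_{\GO}=-\mu\,\jbracket{\df^*\df u,v}_{\GO}-\frac{\mu}{2k_0}\jbracket{\df\,\df^* u,v}_{\GO}
\]
and apply \eqref{eq_green_higher} to each term. The second formula in \eqref{eq_green_higher}, with $u\leftarrow v$ and $A\leftarrow \df u$, gives
\[
\jbracket{\df^*\df u,v}_{\GO}=\jbracket{\df u,\df v}_{\GO}-\jbracket{\iota_\nv \df u,v}_{\p\GO},
\]
while the first formula in \eqref{eq_green_higher}, with $\varphi\leftarrow \df^* u$ and $u\leftarrow v$, gives
\[
\jbracket{\df\,\df^* u,v}_{\GO}=\jbracket{(\df^* u)\,\nv^\flat,v}_{\p\GO}+\jbracket{\df^* u,\df^* v}_{\GO}.
\]
Substituting and collecting the two boundary contributions, the coefficient of $v$ at the boundary is exactly
\[
\mu\,\iota_\nv\df u-\frac{\mu}{2k_0}(\df^* u)\,\nv^\flat=\p^\divf_\nu u|_-,
\]
which yields \eqref{eq_lame_bilinear_1_in_higher}. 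The symmetric identity \eqref{eq_lame_bilinear_in_higher} then follows by subtracting \eqref{eq_lame_bilinear_1_in_higher} with the roles of $u$ and $v$ interchanged, since the two bulk bilinear terms $\jbracket{\df u,\df v}_{\GO}$ and $\jbracket{\df^* u,\df^* v}_{\GO}$ are symmetric in $(u,v)$.

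For part \ref{enum_lame_bilinear_ex_higher}, the only extra step is to justify integration by parts on the unbounded domain $\overline{\GO}^c$. I would work first on the bounded region $B_R\setminus\overline{\GO}$, where $B_R$ is a ball of radius $R$ containing $\overline{\GO}$, and apply \eqref{eq_green_higher} there. Two boundary contributions appear: one on $\p\GO$ and one on $\p B_R$. The outward unit normal to $B_R\setminus\overline{\GO}$ along $\p\GO$ is $-\nv$, which flips the sign in front of the $\p\GO$-boundary term and accounts for the opposite sign in \eqref{eq_lame_bilinear_1_ex_higher} compared with \eqref{eq_lame_bilinear_1_in_higher}. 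The contribution on $\p B_R$ is of the form
\[
\int_{\p B_R}\bigl(|u|\,|\nabla v|+|v|\,|\nabla u|\bigr)\df\sigma = O\bigl(R^{m-1}\cdot R^{-m/2+1-\delta}\cdot R^{-m/2-\delta}\bigr)=O(R^{-2\delta}),
\]
which vanishes as $R\to\infty$ under the stated decay rates on $u$, $v$ and $\nabla u$, $\nabla v$. One also needs to check that $\jbracket{\Lcal_{\lambda,\mu}u,v}_{\overline{\GO}^c}$, $\jbracket{\df u,\df v}_{\overline{\GO}^c}$ and $\jbracket{\df^* u,\df^* v}_{\overline{\GO}^c}$ are absolutely convergent, which is immediate from the assumed decay of $u$, $v$, $\nabla u$, $\nabla v$ and $\Lcal_{\lambda,\mu}u$, $\Lcal_{\lambda,\mu}v$. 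With the $\p B_R$-term disposed of, exactly the same algebra as in part \ref{enum_lame_bilinear_in_higher} produces \eqref{eq_lame_bilinear_1_ex_higher}, and anti-symmetrization gives \eqref{eq_lame_bilinear_ex_higher}.

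The only genuinely delicate point is the decay bookkeeping in part \ref{enum_lame_bilinear_ex_higher}: one must check that the chosen exponents $-m/2+1-\delta$ for $u$, $-m/2-\delta$ for $\nabla u$ and $-m/2-1-\delta$ for $\Lcal_{\lambda,\mu}u$ simultaneously (i) make all three bulk pairings on $\overline{\GO}^c$ absolutely integrable and (ii) force the large-sphere surface integrals to vanish in the limit. Both conditions hold with precisely the exponents in the statement, so no additional hypotheses are needed.
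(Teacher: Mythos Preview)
Your proof is correct and follows exactly the approach the paper indicates: apply the exterior-calculus integration-by-parts identities \eqref{eq_green_higher} to the representation \eqref{eq_lame_another_higher}, mirroring the proof of Lemma~\ref{lemm_green_vc}, and for the exterior case work on $B_R\setminus\overline{\GO}$ and let $R\to\infty$ under the stated decay. The paper itself gives no more detail than this, so your write-up is in fact more explicit than what appears there.
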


In what follows, for $m\geq 2$, we assume that the Lam\'e constants $(\lambda, \mu)$ satisfies
\begin{equation}
    \label{eq_s_convex_condition_higher}
    m\lambda+2\mu>0, \ \mu>0,
\end{equation}
which is a generalization of the condition \eqref{eq_s_convex_condition} to a general dimensional case.
The Kelvin matrix $\Gamma (x)=(\Gamma_{ij}(x))_{i, j=1}^m$ of the fundamental solution to the Lam\'e system in $\Rbb^m$ is given by
\begin{equation}
    \label{eq_kelvin_general}
    \Gamma_{ij}(x)=- \frac{\Ga_1}{(m-2)\omega_m} \frac{\Gd_{ij}}{|x|^{m-2}} - \frac{\Ga_2}{\omega_m} \displaystyle \frac{x_i x_j}{|x|^m}
\end{equation}
where $\omega_m$ is the area of the unit sphere in $\Rbb^m$ and $\alpha_1$ and $\alpha_2$ are constants defined in \eqref{eq_alpha_12} (see \cite[(0.1)]{DKV-Duke-88} or \cite{Kup-book-65} for example). The div-free double layer potential $\Dcal^\divf$ and eNP $\Kcal^\divf$ are defined respectively by \eqref{eq_dd} and \eqref{eq_kd}. One can show that $\Dcal^\divf$ produces div-free solutions (cf. Lemma \ref{lemm_Dcalf}). Like the three-dimensional case, we define
\beq
    \label{eq_k_higher}
    \begin{aligned}
    \Kcal_1[f](x)& =\frac{1}{\Go_m}\,\mathrm{p.v.}\int_{\p \GO} \frac{(x-y)(\nv_y\cdot f(y))-\nv_y ((x-y)\cdot f(y))}{|x-y|^m} \df \Gs (y), \\
    \Kcal_2[f](x) &=-\frac{1}{\Go_m} \,\mathrm{p.v.}\int_{\p \GO} \frac{(x-y)\cdot \nv_y}{|x-y|^m}f(y) \df \Gs (y), \\
    \Kcal_3[f](x)&=\frac{1}{\Go_m}\mathrm{p.v.}\int_{\p\GO} \frac{((x-y)\cdot f(y))((x-y)\cdot \nv_y)(x-y)}{|x-y|^{m+2}}\, \df \Gs (y).
    \end{aligned}
\eeq
Then one can show that the following formulas hold:
\[
\Kcal=2k_0 (\Kcal_1+\Kcal_2)-m(1-2k_0)\Kcal_3, \quad \Kcal^\divf=-\Kcal_1+\Kcal_2.
\]
(cf. \eqref{eq_kd_decomposition}, \eqref{eq_np_decomposition}). Thus $\Kcal +2k_0 \Kcal^\divf$ is compact on $\Hsb^m$ if $\p\GO$ is $C^{1,\Ga}$ for some $\Ga>0$ (cf. Lemma \ref{lemm_np23_compact}).

One can show in the same way as the three-dimensional case that the single layer potential $\Scal$ defined by $\GG$ in \eqref{eq_kelvin_general} is bounded and invertible as the linear mapping from $(\Hsb^*)^m$ onto $\Hsb^m$ so that \eqref{eq_inner_product} yields an inner product on $\Hsb^m$. One can also prove, in the same as the three-dimensional case, the Green's formula (cf. Lemma \ref{theo_sd_d}),  jump formulas (cf. Proposition \ref{prop_dd_jump} and \ref{theo_jump_npds}), and Plemelj's symmetrization principles (cf. \eqref{eq_plemelj} and \eqref{theo_plemelj_kd}).

Like the three dimensional case, in higher dimensions $u^f=\Scal [\Scal^{-1}[f]]$ (in $\Rbb^m$) provides the unique solutions to the interior problem \eqref{eq_lame_bvp} and the exterior  problem \eqref{eq_lame_bvp_ext} (with the condition $u(x)=O(|x|^{-m+2})$ as $|x|\to \infty$).
Thus we apply Lemma \ref{theo_lame_bilinear_higher} to $u=v=u^f$ and obtain
\[
        \jbracket{\left(-\Kcal^\divf+\frac{1}{2}I\right)[f], f }_*
        =\mu \left\| \df u^f \right\|_{\Omega}^2+\frac{\mu}{2k_0}\| \df^* u^f \|_{\Omega}^2
    \]
    and
    \[
        \jbracket{\left(\Kcal^\divf+\frac{1}{2}I \right)[f], f }_*
        =\mu \left\| \df u^f \right\|_{\overline{\Omega}^c}^2+\frac{\mu}{2k_0}\| \df^* u^f \|_{\overline{\Omega}^c}^2
    \]
(cf. Lemma \ref{lemm_div_rot_npd_identity}).

Then, by the same argument as in the three dimensional case, we obtain the following theorem which collects the results analogous to those in three dimensions. We indicate the corresponding theorems in three dimensions. In what follows, the subspaces $\Hdrf^\pm$ are defined as in \eqref{eq_free_spaces} with $\nabla \times v_\pm^f$ replaced by $\df v_\pm^f$. In other words, $\Hdrf^\pm$ is the collection of all $f\in \Hsb^m$ such that
\[
    \nabla\cdot v_\pm^f=0, \quad \frac{\partial (v_\pm^f)_i}{\partial x_j}=\frac{\partial (v_\pm^f)_j}{\partial x_i} \quad (i, j=1, \ldots, m)
\]
in respective space. The orthogonality of subspaces or functions in $\Hsb^m$ is with respect to the inner product \eqref{eq_inner_product}.

\begin{theo}\label{theo_general_dim}
    Let $\Omega$ be a bounded domain in $\Rbb^m$ ($m\geq 3$) with the Lipschitz boundary. Then the following statements hold.
    \begin{enumerate}[label={\rm (L\arabic*)}]
        \item \label{enum_hm_decomposition}(cf. Theorem \ref{theo_decomposition_div_free}) The orthogonal decomposition
        \begin{equation}
            \label{eq_Hmorthogonal}
            \Hsb^m=\Hdrf^-\oplus \Hdrf^+ \oplus (\Hdrf^-+\Hdrf^+)^\perp
        \end{equation}
        and the inclusion
        \begin{equation}
            \label{eq_perpsub_m}
            (\Hdrf^-+\Hdrf^+)^\perp\subset \Hdf
        \end{equation}
        hold.
        \item \label{enum_drf_ker_m}(cf. Theorem \ref{theo_div_rot_free_kd}) $\Hdrf^\pm=\Ker (\Kcal^\divf\pm 1/2I)$.
        \item \label{enum_div_free_inv_md}(cf. Theorem \ref{theo_div_free_invariant}) The operator $\Kcal^\divf$ preserves the subspace $\Hdf$.
        \item \label{enum_infinite_m}(cf. Theorem \ref{theo_infinite_dim_c1a}) The spaces $\Hdrf^\pm$ and $\Hdf$ are infinite-dimensional.
    \end{enumerate}

    Furthermore, if $\partial\Omega$ is $C^{1, \alpha}$ for some $\alpha>0$, then the following statement holds.

    \begin{enumerate}[label={\rm (H\arabic*)}]
        \item \label{enum_compact_drf_m}(cf. Theorem \ref{theo_free_np_3D}) The restricted operators
        \begin{align*}
            \Kcal+k_0I: \Hdrf^-\longrightarrow \Hsb^m, \\
            \Kcal-k_0I: \Hdrf^+\longrightarrow \Hsb^m
        \end{align*}
        are compact. Here $k_0$ is the positive constant defined by \eqref{eq_k0}.
    \end{enumerate}

    Furthermore, if $\partial\Omega$ is $C^{1, \alpha}$ for some $\alpha>1/2$, then the following statements hold.
    \begin{enumerate}[label={\rm (S\arabic*)}]
        \item \label{enum_finite_m}(cf. Theorem \ref{theo_codim_betti} and Corollary \ref{coro_simply_connected}) The inequality
        \[
            \dim \Hdf/(\Hdrf^-+\Hdrf^+)^\perp\leq b_1(\partial\Omega)
        \]
        holds. Furthermore, if $\partial\Omega$ is simply connected, then $\Hsb^m$ admits the orthogonal decomposition $\Hsb^m=\Hdrf^-\oplus \Hdrf^+ \oplus \Hdf$.
        \item \label{enum_compact_m}(cf. Theorem \ref{theo_free_np_3D} and Corollary \ref{coro_polynomially_cpt_3d}) The restricted operator
        \[
            \Kcal: \Hdf \longrightarrow \Hsb^m
        \]
        is compact. Moreover, the operator $\Kcal (\Kcal^2-k_0^2 I)$ is compact on $\Hsb^m$.
        \item \label{enum_ev_md}(cf. Theorem \ref{thm_eigenvalue3D}) The spectrum of the eNP operator $\Kcal$ on $\Hsb^m$ consists of three infinite sequences of real eigenvalues converging to $0$, $k_0$ and $-k_0$.
        \item \label{enum_ev_kd_md}(cf. Theorem \ref{theo_isolated_c1a}) The spectrum $\sigma (\Kcal^\divf)$ of $\Kcal^\divf$ on $\Hcal^m$ consists of $-1/2$, $1/2$, $0$ and sequence of eigenvalues $ \lambda_j \in (-1/2, 1/2)$ of finite multiplicities converging to $0$, where $\pm 1/2$ are eigenvalues of infinite multiplicities. The spaces $\rmop{Ran}(\Kcal^\divf\pm 1/2I)$ are closed in $\Hsb^m$. In particular, we have
        \[
            (\Hdrf^-+\Hdrf^+)^\perp =\Ran (\Kcal^\divf +1/2I)(\Kcal^\divf-1/2I).
        \]
    \end{enumerate}
\end{theo}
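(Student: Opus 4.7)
My plan is to transfer each step of the three-dimensional arguments using the exterior-calculus reformulation set up in the section. The role of the vector-calculus Green's identity (Lemma \ref{lemm_green_vc}) is now played by Lemma \ref{theo_lame_bilinear_higher}; with this in hand, the jump relations for $\Dcal^\divf$, $\Kcal^\divf$ and $\partial^\divf_\nu \Scal$, together with Plemelj's symmetrization, are proved by the same divergence-theorem arguments as in Proposition \ref{prop_dd_jump}, Proposition \ref{theo_jump_npds} and Proposition \ref{theo_plemelj_kd}---in all of these the only ingredients are the decomposition $\Lcal_{\lambda,\mu} = -\mu\,{\df}^*\df - (\mu/2k_0)\,\df\df^*$ and the explicit form \eqref{eq_kelvin_general} of $\Gamma$. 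Inserting $u = u^f$ and $v = u^g$ into \eqref{eq_lame_bilinear_1_in_higher} and \eqref{eq_lame_bilinear_1_ex_higher} then yields
\begin{equation*}
\jbracket{\bigl(\tfrac{1}{2}I - \Kcal^\divf\bigr)[f], g}_* = \mu \jbracket{\df u^f, \df u^g}_\GO + \tfrac{\mu}{2k_0}\jbracket{\df^* u^f, \df^* u^g}_\GO
\end{equation*}
and the analogous exterior identity. Statement (L2) follows immediately, just as in Theorem \ref{theo_div_rot_free_kd}; boundedness and self-adjointness of $\Kcal^\divf$ on $\Hsb^m$ (and hence orthogonality of $\Hdrf^\pm$, the first part of (L1)) follow from the decomposition $\Kcal^\divf = -\Kcal_1 + \Kcal_2$ together with boundedness of $\Kcal$ for two distinct pairs of Lam\'e constants, exactly as in Corollary \ref{coro_kd_self_adjoint}. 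Statement (L3) and the inclusion \eqref{eq_perpsub_m} of (L1) are then obtained verbatim from the proofs of Theorem \ref{theo_div_free_invariant} and Theorem \ref{theo_decomposition_div_free}.

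For (L4), the infinite-dimensionality of $\Hdrf^\pm$ is obtained as in the first part of Theorem \ref{theo_infinite_dim_c1a}: $\Rbb^m$-valued homogeneous harmonic polynomials of degree $k$ that are simultaneously $\df$- and $\df^*$-closed (respectively, whose Kelvin transforms are so) inject into $\Hdrf^-$ (resp.\ $\Hdrf^+$) by the maximum principle, and a count in the spirit of Theorem \ref{theo_structure_df_drf} shows that their dimensions grow without bound in $k$. For $\Hdf$, the three-dimensional identity \eqref{eq:111} and the construction $u \mapsto \Scal_0[\nabla u \times \nv]\vert_{\partial\Omega}$ must be recast in exterior calculus: applying $\Scal_0$ componentwise to 2-form-valued boundary data, a Stokes-theorem computation identical in spirit to \eqref{eq:111} gives
\begin{equation*}
\df \Scal_0[u\, \nv^\flat]\big|_{\pm} = \Scal_0[\df u \wedge \nv^\flat]\big|_{\partial\Omega}
\end{equation*}
as 2-forms on $\partial\Omega$; extracting a 1-form (for example by contracting with constant vectors or by composing with $\df^*$ in a larger auxiliary complex) produces an infinite-dimensional family landing in $\Hdf$ whose kernel on harmonic polynomials is finite-dimensional by the same rigidity argument used in Theorem \ref{theo_infinite_dim_c1a}.

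For the statements assuming $C^{1,\alpha}$ regularity, (H1) is immediate from compactness of $\Kcal + 2k_0\Kcal^\divf = 4k_0\Kcal_2 - m(1-2k_0)\Kcal_3$ on $\Hsb^m$, which is a verbatim application of Corollary \ref{coro_T_cpt} to the kernels in \eqref{eq_k_higher} exactly as in the proof of Lemma \ref{lemm_np23_compact}, combined with (L2). For (S1), the proof of Section \ref{sec:topology} transfers unchanged: the de Rham identification and the Mayer--Vietoris identity $b_1(\Omega) + b_1(\Rbb^m \setminus \overline\Omega) = b_1(\partial\Omega)$ hold in any dimension $m \geq 3$. For (S2), the formula $\nabla \cdot \Scal[g] = (2k_0/\mu)\,\nabla \cdot \Scal_0[g]$ is dimension-free, the boundedness of $\Qcal_\pm$ follows from Theorem \ref{theo_trace_harmonic} (stated in arbitrary dimension), and the argument of Proposition \ref{lemm_np_modulo_cpt} then transfers to yield compactness of $\Kcal$ on $\Hdf$. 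Finally, (S3) and (S4) follow from (L4), (H1) and (S2) by the spectral arguments of Section \ref{sec:eigenspace}. I expect the main technical obstacle to be the construction of the infinite-dimensional family in $\Hdf$ within (L4): the three-dimensional argument exploits the coincidence that $\nabla\times$ takes $\Rbb^3$-valued functions to $\Rbb^3$-valued functions, whereas in higher dimensions the natural exterior-calculus replacement produces forms of differing degree, so genuine care is required to produce 1-forms in $\Hdf$ from a family indexed by harmonic polynomials and to verify that the resulting assignment has a finite-dimensional kernel.
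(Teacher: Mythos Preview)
Your proposal follows the paper's approach closely and is correct on essentially every point: the Green's identities, jump relations, Plemelj symmetrization, the proofs of (L1)--(L3), the compactness arguments for (H1) and (S2), the Betti-number bound (S1), and the spectral conclusions (S3)--(S4) all transfer verbatim just as you describe, and this is exactly what the paper does.

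The one place that requires comment is (L4) for $\Hdf$, which you correctly single out as the main obstacle. However, neither of your proposed extractions actually lands in $\Hdf$. Contracting the 2-form $\df\Scal_0[p\,\nv^\flat]$ with a constant vector $a$ gives the 1-form $\iota_a\df\Scal_0[p\,\nv^\flat]$, and a short computation using $\Delta\Scal_0[p\,\nv^\flat]=0$ off $\partial\Omega$ shows its divergence equals $-a\cdot\nabla\Dcal_0[p]$, which is nonzero in general; so the result is not divergence-free. Applying $\df^*$ instead yields $\df^*\df\Scal_0[p\,\nv^\flat]=-\df\df^*\Scal_0[p\,\nv^\flat]=-\nabla\Dcal_0[p]$, which \emph{is} divergence-free but whose tangential part jumps across $\partial\Omega$ by $\nabla_T p$ (since the double layer jumps by $p$), so it is not the common boundary trace of an interior and exterior divergence-free harmonic field. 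Thus your sketch for producing an infinite family in $\Hdf$ has a genuine gap.

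The paper does not take a different route here; it simply asserts that the three-dimensional argument carries over with $\df$ in place of $\nabla\times$, and the only thing it spells out beyond your sketch is the rigidity step you omit: if $u=\Scal_0[\varphi\,\nv^\flat]$ satisfies $\df u=0$ in $\Rbb^m\setminus\partial\Omega$, then from $\Delta u=-\df\df^*u-\df^*\df u=0$ one gets $\df\df^*u=0$, so $\df^*u=-\nabla\cdot u=\Dcal_0[\varphi]$ is locally constant and hence $\varphi$ is piecewise constant on $\partial\Omega$. The paper is thus equally terse about the construction side of the map into $\Hdf$; you should be aware that neither your fixes nor a naive reading of the paper's remark closes this point, and completing the argument requires a construction that genuinely produces divergence-free 1-forms with matching interior and exterior traces.
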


The proof of Theorem \ref{theo_general_dim} is almost same as that in the three dimensional case. We make three remarks.

The formula \eqref{eq_np_modulo_cpt} for $\Kcal$ holds in higher dimensions with $\Scal_0$, the single layer potential with respect to the Laplacian on $\Rbb^m$, namely,
    \[
        \Scal_0[f](x):=-\frac{1}{(m-2)\omega_m}\int_{\partial\Omega} \frac{1}{|x-y|^{m-2}}f(y)\, \df\sigma (y).
    \]
The formula is used in proving \ref{enum_compact_m}.

The argument in the proof of Theorem \ref{theo_infinite_dim_c1a} is also applicable if we replace $\nabla$ (gradient) and $\nabla \times$ (rotation) with the exterior derivative $\df$ for functions and differential 1-forms respectively. In fact, if we set $u:=\Scal_0[\varphi \nv]\in C^\infty (\Rbb^m\setminus \p\GO)^m \simeq C^\infty (\Rbb^m\setminus \p\GO; \Lambda^1)$ and assume $\df u=0$, we have $\Delta u=0$ and thus
\[
    -\df \df^*u=\df^*\df u+\Delta u=0 \quad \text{in } \Rbb^m \setminus \p\GO.
\]
Hence $-\df^*u$ is locally constant on $\Rbb^m\setminus \p\GO$. Since $-\df^*u$ is nothing but $\nabla\cdot u$ under the identification $C^\infty (\Rbb^m\setminus \p\GO)^m \simeq C^\infty (\Rbb^m\setminus \p\GO; \Lambda^1)$, we can apply the same argument as in Theorem \ref{theo_infinite_dim_c1a}.

Concerning \ref{enum_finite_m}, the spaces $\Mcal^\pm$ in \eqref{eq_mpm} are defined by replacing $\nabla \varphi$ to $\df \varphi$. Furthermore, the first de Rham cohomology of the open subset $U$ of $\Rbb^m$ is defined as
\[
    H_\mathrm{dR}^1 (U):=\frac{\Ker (\df: C^\infty (U; \Lambda^1)\longrightarrow C^\infty (U; \Lambda^2))}{\Ran (\df: C^\infty (U)\longrightarrow C^\infty (U; \Lambda^1))}.
\]
This coincides with the three-dimensional case \eqref{eq_derham}.

We emphasize that Theorem \ref{theo_sphere_orthogonal} holds for higher dimensions with the same proof as in three-dimensional case:
\begin{theo}\label{theo_sphere_orthogonal_higher}
    If $\Omega$ is a ball in $\Rbb^m$ ($m\geq 3$), then we have the orthogonal decomposition
    \[
        \Hsb^m=\Hdrf^- \oplus \Hdrf^+ \oplus \Hdf.
    \]
\end{theo}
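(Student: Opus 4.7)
The plan is to adapt the proof of Theorem \ref{theo_sphere_orthogonal} to $\Rbb^m$ essentially verbatim, replacing the three-dimensional vector-calculus identities by the exterior-calculus analogs from Section \ref{sec:higher}. Assume without loss of generality that $\Omega$ is the unit ball centered at the origin; let $P_k$ denote the space of homogeneous harmonic polynomials of degree $k$ in $\Rbb^m$ (identified with their restrictions to $\p\Omega$), define the Kelvin transform componentwise by $u^*(x):=|x|^{-(m-2)}u(x/|x|^2)$, so that $p^*(x)=p(x)/|x|^{2k+m-2}$ for $p\in P_k^m$, and introduce $\Wcal_k^\pm := \Hdrf^\pm\cap P_k^m$ and $\Wcal_k^o:=\Hdf\cap P_k^m$.

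The first substep is the higher-dimensional analog of Lemma \ref{lemm_div_free_tangent}: for $f\in\Wcal_k^o$ with representative $p\in P_k^m$, uniqueness for \eqref{eq_laplace_bvp} and \eqref{eq_laplace_bvp_ext} gives $u^f=p$ in $\Omega$ and $u^f=p^*$ in $\overline{\Omega}^c$, and expanding the exterior constraint $\nabla\cdot p^*=0$ — combined with the interior constraint $\nabla\cdot p=0$ extended globally by polynomiality and with $2k+m-2>0$ for $m\geq 3$ — yields the tangency $x\cdot p(x)=0$. The second substep is the analog of Lemma \ref{lemm_divfree_decomposition_sphere}: $\Hdf=\overline{\bigoplus_k\Wcal_k^o}$ and $\Hdrf^\pm=\overline{\bigoplus_k\Wcal_k^\pm}$. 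Adding \eqref{eq_lame_bilinear_1_in_higher} and \eqref{eq_lame_bilinear_1_ex_higher} with $u=u^f$, $v=u^g$ delivers the higher-dimensional analog of Corollary \ref{cor213}, whence for $f\in\Hdf$ with spherical-harmonic expansion $f=\sum_k f_k$, $f_k\in P_k^m$, one has
\[
    \Big\|\df^*\sum_{k\leq N}u^{f_k}\Big\|_{L^2(\Omega)}+\Big\|\df^*\sum_{k\leq N}u^{f_k}\Big\|_{L^2(\overline{\Omega}^c)}\lesssim \Big\|\sum_{k\leq N}f_k-f\Big\|_*.
\]
Sending $N\to\infty$ and using the homogeneity of each $u^{f_k}$ (degree $k$ inside, degree $-(k+m-2)$ outside, by the analog of Lemma \ref{lemm_div_free_tangent}) to separate the vanishing term-by-term yields $\df^*u^{f_k}=0$ in both regions and hence $f_k\in\Wcal_k^o$. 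The $\Hdrf^\pm$ cases are handled identically, retaining only the relevant interior or exterior contribution.

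The heart of the proof is the higher-dimensional analog of Theorem \ref{theo_Pksum}: $\Wcal_k^o\cap\Wcal_k^\pm=\{0\}$. For $f\in\Wcal_k^o\cap\Wcal_k^-$ with representative $p$, the condition $\df p=0$ amounts to the symmetry $\p p_i/\p x_j=\p p_j/\p x_i$; differentiating the tangency $x\cdot p=0$ and invoking Euler's identity gives
\[
    0=\frac{\p}{\p x_j}(x\cdot p)=p_j+\sum_{i=1}^m x_i\frac{\p p_i}{\p x_j}=p_j+\sum_{i=1}^m x_i\frac{\p p_j}{\p x_i}=(k+1)p_j,
\]
forcing $p\equiv 0$; the $\Wcal_k^+$ case is identical applied to $p^*$. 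Assembled as in Theorem \ref{theo_sphere_orthogonal}: any $f\in\Hdf\cap\Hdrf^-$ has matched expansions $\sum_k f_k^o=\sum_k f_k^-$, and uniqueness of the spherical-harmonic decomposition forces $f_k^o=f_k^-\in\Wcal_k^o\cap\Wcal_k^-=\{0\}$ for every $k$, so $f=0$; combined with the symmetric case for $\Hdrf^+$ and the already-orthogonal splitting of Theorem \ref{theo_general_dim}\ref{enum_hm_decomposition}, the desired decomposition $\Hsb^m=\Hdrf^-\oplus\Hdrf^+\oplus\Hdf$ follows.

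I foresee no genuine obstacle, the only mild care being the routine translation between vector and exterior calculus on $1$- and $2$-forms supplied in the remark following \eqref{eq_d_conormal_higher}. Alternatively one could invoke Theorem \ref{theo_general_dim}\ref{enum_finite_m} together with the simple-connectedness of $S^{m-1}$ for $m\geq 3$; the direct approach above is preferable because it simultaneously produces explicit spanning families for $\Wcal_k^o$ and $\Wcal_k^\pm$ extending Theorem \ref{theo_structure_df_drf} to higher dimensions.
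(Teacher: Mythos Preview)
Your proof is correct and follows the \emph{first} of the two proofs given for Theorem~\ref{theo_sphere_orthogonal} (the one via Theorem~\ref{theo_Pksum}, establishing $\Wcal_k^o\cap\Wcal_k^\pm=\{0\}$ through the tangency $x\cdot p=0$ and Euler's identity). This generalises cleanly because it uses only $\df$ and $\df^*$ and never the three-dimensional cross product.

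The paper's own proof of Theorem~\ref{theo_sphere_orthogonal_higher} is terse: it asserts that the three-dimensional argument carries over and then verifies only one ingredient explicitly, namely the analogue \eqref{eq_Scal_ball_higher} of Proposition~\ref{prop_div_free_homogeneous_single_layer}. That identity is the key input for the \emph{second} proof of Theorem~\ref{theo_sphere_orthogonal} (the one via Lemma~\ref{lemm_orbital_angular_momentum}); but that route also relies on the representation $p=x\times\nabla q$, whose higher-dimensional substitute the paper does not state. Your approach sidesteps this issue entirely and is therefore the more self-contained generalisation. Conversely, the paper's computation records the eigenvalue relation $\Scal^{-1}[f]=-\mu(2k+m-2)f$ on $\Wcal_k^o$, which your argument never needs but which is of independent interest. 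Your closing remark about the shortcut through Theorem~\ref{theo_general_dim}\ref{enum_finite_m} and the simple-connectedness of $S^{m-1}$ is also valid.
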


\begin{proof}
    Let
\[
\Wcal_k^\pm = \Hdrf^\pm \cap P_k^m, \quad \Wcal_k^o = \Hdf^\pm \cap P_k^m
\]
as in \eqref{eq_wcaldef}.
    We need to have the relation analogous to \eqref{eq_Scal_ball}. We claim that the following relation holds for $f\in \Wcal_k^o$ in dimension $m$:
    \beq\label{eq_Scal_ball_higher}
        \Scal^{-1}[f]=-\mu (2k+m-2)f.
    \eeq
To show this, let, for $f\in \Wcal_k^o$, $p \in P_k^m$ be such that $p|_{\p\GO}=f$. Then, we have
\[
p^*(x) = \frac{1}{|x|^{2k+m-2}} p(x).
\]
Here, the Kelvin transform $p^*(x)$ of $p(x)$ is defined as
\[
    p^*(x):=\frac{1}{|x|^{m-2}}p\left(\frac{x}{|x|^2}\right).
\]
Thus we have
    \begin{align*}
        \Scal^{-1}[f] &=\partial^\divf_\nu p^*-\partial^\divf_\nu p \\
        &=-\mu \iota_\nv\df p^*  + \mu \iota_\nv\df p \\
        &=-\mu (2k+m-2) f(x),
    \end{align*}
    where the last equality holds since $x \cdot p(x)=0$.
\end{proof}

We also obtain the results analogous to Theorem \ref{theo_enp_eigenfunctions} and Theorem \ref{theo_div_rot_asymptotic} by the same argument as in the three dimensional case.

\begin{theo}
    Let $m\geq 3$ and $\Omega$ be a bounded domain in $\Rbb^m$ whose boundary is $C^{1, \alpha}$ for some $\alpha>1/2$. Let $\{f_j\}_{j=1}^\infty$ be an orthonormal system in $\Hsb^m$ consisting of eigenfunctions of $\Kcal$. Let $\lambda_j\in \Rbb$ be the corresponding eigenvalue, i.e., $\Kcal [f_j]=\lambda_j f_j$. We decompose $f_j$ into the sum $f_j=f^-_j+f^+_j+f^o_j$ where
    \[
        f^\pm_j \in \Hdrf^\pm, \quad f^o_j\in (\Hdrf^-+\Hdrf^+)^\perp \subset \Hdf.
    \]
    Then, the following statements hold.
    \begin{enumerate}[label={\rm (\roman*)}]
        \item If $\lambda_j\to 0$, then
        \begin{align*}
            &\lim_{j\to \infty} \| f_j-f^o_j\|_*=0, \\
            &\lim_{j\to \infty}\| \df^* u^{f_j}\|_{\GO}=\lim_{j\to \infty}\| \df^* u^{f_j} \|_{\overline{\GO}^c}= 0, \\
            &\lim_{j\to\infty}\| \df u^{f_j}\|_{\GO}=\lim_{j\to \infty}\left\| \df u^{f_j}\right\|_{\overline{\Omega}^c}= \frac{1}{\sqrt{2\Gm}}.
        \end{align*}
        \item If $\lambda_j\to k_0$, then
        \begin{align*}
            &\lim_{j\to \infty} \|f_j-f^+_j\|_*=0, \\
            &\lim_{j\to\infty}\| \df^* u^{f_j}\|_{\overline{\Omega}^c}=\lim_{j\to\infty}\|  \df u^{f_j} \|_{\overline{\Omega}^c}=0, \\
            &\lim_{j\to\infty} \left( \Gm \|  \df u^{f_j}\|_{\Omega}^2+\frac{\Gm}{2k_0}\| \df^* u^{f_j} \|_{\Omega}^2\right)= 1.
        \end{align*}
        \item If $\lambda_j\to -k_0$, then
        \begin{align*}
            &\lim_{j\to \infty} \|f_j-f^-_j\|_*=0, \\
            &\lim_{j\to\infty}\| \df^* u^{f_j}\|_{\Omega}=\lim_{j\to\infty}\| \df u^{f_j} \|_{\Omega}=0, \\
            &\lim_{j\to\infty} \left( \Gm \|  \df u^{f_j}\|_{\overline{\Omega}^c}^2+\frac{\Gm}{2k_0}\| \df^* u^{f_j} \|_{\overline{\Omega}^c}^2\right)= 1.
        \end{align*}
    \end{enumerate}
\end{theo}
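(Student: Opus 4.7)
The plan is to mimic the three-dimensional proofs of Theorem \ref{theo_enp_eigenfunctions} and Theorem \ref{theo_div_rot_asymptotic} verbatim, with $\nabla \times$ and $\nabla \cdot$ replaced by the exterior derivative $\df$ and its formal adjoint $\df^*$. All the structural ingredients needed are already listed in Theorem \ref{theo_general_dim}: the orthogonal decomposition \ref{enum_hm_decomposition}, the kernel characterization \ref{enum_drf_ker_m}, the compactness $\Kcal+2k_0\Kcal^\divf \in \Kcal(\Hsb^m)$ (which follows from the higher-dimensional analogue of Lemma \ref{lemm_np23_compact}), and the invertibility of $\Kcal^\divf\pm\tfrac12 I$ on the orthogonal complement $\Xcal:=(\Hdrf^-+\Hdrf^+)^\perp$ coming from \ref{enum_ev_kd_md}. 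The higher-dimensional Green-type identities of Lemma \ref{theo_lame_bilinear_higher}, applied with $u=v=u^{f_j}$, provide the link between $\langle(\Kcal^\divf\mp\tfrac12 I)[f_j],f_j\rangle_*$ and the squared $L^2$-norms of $\df u^{f_j}$ and $\df^* u^{f_j}$ on $\Omega$ and on $\overline{\GO}^c$.

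The first step is to record that, since $\{f_j\}$ is orthonormal in $\Hsb^m$, it converges weakly to zero, and therefore compactness of $\Kcal+2k_0\Kcal^\divf$ together with $\Kcal[f_j]=\lambda_j f_j$ gives
\[
\lim_{j\to\infty}\bigl\|\lambda_j f_j+2k_0\Kcal^\divf [f_j]\bigr\|_*=0.
\]
Decomposing $f_j=f_j^-+f_j^++f_j^o$ orthogonally and using $\Kcal^\divf[f_j^\pm]=\mp\tfrac12 f_j^\pm$ together with the invariance $\Kcal^\divf(\Xcal)\subset\Xcal$ (Theorem \ref{theo_general_dim}\ref{enum_div_free_inv_md}), the above identity splits into three independent pieces:
\[
\lim_{j\to\infty}|\lambda_j+k_0|\,\|f_j^-\|_*=0,\quad
\lim_{j\to\infty}|\lambda_j-k_0|\,\|f_j^+\|_*=0,\quad
\lim_{j\to\infty}\bigl\|\lambda_j f_j^o+2k_0\Kcal^\divf[f_j^o]\bigr\|_*=0.
\]
From these, the three norm convergence statements $\|f_j-f_j^o\|_*\to 0$, $\|f_j-f_j^+\|_*\to 0$, $\|f_j-f_j^-\|_*\to 0$ follow exactly as in the proof of Theorem \ref{theo_enp_eigenfunctions}, with the case $\lambda_j\to\pm k_0$ using the invertibility of $\Kcal^\divf+\tfrac12 I$ (respectively $\Kcal^\divf-\tfrac12 I$) on $\Xcal$, guaranteed by \ref{enum_ev_kd_md}.

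Second, to obtain the asymptotic formulas for $\df u^{f_j}$ and $\df^* u^{f_j}$, I apply Lemma \ref{theo_lame_bilinear_higher} to $u=v=u^{f_j}$, which yields
\begin{align}
\Bigl\langle\!\bigl(-\Kcal^\divf+\tfrac12 I\bigr)[f_j],f_j\Bigr\rangle_*&=\mu\,\|\df u^{f_j}\|_\Omega^2+\tfrac{\mu}{2k_0}\|\df^* u^{f_j}\|_\Omega^2,\\
\Bigl\langle\!\bigl(\Kcal^\divf+\tfrac12 I\bigr)[f_j],f_j\Bigr\rangle_*&=\mu\,\|\df u^{f_j}\|_{\overline{\GO}^c}^2+\tfrac{\mu}{2k_0}\|\df^* u^{f_j}\|_{\overline{\GO}^c}^2.
\end{align}
Because $\|\Kcal[f_j]+2k_0\Kcal^\divf[f_j]\|_*\to 0$, we have $\Kcal^\divf[f_j]=-\lambda_j/(2k_0)\,f_j+o_*(1)$. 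If the limit $\lim_{j\to\infty}\lambda_j$ exists, the two right-hand sides therefore converge to $\tfrac12\mp\tfrac{1}{2k_0}\lim\lambda_j$, which already gives the second line in case (ii)--(iii) of the theorem and the split $\tfrac12=\mu\|\df u^{f_j}\|^2+\tfrac{\mu}{2k_0}\|\df^* u^{f_j}\|^2$ on each side in case (i). For the separate $\df u^{f_j}$ and $\df^* u^{f_j}$ vanishings in case (i), I invoke the already proved norm convergence $\|f_j-f_j^o\|_*\to 0$ together with the identities applied to $f_j-f_j^o$ and the observation that $\df^* u^{f_j^o}=0$ on both sides of $\partial\GO$ (since $f_j^o\in\Hdf$), obtaining $\|\df^* u^{f_j}\|_{\Omega}\to 0$ and $\|\df^* u^{f_j}\|_{\overline{\GO}^c}\to 0$; substitution then forces $\mu\|\df u^{f_j}\|^2\to 1/2$ on each side. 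In cases (ii)/(iii), the vanishing of both $\df u^{f_j}$ and $\df^* u^{f_j}$ on the respective side follows immediately from $\lim\lambda_j=\pm k_0$.

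The main obstacle, though really a bookkeeping one, will be to verify that the Green identities \eqref{eq_lame_bilinear_1_in_higher} and \eqref{eq_lame_bilinear_1_ex_higher} are applicable to $u^{f_j}=\Scal[\Scal^{-1}[f_j]]$, which in exterior domain requires the decay $|u^{f_j}(x)|=O(|x|^{-m+2})$, $|\nabla u^{f_j}(x)|=O(|x|^{-m+1})$ needed in Lemma \ref{theo_lame_bilinear_higher}\ref{enum_lame_bilinear_ex_higher}; these follow routinely from the explicit form \eqref{eq_kelvin_general} of the Kelvin matrix and the fact that $\Scal^{-1}[f_j]\in(\Hsb^m)^*$. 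Apart from this, the argument is a direct translation of the three-dimensional proofs with $\nabla\times \leftrightarrow \df$ and $\nabla\cdot \leftrightarrow -\df^*$, so no new idea is required.
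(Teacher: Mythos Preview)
Your proposal is correct and follows exactly the approach the paper indicates: the paper itself states that this higher-dimensional theorem is obtained ``by the same argument as in the three dimensional case,'' and your plan reproduces the proofs of Theorem~\ref{theo_enp_eigenfunctions} and Theorem~\ref{theo_div_rot_asymptotic} verbatim with $\nabla\times\leftrightarrow \df$ and $\nabla\cdot\leftrightarrow -\df^*$, invoking precisely the ingredients (Theorem~\ref{theo_general_dim}\ref{enum_hm_decomposition}--\ref{enum_ev_kd_md} and Lemma~\ref{theo_lame_bilinear_higher}) that the paper has set up for this purpose.
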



\section{The two-dimensional case}\label{sec_2D}

We define
\[
    \nabla^\perp u:=\left(-\frac{\p u}{\p y}, \frac{\p u}{\p x}\right)
\]
for $u\in C^1(\Rbb^2)$, and
\[
    \rot u=\nabla^\perp \cdot u:=-\frac{\p u_1}{\p y}+\frac{\p u_2}{\p x}
\]
for $u=(u_1, u_2)\in C^1(\Rbb^2)^2$. Similarly to \eqref{eq_lame_vc} and \eqref{eq_conormal_d} for the three-dimensional case, the Lam\'e operator $\Lcal_{\lambda, \mu}$ is represented as
\begin{equation}
    \label{eq_lame_2_2d}
    \Lcal_{\lambda, \mu}u=\mu \nabla^\perp (\rot u)+\frac{\mu}{2k_0}\nabla (\nabla\cdot u).
\end{equation}
The Lam\'e constants $(\lambda, \mu)$ is assumed to satisfy the condition \eqref{eq_s_convex_condition_higher} with $m=2$.
Let $\GO$ be a bounded domain in $\Rbb^2$ with the Lipschitz boundary $\p\GO$. The div-free conormal derivative on $\p\GO$ is defined by
    \begin{equation}
    \label{eq_conormal_d_2d}
    \partial^\divf_\nu u:=\mu (\rot u) \nv^\perp+\frac{\mu}{2k_0}(\nabla\cdot u) \nv,
    \end{equation}
    where $\nv$ is the outward normal vector field on $\p \GO$ and
    \[
    (x_1,x_2)^\perp:=(-x_2, x_1).
    \]

Like the three-dimensional case, the div-free double layer potential $\Dcal^\divf$ and eNP $\Kcal^\divf$ are defined respectively by \eqref{eq_dd} and \eqref{eq_kd}. We compare these operators with the Cauchy integral $\Ccal$, which is defined to be
    \begin{equation}
        \label{eq_cauchy}
        \Ccal[f](z):=\frac{1}{2\pi\iu}\int_{\p\GO} \frac{f(w)}{w-z}\, \df w, \quad z\in \Cbb \setminus \p\GO.
    \end{equation}
    Let $\Ccal_b$ be the Cauchy transform on $\p\GO$, namely,
    \begin{equation}
        \label{eq_cauchy_trans}
        \Ccal_b[f](z):=\frac{1}{2\pi\iu} \mathrm{p.v.} \int_{\p\GO} \frac{f(w)}{w-z}\, \df w, \quad z\in \p\GO.
    \end{equation}
It is well-known that the following jump relation holds:
    \begin{equation}
        \label{eq_cc_2d_jump}
        \Ccal [f]|_\pm=\left(\Ccal_b\mp\frac{1}{2}I \right)[f].
    \end{equation}
See \cite[(17.2)]{Muskh} for the case when $\p\GO$ smooth, and \cite{Calderon, EFV92, Verchota84} for the Lipschitz case.

The Kelvin matrix $\GG = ( \GG_{ij} )_{i, j = 1}^2$ of fundamental solutions to the Lam\'{e} operator in two dimensions is given by
\beq
  \GG_{ij}(x) =
  \frac{\Ga_1}{2 \pi} \Gd_{ij} \ln{|x|} - \frac{\Ga_2}{2 \pi} \displaystyle \frac{x_i x_j}{|x| ^2}
\eeq
with $\Ga_1$ and $\Ga_2$ given in \eqref{eq_alpha_12}.
The operators $\Scal$, $\Kcal$, $\Dcal^\divf$ and $\Kcal^\divf$ are defined by \eqref{eq_single_layer_potential}, \eqref{eq_np}, \eqref{eq_dd} and \eqref{eq_kd}, respectively.

In what follows, we identify $(v_1,v_2) \in \Rbb^2$ with $v_1+i v_2 \in \Cbb$ and denote both of them by $v$, namely, $v=(v_1,v_2)$ and $v=v_1+i v_2$.

The following lemma plays a crucial role in this section.

\begin{lemm}\label{theo_dd_2d_cauchy}
    Let $\GO$ be a bounded domain in $\Rbb^2$ with the Lipschitz boundary. It holds for all $f\in \Hsb^2$ that
    \begin{equation}
        \label{eq_dd_2d_cauchy}
        \Dcal^\divf [f](x)=\overline{\Ccal [\overline{f}](x)}, \quad x \in \Rbb^2 \setminus \p\GO
    \end{equation}
    and
    \begin{equation}
        \label{eq_np_2d_cauchy}
        \Kcal^\divf [f]=\overline{\Ccal_b [\overline{f}]} \quad \mbox{on } \p\GO.
    \end{equation}
\end{lemm}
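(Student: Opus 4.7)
The strategy is a direct computation: I compute the kernel $(\p^\divf_{\nu_y}\GG(x-y))^T$ explicitly in the two-dimensional setting, rewrite it using complex-variable notation, and recognize the resulting expression as the conjugate Cauchy kernel.

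First I would compute $\p^\divf_{\nu_y}(\GG(x-y)b)$ for an arbitrary constant vector $b\in\Rbb^2$. With $r:=x-y$, a direct differentiation of the 2D Kelvin matrix gives
\[
\nabla_y\cdot(\GG(r)b)=-\frac{\Ga_1-\Ga_2}{2\pi}\frac{b\cdot r}{|r|^2},\qquad \rot_y(\GG(r)b)=\frac{\Ga_1+\Ga_2}{2\pi}\frac{b^\perp\cdot r}{|r|^2}.
\]
Since $\Ga_1-\Ga_2=(\Gl+2\Gm)^{-1}=2k_0/\Gm$ and $\Ga_1+\Ga_2=\Gm^{-1}$, the Lam\'e constants cancel when these are plugged into \eqref{eq_conormal_d_2d}, leaving the Lam\'e-independent expression
\[
\p^\divf_{\nu_y}(\GG(x-y)b)=\frac{(b^\perp\cdot r)\nv_y^\perp-(b\cdot r)\nv_y}{2\pi|r|^2}.
\]
This is the two-dimensional counterpart of \eqref{abc} and parallels the decomposition \eqref{eq_d_dp_decomposition} seen in three dimensions; in particular $\Dcal^\divf$ and $\Kcal^\divf$ are independent of the Lam\'e constants in 2D as well.

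Next I would pass to complex notation. Identifying $\Rbb^2\simeq\Cbb$ componentwise (so $b$, $r$, $\nv$, $f$ become complex numbers in the obvious way), the algebraic identity $\overline{\nv}\,r=(\nv\cdot r)+i(\nv^\perp\cdot r)$ follows from expanding $(\nv_1-i\nv_2)(r_1+ir_2)$. Reading off the two real components of $(\p^\divf_{\nu_y}\GG(x-y))^T f(y)$ from the matrix formula above, packaging them as $(\cdot)_1+i(\cdot)_2$, and using $r/|r|^2=1/\overline{r}$, a short calculation yields
\[
\bigl(\p^\divf_{\nu_y}\GG(x-y)\bigr)^T f(y) = -\frac{f(y)\,\overline{\nv(y)}}{2\pi\,\overline{x-y}}.
\]
Integrating over $\p\GO$ produces an explicit complex formula for $\Dcal^\divf[f](x)$.

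Finally, on a bounded Lipschitz domain with the standard positive orientation of $\p\GO$, the complex line element satisfies $dw=i\,\nv(w)\,d\Gs(w)$ almost everywhere. Substituting this into \eqref{eq_cauchy} gives $\Ccal[\overline{f}](x)=\frac{1}{2\pi}\int_{\p\GO}\overline{f(y)}\,\nv(y)/(y-x)\,d\Gs(y)$, and conjugation together with $\overline{y-x}=-\overline{x-y}$ reproduces precisely $\Dcal^\divf[f](x)$, proving \eqref{eq_dd_2d_cauchy}. The boundary identity \eqref{eq_np_2d_cauchy} follows by the same computation interpreted as principal values; these are well-defined a.e.\ on $\p\GO$ by the classical $L^2$ theory of the Cauchy transform on Lipschitz curves. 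The only real difficulty throughout is bookkeeping---tracking the interplay of $\nv$, $\nv^\perp$, $\overline{\nv}$, and the complex orientation $dw$---rather than any analytic obstacle.
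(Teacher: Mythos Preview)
Your proposal is correct and follows essentially the same approach as the paper: both compute the kernel $(\p^\divf_{\nu_y}\GG(x-y))^T f(y)$ explicitly, rewrite it in complex notation to obtain $-\frac{1}{2\pi}\,\overline{\overline{f(w)}\,\nv_w/(z-w)}$, and then use $dw=i\,\nv_w\,d\Gs(w)$ (which the paper derives via an arc-length parametrization) to recognize the conjugate Cauchy integral. The only cosmetic difference is that you first compute $\p^\divf_{\nu_y}(\GG(x-y)b)$ and then transpose, whereas the paper writes the transposed kernel directly; the resulting formulas and the subsequent complex-variable manipulation are the same.
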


\begin{proof}
     Straightforward calculations show the following formula:
\begin{equation}
    \label{eq_dd_2d_explicit}
    (\p^\divf_{\nu_y}\Gamma (x-y))^T f(y) =-\frac{1}{2\pi} \frac{(\nv_y^\perp\cdot f(y))(x-y)^\perp + (\nv_y\cdot f(y))(x-y)}{|x-y|^2}.
\end{equation}
We use notation $z=x_1+\iu x_2$ for $x=(x_1, x_2)$ and $w=y_1+\iu y_2$ for $y=(y_1, y_2)$. Then we have
    \[
        x\cdot y =\Re (z\overline{w}), \quad x^\perp = \iu z.
    \]
    Suppose that $x \neq y$ and $y \in \p\GO$. Then \eqref{eq_dd_2d_explicit} becomes
    \begin{align}
        (\p^\divf_{\nu_y}\Gamma (x-y))^T f(y)
        &=-\frac{1}{2\pi} \frac{\Re (-\iu \overline{\nv_w}f(w))\iu (z-w)+\Re (\overline{\nv_w}f(w))(z-w)}{|z-w|^2} \nonumber\\
        &=-\frac{1}{2\pi} \frac{\overline{\nv_w}f(w)(z-w)}{|z-w|^2} \nonumber\\
        &=-\frac{1}{2\pi}\overline{\frac{\overline{f(w)}}{z-w} \nv_w }. \label{eq_dd_2d_cauchy_wip}
    \end{align}
    If $\partial\Omega$ is connected, then we parametrize the boundary $\p\GO$ by the arc-length parameter $s\in [0, L]$ where $L$ is the length of $\p\GO$. Let $w(s): [0, L]\to \p\GO$ be the parametrization and denote the velocity vector $\df w/\df s$ by $\dot w$. Since $s$ is the arc-length parameter, we have $\iu \dot w(s)=\nv_{w(s)}$. Then we have
    \begin{align*}
        \nv_w\, \df\sigma (w) =-\iu \dot w(s)\, \df s = \frac{1}{\iu} \df w.
    \end{align*}
    Thus \eqref{eq_dd_2d_cauchy_wip} yields \eqref{eq_dd_2d_cauchy}.

    If $\partial\Omega$ has multiple connected components, then we argue in the same way as above on each connected component and obtain the same result.
\end{proof}

We immediately obtain the following proposition from \eqref{eq_dd_2d_cauchy} and \eqref{eq_np_2d_cauchy}:

\begin{prop}
    \label{theo_dd_2d_jump}
    Let $\GO$ be a bounded domain in $\Rbb^2$ with the Lipschitz boundary. It holds that
    \begin{equation}
        \label{eq_dd_2d_jump}
        \Dcal^\divf [f]|_\pm=\left(\Kcal^\divf\mp \frac{1}{2}I \right)[f]
    \end{equation}
    for all $f\in \Hsb^2$.
\end{prop}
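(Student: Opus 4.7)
The proposition is essentially a direct corollary of Lemma \ref{theo_dd_2d_cauchy} combined with the known jump relation \eqref{eq_cc_2d_jump} for the Cauchy integral. The plan is to transfer the Cauchy jump through the complex-conjugation identification established in the lemma.

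First, I would fix $f \in \Hsb^2$ and note that $\overline{f} \in \Hsb^2$ (complex conjugation is an isometry on $H^{1/2}(\p\GO)$). By the formula $\Dcal^\divf [f](x)=\overline{\Ccal [\overline{f}](x)}$ of Lemma \ref{theo_dd_2d_cauchy}, for $x$ approaching $\p\GO$ nontangentially from either side of $\p\GO$, we have
\[
\Dcal^\divf [f]|_\pm(x) \;=\; \overline{\Ccal[\overline{f}]|_\pm(x)}
\]
for a.e.\ $x\in\p\GO$, because pointwise complex conjugation commutes with taking nontangential limits.

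Next I would apply the jump relation \eqref{eq_cc_2d_jump} to $\overline{f}$, which gives
\[
\Ccal[\overline{f}]|_\pm \;=\; \left(\Ccal_b \mp \tfrac{1}{2} I\right)[\overline{f}]\quad\text{on } \p\GO.
\]
Taking complex conjugates of both sides and invoking the identity $\Kcal^\divf [f]=\overline{\Ccal_b [\overline{f}]}$ from Lemma \ref{theo_dd_2d_cauchy}, we obtain
\[
\Dcal^\divf [f]|_\pm \;=\; \overline{\Ccal_b[\overline{f}]} \mp \tfrac{1}{2}\,\overline{\overline{f}}
\;=\; \Kcal^\divf[f] \mp \tfrac{1}{2} f,
\]
which is exactly \eqref{eq_dd_2d_jump}.

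There is really no substantive obstacle here: the only things to check are that the Cauchy jump formula in \eqref{eq_cc_2d_jump} is available for $\Hsb$-densities on Lipschitz boundaries (guaranteed by the references to \cite{Calderon, EFV92, Verchota84} cited just before the proposition) and that complex conjugation commutes with the nontangential limits $|_\pm$, which is immediate since conjugation is a continuous $\Rbb$-linear isometry. The sign pattern $\mp$ is preserved under conjugation, since $\overline{\mp \tfrac{1}{2} \overline{f}} = \mp \tfrac{1}{2} f$, so no sign error can sneak in.
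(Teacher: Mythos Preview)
Your proof is correct and is exactly the approach the paper takes: it states that the proposition follows immediately from \eqref{eq_dd_2d_cauchy} and \eqref{eq_np_2d_cauchy} combined with the Cauchy jump relation \eqref{eq_cc_2d_jump}. You have simply spelled out the one-line derivation.
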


If the function $u=u_1+iu_2$ is holomorphic in $\GO$, then the vector-valued function $\overline{u}=(u_1,-u_2)$ satisfies $\nabla \cdot \overline{u}=0$ and $\rot \overline{u}=0$ in $\GO$, and vice versa. Thus we have
    \[
        \Hdrf^-=\{ f \in \Hsb^2 \mid \text{$\overline{f}$ extends to $\GO$ as a holomorphic function} \}.
    \]
If $f$ extends to $\GO$ as a holomorphic function, then $\Ccal[f](z)=0$ for all $z \in \Cbb \setminus \overline{\GO}$ by Cauchy's theorem, and hence $(\Ccal_b-1/2I )[f]=0$. The converse also holds. In fact, if $(\Ccal_b-1/2I )[f]=0$, then $\Ccal[f]$ is the holomorphic function in $\GO$ such that $\Ccal[f]|_{\p\GO}=(\Ccal_b+1/2I )[f]=f$.
Thus we have
\beq
\overline{f} \in \Hdrf^- \Longleftrightarrow f \in \Ker \left(\Ccal_b-\frac{1}{2}I \right).
\eeq
Moreover, if $f = (\Ccal_b+1/2I)[g]$ for some $g$, then $\Ccal[f]$ is the holomorphic function in $\GO$ such that $\Ccal[f]|_{\p\GO}=f$, and hence $f \in \Ker \left(\Ccal_b-1/2I \right)$. If $f \in \Ker \left(\Ccal_b-1/2I \right)$, then the identity
\beq\label{eq_identity_cauchy}
f= - \left(\Ccal_b-\frac{1}{2}I \right)[f]+ \left(\Ccal_b+\frac{1}{2}I \right)[f]
\eeq
shows that $f \in \Ran (\Ccal_b+1/2I)$. Thus we have
\beq
\Ker \left(\Ccal_b-\frac{1}{2}I \right) = \Ran \left(\Ccal_b+\frac{1}{2}I \right).
\eeq
Similarly, one can show that
\beq
\overline{f} \in \Hdrf^+ \Longleftrightarrow f \in \Ker \left(\Ccal_b+\frac{1}{2}I \right) = \Ran \left(\Ccal_b-\frac{1}{2}I \right).
\eeq

There are two-dimensional bounded domain $\GO$ on which the operator $\Scal: (\Hsb^2)^*\to \Hsb^2$ is not invertible (see \cite{AJKKY}). However, we may dilate the domain so that $\Scal$ becomes invertible on the dilated domain. This fact is well-known for the Laplace case (see \cite{Verchota84}), and we include in Appendix \ref{sect_dilation_elastic} a proof for the case of the Lam\'e system. Since dilation of a domain does not alter spectral properties of (e)NP operators, we assume from the beginning that $\Scal: (\Hsb^*)^2\to \Hsb^2$ is invertible and the bilinear form \eqref{eq_inner_product} is a genuine inner product on $\Hsb^2=H^{1/2}(\partial\Omega)$. Then, since the symmetrization principle \eqref{eq_plemelj_kd} holds in the two-dimensional case as well as in the three-dimensional case, the div-free eNP operator $\Kcal^\divf$ is realized as a self-adjoint operator on $\Hsb^2$ with respect to \eqref{eq_inner_product}.

We thus obtain the following theorem from \eqref{eq_np_2d_cauchy}.

\begin{theo}\label{coro_hdr_ker}
    Let $\GO$ be a bounded domain in $\Rbb^2$ with the Lipschitz boundary. It holds that
    \[
        \Hdrf^\pm=\rmop{Ker} \left(\Kcal^\divf\pm \frac{1}{2}I\right)=\rmop{Ran}\left(\Kcal^\divf\mp \frac{1}{2}I\right).
    \]
\end{theo}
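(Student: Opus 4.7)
The plan is to transfer the known mapping structure of the Cauchy transform $\Ccal_b$ to the div-free eNP operator $\Kcal^\divf$ via the conjugation identity of Lemma \ref{theo_dd_2d_cauchy}. First, combining $\Kcal^\divf[f] = \overline{\Ccal_b[\overline{f}]}$ with the fact that $\frac{1}{2}f = \overline{\frac{1}{2}\overline{f}}$ (since $\frac{1}{2}$ is real) gives
\[
\left(\Kcal^\divf \pm \frac{1}{2}I\right)[f] = \overline{\left(\Ccal_b \pm \frac{1}{2}I\right)[\overline{f}]},
\]
so $f \in \Ker(\Kcal^\divf \pm \frac{1}{2}I)$ if and only if $\overline{f} \in \Ker(\Ccal_b \pm \frac{1}{2}I)$. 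The equivalences recorded in the text just before the theorem (namely $g \in \Hdrf^\pm$ iff $\overline{g} \in \Ker(\Ccal_b \pm \frac{1}{2}I)$) then immediately identify $\Hdrf^\pm$ with $\Ker(\Kcal^\divf \pm \frac{1}{2}I)$.

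For the range equalities, I would use the telescoping identity
\[
f = \left(\Kcal^\divf + \frac{1}{2}I\right)[f] - \left(\Kcal^\divf - \frac{1}{2}I\right)[f]
\]
to see that every element of $\Ker(\Kcal^\divf - \frac{1}{2}I)$ lies in $\Ran(\Kcal^\divf + \frac{1}{2}I)$, and symmetrically for the opposite sign. For the reverse inclusion, suppose $f = (\Kcal^\divf + \frac{1}{2}I)[g]$; the jump formula \eqref{eq_dd_2d_jump} rewrites this as $f = \Dcal^\divf[g]|_-$, while Lemma \ref{theo_dd_2d_cauchy} gives $\Dcal^\divf[g] = \overline{\Ccal[\overline{g}]}$ in $\GO$. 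Since $\Ccal[\overline{g}]$ is holomorphic in $\GO$, its conjugate is a div-free and rot-free harmonic vector field there (a direct consequence of the Cauchy--Riemann equations), so this conjugate must coincide with the harmonic extension $v_-^f$, yielding $f \in \Hdrf^-$. The argument for the $+$ sign proceeds identically in $\Rbb^2 \setminus \overline{\GO}$, using the automatic decay of Cauchy integrals at infinity to match the decay condition built into the definition of $v_+^f$.

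The proof is essentially bookkeeping once Lemma \ref{theo_dd_2d_cauchy} is in hand, so there is no genuine obstacle. The most delicate point to be careful with is the conjugate-linearity of $f \mapsto \overline{f}$: one must verify that the scalars $\pm \frac{1}{2}$ (being real) commute through the conjugation so that the signs in the kernel identifications line up correctly, and in the exterior case one must match the decay hypothesis built into the definition of $v_+^f$ with the automatic $|z|^{-1}$ decay of the Cauchy integral at infinity.
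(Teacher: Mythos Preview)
Your proposal is correct and follows essentially the same route as the paper: the preceding discussion in the text establishes $\overline{f}\in\Hdrf^\pm \Leftrightarrow f\in\Ker(\Ccal_b\pm\tfrac12 I)=\Ran(\Ccal_b\mp\tfrac12 I)$, and the theorem is then deduced immediately from the conjugation identity \eqref{eq_np_2d_cauchy}. Your handling of the range inclusion via the jump formula \eqref{eq_dd_2d_jump} and the holomorphy of $\Ccal[\overline{g}]$ is just the $\Kcal^\divf$-side rephrasing of the paper's $\Ccal_b$-side argument, so there is no substantive difference.
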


We see from the identity \eqref{eq_identity_cauchy} that
\[
f= \left(\Kcal^\divf+ \frac{1}{2}I \right)[f]- \left(\Kcal^\divf- \frac{1}{2}I \right)[f],
\]
where the sum is direct. Thus we obtain the following theorem.

\begin{theo}\label{coro_direct_sum_2d}
    Let $\GO$ be a bounded domain in $\Rbb^2$ with the Lipschitz boundary. It holds that
    \begin{equation}\label{eq_2d_direct}
        \Hsb^2 = \Hdrf^- \oplus \Hdrf^+,
\end{equation}
    where the sum is orthogonal with respect to the inner product \eqref{eq_inner_product}.
\end{theo}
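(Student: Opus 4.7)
The plan is to leverage Theorem \ref{coro_hdr_ker}, which already gives us the key identification $\Hdrf^\pm = \Ker(\Kcal^\divf \pm \frac{1}{2}I) = \Ran(\Kcal^\divf \mp \frac{1}{2}I)$. The self-adjointness of $\Kcal^\divf$ with respect to the inner product \eqref{eq_inner_product} (which holds in two dimensions by the same symmetrization argument as Proposition \ref{theo_plemelj_kd}, once one works on a domain where $\Scal$ is invertible, as arranged in Appendix \ref{sect_dilation_elastic}) will then immediately yield orthogonality.

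First, I would establish $\Hsb^2 = \Hdrf^- + \Hdrf^+$ using the elementary decomposition
\[
    f = \left(\Kcal^\divf + \frac{1}{2}I\right)[f] - \left(\Kcal^\divf - \frac{1}{2}I\right)[f],
\]
valid for any $f \in \Hsb^2$. By Theorem \ref{coro_hdr_ker}, the first term lies in $\Ran(\Kcal^\divf + \frac{1}{2}I) = \Hdrf^-$, and the second lies in $\Ran(\Kcal^\divf - \frac{1}{2}I) = \Hdrf^+$.

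Next, I would verify that the sum is direct. If $f \in \Hdrf^- \cap \Hdrf^+$, then by the kernel characterization in Theorem \ref{coro_hdr_ker} we have simultaneously $\Kcal^\divf[f] = \frac{1}{2}f$ and $\Kcal^\divf[f] = -\frac{1}{2}f$, forcing $f = 0$.

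Finally, orthogonality with respect to \eqref{eq_inner_product} follows from a standard self-adjointness argument: for $f \in \Hdrf^-$ and $g \in \Hdrf^+$,
\[
    \frac{1}{2}\jbracket{f,g}_* = \jbracket{\Kcal^\divf f, g}_* = \jbracket{f, \Kcal^\divf g}_* = -\frac{1}{2}\jbracket{f,g}_*,
\]
so $\jbracket{f,g}_* = 0$. The only real obstacle worth checking carefully is that $\Kcal^\divf$ is indeed self-adjoint on $\Hsb^2$ with respect to \eqref{eq_inner_product}; this requires the two-dimensional analogues of Proposition \ref{theo_plemelj_kd} and Corollary \ref{coro_kd_self_adjoint}, which in turn rest on the invertibility of $\Scal$ arranged by dilation. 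Everything else is a short formal manipulation.
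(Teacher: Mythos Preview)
Your proposal is correct and follows essentially the same approach as the paper: the paper derives the decomposition from the identity $f = (\Kcal^\divf + \tfrac{1}{2}I)[f] - (\Kcal^\divf - \tfrac{1}{2}I)[f]$ together with Theorem \ref{coro_hdr_ker}, and relies on the self-adjointness of $\Kcal^\divf$ (established just before Theorem \ref{coro_hdr_ker}) for orthogonality. Your write-up simply spells out the directness and orthogonality steps that the paper leaves implicit.
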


\begin{rema*}
    There is a decomposition analogous to \eqref{eq_2d_direct} in higher dimensions: a decomposition of Clifford algebra-valued functions in terms of Dirac operators. For this we refer to \cite{HMMPT09, Mitrea94}.
\end{rema*}

Like the three-dimensional case, we see that if $\p\GO$ is $C^{1,\Ga}$ for some $\Ga>0$, then $\Kcal +2k_0 \Kcal^\divf$ is compact on $\Hsb^2$, and hence obtain the following theorem proved in \cite{AJKKY}.

\begin{theo}[\cite{AJKKY}]
    \label{theo_np_compact_2d}
If $\GO$ is a bounded domain in $\Rbb^2$ whose boundary is $C^{1,\Ga}$ for some $\Ga >0$, then the restricted operators
\[
    \Kcal+k_0 I: \Hdrf^- \longrightarrow \Hsb^2, \quad
    \Kcal-k_0 I: \Hdrf^+ \longrightarrow \Hsb^2
\]
are compact, and the operator $\Kcal^2-k_0^2 I$ is compact on $\Hsb^2$. The eigenvalues of the eNP operator $\Kcal$ on $\Hsb^2$ consist of two infinite sequences converging to $k_0$ and $-k_0$.
\end{theo}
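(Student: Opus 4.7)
The plan is to invoke three facts already established earlier in the paper: (i) the remark preceding the theorem that $T := \Kcal+2k_0\Kcal^\divf$ is compact on $\Hsb^2$ whenever $\partial\Omega$ is $C^{1,\alpha}$ (proved as in Lemma \ref{lemm_np23_compact}, by writing both $\Kcal$ and $\Kcal^\divf$ in terms of the singular integrals $\Kcal_1,\Kcal_2,\Kcal_3$ and exploiting the $C^{1,\alpha}$-smoothing of the weakly singular kernels); (ii) the eigenspace identification $\Hdrf^\pm=\Ker(\Kcal^\divf\pm \tfrac12 I)$ from Theorem \ref{coro_hdr_ker}, so that $\Kcal^\divf$ acts as $+\tfrac12 I$ on $\Hdrf^-$ and as $-\tfrac12 I$ on $\Hdrf^+$; and (iii) the orthogonal decomposition $\Hsb^2=\Hdrf^-\oplus\Hdrf^+$ from Theorem \ref{coro_direct_sum_2d}.

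The first two compactness claims are then immediate. Restricting $T$ to $\Hdrf^-$ gives $T|_{\Hdrf^-}=\Kcal+k_0 I$ (since $2k_0\Kcal^\divf=k_0 I$ there), which is therefore compact as an operator $\Hdrf^-\to\Hsb^2$; symmetrically $T|_{\Hdrf^+}=\Kcal-k_0 I$ is compact $\Hdrf^+\to\Hsb^2$. For the global compactness of $\Kcal^2-k_0^2 I$, the orthogonal decomposition (iii) shows that $\Kcal^\divf=\tfrac12(P_--P_+)$ where $P_\pm$ are the orthogonal projections onto $\Hdrf^\pm$, so $(\Kcal^\divf)^2=\tfrac14 I$ on all of $\Hsb^2$. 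Writing $\Kcal=-2k_0\Kcal^\divf+T$ and expanding,
\[
\Kcal^2 = 4k_0^2(\Kcal^\divf)^2 - 2k_0(\Kcal^\divf T + T\Kcal^\divf) + T^2 = k_0^2 I + (\text{compact}),
\]
where the boundedness of $\Kcal^\divf$ on $\Hsb^2$ (inherited from the $H^{1/2}$-boundedness of the Cauchy transform $\Ccal_b$ via Lemma \ref{theo_dd_2d_cauchy}) ensures that the cross terms and $T^2$ are compact.

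For the spectral assertion, I use that $\Kcal$ is self-adjoint on $\Hsb^2$ with respect to the inner product \eqref{eq_inner_product} (Plemelj's identity \eqref{eq_plemelj}, whose 2D version is valid since $\Scal$ is invertible, dilating $\GO$ if necessary as discussed earlier in the section). Since $\Kcal^2-k_0^2 I$ is self-adjoint and compact, $\sigma(\Kcal^2)\setminus\{k_0^2\}$ consists of real eigenvalues of finite multiplicity accumulating only at $k_0^2$; by the spectral mapping theorem for self-adjoint operators, $\sigma(\Kcal)\subset\{\pm k_0\}\cup\{\pm\sqrt{\mu_j}\}$ with $\pm\sqrt{\mu_j}\to\pm k_0$, so all accumulation of eigenvalues occurs at $\pm k_0$. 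To establish that both $+k_0$ and $-k_0$ are genuine limits of infinite sequences, I use the infinite-dimensionality of $\Hdrf^\pm$—evident from Lemma \ref{theo_dd_2d_cauchy}, since $\Hdrf^-$ (resp.\ $\Hdrf^+$) consists of conjugates of boundary values of functions holomorphic in $\GO$ (resp.\ in $\Rbb^2\setminus\overline{\GO}$)—together with a Weyl-sequence argument: a bounded weakly null sequence in $\Hdrf^-$ satisfies $(\Kcal+k_0 I)f_n=Tf_n\to 0$, placing $-k_0\in\sigma_{\mathrm{ess}}(\Kcal)$, and symmetrically $+k_0\in\sigma_{\mathrm{ess}}(\Kcal)$; combined with the compactness of $\Kcal^2-k_0^2 I$, this produces infinite discrete eigenvalue sequences accumulating at each of $\pm k_0$.

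The main obstacle is almost entirely bookkeeping: tracking the sign conventions in (ii) so that $\Hdrf^-$ is correctly paired with the $+k_0 I$ shift (rather than $-k_0 I$), and verifying that the Weyl-sequence argument truly forces both $\pm k_0$ to be accumulation points of the discrete eigenvalue set—not single infinite-multiplicity eigenvalues. All substantive analytic ingredients—the $C^{1,\alpha}$-smoothing of $T$, Plemelj's symmetrization, the Cauchy-transform reformulation of $\Kcal^\divf$, and the orthogonal decomposition of $\Hsb^2$—are already in place, so no new hard analysis is required.
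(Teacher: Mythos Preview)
Your proof is correct and follows essentially the same route the paper indicates (the paper itself only writes ``Like the three-dimensional case\ldots'' and cites \cite{AJKKY}, so the argument is implicit from the proofs of Theorem~\ref{theo_free_np_3D} and Theorem~\ref{thm_eigenvalue3D}). Your explicit use of $(\Kcal^\divf)^2=\tfrac14 I$ on $\Hsb^2$---available precisely because the 2D decomposition \eqref{eq_2d_direct} has no $\Hdf$ summand---is a clean way to obtain the global compactness of $\Kcal^2-k_0^2 I$, and your Weyl-sequence argument for $\pm k_0\in\sigma_{\mathrm{ess}}(\Kcal)$ is equivalent to the paper's 3D strategy of showing that $\Kcal\pm k_0 I$ are not compact on the infinite-dimensional subspaces $\Hdrf^\mp$.
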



We now have the following theorem analogous to Theorem \ref{theo_enp_eigenfunctions}. We omit the proof since it can be proved in a similar way.

\begin{theo}\label{theo_enp_eigenfunctions2D}
    Let $\GO$ be a bounded domain in $\Rbb^2$ whose boundary is $C^{1,\Ga}$ for some $\Ga >0$. Let $\{ f_j\}_{j=1}^\infty$ be an orthonormal system in $\Hsb^2$ consisting of eigenfunctions of $\Kcal$ with respect to the inner product \eqref{eq_inner_product} and let $\Gl_j\in \Rbb$ be the corresponding eigenvalue, {\it i.e.}, $\Kcal [f_j]=\Gl_j f_j$. We decompose $f_j=f_j^-+f_j^+$ by the orthogonal sum \eqref{eq_2d_direct} where $f^\pm_j \in \Hdrf^\pm$. Then the following statements hold.
    \begin{enumerate}[label={\rm (\roman*)}]
        \item If $\Gl_j\to k_0$, then $f_j-f^+_j\to 0$ in $\Hsb^2$.
        \item If $\Gl_j\to -k_0$, then $f_j-f^-_j\to 0$ in $\Hsb^2$.
    \end{enumerate}
\end{theo}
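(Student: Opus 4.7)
The plan is to mirror the three-dimensional argument of Theorem \ref{theo_enp_eigenfunctions}, which goes through almost verbatim thanks to the orthogonal decomposition \eqref{eq_2d_direct} and the characterization of $\Hdrf^\pm$ as eigenspaces of $\Kcal^\divf$ given in Theorem \ref{coro_hdr_ker}. Since $\{f_j\}_{j=1}^\infty$ is orthonormal in $\Hsb^2$, it converges weakly to $0$. As in the proof of Lemma \ref{lemm_np23_compact} (whose two-dimensional analogue also holds under $C^{1,\Ga}$ regularity), the operator $\Kcal + 2k_0 \Kcal^\divf$ is compact on $\Hsb^2$, so $\Kcal[f_j] + 2k_0 \Kcal^\divf[f_j] \to 0$ in the $\|\cdot\|_*$-norm. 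Using $\Kcal[f_j] = \Gl_j f_j$, this reads
\[
    \Gl_j f_j + 2k_0 \Kcal^\divf[f_j] \to 0 \quad \text{in } \Hsb^2.
\]

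Next, by Theorem \ref{coro_hdr_ker}, the decomposition $f_j = f_j^- + f_j^+$ with $f_j^\pm \in \Hdrf^\pm = \Ker(\Kcal^\divf \mp \tfrac{1}{2}I)$ gives $\Kcal^\divf[f_j^\pm] = \pm \tfrac{1}{2}f_j^\pm$. Substituting, the above limit becomes
\[
    (\Gl_j + k_0) f_j^- + (\Gl_j - k_0) f_j^+ \to 0 \quad \text{in } \Hsb^2.
\]
Because the sum $\Hsb^2 = \Hdrf^- \oplus \Hdrf^+$ is orthogonal with respect to the inner product \eqref{eq_inner_product} by Theorem \ref{coro_direct_sum_2d}, taking the squared $\|\cdot\|_*$-norm yields
\[
    |\Gl_j + k_0|^2 \|f_j^-\|_*^2 + |\Gl_j - k_0|^2 \|f_j^+\|_*^2 \to 0.
\]

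Finally, the two assertions follow by separating cases on the limit of $\Gl_j$. If $\Gl_j \to k_0$, then $|\Gl_j + k_0| \to 2k_0 > 0$, forcing $\|f_j^-\|_* \to 0$; since $f_j - f_j^+ = f_j^-$, this gives (i). Symmetrically, if $\Gl_j \to -k_0$, then $|\Gl_j - k_0| \to 2k_0 > 0$, so $\|f_j^+\|_* \to 0$, and $f_j - f_j^- = f_j^+ \to 0$, proving (ii). The argument is essentially routine once the compactness of $\Kcal + 2k_0 \Kcal^\divf$ and the orthogonal decomposition \eqref{eq_2d_direct} are in hand; there is no substantive obstacle, and the only mild care needed is to invoke the equivalence between the $\|\cdot\|_*$-norm and the standard $H^{1/2}$-norm (so that convergence in one topology is convergence in the other) and, as noted before Theorem \ref{coro_hdr_ker}, to have dilated the domain if necessary so that $\Scal$ is invertible and \eqref{eq_inner_product} is a genuine inner product.
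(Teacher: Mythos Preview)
Your proof is correct and follows exactly the approach the paper intends (the paper omits the proof, saying it is proved in the same way as Theorem \ref{theo_enp_eigenfunctions}). One minor slip: Theorem \ref{coro_hdr_ker} reads $\Hdrf^\pm=\Ker(\Kcal^\divf\pm\tfrac{1}{2}I)$, so $\Kcal^\divf[f_j^\pm]=\mp\tfrac{1}{2}f_j^\pm$ (not $\pm\tfrac{1}{2}$); your displayed expression $(\Gl_j+k_0)f_j^-+(\Gl_j-k_0)f_j^+$ is nonetheless the correct one, so the conclusion stands.
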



\section*{Discussions}\label{sec:discussion}

In this paper, we proved that the following decomposition holds:
\beq\label{eq_decomposition_div_free2}
        \Hsb^3=\Hdrf^- \oplus \Hdrf^+ + \Hdf.
    \eeq
We then proved that if $\p\GO$ is $C^{1,\Ga}$ for some $\Ga >1/2$, then the operators $\Kcal + k_0I$, $\Kcal - k_0I$ and $\Kcal$ are compact on $\Hdrf^{-}$, $\Hdrf^{+}$ and $\Hdf$, respectively. As a consequence, we showed that the eigenvalues of the eNP operator $\Kcal$ on $\Hsb^3$ consists of three infinite real sequences converging to $0$, $k_0$, and $-k_0$, and corresponding eigenfunctions are characterized by $\Hdrf^{+}$, $\Hdrf^{+}$ and $\Hdrf^{-}$.

We proved that if $\p\GO$ is simply connected, then the decomposition \eqref{eq_decomposition_div_free2} is orthogonal. However, we do not know if this is the case in general and it is quite interesting to prove (or disprove) this. In relation to this question, we mention that the following three statements are equivalent.
\begin{itemize}
\item[(i)] $(\Hdrf^-+\Hdrf^+)^\perp = \Hdf$.
\item[(ii)] $\Hdrf^\pm \cap \Hdf = \{0\}$.
\item[(iii)] If $u_-$ and $u_+$ are divergence-free solutions to \eqref{eq_lame_bvp} and \eqref{eq_lame_bvp_ext}, respectively, then there are $f_-$ and $f_+$ in $\Hsb^3$ such that $\Dcal^\divf[f_-]= u_-$ and $\Dcal^\divf[f_+]= u_+$.
\end{itemize}

\section*{Acknowledgment}
The authors thank the unanimous referee(s) for valuable comments on this paper.

\appendix

\section{Smoothing properties of integral operators}\label{sect_smoothing}

In this appendix we prove regularity properties of certain integral operators which have been used at several places in this paper. These facts may be known. Since we fail to pinpoint a reference, we include proofs for readers' sake.

Let $\Omega\subset \Rbb^m$ ($m\geq 2$) be a bounded Lipschitz domain. The Sobolev norm on $H^s(\partial\Omega)$ ($0\leq s<1$) is equivalent to the square root of the quantity
\[
    \|\varphi\|_{L^2(\partial\Omega)}^2+\int_{\partial\Omega}\int_{\partial\Omega} \frac{|\varphi (x)-\varphi(y)|^2}{|x-y|^{m-1+2s}}\, \df \sigma (x)\df \sigma (y)
\]
(see \cite{Gilbarg-Trudinger01}). We observe that the norm in \eqref{eq_Sobolev_Besov} is the special case when $m=3$ and $s=1/2$.

\begin{theo}\label{theo_NP_bounded_Sobolev_g}
    Let $m\geq 2$ and $\Omega\subset \Rbb^m$ be a bounded Lipschitz domain and $T(x, y)\in L^1(\partial\Omega \times \partial\Omega)$ satisfy
    \begin{align}
        |T(x, y)|&\lesssim \frac{1}{|x-y|^{m-\beta-\gamma}} \quad (x\neq y), \label{eq_singularity_0}\\
        |T(x, y)-T(z, y)|&\lesssim \frac{|x-z|^\gamma}{|x-y|^{m-\beta}} \quad (2|x-z|<|x-y|)\label{eq_singularity_1}
    \end{align}
    for some $\beta, \gamma> 0$ ($\gamma \le 1$) with $\beta+\gamma>1$. The integral operator
    \[
        \Tcal [\varphi](x):=\int_{\partial\Omega} T(x, y)\varphi(y)\, \df \sigma (y)
    \]
    has the following mapping properties:
    \begin{enumerate}[label=(\roman*)]
        \item \label{enum_T_Sobolev} $\Tcal$ is a bounded linear operator from $L^2 (\partial\Omega)$ to $H^s (\partial\Omega)$ for all $s$ satisfying
        \begin{equation}\label{eq_Sobolev_condition}
          0\leq s<\min \{ \beta+\gamma-1, \gamma\}.
        \end{equation}
        \item \label{enum_T_T1} Additionally, if $\Tcal [1]$ is constant on $\partial\Omega$, then $\Tcal$ is a bounded linear operator from $H^t (\partial\Omega)$ to $H^s (\partial\Omega)$ for all $(t, s)$ satisfying
        \begin{equation}\label{eq_Sobolev_T1_condition}
            \begin{cases}
                0\leq s<\gamma, \\
                \max\{ 0, s-\beta-\gamma+1\}\leq t, \\
                (t, s)\neq (0, \beta+\gamma-1). 
            \end{cases}
        \end{equation}
    \end{enumerate}
\end{theo}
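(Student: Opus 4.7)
The plan is to work with the intrinsic fractional Sobolev seminorm on $\partial\Omega$: for $0\le s<1$,
\[
\|\psi\|_{H^s(\partial\Omega)}^2 \sim \|\psi\|_{L^2(\partial\Omega)}^2 + \iint_{\partial\Omega\times\partial\Omega} \frac{|\psi(x)-\psi(z)|^2}{|x-z|^{m-1+2s}}\,d\sigma(x)\,d\sigma(z).
\]
Since $\partial\Omega$ is Lipschitz, it is locally bilipschitz equivalent to $\Rbb^{m-1}$, so size estimates and integrations of power-type kernels may be carried out as though on $\Rbb^{m-1}$, up to absolute constants.

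For part (i), I would first establish $L^2$-boundedness via Schur's test: the hypothesis $\beta+\gamma>1$ makes $m-\beta-\gamma<m-1$, so \eqref{eq_singularity_0} is an integrable singularity on the $(m-1)$-dimensional surface and $\sup_x\int|T(x,y)|\,d\sigma(y)<\infty$ (and symmetrically in $y$). For the fractional seminorm I would, for each fixed pair $x,z$, split the kernel difference $T(x,y)-T(z,y)$ over the near set $A_{x,z}:=\{y:|x-y|<2|x-z|\}$ and its complement. On $A_{x,z}$, use the triangle inequality with \eqref{eq_singularity_0} (plus $|z-y|\lesssim|x-z|$) to dominate the integrand by $|x-y|^{-(m-\beta-\gamma)}+|z-y|^{-(m-\beta-\gamma)}$; on $A_{x,z}^c$, apply \eqref{eq_singularity_1} to obtain an $|x-z|^\gamma$ factor times $|x-y|^{-(m-\beta)}$. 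Squaring, multiplying by $|x-z|^{-(m-1+2s)}$, and integrating in $x,z$, Cauchy-Schwarz and Fubini reduce the problem to bounding convolution-type operators whose kernels are powers of $|x-z|$ on $\partial\Omega$. The near-region contribution is finite precisely when $s<\beta+\gamma-1$; the far-region contribution is finite precisely when $s<\gamma$. This yields the range \eqref{eq_Sobolev_condition}.

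For part (ii), the constancy of $\Tcal[1]$ supplies the cancellation
\[
\Tcal\varphi(x)-\Tcal\varphi(z) = \int_{\partial\Omega}\bigl(T(x,y)-T(z,y)\bigr)\bigl(\varphi(y)-\varphi(x)\bigr)\,d\sigma(y),
\]
since $\int(T(x,y)-T(z,y))\,d\sigma(y)=\Tcal[1](x)-\Tcal[1](z)=0$. When $t>0$, I would pair this with the $H^t$-seminorm of $\varphi$: a Cauchy-Schwarz step with weight $|x-y|^{\kappa}$ (for suitable $\kappa$) trades a factor $|x-y|^{t}$ against the finite quantity $\int|x-y|^{-(m-1+2t)}|\varphi(y)-\varphi(x)|^2\,d\sigma(y)$, which is controlled pointwise by $\|\varphi\|_{H^t}^2$ after a further integration in $x$. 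Inserting this improved pointwise bound into the near/far decomposition from part (i), the effective singularity order in the kernel estimates is reduced by $t$, and the admissible range of $s$ expands to \eqref{eq_Sobolev_T1_condition}. $L^2$-control ($H^t\to L^2$) follows from Schur's test using the cancellation, or by interpolation with part (i).

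The main obstacle will be the careful bookkeeping of exponents in the double integrals arising from the near/far splitting, verifying that the resulting convolution kernels on $\partial\Omega$ are integrable against the fractional weights in the correct ranges. In particular, the endpoint $(t,s)=(0,\beta+\gamma-1)$ must be excluded because at that exponent the near-region integral produces a logarithmic divergence; this is the usual endpoint loss for fractional smoothing of weakly singular operators. A secondary technical point is handling the Lipschitz geometry so that covering arguments and local flattenings do not introduce spurious powers; this is standard but tedious. Once the dyadic/convolution estimates are pinned down, both parts follow from Fubini combined with Schur's test.
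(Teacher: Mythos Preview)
Your plan is correct and follows the paper's approach: the Gagliardo seminorm, the same near/far splitting of the $y$-integral against $|x-z|$, Cauchy--Schwarz with auxiliary power weights, and Fubini/Schur on the resulting power kernels. The one organizational difference is that the paper subtracts $\varphi(x)$ already in part (i), writing
\[
\Tcal\varphi(x)-\Tcal\varphi(z)=I_1+I_2+\varphi(x)\bigl(\Tcal[1](x)-\Tcal[1](z)\bigr)
\]
with $I_1,I_2$ involving $\varphi(y)-\varphi(x)$; the third term is what forces the extra constraint $s<\beta+\gamma-1$ in (i) and vanishes in (ii), so (i) and (ii) share a single $I_1,I_2$ estimate. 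Your direct route for (i) without this subtraction also works, but one small correction to your bookkeeping: when you actually carry it through, the far-region contribution imposes \emph{both} $s<\gamma$ (from integrability of $|x-z|^{2\gamma-(m-1+2s)}$ in $z$) and $s<\beta+\gamma-1$ (from the remaining Schur bound on the kernel $|x-y|^{-(m-\beta-\gamma+s)}$ in $x,y$), not $s<\gamma$ alone.
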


\begin{proof}
    Since $T(x, y)\in L^1(\partial\Omega \times \partial\Omega)$, $\Tcal$ is bounded on $L^2(\partial\Omega)$ (Schur's test). So we only have to estimate the (square of) seminorm
    \[
        \int_{\partial\Omega\times \partial\Omega} \frac{|\Tcal[\varphi] (x)-\Tcal[\varphi] (z)|^2}{|x-z|^{m+2s-1}}\, \df \sigma (x)\df \sigma (z)
    \]
    for $\varphi \in H^t (\partial\Omega)$, $s>0$ and $t\geq 0$ satisfying either \eqref{eq_Sobolev_condition} or \eqref{eq_Sobolev_T1_condition}.

    For $x, z\in \partial\Omega$, we set
    \[
        A_{x, z}:=\{ y\in \partial\Omega \mid 2|x-z|<|x-y|\}.
    \]
    Then, we have
    \begin{align*}
        \Tcal[\varphi](x)-\Tcal[\varphi](z)
        &=\int_{\partial\Omega}(T(x, y)-T(z, y))(\varphi (y)-\varphi (x))\, \df \sigma (y) \\
        &\quad +\varphi (x)\int_{\partial\Omega} (T(x, y)-T(z, y))\, \df \sigma (y) \\
        &= \int_{A_{x, z}} + \int_{A_{x, z}^c} (T(x, y)-T(z, y))(\varphi (y)-\varphi (x))\, \df \sigma (y) \\
        &\quad +\varphi (x) \left( \Tcal[1](x)-\Tcal[1](z) \right) \\
        &=: I_1+I_2+\varphi (x)I_3.
    \end{align*}
Since
    \[
        |\Tcal[\varphi](x)-\Tcal[\varphi](z)|^2 \lesssim |I_1|^2+|I_2|^2+|\varphi(x)|^2|I_3|^2,
    \]
    we have
    \beq\label{10006}
        \int_{\partial\Omega\times \partial\Omega}\frac{|\Tcal[\varphi](x)-\Tcal[\varphi](z)|^2}{|x-z|^{2s+m-1}}\, \df \sigma (x)\df \sigma (z)
        \lesssim J_1+J_{2}+J_3
    \eeq
    where
    \begin{align*}
        J_1&:=\int_{\partial\Omega\times \partial\Omega} \frac{\df \sigma (x)\df \sigma (z)}{|x-z|^{2s+m-2\gamma-2\tau}}\int_{A_{x, z}}\frac{|\varphi (y)-\varphi (x)|^2}{|x-y|^{m-2(\beta-\tau)}}\, \df \sigma (y), \\
        J_{2}&:=\int_{\partial\Omega\times \partial\Omega}\frac{\df \sigma (x)\df \sigma (z)}{|x-z|^{2s-2\theta+m}}\int_{3|x-z|\geq |z-y|}\frac{|\varphi (y)-\varphi (x)|^2\, \df \sigma (y)}{|z-y|^{m-2(\beta+\gamma-\theta)}} \\
        J_3 &:= \int_{\partial\Omega\times \partial\Omega} \frac{|\varphi(x)|^2|I_3|^2}{|x-z|^{m+2s-1}}\, \df \sigma (x)\df \sigma (z).
    \end{align*}

Since $s<\gamma$, we can take $\tau\in (1/2-\gamma+s, 1/2)$. Then, we use \eqref{eq_singularity_1} to have
    \begin{align*}
        |I_1|^2&\lesssim \left(\int_{A_{x, z}}|T(x, y)-T(z, y)| |\varphi (y)-\varphi (x)|\, \df \sigma (y)\right)^2 \\
        &\lesssim \left(\int_{A_{x, z}}\frac{|x-z|^\gamma}{|x-y|^{m-\beta}}|\varphi (y)-\varphi (x)|\, \df \sigma (y)\right)^2 \\
        &\leq |x-z|^{2\gamma}\left(\int_{A_{x, z}}\frac{\df \sigma (y)}{|x-y|^{m-2\tau}}\right)\left(\int_{A_{x, z}}\frac{|\varphi (y)-\varphi(x)|^2}{|x-y|^{m-2(\beta-\tau)}}\, \df \sigma (y)\right) \\
        &\lesssim |x-z|^{2\gamma+2\tau-1} \int_{A_{x, z}}\frac{|\varphi (y)-\varphi(x)|^2}{|x-y|^{m-2(\beta-\tau)}}\, \df \sigma (y),
    \end{align*}
where the last inequality holds since $0<\tau<1/2$.

Note that
\[
|I_2| \le I_{2,1}+I_{2,2}+I_{2, 3}
\]
where
\begin{align}
I_{2,1}&= \int_{A_{x, z}^c} |T(x, y)| |\varphi (y)-\varphi (x)|\, \df \sigma (y), \\
I_{2, 2}&=\int_{A_{x, z}^c} |T(z, y)||\varphi (y)-\varphi (z)|\, \df \sigma (y), \\
I_{2, 3}&=|\varphi (x)-\varphi (z)|\int_{A_{x, z}^c} |T(z, y)|\, \df \sigma (y). 
\end{align}
Take $\theta\in (1/2, 1/2+s)$. We use \eqref{eq_singularity_0} to have
    \begin{align*}
        I_{2,1}^2 & \lesssim \left( \int_{A_{x, z}^c} \frac{|\varphi (y)-\varphi (x)|}{|x-y|^{m-\beta-\gamma}} \, \df \sigma (y) \right)^2 \\
        &\lesssim \left(\int_{2|x-z|\geq |x-y|}\frac{\df \sigma (y)}{|x-y|^{m-2\theta}}\right)\left(\int_{2|x-z|\geq |x-y|}\frac{|\varphi (y)-\varphi (x)|^2\, \df \sigma (y)}{|x-y|^{m-2(\beta+\gamma-\theta)}}\right) \\
        &\lesssim |x-z|^{2\theta-1}\int_{2|x-z|\geq |x-y|}\frac{|\varphi (y)-\varphi (x)|^2\, \df \sigma (y)}{|x-y|^{m-2(\beta+\gamma-\theta)}} .
    \end{align*}
    If $y \in A_{x, z}^c$, then $2|x-z|\geq |x-y|$, and hence $|z-y|\leq |x-z|+|x-y|\leq 3|x-z|$. So, the same argument yields
    \begin{align*}
        I_{2,2}^2
        &\lesssim |x-z|^{2\theta-1}\int_{3|x-z|\geq |z-y|}\frac{|\varphi (y)-\varphi (z)|^2\, \df \sigma (y)}{|z-y|^{m-2(\beta+\gamma-\theta)}}.
    \end{align*}
    Concerning $I_{2, 3}$, since 
    \[
        \int_{A_{x, z}^c} |T(z, y)|\, \df \sigma (y)
        \lesssim \int_{3|x-z|\geq |z-y|} \frac{\df \sigma (y)}{|z-y|^{d-\beta-\gamma}}
        \lesssim |x-z|^{\beta+\gamma-1}
    \]
    by $\beta+\gamma-1>0$, we have 
    \[
        |I_{2, 3}|\lesssim |x-z|^{\beta+\gamma-1}|\varphi (x)-\varphi(z)|. 
    \]
    Thus we have
    \begin{align*}
        |I_2|^2
        &\lesssim |x-z|^{2\theta-1}\int_{2|x-z|\geq |x-y|}\frac{|\varphi (y)-\varphi (x)|^2\, \df \sigma (y)}{|x-y|^{m-2(\beta+\gamma-\theta)}} \\
        &\quad +|x-z|^{2\theta-1}\int_{3|x-z|\geq |z-y|}\frac{|\varphi (y)-\varphi (z)|^2\, \df \sigma (y)}{|z-y|^{m-2(\beta+\gamma-\theta)}} \\
        &\quad + |x-z|^{2(\beta+\gamma-1)}|\varphi(x)-\varphi (z)|^2.
    \end{align*}

    Since $s<\gamma+\tau-1/2$, we have
    \[
    \int_{2|x-z|<|x-y|}\frac{\df \sigma (z)}{|x-z|^{2s-2\gamma+m-2\tau}} \lesssim \frac{1}{|x-y|^{2s-2\gamma-2\tau+1}}.
    \]
    It thus follows that
    \begin{align*}
        J_1&=\int_{\partial\Omega\times \partial\Omega}\df \sigma (x)\df \sigma (y)\, \frac{|\varphi (y)-\varphi (x)|^2}{|x-y|^{m-2(\beta-\tau)}}\int_{2|x-z|<|x-y|}\frac{\df \sigma (z)}{|x-z|^{2s-2\gamma+m-2\tau}} \\
        &\lesssim \int_{\partial\Omega\times \partial\Omega} \frac{|\varphi (y)-\varphi (x)|^2}{|x-y|^{m-2\beta+2s-2\gamma+1}}\, \df \sigma (x)\df \sigma (y) .
    \end{align*}
    We see from this inequality that
    \begin{equation}\label{10001}
    J_1 \lesssim
    \begin{cases}
    \| \varphi \|_{H^t (\partial\Omega)}^2 \quad &\text{if } t \ge s-\beta-\gamma+1>0, \\
    \| \varphi \|_{L^2 (\partial\Omega)}^2 \quad &\text{if } s-\beta-\gamma+1 < 0.
    \end{cases}
\end{equation}

    Since $s>\theta-1/2$, we have
    \[
        \int_{2|x-z|\geq |x-y|}\frac{\df \sigma (z)}{|x-z|^{2s-2\theta+m}} \lesssim \frac{1}{|x-y|^{2s-2\theta+1}}
    \]
    and 
    \[
        \int_{3|x-z|\geq |z-y|}\frac{\df \sigma (x)}{|x-z|^{2s-2\theta+m}} \lesssim \frac{1}{|z-y|^{2s-2\theta+1}},
    \]
    and hence
    \begin{align*}
        J_{2}&=\int_{\partial\Omega\times \partial\Omega}\df \sigma (x)\df \sigma (y)\, \frac{|\varphi (y)-\varphi (x)|^2}{|x-y|^{m-2(\beta+\gamma-\theta)}}\int_{2|x-z|\geq |x-y|}\frac{\df \sigma (z)}{|x-z|^{2s-2\theta+m}} \\
        &\quad +\int_{\partial\Omega\times \partial\Omega}\df \sigma (z)\df \sigma (y)\, \frac{|\varphi (y)-\varphi (z)|^2}{|z-y|^{m-2(\beta+\gamma-\theta)}}\int_{3|x-z|\geq |z-y|}\frac{\df \sigma (x)}{|x-z|^{2s-2\theta+m}} \\
        &\quad +\int_{\partial\Omega\times \partial\Omega} \frac{|\varphi (x)-\varphi (z)|^2}{|x-z|^{m+2s-1-2(\beta+\gamma-1)}}\, \df \sigma (x)\df\sigma (z) \\
        &\lesssim \int_{\partial\Omega\times \partial\Omega}\frac{|\varphi (y)-\varphi (x)|^2}{|x-y|^{m-2(\beta+\gamma)+2s+1}}\, \df \sigma (x)\df \sigma (y).
    \end{align*}
    Thus we have
    \beq\label{10002}
    J_2 \lesssim
    \begin{cases}
    \| \varphi \|_{H^t (\partial\Omega)}^2 \quad &\text{if } t \ge s-\beta-\gamma+1> 0 , \\
    \| \varphi \|_{L^2 (\partial\Omega)}^2 \quad &\text{if } s-\beta-\gamma+1 < 0.
    \end{cases}
    \eeq

We now estimate $J_3$. We write
    \begin{align*}
        I_3
        &=\int_{A_{x, z}} +\int_{A_{x, z}^c} (T(x, y)-T(z, y))\, \df \sigma (y) =:I_{3, 1}+I_{3, 2}.
    \end{align*}
    We use \eqref{eq_singularity_1} to have
    \begin{align*}
        |I_{3, 1}|\lesssim \int_{A_{x, z}} \frac{|x-z|^\gamma}{|x-y|^{m-\beta}}\, \df \sigma (y)
        \lesssim |x-z|^\gamma h_\beta(|x-z|)
    \end{align*}
    where
    \[
        h_\beta (\xi):=
        \begin{cases}
            \xi^{\beta-1} & \text{if } 0\leq \beta<1, \\
            1+|\log \xi| & \text{if } \beta=1, \\
            1 & \text{if } \beta>1.
        \end{cases}
    \]
    for $\xi>0$. We then use \eqref{eq_singularity_0} to have
    \begin{align*}
        |I_{3, 2}|&\lesssim \int_{A_{x, z}^c} \left(\frac{1}{|x-y|^{m-\beta-\gamma}}+\frac{1}{|z-y|^{m-\beta-\gamma}}\right)\, \df \sigma (y) \\
        &\leq \int_{A_{x, z}^c} \frac{\df \sigma (y)}{|x-y|^{m-\beta-\gamma}}
        +\int_{3|x-z|\geq |z-y|} \frac{\df \sigma (y)}{|z-y|^{m-\beta-\gamma}} \\
        &\lesssim |x-z|^{\beta+\gamma-1}.
    \end{align*}
Thus we have
    \begin{align}
        J_3
        &\lesssim \int_{\partial\Omega \times \partial\Omega} \frac{h_\beta(|x-z|)^2|\varphi(x)|^2}{|x-z|^{2s-2\gamma+m-1}}\,\df \sigma (x)\df \sigma (z) \nonumber \\
        &\quad +\int_{\partial\Omega \times \partial \Omega} \frac{|\varphi (x)|^2}{|x-z|^{2s+m-2\gamma-2\beta+1}}\,\df \sigma (x)\df \sigma (z). \label{10004}
    \end{align}

If \eqref{eq_Sobolev_condition} holds, namely, if $s<\min \{ \beta+\gamma-1, \gamma\}$, then we see from \eqref{10004} that
    \begin{align*}
        J_3\lesssim \|\varphi\|_{L^2 (\partial\Omega)}^2.
    \end{align*}
We also see from \eqref{10001} and \eqref{10002} that
    \begin{align*}
        J_1+J_2 \lesssim \|\varphi\|_{L^2 (\partial\Omega)}^2.
    \end{align*}
It then follows from \eqref{10006} that
\[
\int_{\partial\Omega\times \partial\Omega}\frac{|\Tcal[\varphi](x)-\Tcal[\varphi](z)|^2}{|x-z|^{2s+m-1}}\, \df \sigma (x)\df \sigma (z)
        \lesssim \|\varphi\|_{L^2 (\partial\Omega)}^2,
        \]
and hence $\Tcal: L^2 (\partial\Omega) \to H^s (\partial\Omega)$ is bounded if $s<\min \{ \beta+\gamma-1, \gamma\}$.
This proves \ref{enum_T_Sobolev}.

If $\Tcal[1]$ is constant on $\partial\Omega$, then $I_3=0$. It thus follows from \eqref{10006}, \eqref{10001} and \eqref{10002} that
\begin{align*}
&\int_{\partial\Omega\times \partial\Omega}\frac{|\Tcal[\varphi](x)-\Tcal[\varphi](z)|^2}{|x-z|^{2s+m-1}}\, \df \sigma (x)\df \sigma (z) \\
        &\lesssim 
        \begin{cases}
            \|\varphi\|_{H^t (\partial\Omega)}^2 & \text{if } t\geq s-\beta-\gamma+1>0, \\
            \|\varphi\|_{L^2 (\partial\Omega)}^2 & \text{if } s-\beta-\gamma+1<0. 
        \end{cases}
    \end{align*}
    This completes the proof. 
\end{proof}

\begin{coro}\label{coro_T_cpt}
    Under the assumptions of Theorem \ref{theo_NP_bounded_Sobolev_g}, the following hold.
    \begin{enumerate}[label=(\roman*)]
        \item $\Tcal$ is compact on $H^s(\partial\Omega)$ for all $0\leq s<\min \{\beta+\gamma-1, \gamma \}$.
        \item \label{enum_T_T1_cpt} If $\Tcal[1]$ is constant on $\partial\Omega$, then $\Tcal$ is compact on $H^s (\partial\Omega)$ for all $s \in [0,\gamma)$.
    \end{enumerate}
\end{coro}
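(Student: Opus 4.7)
The plan is to deduce both compactness statements from Theorem \ref{theo_NP_bounded_Sobolev_g} by combining boundedness into a strictly higher Sobolev space with the Rellich--Kondrachov compact embedding $H^{s'}(\p\GO)\hookrightarrow H^{s}(\p\GO)$ for $s'>s\geq 0$ on the compact Lipschitz manifold $\p\GO$. In both parts, I would factor $\Tcal$ as a composition in which one factor is a bounded operator into $H^{s'}$ and another factor is compact, whence $\Tcal$ itself is compact as an operator on $H^{s}(\p\GO)$.

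For part (i), fix $s$ with $0\leq s<\min\{\beta+\gamma-1,\gamma\}$ and choose $s'$ with $s<s'<\min\{\beta+\gamma-1,\gamma\}$. Part \ref{enum_T_Sobolev} of Theorem \ref{theo_NP_bounded_Sobolev_g} gives that $\Tcal\colon L^{2}(\p\GO)\to H^{s'}(\p\GO)$ is bounded. The continuous inclusion $H^{s}(\p\GO)\hookrightarrow L^{2}(\p\GO)$ (valid for $s\geq 0$) then shows $\Tcal\colon H^{s}(\p\GO)\to H^{s'}(\p\GO)$ is bounded; composing with the compact embedding $H^{s'}(\p\GO)\hookrightarrow H^{s}(\p\GO)$ yields compactness of $\Tcal$ on $H^{s}(\p\GO)$.

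For part (ii), suppose $\Tcal[1]$ is constant on $\p\GO$ and fix $s\in[0,\gamma)$. I will pick $s'$ slightly above $s$ lying in the region prescribed by \eqref{eq_Sobolev_T1_condition} with $t=s$. Because $\beta+\gamma-1>0$, the interval
\[
\bigl(s,\ \min\{\gamma,\ s+\beta+\gamma-1\}\bigr)
\]
is nonempty, and we further avoid the single exceptional point $s'=\beta+\gamma-1$ (only relevant when $s=0$) by shifting $s'$ inside this open interval if necessary. With such $s'$, the triple $(t,s')=(s,s')$ satisfies all three conditions in \eqref{eq_Sobolev_T1_condition}, so part \ref{enum_T_T1} of Theorem \ref{theo_NP_bounded_Sobolev_g} yields boundedness of $\Tcal\colon H^{s}(\p\GO)\to H^{s'}(\p\GO)$. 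Composing with the compact embedding $H^{s'}(\p\GO)\hookrightarrow H^{s}(\p\GO)$ gives the desired compactness of $\Tcal$ on $H^{s}(\p\GO)$.

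The only subtlety is the bookkeeping in the choice of $s'$ in part (ii), ensuring that the constraints $s'<\gamma$, $s'\leq s+\beta+\gamma-1$, and $s'\neq\beta+\gamma-1$ can be met simultaneously with $s'>s$; this is where the hypothesis $\beta+\gamma>1$ is used. No genuinely new estimates are required beyond Theorem \ref{theo_NP_bounded_Sobolev_g} and the standard compactness of Sobolev embeddings on the compact Lipschitz boundary $\p\GO$.
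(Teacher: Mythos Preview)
Your proposal is correct and follows essentially the same approach as the paper: factor $\Tcal$ through a strictly higher Sobolev space using Theorem \ref{theo_NP_bounded_Sobolev_g} and then invoke the Rellich--Kondrachov compact embedding. The paper's proof is terser (it uses the compact inclusion $H^{s}\hookrightarrow L^{2}$ rather than $H^{s'}\hookrightarrow H^{s}$, and for part (ii) simply says ``proved in the same way''), but your more explicit bookkeeping in the choice of $s'$ is a welcome clarification and handles the endpoint $s=0$ cleanly.
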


\begin{proof}
    Take $0<s<\min \{\beta+\gamma-1, \gamma\}$. Since $\Tcal: L^2 (\partial\Omega)\to H^s(\partial\Omega)$ is bounded by Theorem \ref{theo_NP_bounded_Sobolev_g} \ref{enum_T_Sobolev} and the embedding $H^s (\partial\Omega)\hookrightarrow L^2 (\partial\Omega)$ is compact by the Rellich-Kondrachov theorem, $\Tcal$ is compact on $H^s (\partial\Omega)$. \ref{enum_T_T1_cpt} is proved in the same way.
\end{proof}

We also obtain the following corollary.

\begin{coro}
    \label{coro_regularity}
    Under the assumptions of Theorem \ref{theo_NP_bounded_Sobolev_g}, the following hold.
    \begin{enumerate}[label=(\roman*)]
        \item \label{enum_regularity}If $\varphi\in L^2 (\partial\Omega)$ and $(\Tcal-\lambda I)[\varphi]\in H^s (\partial\Omega)$ for some $\lambda\neq 0$ and $0 < s<\min \{ \beta+\gamma-1, \gamma\}$, then $\varphi\in H^s (\partial\Omega)$.
        \item \label{enum_regularity_T1} Suppose that $\Tcal[1]$ is constant on $\partial\Omega$. If $\varphi\in L^2 (\partial\Omega)$ and $(\Tcal-\lambda I)[\varphi]\in H^s (\partial\Omega)$ for some $\lambda\neq 0$ and $0< s< \gamma$, then $\varphi\in H^s (\partial\Omega)$.
    \end{enumerate}
Furthermore, the following hold for the $L^2$-adjoint $\Tcal^*$ of $\Tcal$.
    \begin{enumerate}[label=(\roman*)]
        \setcounter{enumi}{2}
        \item \label{enum_regularity_adj} If $\varphi \in H^{-s}(\partial\Omega)$ and $(\Tcal^*-\lambda I)[\varphi]\in L^2(\partial\Omega)$ for some $\lambda\neq 0$ and $0< s<\min\{\beta+\gamma-1, \gamma\}$, then $\varphi\in L^2 (\partial\Omega)$.
        \item \label{enum_regularity_T1_adj} Suppose that $\Tcal[1]$ is constant on $\partial\Omega$. If $\varphi\in H^{-s}(\partial\Omega)$ and $(\Tcal^*-\lambda I)[\varphi]\in L^2(\partial\Omega)$ for some $\lambda\neq 0$ and $0< s< \gamma$, then $\varphi\in L^2 (\partial\Omega)$.
    \end{enumerate}
\end{coro}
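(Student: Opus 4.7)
The proof of all four parts rests on the elementary identity
\[
\lambda \varphi = \Tcal[\varphi] - (\Tcal - \lambda I)[\varphi]
\]
(and its formal adjoint analogue $\lambda \varphi = \Tcal^*[\varphi] - (\Tcal^* - \lambda I)[\varphi]$ for parts \ref{enum_regularity_adj} and \ref{enum_regularity_T1_adj}), combined with the smoothing properties of $\Tcal$ established in Theorem \ref{theo_NP_bounded_Sobolev_g} and a duality argument.

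For part \ref{enum_regularity}, the plan is immediate: by Theorem \ref{theo_NP_bounded_Sobolev_g}\ref{enum_T_Sobolev}, $\Tcal$ maps $L^2(\partial\Omega)$ boundedly into $H^s(\partial\Omega)$ for every $s$ in the stated range, so $\Tcal[\varphi] \in H^s(\partial\Omega)$. The identity above then places $\lambda \varphi$, as a difference of two elements of $H^s$, in $H^s$, and $\lambda \neq 0$ finishes the argument.

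Part \ref{enum_regularity_T1} uses the same identity but now must allow $s$ up to (but not including) $\gamma$. When $s < \min\{\beta+\gamma-1, \gamma\}$, part \ref{enum_regularity} applies directly. When $\beta+\gamma-1 \leq s < \gamma$ (which can occur only if $\beta < 1$), the plan is to bootstrap: first apply part \ref{enum_regularity} to obtain $\varphi \in H^{s_0}$ for some $s_0 \in (0, \min\{\beta+\gamma-1, \gamma\})$, then apply Theorem \ref{theo_NP_bounded_Sobolev_g}\ref{enum_T_T1} with $t = s_0 > 0$ to conclude $\Tcal[\varphi] \in H^{s_1}$ for any $s_1 < \min\{\gamma, s_0 + \beta+\gamma-1\}$, and hence $\varphi \in H^{s_1}$. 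Each iteration improves the regularity by almost $\beta+\gamma-1 > 0$, so after finitely many steps we reach $\varphi \in H^s$. Starting with a strictly positive intermediate regularity sidesteps the single excluded pair $(t,s) = (0, \beta+\gamma-1)$.

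For parts \ref{enum_regularity_adj} and \ref{enum_regularity_T1_adj}, I would pass to adjoints: the boundedness of $\Tcal: H^t(\partial\Omega) \to H^s(\partial\Omega)$ implies by duality the boundedness of $\Tcal^*: H^{-s}(\partial\Omega) \to H^{-t}(\partial\Omega)$. Part \ref{enum_regularity_adj} then follows from $\Tcal^*: H^{-s}(\partial\Omega) \to L^2(\partial\Omega)$ (dual of Theorem \ref{theo_NP_bounded_Sobolev_g}\ref{enum_T_Sobolev} with $t=0$) and the adjoint identity. Part \ref{enum_regularity_T1_adj} requires the dual bootstrap: for $s \geq \beta+\gamma-1$, apply $\Tcal^*: H^{-s} \to H^{-t}$ with $t = s - \beta - \gamma + 1 + \epsilon > 0$ to deduce $\varphi \in H^{-t}$, and iterate, lowering the exponent by almost $\beta+\gamma-1$ each time, until reaching $L^2(\partial\Omega)$. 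The principal difficulty, essentially a bookkeeping issue, is verifying that the iteration terminates in finitely many steps and that the exclusion $(t, s) \neq (0, \beta+\gamma-1)$ never obstructs the chain; both are handled by the freedom to keep intermediate exponents strictly positive and bounded away from the excluded value.
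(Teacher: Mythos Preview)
Your proposal is correct and follows essentially the same approach as the paper: the identity $\lambda\varphi=\Tcal[\varphi]-(\Tcal-\lambda I)[\varphi]$ combined with Theorem \ref{theo_NP_bounded_Sobolev_g}\ref{enum_T_Sobolev} for part \ref{enum_regularity}, a bootstrap via Theorem \ref{theo_NP_bounded_Sobolev_g}\ref{enum_T_T1} for part \ref{enum_regularity_T1} (the paper fixes a step size $\varepsilon<\beta+\gamma-1$ and iterates from $L^2$ directly rather than first invoking \ref{enum_regularity}, but this is cosmetic), and duality for \ref{enum_regularity_adj} and \ref{enum_regularity_T1_adj}. Your handling of the excluded pair $(t,s)=(0,\beta+\gamma-1)$ by keeping intermediate exponents strictly positive is equivalent to the paper's choice of $\varepsilon$ strictly below $\beta+\gamma-1$.
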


\begin{proof}
    \ref{enum_regularity} follows from Theorem \ref{theo_NP_bounded_Sobolev_g} \ref{enum_T_Sobolev} and the identity $\varphi=\lambda^{-1}(\Tcal [\varphi]-(\Tcal-\lambda I)[\varphi])$. For the proof of \ref{enum_regularity_T1}, take $0<\varepsilon<\beta+\gamma-1$. Then Theorem \ref{theo_NP_bounded_Sobolev_g} \ref{enum_T_T1} implies that the operator $\Tcal: H^t (\partial\Omega)\to H^{t+\varepsilon}(\partial\Omega)$ is bounded if $0\leq t<t+\varepsilon<\gamma$. In particular, $\Tcal: L^2 (\partial\Omega)\to H^{s_1} (\partial\Omega)$, where $s_1:=\min \{s, \varepsilon\}$, is bounded since $s<\gamma$. Then, since $(\Tcal-\lambda I)[\varphi]\in H^s (\partial\Omega)$, we have $\varphi\in H^{s_1}(\partial\Omega)$. Since $\Tcal: H^{s_1}(\partial\Omega)\to H^{s_2}(\partial\Omega)$, where $s_2:=\min \{s, s_1+\varepsilon\}=\min \{s, 2\varepsilon\}$ is bounded, we have $\varphi \in H^{s_2}(\partial\Omega)$. We iterate this procedure to obtain $\varphi\in H^{s_k}(\partial\Omega)$, where $s_k:=\min \{ s, k\varepsilon\}$ for all integer $k\in \Nbb$. Thus we obtain $\varphi\in H^s (\partial\Omega)$. 
    
    \ref{enum_regularity_adj} and \ref{enum_regularity_T1_adj} hold by duality.
\end{proof}

\section{Dilation of domain and single layer potential in elasticity}\label{sect_dilation_elastic}

For a bounded Lipschitz domain $\Omega\subset \Rbb^2$ and a positive number $r>0$, we define the dilation of $\Omega$ as
\[
    \Omega_r:=\left\{ \frac{1}{r}x \,\middle|\, x\in \Omega\right\}.
\]
For $f: \partial\Omega \to \Rbb^2$, we define $f_r(x)$ for $x \in \partial\Omega_r$ by
\[
    f_r(x):=rf(rx).
\]
We denote $(\Hsb_r^2)^*:=H^{-1/2}(\partial\Omega_r)^2$, $\Hsb_r^2:=H^{1/2}(\partial\Omega_r)^2$ and the single layer potential on $\partial\Omega_r$ by $\Scal_r$ (without confusion with the single layer potential for the Laplacian $\Scal_0$ which appeared before). Then we obtain the following theorem.

\begin{theo}\label{theo_dilation_sl}
    For a bounded Lipschitz domain $\Omega\subset \Rbb^2$, there exists (large) $r>0$ such that $\Scal_r: (\Hsb_r^2)^*\to \Hsb_r^2$ is invertible and the bilinear form in \eqref{eq_inner_product} defines an inner product on $\Hsb_r^2$. The induced norm is equivalent to the Sobolev norm on $\Hsb_r^2$.
\end{theo}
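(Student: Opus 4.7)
The strategy is to reformulate invertibility of $\Scal_r$ as invertibility of a finite-rank perturbation $A_c:=\Scal-c\,\Xi$ of the single layer potential on the original $\p\GO$, and then to show by a Fredholm-plus-limiting argument that $\ker A_c=\{0\}$ for sufficiently large $|c|$. The explicit form of the two-dimensional Kelvin matrix gives $\Gamma(z/r)=\Gamma(z)-(\alpha_1\ln r/(2\pi))\,I_2$ for $z\in\Rbb^2\setminus\{0\}$ and $r>0$, where $I_2$ is the $2\times 2$ identity. Substituting this and changing variables $y=y'/r$ in the integral defining $\Scal_r[g](x'/r)$ yields
\[
    r\,\Scal_r[g](x'/r)=\Scal[\psi](x')-\frac{\alpha_1\ln r}{2\pi}\int_{\p\GO}\psi\,\df\sigma,\qquad \psi(y'):=g(y'/r).
\]
Since $g\mapsto\psi$ and $f\mapsto f_r$ are bounded isomorphisms between the corresponding Sobolev spaces on $\p\GO$ and $\p\GO_r$, the invertibility of $\Scal_r:(\Hsb_r^2)^*\to\Hsb_r^2$ is equivalent to the invertibility of
\[
    A_c := \Scal - c\,\Xi:(\Hsb^2)^*\longrightarrow\Hsb^2,\qquad c:=\frac{\alpha_1\ln r}{2\pi},
\]
where $\Xi[\psi]:=\int_{\p\GO}\psi\,\df\sigma\in\Rbb^2$, viewed as a constant vector field in $\Hsb^2$, has rank at most two.

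Next I would establish that $\Scal:(\Hsb^2)^*\to\Hsb^2$ is Fredholm of index zero, using the two-dimensional analogue of Corollary~\ref{cor213} (positive-definiteness of $-\Scal$ on the codimension-two subspace $\ker\Xi$ via the decay of $u^\psi=\Scal[\psi]$ when $\int\psi\,\df\sigma=0$), together with interior and exterior Lam\'e uniqueness and the symmetry $\Scal=\Scal^T$. A consequence is that $\Xi|_{\ker\Scal}$ is injective: if $\psi\in\ker\Scal$ and $\Xi[\psi]=0$, then $u^\psi$ vanishes on $\p\GO$, vanishes inside $\GO$ by interior uniqueness, decays at infinity, and thus vanishes outside as well, whence $\psi=0$. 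Since $\Xi$ is of rank at most two, $A_c$ is also Fredholm of index zero for every $c\in\Rbb$, so its invertibility reduces to injectivity. Suppose for contradiction that $\psi_n\in\ker A_{c_n}$ with $\|\psi_n\|=1$ and $|c_n|\to\infty$. The identity $\Scal[\psi_n]=c_n\,\Xi[\psi_n]$ together with the bound $\|\Scal[\psi_n]\|\le\|\Scal\|$ forces $\Xi[\psi_n]\to 0$ and hence $\Scal[\psi_n]\to 0$. Writing $\psi_n=\psi_n^N+\psi_n^{N^\perp}$ with $N:=\ker\Scal$ and using the Fredholm coercivity $\|\Scal[\psi]\|\gtrsim\|\psi\|$ on $N^\perp$ gives $\psi_n^{N^\perp}\to 0$; since $N$ is finite-dimensional, a subsequence of $\psi_n^N$ converges to some $\psi_\infty\in N$ with $\|\psi_\infty\|=1$ and $\Xi[\psi_\infty]=0$, contradicting injectivity of $\Xi|_N$. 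Therefore $\ker A_c=\{0\}$ for all sufficiently large $|c|$, and $\Scal_r$ is invertible for all sufficiently large $r$.

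Once $\Scal_r$ is a bounded isomorphism, positive-definiteness of $\jbracket{f,g}_*=-\jbracket{\Scal_r^{-1}[f],g}_{\p\GO_r}$ follows from the two-dimensional analogue of Corollary~\ref{cor213}, applied after splitting $\Scal_r^{-1}[f]$ along $\ker\Xi_r$ to handle the logarithmic growth at infinity separately; vanishing of $\jbracket{f,f}_*$ forces $u^f$ to be simultaneously div-free and rot-free on both sides of $\p\GO_r$, which under the complex identification $\Rbb^2\simeq\Cbb$ corresponds to an antiholomorphic function on each side, and continuity of the single layer across $\p\GO_r$ together with the decay at infinity then yields $f=0$ by Liouville. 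The equivalence of the induced norm with the standard $H^{1/2}$ norm is the open-mapping theorem applied to the bounded isomorphism $\Scal_r$.

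The principal obstacle is the Fredholm property of $\Scal$ invoked in the second step: although natural in view of the purely logarithmic character of the two-dimensional obstruction to invertibility, its rigorous verification on a general Lipschitz boundary requires carefully combining the jump relations of Propositions~\ref{prop_dd_jump} and \ref{theo_jump_npds}, exterior Lam\'e uniqueness admitting the growth $u(x)=\Gamma(x)a+O(|x|^{-1})$, and the symmetry of the kernel to balance the finite-dimensional kernel and cokernel; the limiting argument at the end of Step 2 is then the mechanism by which the dilation parameter $r$ bypasses any such obstruction present in the original domain.
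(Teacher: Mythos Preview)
Your reduction to the finite-rank perturbation $A_c=\Scal-c\,\Xi$ is exactly the right starting point, and it agrees with the paper's use of the dilation identity.  However, the limiting argument in your second paragraph contains a genuine gap.  From $\Scal[\psi_n]=c_n\,\Xi[\psi_n]$ with $\|\psi_n\|=1$ and $|c_n|\to\infty$ you correctly get $\Xi[\psi_n]\to 0$, but the clause ``and hence $\Scal[\psi_n]\to 0$'' does not follow: the product $c_n\,\Xi[\psi_n]$ is of the indeterminate form $\infty\cdot 0$ and is only known to be \emph{bounded}.  Consequently the coercivity of $\Scal$ on $N^\perp$ does not force $\psi_n^{N^\perp}\to 0$, and the argument stalls before reaching the contradiction with injectivity of $\Xi|_N$.

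The fix is to observe that every $\psi_n$ lies in the finite-dimensional space $W:=\Scal^{-1}(\text{constants})$; after passing to a subsequence $\psi_n\to\psi_\infty\in W$ with $\|\psi_\infty\|=1$ and $\Xi[\psi_\infty]=0$, one has $\Scal[\psi_\infty]=b$ for some constant vector $b$.  Now the positive-definiteness of $-\langle\cdot,\Scal[\cdot]\rangle$ on $\ker\Xi$ (which you already invoke) gives $0<-\langle\psi_\infty,\Scal[\psi_\infty]\rangle=-b\cdot\int_{\p\GO}\psi_\infty\,\df\sigma=0$ unless $\psi_\infty=0$, and the contradiction is complete.  Note that this repair uses exactly the ingredient the paper places at the center of its proof.

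By contrast, the paper does not separate invertibility from positive-definiteness.  It constructs explicitly two functions $f_1,f_2$ with $\Scal[f_i]$ constant and $\int_{\p\GO}f_i\,\df\sigma=e_i$ (using the three-dimensional kernel structure $\Ker(\Kcal^*-\tfrac12 I)$ from \cite{AJKKY}), decomposes $f=f_0+a_1f_1+a_2f_2$ with $f_0\in\ker\Xi$, and obtains the closed formula
\[
-\langle f_r,\Scal_r[f_r]\rangle_{\p\GO_r}=\frac{\alpha_1\ln r}{2\pi}(a_1^2+a_2^2)-\sum_{i,j}a_ia_j\langle f_i,\Scal[f_j]\rangle_{\p\GO}-\langle f_0,\Scal[f_0]\rangle_{\p\GO},
\]
from which strict positivity for large $r$ is immediate.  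This single computation delivers invertibility, positive-definiteness, and norm equivalence simultaneously, and avoids any appeal to the Fredholm property of $\Scal$ on a general Lipschitz boundary, which you yourself flag as the ``principal obstacle'' in your approach.
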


On a two-dimensional domain $\GO$, the single layer potential $\Scal$ may have a non-trivial kernel and it prohibits the bilinear form in \eqref{eq_inner_product} from being an inner product. We show that if we dilate $\GO$ by sufficiently large $r$, then $\Scal_r$ has a trivial kernel. In fact, we prove that
\beq\label{B100}
    -\jbracket{f, \Scal_r [f]}_{\partial\Omega_r} >0.
\eeq
for all $f \in (\Hsb_r^2)^*$

\begin{proof}[Proof of Theorem \ref{theo_dilation_sl}]
    To prove \eqref{B100}, we assume $\GO$ is simply connected without loss of generality.

    Let us recall some facts whose proofs can be found in \cite{AJKKY}.
    \begin{enumerate}[label=(\roman*)]
        \item \label{enum_2d_sl_zero_mean} $-\jbracket{f, \Scal [f]}_{\partial\Omega}>0$ for all $f \in \Vcal:=\left\{ f\in (\Hsb^2)^* \,\middle|\, \int_{\partial\Omega} f\, \df \sigma=0\right\}$.
        \item \label{enum_2d_sl_exceptional} Let $\Wcal^*:=\Ker (\Kcal^*-1/2I)$ (on $(\Hsb^2)^*$) and $\Wcal:=\Ker (\Kcal-1/2I)$ (on $\Hsb^2$). Then $\Wcal^*$ and $\Wcal$ are three-dimensional, $\Scal$ maps $\Wcal^*$ into $\Wcal$, $\Ker \Scal \subset \Wcal^*$, and $\Wcal$ is spanned by $(1, 0)^T, (0, 1)^T, (-y, x)^T$. In particular, the constant vectors constitute a two-dimensional subspace of $\Wcal$.
    \end{enumerate}

We first show that there are linearly independent $f_1, f_2\in (\Hsb^2)^*$ such that $\Scal [f_1]$ and $\Scal[f_2]$ are constant on $\partial\Omega$. To prove this, we consider the restriction $\Scal|_{\Wcal^*}: \Wcal^*\to \Wcal$. If $\dim \Ker \Scal|_{\Wcal^*}=0$, then $\Scal|_{\Wcal^*}$ in invertible and we set $f_1:=(\Scal|_{\Wcal^*})^{-1}[(1, 0)^T]$ and $f_2:=(\Scal|_{\Wcal^*})^{-1}[(0, 1)^T]$. If $\dim \Ker \Scal|_{\Wcal^*}=1$, then we take $f_1\in \Ker \Scal|_{\Wcal^*}$ and $f_1 \neq 0$. Since $\Ran \Scal|_\Wcal$ must contain at least one non-zero constant vector, say, $b\in \Rbb^2$, we can take $f_2\in \Wcal$ such that $\Scal[f_2]=b$. Clearly $f_1$ and $f_2$ are linearly independent. If $\dim \Ker \Scal|_\Wcal\geq 2$, then we just take linearly independent $f_1, f_2\in \Ker \Scal$.

We then show that $\int_{\partial\Omega}f_1\, \df \sigma$ and $\int_{\partial\Omega}f_2\, \df \sigma$ are linearly independent in $\Rbb^2$. In fact, if $a_1\int_{\partial\Omega}f_1\, \df \sigma+a_2\int_{\partial\Omega}f_2\, \df \sigma=0$ for some $a_1, a_2\in \Rbb$, then $f:=a_1f_1+a_2f_2$ satisfies $\int_{\partial\Omega}f\, \df \sigma=0$, and
\[
        -\jbracket{\Scal [f], f}_{\partial\Omega}
        =-\Scal [f] \cdot \int_{\partial\Omega} f\, \df \sigma=0.
    \]
By \ref{enum_2d_sl_zero_mean} above, $f=0$. Since $f_1$ and $f_2$ are linearly independent, we obtain $a_1=a_2=0$.

In particular, by considering appropriate linear combinations of $f_1$ and $f_2$, we may assume
\[
\int_{\partial\Omega}f_1\, \df \sigma=(1, 0)^T, \quad \int_{\partial\Omega} f_2\, \df \sigma =(0, 1)^T.
\]

A direct calculation shows that for $f\in (\Hsb^2)^*$,
    \[
        -\jbracket{f_r, \Scal_r [f_r]}_{\partial\Omega_r}
        =\frac{\alpha_1}{2\pi}\ln r\left|\int_{\partial\Omega} f\, \df \sigma\right|^2-\jbracket{f,\Scal [f]}_{\partial\Omega}
    \]
    where $\Ga_1$ is the constant in \eqref{eq_alpha_12}.
    We decompose $f\in (\Hsb^2)^*$ as $f=f_0+\widetilde{f}$ where
    \[
        \widetilde{f}:=a_1f_1+a_2f_2 \quad \text{with } (a_1, a_2)^T=\int_{\partial\Omega} f\, \df\sigma
    \]
    and $f_0:=f-\widetilde{f}\in \Vcal$. Then $\Scal[\widetilde{f}]$ is constant on $\p\GO$, and hence
    \begin{align*}
        &\frac{\alpha_1}{2\pi}\ln r\left|\int_{\partial\Omega} f\, \df \sigma\right|^2-\jbracket{f,\Scal [f]}_{\partial\Omega} \\
        &=\frac{\alpha_1}{2\pi}\ln r\left|\int_{\partial\Omega} \widetilde{f}\, \df \sigma\right|^2-\jbracket{f_0, \Scal [f_0]}_{\partial\Omega}-2\jbracket{f_0,\Scal [\widetilde{f}]}_{\partial\Omega}-\jbracket{\widetilde{f}, \Scal [\widetilde{f}]}_{\partial\Omega} \\
        &=\frac{\alpha_1}{2\pi}\ln r\left|\int_{\partial\Omega} \widetilde{f}\, \df \sigma\right|^2-\jbracket{f_0, \Scal [f_0]}_{\partial\Omega} -\jbracket{\widetilde{f}, \Scal [\widetilde{f}]}_{\partial\Omega} .
    \end{align*}
Note that
    \begin{align*}
        &\frac{\alpha_1}{2\pi}\ln r\left|\int_{\partial\Omega} \widetilde{f}\, \df \sigma\right|^2-\jbracket{\widetilde{f}, \Scal [\widetilde{f}]}_{\partial\Omega} \\
        &=\frac{\alpha_1\ln r}{2\pi}(a_1^2+a_2^2)-\sum_{i, j=1}^2 a_ia_j\jbracket{f_i, \Scal [f_j]}_{\partial\Omega}.
    \end{align*}
Thus we have
\[
-\jbracket{f_r, \Scal_r [f_r]}_{\partial\Omega_r} =   \frac{\alpha_1\ln r}{2\pi}(a_1^2+a_2^2)-\sum_{i, j=1}^2 a_ia_j\jbracket{f_i, \Scal [f_j]}_{\partial\Omega}  -\jbracket{f_0, \Scal [f_0]}_{\partial\Omega}.
\]
One can easily see from this identity that $-\jbracket{f_r, \Scal_r [f_r]}_{\partial\Omega_r}>0$ for any $f \neq 0$ if $r>0$ is sufficiently large. Hence $\Scal_r: (\Hsb_r^2)^*\to \Hsb_r^2$ is invertible and the bilinear form \eqref{eq_inner_product} on $\Hsb_r^2$ is positive-definite. It can be proved easily that the inner product \eqref{eq_inner_product} induces a norm which is equivalent to the Sobolev norm on $\Hsb_r^2$.
\end{proof}



\end{document}